\newtheorem{theorem}{Theorem}[section]
\newtheorem{lemma}[theorem]{Lemma}
\newtheorem{proposition}[theorem]{Proposition}
\theoremstyle{definition}
\newtheorem{definition}[theorem]{Definition}
\theoremstyle{remark}
\newtheorem{remark}[theorem]{Remark}
\numberwithin{equation}{section}
\newcommand{\R}{\mathbb{R}}
\newcommand{\p}{\partial}
\newcommand{\sig}{\Sigma}
\newcommand{\s}{\sigma}
\newcommand{\n}{\nabla}
\newcommand{\z}{\bar{z}}
\newcommand{\la}{\langle}
\newcommand{\ra}{\rangle}
\newcommand{\der}[1]{\frac{\partial}{\partial #1}}
\newcommand{\fder}[2]{\frac{\partial #1}{\partial #2}}
\newcommand{\vp}{\varphi}
\newcommand{\mC}{\mathcal{C}}
\newcommand{\V}{\mathcal{V}}
\begin{document}
\title[Sharp eigenvalue bounds and minimal surfaces in the ball]
{Sharp eigenvalue bounds and minimal surfaces in the ball}
\author{Ailana Fraser}
\address{Department of Mathematics \\
                 University of British Columbia \\
                 Vancouver, BC V6T 1Z2}
\email{afraser@math.ubc.ca}
\author{Richard Schoen}
\address{Department of Mathematics \\
                 Stanford University \\
                 Stanford, CA 94305 \\
                 \& Department of Mathematics \\
                 University of California \\
                 Irvine, CA 92617}
\thanks{2010 {\em Mathematics Subject Classification.} 35P15, 53A10. \\
A. Fraser was partially supported by  the 
Natural Sciences and Engineering Research Council of Canada and R. Schoen
was partially supported by the National Science Foundation [DMS-1105323] \& [DMS-1404966].}
\email{rschoen@math.uci.edu}

\begin{abstract} We prove existence and regularity of metrics on a surface with
boundary which maximize $\sigma_1 L$ where $\sigma_1$ is the first nonzero Steklov
eigenvalue and $L$ the boundary length. We show that such metrics arise as the induced
metrics on free boundary minimal surfaces in the unit ball $B^n$ for some $n$. In the case
of the annulus we prove that the unique solution to this problem is the induced metric on
the critical catenoid, the unique free boundary surface of revolution in $B^3$. We also show that the unique solution on the M\"obius band is achieved by an explicit $S^1$ invariant embedding
in $B^4$ as a free boundary surface, the critical M\"obius band. For oriented surfaces of genus
$0$ with arbitrarily many boundary components we prove the existence of maximizers which
are given by minimal embeddings in $B^3$. We characterize the limit as the number of boundary
components tends to infinity to give the asymptotically sharp upper bound of $4\pi$. We 
also prove multiplicity bounds on $\sigma_1$ in terms of the topology, and we give a lower
bound on the Morse index for the area functional for free boundary surfaces in the ball.
\end{abstract}

\maketitle

\section{Introduction}
If we fix a smooth compact surface $M$ with boundary, we can consider all Riemannian
metrics on $M$ with fixed boundary length, say $L(\p M)=1$. We can then hope to find
a canonical metric by maximizing a first eigenvalue. The eigenvalue problem
which turns out to lead to geometrically interesting maximizing metrics is the Steklov
eigenvalue; that is, the first eigenvalue $\sigma_1$ of the Dirichlet-to-Neumann map on $\p M$. In
our earlier paper \cite{FS} we made a connection of this problem with minimal surfaces 
$\Sigma$ in
a euclidean ball which are proper in the ball and which meet the boundary of the ball
orthogonally. We refer to such minimal surfaces as {\it free boundary} surfaces since they
arise variationally as critical points of the area among surfaces in the ball whose boundaries
lie on $\p B$ but are free to vary on $\p B$. The orthogonality at $\p B$ makes the area
critical for variations that are tangent to $\p B$ but do not necessarily fix $\p \Sigma$.
Given an oriented surface $M$ of genus $\gamma$ with $k\geq 1$ boundary components, we let
$\sigma^*(\gamma,k)$ denote the supremum of $\sigma_1L$ taken over all smooth metrics
on $M$. Given a non-orientable surface $M$ we let $k$ denote the number of boundary
components and $\gamma$ denote the genus of its oriented double covering. Thus the M\"obius band has $\gamma=0$ and $k=1$ while the Klein bottle with a disk removed has $\gamma=1$
and $k=1$. We then let $\sigma^\#(\gamma,k)$ denote the supremum of $\sigma_1L$ taken over all smooth metrics on $M$.

In this paper we develop the theory in several new directions. First we develop a
general existence and regularity theory for the problem. We show in Section \ref{section:extremal}
that for any compact surface $M$ with boundary, a smooth maximizing metric $g$ exists on $M$
provided the conformal structure is controlled for any metric near the maximum. Furthermore
we show that the multiplicity of the eigenvalue is at least two and there are independent eigenfunctions which define a conformal harmonic map to the unit ball whose image is a
free boundary surface such that the induced metric from the ball is equal to a constant times
$g$ on the boundary. Our proof involves a canonical regularization procedure which produces
a special maximizing sequence for which a carefully chosen set of eigenfunctions converges 
strongly in $H^1$ to a limit. We then show that the limit defines a continuous map which is stationary for the free boundary problem. The higher regularity then follows from minimal surface
theory. 

In Section \ref{section:properties} we prove boundedness of the conformal structure for
nearly maximizing metrics in the
genus $0$ case with arbitrarily many boundary components. This proof involves showing
that the supremum value for $\sigma_1L$ strictly increases when a boundary component is
added. It is then shown by delicate constructions of comparison functions that if the conformal
structure degenerates for a sequence, then $\sigma_1$ for the sequence must be
asymptotically bounded above by the supremum for surfaces with fewer boundary components.
We also show that the conformal structure is controlled for nearly maximizing metrics on
the M\"obius band. Combining this with the work of Section \ref{section:extremal} we obtain the following main
theorem. 
\begin{theorem} Let $M$ be either an oriented surface of genus $0$ with $k\geq 2$
boundary components or a M\"obius band. There exists on $M$ a smooth metric $g$ which
maximizes $\sigma_1L$ over all metrics on $M$. Moreover there is a branched conformal minimal 
immersion $\varphi:(M,g)\to B^n$ for some $n\geq 3$ by first eigenfunctions so that $\varphi$ is a 
$\sigma$-homothety from $g$ to the induced metric $\varphi^*(\delta)$ where $\delta$ is
the euclidean metric on $B^n$.
\end{theorem}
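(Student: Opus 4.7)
The plan is to read the theorem as the synthesis of the two pieces of theory developed earlier in the paper: the general existence/regularity machinery of Section \ref{section:extremal}, which produces a smooth maximizer together with the free boundary minimal immersion \emph{conditional} on control of the conformal structure; and the results of Section \ref{section:properties}, which supply exactly this conformal control in the two topological situations at hand. So I would proceed by taking a maximizing sequence of metrics $g_j$ on $M$, normalized so that $L_{g_j}(\p M)=1$ and $\sigma_1(g_j)\to \sigma^*(0,k)$ (respectively $\sigma^\#(0,1)$), and by feeding it into the existence theorem of Section \ref{section:extremal} once the conformal hypothesis has been verified.

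The first main step is to invoke the compactness of the conformal classes proved in Section \ref{section:properties}. For genus $0$ with $k\geq 2$ boundary components, this rests on the strict monotonicity $\sigma^*(0,k)>\sigma^*(0,k-1)$ together with the comparison function constructions which show that any degeneration of the conformal structure (a boundary component pinching, or a neck collapsing) forces $\limsup \sigma_1(g_j)\leq \sigma^*(0,k-1)$, contradicting the choice of $g_j$. The Möbius band case is handled analogously, with degeneration leading back either to a disk or to a cylindrical limit whose Steklov supremum is known to be strictly smaller. Consequently, after passing to a subsequence, the conformal class of $g_j$ stays in a compact region of the corresponding moduli space, which is the hypothesis required by Section \ref{section:extremal}.

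The second main step is to apply the existence and regularity theorem of Section \ref{section:extremal} directly. That result supplies a special regularized maximizing sequence along which a carefully chosen basis of first eigenfunctions converges strongly in $H^1$ to a limit $\varphi=(u_1,\ldots,u_n)$; the strong convergence is what allows one to pass free boundary stationarity from the sequence to the limit. The eigenvalue equation combined with the stationarity forces $\varphi$ to be a conformal harmonic map into $B^n$ whose image is a free boundary minimal surface, with $|\varphi|\equiv 1$ on $\p M$ and $\varphi^*(\delta)=\sigma g$ on $\p M$. Smoothness of $\varphi$, and hence of $g$ away from branch points, follows from the regularity theory for stationary free boundary minimal surfaces and from elliptic regularity applied to the conformal factor.

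The main obstacle is plainly the first step: the conformal compactness in the many-boundary-component case. Strong $H^1$ convergence, passage to the limit, and minimal-surface regularity are by now fairly robust once compactness is in hand, but ruling out degeneration of the conformal structure requires the quantitative strict monotonicity of $\sigma^*(0,k)$ in $k$ and the delicate, geometry-specific test function constructions that localize mass on the pieces produced by a hypothetical degeneration. Everything downstream, including the existence of the smooth maximizer and the associated conformal minimal immersion into $B^n$, then follows by assembling the already developed machinery.
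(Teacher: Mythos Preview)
Your outline is essentially the paper's own strategy: couple the conformal-control results of Section~\ref{section:properties} with the existence/regularity machinery of Section~\ref{section:extremal} (Theorem~\ref{regularity} and Proposition~\ref{prop:extremal}). One logical point, however, is glossed over and it matters.

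You invoke the strict monotonicity $\sigma^*(0,k)>\sigma^*(0,k-1)$ as input to Theorem~\ref{k_comp}. But look at Proposition~\ref{sigma*_inc}: the \emph{strict} inequality is proved only under the hypothesis that $\sigma^*(0,k-1)$ is \emph{achieved} by a smooth metric. So you cannot simply assume strict monotonicity for all $k$ up front; you must argue by induction on $k$. The paper does exactly this: the base case $k=2$ uses Proposition~\ref{two_comp} (which only needs $\sigma^*(0,2)>2\pi$, checked via the critical catenoid) together with Theorem~\ref{regularity} to produce a smooth maximizer; then, assuming a smooth maximizer exists for $k$, Proposition~\ref{sigma*_inc} yields $\sigma^*(0,k+1)>\sigma^*(0,k)$, Theorem~\ref{k_comp} gives conformal control at level $k+1$, and Theorem~\ref{regularity} produces the maximizer at level $k+1$. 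Without making this induction explicit your first step is circular.

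Similarly, for the M\"obius band and for the annulus base case, the input to Proposition~\ref{two_comp} is not an abstract degeneration dichotomy but the concrete lower bound $\sigma_1L>2\pi$ on the supremum, supplied by the explicit examples (the critical catenoid with $\sigma_1L\approx 4\pi/1.2$ and the critical M\"obius band with $\sigma_1L=2\sqrt{3}\pi$). You should cite these rather than appeal to a vague ``limit whose Steklov supremum is known to be strictly smaller.''
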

The terminology $\sigma$-homothetic means that $g$ is a constant times the induced metric on
the boundary.

We are also able to uniquely characterize the maximizing metrics for the annulus and the 
M\"obius band. In the case of the annulus we show that the unique maximizer is the `critical
catenoid', the unique portion of a suitably scaled catenoid which defines a free boundary
surface in $B^3$. We note that in \cite{FS} this result was proven by a more elementary
argument for some conformal structures on the annulus (super-critical). The proof combines the
existence results with the uniqueness theorem of Section \ref{section:unique_cc}.
\begin{theorem} If $\sig$ is a free boundary minimal surface in $B^n$ which is homeomorphic
to the annulus and such that the coordinate functions are first eigenfunctions, then $n=3$ and 
$\sig$ is congruent to the critical catenoid.
\end{theorem}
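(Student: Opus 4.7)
The plan is to pass to a conformal cylinder parameterization, establish rotational symmetry of the induced boundary measure so that the Steklov problem decouples in Fourier modes, and then identify the map explicitly as the critical catenoid. By uniformization, the annulus $\Sigma$ is conformally equivalent to a flat cylinder $C_T = [-T,T]\times S^1$; write the induced metric as $g = e^{2u}(dt^2+d\theta^2)$. Minimality of $\Sigma$ in $\mathbb{R}^n$ makes each coordinate $x_i = \varphi_i$ $g$-harmonic, hence also $g_0$-harmonic on the flat cylinder by the two-dimensional conformal invariance of harmonicity. The free boundary condition together with $|\varphi|=1$ on $\partial\Sigma$ gives $\partial_t\varphi = \pm e^u\varphi$ at $t = \pm T$, so $\partial_\nu x_i = x_i$ on $\partial\Sigma$; combined with the first-eigenfunction hypothesis, this identifies $\sigma_1(\Sigma,g) = 1$.

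The central step is to show that $e^u$ is constant on $\partial\Sigma$, taking one and the same value $\rho$ on each of the two boundary circles. Granted this, the Steklov problem decouples in the $\theta$-Fourier modes, and eigenvalues in each mode are explicit in $T$ and $\rho$: the $k=1$ even and odd modes give eigenvalues $\tanh(T)/\rho$ and $\coth(T)/\rho$, while the nontrivial $k=0$ mode gives $1/(T\rho)$. The coordinate functions must realize $\sigma_1 = 1$ in both a $k=0$ direction (the axis of the immersion) and a $k=1$ direction (the transverse directions), forcing $\rho = \tanh(T) = 1/T$ and hence $T\tanh(T) = 1$, which identifies $T = T_*$, the critical catenoid parameter. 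For $k \geq 2$ the smallest eigenvalue in each mode is $k\tanh(kT_*)/\tanh(T_*) \geq k > 1$, so no higher Fourier modes appear in the $x_i$. Thus
\begin{equation*}
    \varphi(t,\theta) = \vec{a} + \vec{b}\, t + \cosh(t)\bigl(\vec{c}\cos\theta + \vec{d}\sin\theta\bigr),
\end{equation*}
and a direct check using $|\varphi_t|^2=|\varphi_\theta|^2$, $\varphi_t\cdot\varphi_\theta = 0$, and $\partial_t\varphi = \pm\tanh(T_*)\varphi$ at $t = \pm T_*$ forces $\vec{a} = 0$, mutual orthogonality $\vec{b}\perp\vec{c}\perp\vec{d}\perp\vec{b}$, and equal lengths $|\vec{b}|=|\vec{c}|=|\vec{d}|=1/\sqrt{\cosh^2(T_*)+T_*^2}$. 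Up to a rigid motion of $\mathbb{R}^n$ the image then lies in the $3$-plane spanned by $\vec{b},\vec{c},\vec{d}$ and equals the standard critical catenoid, so $n=3$ and $\Sigma$ is congruent to it.

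I expect the main obstacle to be the central step: establishing rotational symmetry and equality of the two boundary conformal factors. Without this, the non-uniform boundary measure $e^u d\theta$ couples different $\theta$-Fourier modes through the Steklov boundary condition, and the mode-by-mode eigenvalue calculation breaks down. My strategy would be to use the Rayleigh-quotient characterization of $\sigma_1 = 1$ together with the boundary identities $|\partial_\theta\varphi|_{g_0}^2 = e^{2u}$ (from conformality) and $\sum_i x_i\,\partial_\theta x_i = 0$ (from $|\varphi|^2\equiv 1$ on $\partial\Sigma$): any non-uniformity of $e^u$ on a boundary circle, or mismatch between the two circles, should allow the construction of a test function built from tangential perturbations of the $x_i$, orthogonal on the boundary to $\{1, x_1, \dots, x_n\}$, whose Rayleigh quotient drops strictly below $1$, contradicting the hypothesis.
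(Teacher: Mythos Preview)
Your outline correctly identifies the overall strategy---reduce to a rotationally symmetric situation and then invoke the explicit analysis on the flat cylinder---but the ``central step'' is precisely where all the content lies, and your proposed approach to it is not a proof. You propose to build a test function from ``tangential perturbations of the $x_i$'' whose Rayleigh quotient drops below $1$ whenever $e^u$ is nonconstant on $\partial\Sigma$. It is not at all clear how to do this: the tangential derivative $\partial_\theta\varphi$ is not itself admissible (it need not have mean zero), and passing from an inequality on the boundary density to a strict drop in the Rayleigh quotient requires a mechanism you have not supplied.

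The paper's route to this step is substantially different and more delicate. First, $n=3$ comes directly from the multiplicity bound (at most $3$ for genus $0$). The key object is the conformal Killing field $X=\varphi_\theta$; the paper shows (i) $X$ lies in the nullspace of the energy second-variation form $Q$, (ii) for any \emph{conformal} vector field $V$ one has $Q(V,V)=S(V^\perp,V^\perp)$ (the area index form), and (iii) by solving an inhomogeneous Cauchy--Riemann equation on the annulus one can, for a codimension-one subspace of normal Jacobi fields $\psi\in\mathrm{span}\{\nu_1,\nu_2,\nu_3,x\cdot\nu\}$, produce a conformal vector field $Y(\psi)$ tangent to $S^2$ along $\partial\Sigma$ with $Y(\psi)^\perp=\psi\nu$. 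Combining these with the index computation $S(v^\perp,v^\perp)=-2\int_\Sigma|v^\perp|^2<0$ and a dimension count, one finds a nontrivial $V=Y(\psi)+cX$ with $\int_{\partial\Sigma}V=0$ and $Q(V,V)\le 0$; since $\sigma_1=1$ forces $Q\ge 0$ on mean-zero fields, equality holds, which pins down $\psi$ and ultimately shows the components of $X$ are first eigenfunctions. The Steklov condition $\partial_tX=\lambda X$ then gives $\partial_\theta\lambda=0$ on $\partial\Sigma$. None of this is captured by the Rayleigh-quotient heuristic you sketch; the passage through second variation of area (and the $\bar\partial$-equation to manufacture conformal variations) is essential.

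Two smaller points once the central step is granted. First, the paper only obtains that $\lambda$ is constant on \emph{each} boundary circle, not that the two constants agree; the identification with the critical catenoid is then deferred to the rotationally symmetric analysis in the earlier paper, which allows different boundary densities. Your write-up assumes equal $\rho$ on both circles without justification. Second, your claim that both a $k=0$ and a $k=1$ mode must be present among the $x_i$ presupposes an axis, which is what you are trying to establish; you would need to derive this from conformality of $\varphi$ rather than assert it.
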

We note that there are multiplicity bounds proven in Section \ref{section:prelim} which imply that the maximizing metric in the genus $0$ case necessarily lies in $B^3$. The main idea of the proof is to show that such a surface is $S^1$ invariant, and this case was analyzed in detail in \cite{FS}. The proof of the uniqueness of the critical catenoid uses several ingredients including an analysis of the second variation of energy and area for free boundary surfaces. We show in 
Section \ref{section:index} that certain canonical vector fields reduce the area up to second
order; in fact, we show that the Morse index (for area) of any free boundary surface in $B^n$,
which does not split as a product with a line, is at least $n$. We note that this same class of variations was used by J. D. Moore and T. Schulte \cite{MS} to show that any free boundary
submanifold in a convex body is unstable. Our proof requires that the minimal surface is in the ball 
$B^n$. In the uniqueness proof it is necessary to show that the energy has high enough index, so in order to do this we solve a Cauchy-Riemann equation on $\Sigma$ to 
add a tangential component to certain normal variations to make them conformal. We note that
a systematic study of the relationship between second variation of energy and area was
done by N. Ejiri and M. Micallef \cite{EM}.

By combining the existence and uniqueness theorems we obtain the following. 
\begin{theorem}
For any metric on the annulus $M$ we have 
\[
         \sigma_1L\leq (\sigma_1L)_{cc}
\]
with equality if and only if $M$ is $\sigma$-homothetic to the critical catenoid.
In particular,
\[
     \sigma^*(0,2)=(\sigma_1L)_{cc}\approx 4\pi/1.2.
\]
\end{theorem}
See Definition \ref{def:sigma-homothetic} for the definition of $\sigma$-homothetic.

In Section \ref{section:band} we prove the analogous theorem for the M\"obius band. 
We explicitly construct
a minimal embedding by first eigenfunctions of a M\"obius band into $B^4$ which defines a
free boundary surface. We refer to this surface as the {\em critical M\"obius band}. We show that this is the only such surface which has an $S^1$ symmetry group. The following theorem characterizes this metric.
\begin{theorem}
Assume that $\Sigma$ is a free boundary minimal M\"obius band in $B^n$ such that the coordinate functions are first eigenfunctions. Then $n=4$ and  $\Sigma$ is the critical M\"obius band.
\end{theorem}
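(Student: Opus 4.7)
My plan is to adapt the critical catenoid uniqueness argument from Section \ref{section:unique_cc} to the non-orientable setting, in three steps: bound the ambient dimension $n$, upgrade the first-eigenfunction hypothesis to an $S^1$-symmetry of $\Sigma$ coming from ambient rotation, and then invoke the classification of $S^1$-invariant free boundary minimal M\"obius bands in the ball worked out in Section \ref{section:band}.

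For the dimension bound, the multiplicity estimates on $\sigma_1$ proved in Section \ref{section:prelim} depend only on the topology of $M$. For a M\"obius band (oriented double cover of genus $0$, $k=1$) they cap the multiplicity at a value forcing $n \leq 4$, since $x_1,\dots,x_n$ are linearly independent first eigenfunctions orthogonal to the constants. The case $n=3$ is ruled out because a free boundary minimal M\"obius band in $B^3$ would inherit a globally defined unit normal in $\R^3$, contradicting non-orientability; hence $n=4$.

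For the symmetry step, for each pair $i<j$ consider the ambient rotational Killing field $X_{ij} = x_i \p_j - x_j \p_i$; restricted to $\Sigma$ it decomposes as $X_{ij} = X_{ij}^T + X_{ij}^N$. Using the $\bar\partial$-technique advertised in the introduction and developed in Section \ref{section:unique_cc}, solve a Cauchy--Riemann equation on $\Sigma$ for a tangential field $Y_{ij}$ so that $Z_{ij} = X_{ij}^N + Y_{ij}$ is a conformal variation, and hence admissible for the energy. Because $X_{ij}$ is Killing on $\R^4$ and tangent to $\p B^4$, the second variation of energy along $Z_{ij}$ reduces to a boundary quantity which, after using the free boundary condition $\p x_i/\p \nu = x_i$ and the assumption that $x_1,\dots,x_4$ are first Steklov eigenfunctions, is nonnegative. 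Combining this with the Morse index lower bound of Section \ref{section:prelim} and a dimension count among the six fields $\{X_{ij}\}$ forces some nonzero element of $\mathfrak{so}(4)$ to be everywhere tangent to $\Sigma$. The corresponding one-parameter subgroup of $SO(4)$ then preserves $\Sigma$ and yields the desired $S^1$-action. Once $\Sigma$ is known to be $S^1$-invariant, the profile-curve analysis carried out in Section \ref{section:band} reduces the free boundary minimal equation to an ODE whose only M\"obius band solution, up to congruence, is the critical M\"obius band.

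The main obstacle is the symmetry step. The normal parts $X_{ij}^N$ are not themselves Jacobi fields on $\Sigma$ (the ambient Killing property only controls the full restrictions $X_{ij}$), and the tangential correction $Y_{ij}$ must be chosen precisely so that the boundary contribution to the second variation of energy picks up the Steklov eigenvalue $\sigma_1 = 1$ rather than cancelling trivially. Arranging this matching so as to produce a genuine constraint on the $X_{ij}^N$, and then extracting a nontrivial rotation tangent to $\Sigma$ from the ensuing six-dimensional index calculation, is the delicate technical heart of the argument, exactly as in the critical catenoid uniqueness proof.
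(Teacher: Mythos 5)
Your high-level architecture (a $\bar\partial$-corrected conformal variation, a second-variation comparison, and a reduction to the $S^1$-symmetric classification) matches the paper's, but two of your three steps contain genuine gaps. First, the dimension bound: the multiplicity estimate for non-orientable surfaces (Theorem \ref{unoriented_mult}) applied to the M\"obius band ($\chi=0$, $k=1$, $i=1$) only gives multiplicity at most $7$, so it does not force $n\leq 4$; and your exclusion of $n=3$ via ``a globally defined unit normal'' is not valid, since a M\"obius band in $\R^3$ is simply one-sided --- non-orientable surfaces with boundary immerse and even embed in $\R^3$ without contradiction. In the paper $n=4$ is not an a priori input at all: it emerges only at the end, once $\sig$ is identified with the critical M\"obius band.

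Second, and more seriously, your symmetry step uses the wrong family of vector fields. The ambient rotations $X_{ij}=x_i\p_j-x_j\p_i$ generate one-parameter families of congruent surfaces, so the second variation of area along $X_{ij}^N$ vanishes identically; this gives no constraint and certainly cannot interact with the Morse index bound to force tangency of an element of $\mathfrak{so}(4)$. The paper's mechanism is different: the fields fed into the $\bar\partial$-equation are the normal projections $E_1^\perp,\ldots,E_n^\perp$ of the \emph{constant} vectors together with $x^\perp$, for which Theorem \ref{theorem:index} gives the strict negativity $S(v^\perp,v^\perp)=-2\int_\sig|v^\perp|^2\,da$. The intrinsic rotation field $X=\p\vp/\p\theta$ of the parametrizing domain (not an ambient rotation) enters only to balance the variation, i.e.\ to arrange $\int_{\p\sig}W\,ds=0$ via a dimension count on $W\mapsto\int_{\p\sig}W\,ds$; balancing is essential because $Q(W,W)\geq 0$ only for mean-zero boundary data when $\sigma_1=1$, and your proposal omits it. The resulting contradiction kills the translational coefficient, then a Cauchy-uniqueness argument with $x^\perp$ kills the dilational one, leaving the conclusion that the components of $X=\vp_\theta$ are themselves first eigenfunctions; from this one gets that the conformal factor is constant on $\p M$ and that $\sig$ is $\sigma$-homothetic to a rotationally symmetric flat M\"obius band, at which point Proposition \ref{prop:cmb} identifies it (and fixes $n=4$). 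As written, your step from the six fields $X_{ij}$ to an everywhere-tangent rotation has no supporting computation and would not go through.
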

We prove this by extending the argument for the annulus case to show that such a M\"obius band
is invariant under an $S^1$ group of rotations of $B^n$. We then show that $n=4$ and the
surface is the critical M\"obius band.

Combining this with the existence theorem we obtain the following.
\begin{theorem}
For any metric on the M\"obius band $M$ we have 
\[
        \sigma_1L\leq (\sigma_1L)_{cmb} =2\pi\sqrt{3}
\]
with equality if and only if $M$ is $\sigma$-homothetic to the critical M\"obius band. In particular,
\[ \sigma^\#(0,1)=(\sigma_1L)_{cmb} =2\pi\sqrt{3}.
\]
\end{theorem}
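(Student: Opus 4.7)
The plan is to deduce this theorem by combining the existence and regularity result Theorem 1.1 with the uniqueness characterization Theorem 1.4. Since the M\"obius band is one of the cases covered by Theorem 1.1, there exists a smooth metric $g$ on $M$ attaining $\sigma^\#(0,1)$, together with a branched conformal minimal immersion $\varphi:(M,g)\to B^n$ by first eigenfunctions such that $\varphi$ is a $\sigma$-homothety onto a free boundary minimal surface. Because the coordinate functions of $\varphi(M)$ are, by construction, first Steklov eigenfunctions on $(M,g)$, Theorem 1.4 applies and forces $n=4$ with $\varphi(M)$ congruent to the critical M\"obius band. In particular $\sigma^\#(0,1) = \sigma_1(g) L(g) = (\sigma_1 L)_{cmb}$.

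The explicit value $2\pi\sqrt{3}$ would then be read off from the $S^1$-equivariant parametrization of the critical M\"obius band constructed earlier in this section: one computes the boundary length and verifies, by a nodal or multiplicity argument, that the coordinate functions are first Steklov eigenfunctions with the correct eigenvalue, and the product, being scale invariant, equals $2\pi\sqrt{3}$. Together with the previous paragraph this yields the sharp bound $\sigma_1 L \leq 2\pi\sqrt{3}$ for every smooth metric on $M$.

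For the equality case, suppose $g$ is a smooth metric on $M$ with $\sigma_1(g) L(g) = 2\pi\sqrt{3}$. Then $g$ is itself a maximizer, so the variational characterization of maximizers underlying Theorem 1.1 yields a branched conformal minimal immersion $\varphi:(M,g)\to B^n$ of the type appearing there. Applying Theorem 1.4 once more, $\varphi(M)$ is the critical M\"obius band in $B^4$, and the $\sigma$-homothety condition says that $g$ agrees with $\varphi^*\delta$ up to a constant factor along $\p M$; hence $g$ is $\sigma$-homothetic to the critical M\"obius band. The reverse implication is immediate from the scale invariance of $\sigma_1 L$.

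The main obstacle I anticipate is not the assembly itself, which is essentially formal once Theorems 1.1 and 1.4 are in hand, but rather two underlying points: (i) one must make sure the conformal minimal immersion property of Theorem 1.1 is available for every maximizing metric, not only for the particular one produced by the approximation scheme, which amounts to a standard variational argument at a critical point of $\sigma_1$; and (ii) the explicit evaluation $(\sigma_1 L)_{cmb} = 2\pi\sqrt{3}$, which requires the concrete formulas for the critical M\"obius band together with a careful argument showing that its coordinate functions are first, rather than higher, Steklov eigenfunctions.
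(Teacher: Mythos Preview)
Your proposal is correct and follows essentially the same route as the paper: combine the existence/regularity theorem (Theorem~\ref{main}) with the uniqueness result (Theorem~\ref{theorem:mobius}) to identify the maximizer with the critical M\"obius band, and read off $2\pi\sqrt{3}$ from Proposition~\ref{prop:cmb}. Your concerns (i) and (ii) are not genuine obstacles here, since Proposition~\ref{prop:extremal} applies directly to any smooth maximizing metric and Proposition~\ref{prop:cmb} already supplies the explicit value together with the first-eigenfunction property.
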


For a surface of genus $0$ and $k\geq 3$ boundary components we are not able to explicitly
characterize an extremal metric, but we show in Section \ref{section:asymptotics} that the metric arises from  a free boundary surface in $B^3$ which is embedded and star-shaped with respect to the origin. We then analyze the limit as $k$ goes to infinity. 

\begin{theorem} The sequence $\sigma^*(0,k)$ is strictly increasing in $k$
and converges to $4\pi$ as $k$ tends to infinity. For each $k$ a maximizing metric is achieved
by a free boundary minimal surface $\Sigma_k$ in $B^3$ of area less than $2\pi$. The limit of these minimal surfaces as $k$ tends to infinity is a double disk.
\end{theorem}
The proof of this theorem uses properties of the nodal sets of first eigenfunctions (in this case,
intersection curves of planes through the origin with the surface) together with a variety
of minimal surface methods. The maximizing property of the metrics is used in an essential
way to identify the limit as a double plane. 

For compact, closed surfaces $M$ there is a question which in certain respects has parallels
to the problem we study. This is the question of maximizing $\lambda_1(g)A(g)$ over all
smooth metrics on $M$ where $\lambda_1$ is the first nonzero eigenvalue. This problem has been resolved for surfaces with non-negative Euler characteristic with contributions by several authors. First, J. Hersch \cite{H} proved that the constant curvature metrics on $\mathbb S^2$ uniquely maximize. The connection of this problem with minimal surfaces in spheres was made by
P. Li and S. T. Yau \cite{LY} who succeeded in showing that the constant curvature metric on
$\mathbb{RP}^2$ uniquely maximizes $\lambda_1A$ over all smooth metrics on 
$\mathbb{RP}^2$. It was shown by N. Nadirashvili \cite{N} that the maximizing metric on the
torus is uniquely achieved by the flat metric on the $60^0$ rhombic torus. Finally, the case of the 
Klein bottle was handled in a series of papers by Nadirashvili \cite{N} who proved the existence
of a maximizing metric, by D. Jakobson, Nadirashvili, and I. Polterovich \cite{JNP} who
constructed the maximizing metric (which is $\mathbb S^1$ invariant but not flat), and
by A. El Soufi, H. Giacomini, and M. Jazar \cite{EGJ} who proved it is the unique maximizer.
The cases of the torus and the Klein bottle are much more difficult technically than the two
sphere and the projective plane because they require a difficult theorem of \cite{N} which 
asserts the existence of a maximizing metric which is the induced metric on a branched 
conformal minimal immersion into $\mathbb S^n$ by first eigenfunctions. As in our existence
theorem there are two steps, the first being to control the conformal structure for metrics which
are near the maximum, and the second being to prove existence and regularity of the conformal metric. The first part of this theorem has been simplified by  A. Girouard \cite{G}. Recently
G. Kokarev \cite{K} has undertaken a study of existence and regularity of maximizing measures
for $\lambda_1A$ in a conformal class. He has obtained existence and partial regularity of
such measures.

We point out that the case of closed surfaces has to do with minimal immersions into spheres while our theory has to do with minimal immersions into balls with the free boundary condition.  A basic result which is important for the theory of closed surfaces is that the conformal transformations of the sphere reduce the area of minimal surfaces. This was shown by Li and Yau \cite{LY} for two
dimensional surfaces and extended by A. El Soufi and S. Ilias \cite{EI1} to higher dimensions.
The conformal transformations of the ball do not seem to have this property for free boundary minimal surfaces, and this makes our theory, particularly the uniqueness theorems, more difficult.
We note that in \cite{FS} it was shown that the conformal transformations of the ball do reduce
the boundary length for two dimensional free boundary surfaces, but we do not know if they
also reduce the area.

{\em Acknowledgements}. The authors would like to thank the referees for several valuable comments which greatly improved the exposition and clarified the content.

\section{Notation and preliminaries} \label{section:prelim}

Let $(M,g)$ be a compact $k$-dimensional Riemannian manifold with boundary 
$\p M \neq \emptyset$ and Laplacian $\Delta_g$. Given a function $u \in C^\infty(\p M)$, let $\hat{u}$ be the harmonic extension of $u$:
\[
    \begin{cases}
    \Delta_g \hat{u} =0  & \text{on }\; M, \\   
    \hat{u} =u &  \text{on } \p M.
    \end{cases}
\]
 Let $\eta$ be the outward unit conormal along $\p M$. The Dirichlet-to-Neumann map is the map
 \[
             L: C^\infty(\p M) \rightarrow C^\infty(\p M)
 \]
 given by 
 \[
           Lu=\fder{\hat{u}}{\eta}.
 \]         
$L$ is a nonnegative self-adjoint operator with discrete spectrum
$\sigma_0 < \sigma_1 \leq \sigma_2 \leq \cdots$ tending to infinity.
We will refer to these as the Steklov eigenvalues.

Since the constant functions are in the kernel of $L$, the lowest eigenvalue $\sigma_0$ of $L$ is zero. The first nonzero eigenvalue $\sigma_1$ of $L$ can be characterized variationally as follows:
\[
       \sigma_1=\inf_{u \in C^1(\p M),\; \int_{\p M} u=0}
       \frac{\int_M |\n \hat{u}|^2 \,da}{\int_{\p M} u^2 \,ds}.
\]

The results of this paper concern two dimensional surfaces, and in this case the Steklov
problem has a certain conformal invariance which we now elucidate with some terminology.

\begin{definition} \label{def:sigma-homothetic}
If we have two surfaces $(M_1,g_1)$ and $(M_2,g_2)$, we say that $M_1$ and $M_2$
are {\it $\sigma$-isometric} (resp. {\it $\sigma$-homothetic}) if there is a conformal diffeomorphism
$\varphi:M_1\to M_2$ such that the pullback metric $\varphi^*(g_2)=\lambda^2 g_1$
with $\lambda=1$ (resp. $\lambda=c$ for a constant $c$) on $\p M_1$. 
\end{definition}

It is clear that if
two surfaces are $\sigma$-isometric then their Steklov eigenvalues coincide, while if
surfaces are $\sigma$-homothetic then the normalized eigenvalues $L(\p M_j)\sigma_i(M_j)$
coincide for $j=1,2$ and for all $i$. We can only hope to characterize surfaces up to 
$\sigma$-homothety by conditions on the Steklov spectrum.

We will need the following coarse upper bound which combines Theorem 2.3 of \cite{FS}
with a bound of G. Kokarev \cite{K}.
\begin{theorem} \label{theorem:yy} 
Let $M$ be a compact oriented surface of genus $\gamma$ with $k$ boundary components. Let 
$\sigma_1$ be the first non-zero eigenvalue of the Dirichlet-to-Neumann operator on $M$ with metric $g$. Then
\[
       \sigma_1 L(\p M) \leq \min\{2 (\gamma+k) \pi, 8\pi[(\gamma+3)/2]\}
\]
where $[x]$ denotes the greatest integer less than or equal to $x$.
\end{theorem}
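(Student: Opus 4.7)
The plan is to establish the two upper bounds in the minimum separately; both are Hersch-type results coming from the variational characterization of $\sigma_1$ applied to test maps built from conformal/holomorphic functions, with Brouwer-type balancing used to enforce zero boundary mean.

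The first bound $\sigma_1 L(\p M) \leq 2\pi(\gamma+k)$ is Theorem 2.3 of \cite{FS}. Its proof uses Ahlfors' theorem: a bordered Riemann surface of genus $\gamma$ with $k$ boundary components admits a proper holomorphic map $\varphi: M \to \overline{\mathbb{D}}$ of degree at most $\gamma+k$. Post-compose $\varphi$ with a M\"obius automorphism $\psi_a$ of $\mathbb{D}$, parameterized by $a \in \mathbb{D}$, and apply a standard Brouwer fixed-point argument on the boundary measure of $\p M$ to choose $a$ so that $u := \psi_a \circ \varphi$, viewed as an $\mathbb{R}^2$-valued map, satisfies $\int_{\p M} u\, ds = 0$. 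The variational characterization then gives
\[
   \sigma_1 \int_{\p M} |u|^2\, ds \leq \int_M |\n u|^2\, da.
\]
Since $|u|=1$ on $\p M$, the left side equals $\sigma_1 L(\p M)$; since $u$ is holomorphic of degree at most $\gamma+k$ onto the disk of area $\pi$, the conformality identity bounds the right side by $2\pi(\gamma+k)$.

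The second bound $\sigma_1 L(\p M) \leq 8\pi[(\gamma+3)/2]$ is Kokarev's bound from \cite{K}. To obtain it, fill in the $k$ boundary components of $M$ to produce a closed Riemann surface $\hat M$ of genus $\gamma$. By the Brill--Noether/Meis theorem, $\hat M$ carries a meromorphic function $\varphi: \hat M \to \mathbb{CP}^1 \cong S^2$ of degree at most $[(\gamma+3)/2]$. Compose with a M\"obius transformation $\psi_a$ of $S^2$ (parameterized by $a$ in the open unit ball of $\mathbb{R}^3$) and apply a three-dimensional Hersch balancing on the boundary measure to choose $a$ so that $u := \psi_a \circ \varphi$, regarded as an $\mathbb{R}^3$-valued map, satisfies $\int_{\p M} u\, ds = 0$. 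Then $|u|=1$ on $\p M$ gives $\int_{\p M} |u|^2\, ds = L(\p M)$, while extending the domain of integration to $\hat M$ only enlarges the Dirichlet energy, so
\[
   \int_M |\n u|^2\, da \leq \int_{\hat M} |\n u|^2\, da = 2 \cdot 4\pi \cdot \deg(u) \leq 8\pi\,[(\gamma+3)/2].
\]

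The main obstacles are not analytic but geometric: they are the classical inputs furnishing the low-degree maps, namely Ahlfors' theorem for the proper holomorphic map to $\overline{\mathbb{D}}$ and the Brill--Noether/Meis estimate for the gonality of $\hat M$. Both are standard results. The Hersch balancing lemma is a routine Brouwer fixed-point argument; the only minor subtlety is that the pushforward of the boundary measure on the target ($\overline{\mathbb{D}}$ or $S^2$) must not concentrate at a single point, which is automatic since $\varphi$ is non-constant. The conformality identity $\int |\n u|^2\, da = 2\int u^*(dA_{\text{target}})$ used in each estimate is standard for holomorphic maps into $\mathbb{R}^n$ and, via stereographic projection, into $S^n$.
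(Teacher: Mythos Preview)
Your proposal is correct and matches the paper's treatment: the paper does not give a proof of this theorem but simply states it as the combination of Theorem~2.3 of \cite{FS} (the $2\pi(\gamma+k)$ bound via the Ahlfors map to the disk) and Kokarev's bound \cite{K} (the $8\pi[(\gamma+3)/2]$ bound via filling in the boundary and using a meromorphic map to $S^2$ of degree bounded by the gonality). Your sketches accurately reproduce those two arguments.
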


We will also need the following multiplicity bounds for the first eigenvalue on surfaces. These results appeared almost simultaneously in two other papers, \cite{KKP} and \cite{J1}, with some improvements in \cite{KKP}. The approaches behind all the proofs go back to the ideas of Cheng and Besson on multiplicity bounds of the eigenvalues of the Laplacian on closed surfaces. Some further results have also been obtained in \cite{J2}.
\begin{theorem} \label{multiplicity} 
For a compact connected oriented surface $M$ of genus $\gamma$ with $k$ boundary components, the multiplicity of $\s_i$ is at most $4\gamma+2i+1$.
\end{theorem}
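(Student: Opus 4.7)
The plan is to adapt the classical Cheng--Besson multiplicity bound to the Steklov setting via a Taylor-vanishing construction and an Euler characteristic count on the nodal graph. Let $m$ denote the multiplicity of $\sigma_i$ and let $V$ be the corresponding eigenspace; every $u\in V$ is harmonic in $M$ and satisfies $\p u/\p\eta=\sigma_i u$ on $\p M$.

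First I would fix an interior point $p\in\operatorname{int}(M)$ and work in a local isothermal coordinate $z$ centered at $p$. A real harmonic function near $p$ is determined by its constant term together with the complex Taylor coefficients of its holomorphic part, so vanishing to order $N$ at $p$ amounts to $2N-1$ real linear conditions on $V$. Requiring vanishing to order $\lfloor m/2\rfloor$ leaves a nontrivial kernel, so there is $u\in V\setminus\{0\}$ whose Taylor series at $p$ begins with a nonzero term $\operatorname{Re}(cz^{N})$ for some $N\geq\lfloor m/2\rfloor$; in particular $m\leq 2N+1$. The nodal set $Z=u^{-1}(0)$ then has $2N$ analytic arcs emanating from $p$.

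Next I would record two structural facts. (i)~Courant's nodal domain theorem holds for Steklov: every nodal domain of $u$ meets $\p M$ (otherwise $u$ would be a nontrivial Dirichlet harmonic function on an interior component and hence vanish identically), and the standard test-function proof with $v_j=u\chi_{D_j}$ applied to the Rayleigh quotient $\int_M|\n v|^2/\int_{\p M}v^2$ gives at most $i+1$ nodal domains. (ii)~At a boundary zero $q\in\p M$ of $u$, the Steklov condition forces $\p u/\p\eta(q)=\sigma_i u(q)=0$; at a simple zero of $u|_{\p M}$ the tangential derivative is nonzero, so $Z$ leaves $\p M$ perpendicularly at $q$. By a generic choice of $p$ I would arrange that $p$ is the only interior zero of $u$ of order $\geq 2$ and all boundary zeros are simple; non-generic configurations only strengthen the Euler inequality below.

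Finally I would run the combinatorial count on the graph $G=Z\cup\p M\subset M$. The decisive parity observation is that on each boundary circle $\p_j M$, the function $u|_{\p_j M}$ alternates sign between consecutive simple zeros, so the number $b_j$ of zeros on $\p_j M$ is even; writing $\ell=\sum_j b_j$ and $k_+=\#\{j:b_j\geq 1\}$ gives $\ell\geq 2k_+$. I count in $G$: one interior vertex of degree $2N$ and $\ell$ boundary vertices of degree $3$, so $V=1+\ell$; edges from $p$ contribute $N+\ell/2$ (loops at $p$ together with arcs from $p$ to $\p M$) and boundary segments contribute $\ell+(k-k_+)$, giving $E=N+\tfrac{3}{2}\ell+k-k_+$; and $F\leq i+1$ because the faces of $G$ are exactly the nodal domains. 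The general Euler inequality $V-E+F\geq\chi(M)=2-2\gamma-k$ rearranges, using $\ell/2\geq k_+$, to
\[
i+1\;\geq\;F\;\geq\;1-2\gamma-k_++N+\tfrac{\ell}{2}\;\geq\;1-2\gamma+N,
\]
so $N\leq 2\gamma+i$ and $m\leq 2N+1\leq 4\gamma+2i+1$. The main obstacle is exactly this parity step, which is what makes the bound independent of $k$: the forced lower bound $b_j\geq 2$ on each active boundary circle produces just enough extra boundary edges to absorb the $-k$ in $\chi(M)$. The remaining technicalities---Courant for Steklov, genericity of $p$, extra nodal components not through $p$, and any higher-order boundary zeros---are routine and only tighten the estimate.
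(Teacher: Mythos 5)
Your overall strategy is the same as the paper's in its first two steps: force vanishing to order $N$ at an interior point by imposing $2N-1$ linear conditions on the eigenspace (so $m\leq 2N+1$), and then bound $N\leq 2\gamma+i$ by combining the structure of the nodal set with the Courant bound of at most $i+1$ nodal domains. Where you differ is the topological step: the paper glues disks onto \emph{all} boundary components to obtain a closed surface $S$ of genus $\gamma$ and derives the contradiction from a homology relation among the nodal branches through $p$ via Cheng's Lemma 3.1, while you run an Euler-characteristic count of the nodal graph in $M$ itself. That route can be made to work, but as written it fails at exactly the step you identify as decisive.

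The error is in the treatment of boundary circles on which $u$ has no zeros. You assign each such circle one edge and no vertices, i.e.\ a contribution of $-1$ to $V-E$; but a vertex-free circle is not a valid $1$-cell, and its correct contribution to $V-E$ is $0$ (add a dummy vertex: $V$ and $E$ each increase by one). Indeed, with your convention the inequality $V-E+F\geq\chi(M)$ is already false for $M$ a disk and $G=\p M$, where it would read $0-1+1\geq 1$. Writing $k_0=k-k_+$ for the number of zero-free boundary circles, the corrected count gives only $F\geq 1-2\gamma-k_0+N$, hence $N\leq 2\gamma+i+k_0$, which is not the claimed bound. This case genuinely occurs: on the critical catenoid the first eigenspace contains the third coordinate function, which has no boundary zeros at all, so one cannot assume $k_0=0$. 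The missing idea is precisely the paper's capping device: glue a disk onto each zero-free boundary circle. Since $u$ is nonvanishing on a collar of such a circle, exactly one face abuts it; capping raises $\chi$ by $k_0$ (to $2-2\gamma-k_+$), leaves the number of faces and the rest of your count unchanged, and each capped face is still a connected open surface, so the Euler inequality applies and yields $F\geq 1-2\gamma+N+\tfrac{\ell}{2}-k_+\geq 1-2\gamma+N$ as desired. A secondary point to tighten: your parity claim that each $b_j$ is even presumes all boundary zeros of $u|_{\p M}$ are simple sign-changing zeros; a zero of even order need not change sign, so $b_j$ can be odd, and one must check (as the paper's reflection analysis of the leading harmonic polynomial allows) that the higher degree of such a vertex compensates in $E-V$.
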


\begin{proof} We first show that for any eigenfunction $u$ on $M$, the set of points 
$S=\{p\in\bar{M}:\ u(p)=0,\ \nabla u(p)=0\}$ is finite. It is clear that this set is discrete
in the interior of $M$, and standard unique continuation results imply that if there is point 
$p\in S\cap\p M$, then $u$ vanishes to a finite order at $p$, the point $p$ is isolated in $S$,
and the zero set of $u$ near $p$ consists of a finite number of arcs from $p$ which meet the boundary transversely. To see this we choose conformal coordinates $(x,y)$ centered at $p$ so that $M$ is the upper half plane near $(0,0)$. The function $u$ is then harmonic and satisfies
the boundary condition $u_y=-\sigma u\lambda$ for $y=0$ where $\sigma$ is the eigenvalue and
$\lambda$ is a smooth positive function of $x$. The function $v=e^{\sigma\lambda y}u$ then satisfies the equation $\Delta(e^{-\sigma\lambda y}v)=0$ together with the homogeneous Neumann boundary condition. This may be written in the form
\[ \Delta v+b\cdot\nabla v+cv=0,\ v_y(x,0)=0 
\]
where the coefficient functions are smooth and bounded up to the boundary. We then extend
$v$ by even reflection to a full neighborhood of $(0,0)$ and observe that the extended function
$\hat{v}$ satisfies
\[ \Delta \hat{v}+\hat{b}\cdot\nabla\hat{v}+\hat{c}\hat{v}=0
\]
where $\hat{b}$ is the odd extension of $b$ and $\hat{c}$ the even extension of $c$. 
The function $\hat{v}$ is at least $C^{1,1}$, because it satisfies the Neumann condition.
The
coefficients $\hat{b}$ and $\hat{c}$ are bounded and therefore we may apply unique continuation results (see \cite{A}) to assert that $v$ and hence $u$ vanishes to finite order at $(0,0)$. From
the boundary condition on $u$ it follows that the leading order homogenous harmonic polynomial
$P(x,y)$ in the Taylor expansion of $u$ at $(0,0)$ is even under reflection across the $x$-axis and satisfies $P_y(x,0)=0$. It follows that $P_x(x,0)\neq 0$ for $x\neq 0$ and the zero set of $P$
consists of lines through the origin which are symmetric under reflection across the $x$-axis
and which do not contain the $x$-axis. The conclusion for the zero set of $u$ then follows.

Let $\vp$ be an $i$-th eigenfunction. Let $p$ be a point in the interior of $M$ with $\vp(p)=0$. First we show that the order of vanishing of $\vp$ at $p$ is less than or equal to $2\gamma +i$. To see this, suppose the order of vanishing of $\vp$ is $d$.  By \cite{Ch} (Theorem 2.5), the nodal set of $\vp$ consists of a number of $C^2$-immersed one dimensional closed submanifolds, which meet at a finite number of points. Therefore, the nodal set of $\vp$ consists of a finite number of immersed circles and arcs between boundary components. Since the sign of an eigenfunction changes around any of its zeros, there are an even number of arcs meeting any given boundary curve of $M$. By gluing a disk on each boundary component of $M$ and deforming the disk to a point, we can view $M$ as a compact surface $S$ of genus $\gamma$ with $k$ points $p_1, \ldots, p_k$ removed, where each point is identified with a  boundary curve of $M$. Since the nodal set of a spherical harmonic of order $d$ in $\mathbb{R}^2$ consists of $d$ lines passing through the origin, there exist injective piecewise $C^1$ maps $\Phi_j: S^1 \rightarrow S$, $j=1, \ldots, d$ such that $\Phi_j(S^1) \cap \Phi_l(S^1)$, $j \neq l$, consists of a finite number of points, and $\Phi_j(S^1) \subset \vp^{-1}(0)\cup \{p_1, \ldots, p_k \}$, $j=1, \ldots, d$. If $d > 2\gamma + i$, then there exist $n_1, \ldots, n_{2\gamma+i+1} \in \mathbb{Z}$ not all zero such that the homology class of $S$ represented by $\sum_{j=1}^{2\gamma+i+1} n_j \Phi_j$ is zero. It follows from the argument of \cite{Ch} (Lemma 3.1) that $S \setminus \Phi_1(S^1) \cup \ldots \cup \Phi_d(S^1)$ has at least $i+2$ connected components. Therefore, $M \setminus \vp^{-1}(0)$ has at least $i+2$ components. But by the nodal domain theorem for Steklov eigenfunctions (see \cite{KS}) there can be at most $i+1$ connected components of $M \setminus \vp^{-1}(0)$. Therefore the order of vanishing of $\vp$ is less than or equal to $2\gamma +i$.

Now, arguing as in \cite{Be} and \cite{Ch}, we show that the multiplicity $m_i$ of $\s_i$ is at most $4\gamma+2i+1$.  
Let $\vp_1, \ldots , \vp_{m_i}$ be a basis for the $i$-th eigenspace. Let $(x,y)$ be local conformal coordinates near an interior point $p$ of $M$. Consider the system of equations
\begin{equation} \label{eq:system}
        \sum_{j=1}^{m_i} a_j \frac{\p^s \vp_j}{\p x^t \p y^{s-t}} (p)=0,
        \quad s=0, \ldots , 2\gamma +i, \;\; 0 \leq t \leq s
\end{equation}
This is a system of $(2\gamma+i+1)(2 \gamma +i+2)/2$ equations in $m_i$ unknowns, $a_1, \ldots, a_{m_i}$. Since $\Delta \vp_j=0$, we have for any integer $q$, and $0 \leq t \leq q$:
\[
        \frac{\p^{q+2}\vp_j}{\p x^{l+2}\p y^{q-l}}(p)+ \frac{\p^{q+2}\vp_j}{\p x^{l}\p y^{q-l+2}}(p)=0.
\]
Using this, it can then be shown that $(2\gamma+i-1)(2\gamma +i)/2$ of the equations (\ref{eq:system}) follow from the other ones. Therefore, to solve (\ref{eq:system}), it suffices to solve the remaining $4\gamma+2i+1$ equations in $m_i$ unknowns. If $m_i >4\gamma+2i+1$, then this homogeneous system of linear equations has a nontrivial solution. But then $\vp=\sum_{j=1}^{m_i} a_j \vp_j$ would be an $i$-th eigenfunction that vanishes to order $2\gamma+i+1$, a contradiction. Therefore, $m_i \leq 4\gamma+2i+1$.
\end{proof}

For non-orientable surfaces, we have:

\begin{theorem} \label{unoriented_mult} For a compact connected non-orientable surface $M$ with $k$ boundary components and Euler characteristic $\chi(M)$, the multiplicity of $\s_i$ is at most $4(1-\chi(M)-k)+4i+3$.
\end{theorem}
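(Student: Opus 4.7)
The plan is to pass to the orientable double cover $\pi:\tilde{M}\to M$ and reduce to Theorem~\ref{multiplicity} with a bookkeeping adjustment. Since $\chi(\tilde M)=2\chi(M)$ and $\tilde M$ has $2k$ boundary components, its genus is $\tilde\gamma=1-\chi(M)-k$, which explains the constant $4(1-\chi(M)-k)$ in the stated bound. The only essential new input needed is a sharp nodal domain count on $\tilde M$ for lifts of eigenfunctions from $M$; everything else can be carried over from the proof of Theorem~\ref{multiplicity}.

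First I would lift any $i$-th Steklov eigenfunction $\vp$ on $M$ to $\tilde\vp=\vp\circ\pi$ on $\tilde M$, which is a $\tau$-invariant eigenfunction for the same eigenvalue $\sigma_i$, where $\tau$ denotes the deck involution. Courant's nodal domain theorem for Steklov eigenfunctions (cf.\ \cite{KS}) on $M$ bounds the nodal domains of $\vp$ by $i+1$. Since $\pi$ is a two-fold cover, each nodal domain of $\vp$ pulls back to either one or two nodal domains of $\tilde\vp$, so $\tilde\vp$ has at most $2(i+1)=2i+2$ nodal domains on $\tilde M$. This is the crucial quantitative replacement for the Courant bound used in Theorem~\ref{multiplicity}.

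Next I would rerun the nodal-set/unique-continuation analysis of Theorem~\ref{multiplicity} on $\tilde M$ applied to $\tilde\vp$. The local reflection and unique continuation arguments are intrinsic and depend only on the local structure, so they go through unchanged (both at interior points and at boundary points of $\tilde M$ lying over boundary points of $M$). If $\tilde\vp$ vanishes to order $d$ at a point of $\tilde M$, one obtains $d$ piecewise $C^1$ loops in the capped orientable surface $\tilde S$ of genus $\tilde\gamma$. Supposing $d>2\tilde\gamma+2i+1$, then since $\mathrm{rank}\,H_1(\tilde S;\mathbb{Z})=2\tilde\gamma$, there exist integers $n_1,\ldots,n_{2\tilde\gamma+2i+2}$, not all zero, making $\sum n_j\Phi_j$ null-homologous. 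Applying Chanillo's Lemma~3.1 exactly as in Theorem~\ref{multiplicity} forces $\tilde S$ minus the union of the loops to have at least $2i+3$ components, hence $\tilde\vp$ would have at least $2i+3$ nodal domains on $\tilde M$, contradicting the bound $2i+2$. Thus every $i$-th eigenfunction on $M$ has vanishing order at most $2\tilde\gamma+2i+1$ at each point.

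Finally, the Taylor/harmonicity step at the end of the proof of Theorem~\ref{multiplicity} applies verbatim: working in local conformal coordinates at an interior point $p\in M$ and using $\Delta\vp_j=0$, the condition that a combination $\sum a_j\vp_j$ vanish to order at least $2\tilde\gamma+2i+2$ reduces to $2(2\tilde\gamma+2i+1)+1=4\tilde\gamma+4i+3$ independent linear equations in the $m_i$ unknowns $a_j$. If $m_i>4\tilde\gamma+4i+3$, a nontrivial solution exists and contradicts the vanishing order bound from the previous step, yielding $m_i\le 4(1-\chi(M)-k)+4i+3$. The main obstacle I anticipate is verifying that the nodal-set argument genuinely transfers to $\tilde M$ at boundary points lying above boundary points of $M$; however, this is exactly handled by the even-reflection and unique-continuation step already developed in the proof of Theorem~\ref{multiplicity}, so nothing new is required.
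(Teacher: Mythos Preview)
Your argument is correct and is essentially the same as the paper's, which caps off $M$ to a closed non-orientable surface $S$ and then cites Besson's Theorem~2.2 on the orientable double cover of $S$; you simply swap the order (lift first to $\tilde M$, then cap to the same $\tilde S$ of genus $\tilde\gamma=1-\chi(M)-k$) and spell out explicitly the nodal-domain bound $2(i+1)$ for the lift and the Cheng-type component count that underlie Besson's bound. One cosmetic slip: Lemma~3.1 of \cite{Ch} is Cheng's, not Chanillo's.
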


\begin{proof}
We can view $M$ as a domain in a compact surface $S$ of Euler characteristic $\chi(M)+k$. Then, by an argument similar to \cite{Be}, Theorem 2.2, the multiplicity of $\s_i$ is at most $4\tilde{\gamma} +4i+3$, where $\tilde{\gamma}=1-\chi(M)-k$ is the genus of the orientable double cover of $S$.
\end{proof}

\section{The Morse index of free boundary solutions in the ball} \label{section:index}

The result of this section is used in section \ref{section:unique_cc}, but is of interest in its own right, so we include  it here as a general property of free boundary solutions.
Following \cite{FS} we will say that a minimal submanifold $\sig$, properly immersed in a domain 
$\Omega\subset\R^n$, is a {\it free boundary solution} if the outward unit normal vector of $\p\sig$ (the conormal vector) agrees with the outward unit normal to $\p\Omega$ at each point of $\p\sig$. 
If $\vp$ is an isometric minimal immersion of $M$ into the unit ball $B^n$ such that $\sig=\vp(M)$
is a free boundary solution, then the coordinate functions $\vp^1,\ldots, \vp^n$ are Steklov
eigenfunctions with eigenvalue $1$. In this section we show that certain normal deformations
decrease volume to second order. For a free boundary solution there is a Morse index for
deformations which preserve the domain $\Omega$ but not necessarily the boundary of $\sig$.
In general this is larger than the Morse index for deformations which fix the boundary. For example,
the Morse index of the critical catenoid is at least three for deformations which fix the ball, while
it is zero for deformations which fix the boundary of $\sig$. 

We consider a free boundary submanifold $\sig^{k}$ in the ball $B^n$. For a normal variation $W$ we have the index form for area given by
\[ S(W,W)=\int_\sig (|D W|^2-|A^W|^2)\ da-\int_{\p\sig} |W|^2 \ ds
\]
where $A$ denotes the second fundamental form of $\sig$.

\begin{theorem}   \label{theorem:index}
If $\sig^{k}$ is a free boundary solution in $B^n$ and $v\in\R^n$, then
we have
\[ S(v^\perp,v^\perp)=-k\int_{\sig} |v^\perp|^2\ da.
\]
If $\sig$ is not contained in a product $\sig_0 \times \mathbb{R}$ where $\sig_0$ is a free boundary solution, then the Morse index of $\sig$ is at least $n$.
In particular, if $k=2$ and $\sig$ is not a plane disk, its index is at least $n$.
\end{theorem}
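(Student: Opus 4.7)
The plan is to express $S(v^\perp,v^\perp)$ as a pure boundary integral via a Jacobi-field argument, evaluate that boundary integral using the Steklov property of the coordinate functions together with the extrinsic geometry of $\partial\Sigma\subset S^{n-1}$, and finally extract the index bound by exhibiting an $n$-dimensional negative subspace for $S$.

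Decompose $v = v^T+v^\perp$ along $\Sigma$. Since $v$ is parallel in $\R^n$, the Gauss--Weingarten formulas give $D_X v^\perp = -A(X,v^T)$; Codazzi together with $\vec{H}=0$ then shows that $v^\perp$ satisfies the Jacobi equation $\Delta^\perp v^\perp + \sum_{i,j}\langle v^\perp,A(e_i,e_j)\rangle A(e_i,e_j)=0$. Pairing with $v^\perp$ and integrating over $\Sigma$ yields
\[ \int_\Sigma\bigl(|Dv^\perp|^2-|A^{v^\perp}|^2\bigr)\,da = \int_{\partial\Sigma}\langle D_\eta v^\perp,v^\perp\rangle\,ds. \]
Along $\partial\Sigma$ the free-boundary condition says $\eta=x$, and $\bar{\nabla}_Y x=Y$ together with $x^\perp\equiv 0$ on $\partial\Sigma$ forces $A(x,Y)=0$ for every $Y$ tangent to $\partial\Sigma$. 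Decomposing $v^T=(v\cdot x)x+v^{\partial\Sigma}$ on $\partial\Sigma$, only the first piece contributes, giving $\langle D_\eta v^\perp,v^\perp\rangle = -(v\cdot x)\langle A(x,x),v\rangle$ on $\partial\Sigma$.

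The linear function $\phi=v\cdot x$ is $\Sigma$-harmonic with $\nabla^\Sigma\phi=v^T$, and $\eta=x$ gives the Steklov identity $\partial_\eta\phi=\phi$. Integration by parts produces $\int_\Sigma|v^T|^2=\int_{\partial\Sigma}(v\cdot x)^2$, whose trace over an orthonormal basis of $\R^n$ is the classical formula $L(\partial\Sigma)=k\,\mathrm{Vol}(\Sigma)$. For the boundary Laplacian one has $\Delta_{\partial\Sigma}\phi=v\cdot\vec{H}^{\partial\Sigma/\R^n}$; splitting $\vec{H}^{\partial\Sigma/\R^n}=\vec{H}^{\partial\Sigma/S^{n-1}}-(k-1)x$ via the second fundamental form of $S^{n-1}$, and identifying $\vec{H}^{\partial\Sigma/S^{n-1}}=-A(x,x)$ (which uses minimality in a frame with $\eta=x$ and the coincidence of the normal bundles of $\partial\Sigma\subset S^{n-1}$ and $\Sigma\subset\R^n$ along $\partial\Sigma$), Green's identity on the closed manifold $\partial\Sigma$ gives
\[ \int_{\partial\Sigma}|v^{\partial\Sigma}|^2-\int_{\partial\Sigma}(v\cdot x)\langle A(x,x),v\rangle = (k-1)\int_{\partial\Sigma}(v\cdot x)^2. \]
Combining this with the boundary expression for $S(v^\perp,v^\perp)$, the pointwise identity $|v^\perp|^2=|v|^2-(v\cdot x)^2-|v^{\partial\Sigma}|^2$ on $\partial\Sigma$, and the formula $\int_\Sigma|v^\perp|^2=|v|^2\,\mathrm{Vol}(\Sigma)-\int_{\partial\Sigma}(v\cdot x)^2$, a short arithmetic check gives $S(v^\perp,v^\perp)=-k\int_\Sigma|v^\perp|^2$.

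For the index statement, polarization of the identity shows $S(v_1^\perp,v_2^\perp)=-k\int_\Sigma\langle v_1^\perp,v_2^\perp\rangle$ on the image of $v\mapsto v^\perp$, so $S$ equals $-k$ times the $L^2$ pairing there and is negative definite whenever this map is injective. Its kernel consists of $v\in\R^n$ lying in $T_p\Sigma$ for every $p$; any such $v$ is parallel in the induced connection and in the kernel of $A$, forcing $\Sigma$ to split as a Riemannian cylinder over a lower-dimensional free boundary solution $\Sigma_0$ in direction $v$, which is the case excluded by hypothesis. Thus the image is $n$-dimensional and exhibits a negative subspace of dimension $n$. For $k=2$, a cylinder over a curve is minimal only if the curve is a line, and the free-boundary condition in $B^n$ forces that plane to pass through the origin, so ``not a plane disk'' matches the product hypothesis. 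The main technical obstacle is the boundary evaluation of $A(x,x)$, which couples the intrinsic geometry of $\partial\Sigma\subset S^{n-1}$ with the minimality of $\Sigma$ through $\vec{H}^{\partial\Sigma/S^{n-1}}=-A(x,x)$.
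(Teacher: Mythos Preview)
Your argument is correct and follows essentially the same route as the paper: reduce $S(v^\perp,v^\perp)$ to a boundary integral via the Jacobi equation, evaluate the term $\langle D_x v^\perp,v^\perp\rangle$ using the frame relation (your $-(v\cdot x)\langle A(x,x),v\rangle$ is exactly the paper's $\sum_{\alpha}\sum_{i<k}h_{ii}^\alpha(v\cdot x)(\nu_\alpha\cdot v)$ after invoking minimality), and then integrate by parts on $\partial\Sigma$. Your Green's identity for $\Delta_{\partial\Sigma}\phi$ and your identification $\vec{H}^{\partial\Sigma/S^{n-1}}=-A(x,x)$ are precisely the paper's divergence computation $\operatorname{div}_{\partial\Sigma}(v_0)=-(k-1)(v\cdot x)$ and $\operatorname{div}_{\partial\Sigma}(v_0)=\operatorname{div}_{\partial\Sigma}(v_1)-\sum_{i,\alpha}(v\cdot\nu_\alpha)h_{ii}^\alpha$, written in invariant language; both lead to $S(v^\perp,v^\perp)=\int_{\partial\Sigma}\bigl[-|v|^2+k(v\cdot x)^2\bigr]\,ds$.

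The only genuine difference is the last conversion from a boundary integral to $-k\int_\Sigma|v^\perp|^2$. The paper does this in one stroke by applying the first variation formula to the single vector field $V=x-k(v\cdot x)v$, whose tangential divergence is $k|v^\perp|^2$. You instead split this into the two standard identities $\int_\Sigma|v^T|^2=\int_{\partial\Sigma}(v\cdot x)^2$ and $L(\partial\Sigma)=k\,\mathrm{Vol}(\Sigma)$, which are exactly the $(v\cdot x)v$ and $x$ pieces of the paper's $V$ taken separately. Your packaging is slightly more transparent (and surfaces the nice geometric fact $\vec{H}^{\partial\Sigma/S^{n-1}}=-A(x,x)$), while the paper's is a line shorter; the content is the same. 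The index argument and the $k=2$ specialization also match the paper.
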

\begin{proof} 
Fix a point $x_0 \in \p \Sigma$ and choose local orthonormal frames $e_1,\ldots,e_{k}$ tangent to $\Sigma^k$, where $e_{k}=x$ along $\p\sig$, and $\nu_1, \ldots, \nu_{n-k}$ normal to $\Sigma^k$ such that $(D \nu_\alpha)^\perp=0$ at $x_0$. If $h_{ij}^\alpha=(D_{e_i}e_j)\cdot\nu_\alpha$ is the second fundamental form in this basis, then we have $h_{ik}^\alpha=(D_{e_i} x) \cdot \nu_\alpha=e_i \cdot \nu_\alpha=0$ for $i<k$. Therefore,
\[
       D_x \nu_\alpha
       =\sum_{i=1}^k(D_x \nu_\alpha \cdot e_i)e_i 
       =-h_{kk}^\alpha x.
\]

Without loss of generality we may assume $|v|=1$, and we compute $S(v^\perp,v^\perp)$. Observing that $v^\perp$ is a Jacobi field we have
\[ 
    S(v^\perp,v^\perp)=\int_{\p\sig}[ v^\perp \cdot D_x v^\perp- |v^\perp|^2]\ ds.
\]
We calculate the first term using the observation above and the minimality of $\sig$ 
\begin{align*}
     v^\perp \cdot D_x v^\perp
     &= v^\perp \cdot D_x \left[ \sum_{\alpha=1}^{n-k} (v \cdot \nu_\alpha) \nu_\alpha \right]  \\
     &= - v^\perp \cdot \left[ \sum_{\alpha=1}^{n-k} \left( h_{kk}^\alpha (v \cdot x) \nu_\alpha
              + h_{kk}^\alpha (v \cdot \nu_\alpha) x \right) \right] \\
    &=  \sum_{\alpha=1}^{n-k}\sum_{i=1}^{k-1}h_{ii}^\alpha (v\cdot x)(\nu_\alpha \cdot v^\perp).
\end{align*}
Now if we let $v_0=v-(v\cdot x)x$ be the component of $v$ tangent to $S^{n-1}$, then we have 
$div_{\p\sig}(v_0)=-(k-1)v\cdot x$. On the other hand 
$v_0=v_1 + \sum_{\alpha=1}^{n-k} (v \cdot \nu_\alpha) \nu_\alpha$ where $v_1$ is the component of $v$ tangent to $\p\sig$, and so 
$div_{\p\sig}(v_0)=div_{\p\sig}(v_1)-\sum_{i=1}^{k-1}\sum_{\alpha=1}^{n-k}(v \cdot \nu_\alpha) h_{ii}^\alpha$.  Putting this information into the index form we find
\[ 
      S(v^\perp,v^\perp)=\int_{\p\sig}[(v\cdot x)(div_{\p\sig}(v_1)+(k-1)(v\cdot x))-|v^\perp|^2]\ ds.
\]
We apply the divergence theorem to the first term to get
\[ 
      S(v^\perp,v^\perp)=\int_{\p\sig}[-|v_1|^2+(k-1)(v\cdot x)^2-|v^\perp|^2]\ ds.
\]
Since $|v_1|^2=1-(v\cdot x)^2-|v^\perp|^2$ we have
\[ 
     S(v^\perp,v^\perp)=\int_{\p\sig}[-1+k(v\cdot x)^2]\ ds.
\]

Now we consider the vector field $V=x-k(v\cdot x)v$ and apply the first variation formula
on $\sig$
\[ 
     \int_\sig\ div_\sig V\ da=\int_{\p\sig}\ V\cdot x\ ds.
\]
Direct computation gives $div_\sig V=k(1-|v^t|^2)=k|v^\perp|^2$ where $v^t$ denotes
the tangential part of $v$. Putting this into the formula above we get
\[ 
     S(v^\perp,v^\perp)=-\int_{\p\sig}\ V\cdot x\ ds=-k\int_{\sig}|v^\perp|^2\ da
\]
as desired.

If there is a $v\neq 0$ such that $v^\perp \equiv 0$ on $\sig$, then $v$ lies in the tangent plane to $\Sigma$ at each point, and $\sig$ is contained in the product $\sig_0^{k-1} \times \mathbb{R}$. 
$\sig_0$ is the intersection of $\sig$ with the hyperplane through the origin orthogonal to $v$, and hence is a free boundary solution.  Therefore, if $\sig$ is not contained in a product $\sig_0 \times \mathbb{R}$ where $\sig_0$ is a free boundary solution, then the Morse index of $\sig$ is at least $n$.
In particular, if $k=2$ and $\sig$ is not a plane disk, its index is at least $n$.
\end{proof}

\section{Properties of metrics with lower bounds on $\sigma_1L$} \label{section:properties}

In this section and the next we take up the existence and regularity question for metrics which maximize $\sigma_1 L$. For an oriented surface $M$ with genus $\gamma$ and $k\geq 1$ boundary components we define
the number $\sigma^*(\gamma,k)$ to be the supremum of $\sigma_1(g)L_g(\p M)$ taken
over all smooth metrics on $M$. Theorem \ref{theorem:yy} tells us that 
\[ \sigma^*(\gamma,k)\leq \min\{2 (\gamma+k) \pi, 8\pi[(\gamma+3)/2]\}.
\] 
Weinstock's theorem implies that $\sigma^*(0,1)=2\pi$, and our analysis of rotationally
symmetric metrics on the annulus \cite{FS} implies that $\sigma^*(0,2)$ is at least the
value for the critical catenoid, so $\sigma^*(0,2)>2\pi$. 

There are two general ways in which metrics can degenerate in our problem. The first
is that the conformal structure might degenerate and the second is that the boundary
arclength measure might become singular even though the conformal class is controlled.
We will need the following general result which limits the way in which the boundary measures
can degenerate provided the conformal structures converge and $\sigma_1$ is not too small. If we choose a metric $g$ on $M$ and a measure $\mu$ with a smooth density relative to the arclength
measure of $g$ on $\p M$, then $\sigma_1(g,\mu)$ is defined as the first
Steklov eigenvalue of any metric conformal to $g$ with arclength measure $\mu$ on $\p M$.
To be clear we have
\[ 
  \sigma_1(g,\mu)=\inf\left\{\frac{\int_M|\nabla\hat{u}|^2\ da_g}{\int_{\p M}u^2\ d\mu}:\ \int_{\p M}u\ d\mu=0\right\}.
\]
By the weak* compactness of measures, if we take a sequence of probability measures $\mu_i$
on $\p M$, then there is a subsequence which converges to a limit probability measure $\mu$.
The following result is analogous to Theorem 1.1.3 in \cite{G} and Lemma 3.1 in \cite{K} for closed surfaces.

\begin{proposition}\label{pt.mass} Assume that we have a sequence of metrics $g_i$ converging
in $C^2$ norm to a metric $g$ on $M$, and a sequence of smooth probability measures $\mu_i$ converging in the weak* topology to a measure $\mu$. Assume that there is a $\lambda>2\pi$
so that $\sigma_1(g_i,\mu_i)\geq \lambda$ for each $i$. It then follows that $\mu$ has no 
point masses.  
\end{proposition}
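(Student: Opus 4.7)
The plan is to argue by contradiction: suppose $\mu(\{p\})=m>0$ for some $p\in\partial M$. The $C^2$ convergence $g_i\to g$ lets me fix a conformal chart identifying a neighborhood $V$ of $p$ with the half-disk $\mathbb{D}_+=\{z\in\mathbb{C}:|z|<1,\ \mathrm{Im}\,z>0\}$, with $p$ corresponding to $0$ and $\partial M\cap V$ to the diameter $(-1,1)$. My proof has two steps: first use a logarithmic capacitor to force $m\in\{0,1\}$, then rule out $m=1$ by a Hersch--Weinstock argument on a rescaled limit.

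For the capacity step, let $\phi_\epsilon:M\to[0,1]$ equal $0$ on $\{|z|\le\epsilon\}$, equal $\log(|z|/\epsilon)/\log(1/\epsilon)$ on $\{\epsilon<|z|<1\}$, and extend by $1$ to $M\setminus V$. A direct computation gives $\int_M|\nabla\phi_\epsilon|^2\,da_{g_i}=\pi/\log(1/\epsilon)+o_i(1)$. Plugging the mean-zero test function $\phi_\epsilon-\int\phi_\epsilon\,d\mu_i$ into the Rayleigh inequality $\sigma_1(g_i,\mu_i)\ge\lambda$ yields
\[
\int\phi_\epsilon^2\,d\mu_i-\bigl(\int\phi_\epsilon\,d\mu_i\bigr)^2\le \lambda^{-1}\int_M|\nabla\phi_\epsilon|^2\,da_{g_i}.
\]
Since $\phi_\epsilon$ is continuous on $\bar M$, weak* convergence of $\mu_i$ lets $i\to\infty$; then $\epsilon\to 0$ and dominated convergence (using $\phi_\epsilon\to\mathbf{1}_{\bar M\setminus\{p\}}$ pointwise) turns the left side into $m(1-m)$ and the right side into $0$. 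Thus $m(1-m)\le 0$, forcing $m\in\{0,1\}$.

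To exclude $m=1$ (i.e.\ $\mu=\delta_p$), I rescale the conformal chart by $z\mapsto z/r_i$, choosing $r_i$ so that the pushforward probability measure $\tilde\mu_i$ has variance $1$. Such $r_i\to 0$ exists because $\mathrm{Var}(\mu_i)\to 0$ when $\mu_i\to\delta_p$. Chebyshev gives tightness, so along a subsequence $\tilde\mu_i\to\tilde\mu$ weak* on $\mathbb{R}$, where $\tilde\mu$ is a probability measure of variance $1$ on $\partial\mathbb{H}$. By two-dimensional conformal invariance of the Dirichlet integral, the Rayleigh quotients on $(M,g_i,\mu_i)$ and on the rescaled data coincide. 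On $(\mathbb{H},\tilde\mu)$ I map conformally to $\mathbb{D}$ and apply Hersch's lemma to pick a M\"obius automorphism balancing the pushforward of $\tilde\mu$; the two coordinate functions give harmonic test functions with Dirichlet energy $\pi$ each and with $(u^{(1)})^2+(u^{(2)})^2=1$ on $\partial\mathbb{D}$, so summing the two Rayleigh quotients gives $\sigma_1(\mathbb{H},\tilde\mu)\le 2\pi$. Pulling these test functions back to the rescaled $\tilde M_i$ and truncating smoothly outside a fixed large ball (which carries all but $o(1)$ of the rescaled mass by tightness) produces test functions on $(M,g_i,\mu_i)$ whose summed Rayleigh quotient is $2\pi+o(1)$, contradicting $\sigma_1(g_i,\mu_i)\ge\lambda>2\pi$.

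The main obstacle is the $m=1$ step: the capacity argument alone gives no information when all the mass concentrates at $p$, and one must rescale to produce a nondegenerate limiting probability measure and then apply Weinstock's bound, carefully controlling the truncation errors outside the rescaled chart. It is here that the hypothesis $\lambda>2\pi$, rather than merely $\lambda>0$, enters in an essential way.
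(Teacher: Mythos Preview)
Your Step 1 (the logarithmic capacitor argument to force $m\in\{0,1\}$) is correct and is a clean alternative to the paper's Case 1; the paper instead uses a bump function supported near $p$ with small Dirichlet integral, but both arguments amount to showing that a nontrivial splitting of the mass drives the Rayleigh quotient to zero.

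Your Step 2, however, has a genuine gap. You assert that the weak* limit $\tilde\mu$ of the rescaled measures is a probability measure \emph{of variance $1$}, and then invoke Hersch's lemma to balance it. But second moments are not preserved (not even lower-semicontinuous) under weak* convergence, so the variance-$1$ normalization of each $\tilde\mu_i$ does not prevent $\tilde\mu$ from being a Dirac mass. Concretely, smooth approximations of $(1-\tfrac1i)\delta_0+\tfrac1i\,\delta_{\sqrt{i}}$ have variance approximately $1$ yet converge weak* to $\delta_0$. If $\tilde\mu$ is a single Dirac mass on $\partial\mathbb D$, no M\"obius automorphism of $\mathbb D$ can balance it (since $\psi(\partial\mathbb D)\subset\partial\mathbb D$), and the Weinstock comparison collapses. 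A secondary issue: Chebyshev gives tightness around the \emph{mean} of $\tilde\mu_i$, but you have not shown these means stay bounded; your dilation $z\mapsto z/r_i$ does not center, so the means $a_i=(\text{mean of }\mu_i)/r_i$ can drift to infinity.

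The paper avoids both difficulties by never passing to a limit measure. Instead it balances a conformal map $F_i:U\to D$ with respect to $\nu_i$ (essentially $\mu_i$ restricted to the boundary arc) \emph{for each $i$ separately}; since $\nu_i$ is smooth, balancing always succeeds. The key observation is that because $\nu_i\to\delta_p$, the balanced maps $F_i$ converge to a constant on compact subsets of $\bar U\setminus\{p\}$, which is precisely what allows one to extend $F_i$ to all of $M$ with Dirichlet integral tending to $2\pi$ and to control the mean and $L^2$ norm against $\mu_i$. If you modify your Step 2 to balance each $\tilde\mu_i$ (or simply each $\mu_i$) directly and use this concentration of the balanced map, the argument goes through; the intermediate rescaling by variance is then unnecessary.
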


\begin{proof} Assume on the contrary there is a point $p\in\p M$ with $a=\mu(\{p\})>0$. We consider two cases: First assume that $a<1$. In this case we will show that $\sigma_1(g_i,\mu_i)$
converges to $0$. We can find a sufficiently small open interval $I$ in $\p M$ containing $p$
so that $b=\mu(\p M\setminus I)>0$. We then choose a smooth function $u\leq 1$ supported in $I$
with $u(p)=1$ and so that the Dirichlet integral of the harmonic extension $\hat{u}$ is
arbitrarily small (relative to the limit metric $g$). We then consider the function
$\varphi=u-\bar{u}$ where $\bar{u}$ is the average of $u$ with respect to $\mu$. The
harmonic extension $\hat{\varphi}$ has the same Dirichlet integral as that of $\hat{u}$,
and we have $\bar{u}\leq \mu(I)=1-b$, and thus $\varphi(p)\geq b$, and so we have
\[ \int_{\p M}\varphi^2\ d\mu\geq ab^2>0.
\]
It follows that the Rayleigh quotient of $\varphi$ with respect to $(g_i,\mu_i)$
becomes arbitrarily small for large $i$ contradicting the lower bound on $\sigma_1(g_i,\mu_i)$.

The second and more difficult case is when $a=1$, and the support of $\mu$ is $p$. In this case
we show that $\limsup_{i\to\infty} \sigma_1(g_i,\mu_i)\leq 2\pi$ in contradiction to our assumption.
To see this we consider a disk $U$ in $M$ whose boundary intersects $\p M$ in an interval $I$
about $p$. We define a measure $\nu_i$ on $\p U$ to be equal to $\mu_i$ on $I$ and to be zero on
$\p U\setminus I$. We then take a conformal map $F_i$ from $U$ to the unit disk $D$ for which
$\int_{\p U}F_i\ d\nu_i=0$. Since the measures $\nu_i$ are converging to a point mass at $p$,
it follows that the maps $F_i$ outside a neighborhood of $p$ are arbitrarily close to a point of 
$\p D$ for $i$ large. By composing $F_i$ with a rotation we may assume that this point is $(1,0)$,
so we have arranged that the $F_i$ converge to the constant map $(1,0)$ on compact subsets of
$\bar{U}\setminus\{p\}$; in particular, this is true near $\p U\setminus I$. We may then extend $F_i$
to a Lipschitz map $\hat{F}_i:M\to \mathbb R^2$ which maps the complement of a neighborhood
of $\bar{U}$ to the point $(1,0)$ and such that the Dirichlet integral of $\hat{F}_i$ converges to
$2\pi$, the Dirichlet integral of $F_i$. The average of $\hat{F}_i$ on $\p M$ with respect to
$\mu_i$ converges to $0$, so we may use $\hat{F}_i$ minus its average as comparison 
functions as in the Weinstock proof to conclude that 
$\limsup_{i\to\infty}\sigma_1(g_i,\mu_i)\leq 2\pi$. This contradiction completes the proof.
\end{proof}

We will prove a result on the strict monotonicity of $\sigma^*$ when the number of boundary components is increased. We will first need the following lemma concerning approximate
eigenfunctions. We let $(M_1,g)$ be any compact Riemannian manifold with boundary
and let $T$ denote the Dirichlet-Neuman map acting on functions on $\partial M_1$.
We will use $\|\cdot\|$ to denote the $L^2(\partial M_1)$-norm.

\begin{lemma}\label{approx_efcn}
Let $\alpha,\delta\in (0,1)$ and let $\sigma> 0$. Assume that $v\in C^1(\partial M_1)$ with  $\|v\|=1$ and with $\|Tv-\sigma v\|<\delta$. There exist Steklov eigenvalues
$\sigma^{(1)},\ldots, \sigma^{(p)}$ with $|\sigma^{(j)}-\sigma|<\delta^\alpha$ for an integer
$p\geq 1$. Let $V$ be the direct sum of the corresponding eigenspaces and let $w$ be the
orthogonal projection of $v$ into $V$. We then have $\|v-w\|<\delta^{1-\alpha}$.
\end{lemma}

\begin{proof} Let $u_i$ ($i=0,1,2,\ldots$) be a complete orthonormal basis of Steklov eigenfunctions with eigenvalues $\sigma_i$. We expand $v=\sum_{i=0}^\infty a_iu_i$.
We then have
\[ \sum_{i=0}^\infty a_i(\sigma_i-\sigma)u_i=Tv-\sigma v,
\]
so it follows that
\[ \left(\sum_{i=0}^\infty a_i^2(\sigma_i-\sigma)^2\right)^{1/2}=\|Tv-\sigma v\|<\delta.
\]
Thus we have
\[ \delta^\alpha\left(\sum_{\{i:|\sigma_i-\sigma|>\delta^\alpha\}}a_i^2\right)^{1/2}<\delta,
\]
and this implies that $\|v-w\|<\delta^{1-\alpha}$ as claimed. Note that since $\delta<1$
we must have at least one $i$ with $|\sigma_i-\sigma|<\delta^\alpha$.
\end{proof}

We are now in a position to prove the following monotonicity result.

\begin{proposition}\label{sigma*_inc} For any $\gamma$ and $k$ we have 
$\sigma^*(\gamma,k+1)\geq \sigma^*(\gamma,k)$. If $\sigma^*(\gamma,k)$ is achieved by a smooth metric for some $\gamma$ and $k$ it then follows that $\sigma^*(\gamma,k+1)>\sigma^*(\gamma,k)$. 
\end{proposition}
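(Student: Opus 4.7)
The plan for both parts is to produce metrics on a surface of genus $\gamma$ with $k+1$ boundary components by removing a small geodesic disk around an interior point of a surface $M_0$ of genus $\gamma$ with $k$ boundary components, and then to control how $\sigma_1 L$ changes.

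For the non-strict inequality I would take an arbitrary smooth metric $g$ on $M_0$, pick an interior point $p$, and set $M_\epsilon = M_0 \setminus D_\epsilon(p)$ with the restriction $g_\epsilon = g|_{M_\epsilon}$. The main analytic claim is $\sigma_1(g_\epsilon) L_{g_\epsilon} \to \sigma_1(g) L_g$ as $\epsilon \to 0$. The upper bound $\sigma_1(g_\epsilon) \le \sigma_1(g) + O(\epsilon)$ comes from inserting a first eigenfunction $u_0$ of $(M_0, g)$, shifted to have $g_\epsilon$-boundary mean zero on $\partial M_\epsilon$, as a test function: the Rayleigh-quotient numerator decreases only by $\int_{D_\epsilon} |\nabla u_0|^2 = O(\epsilon^2)$, and the denominator changes by an $O(\epsilon)$ contribution from the new component. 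The matching lower bound is a standard spectral-convergence statement: take first eigenfunctions $v_\epsilon$ of $(M_\epsilon, g_\epsilon)$ normalized by $\int_{\partial M_\epsilon} v_\epsilon^2 \, ds = 1$, extend $v_\epsilon$ across $\partial D_\epsilon$ by harmonic extension on the small disk (whose Dirichlet energy is $o(1)$ by interior elliptic regularity and the smallness of the boundary data), and pass to a weak limit $v \in H^1(M_0)$; the trace of $v$ on $\partial M_0$ has $L^2$-norm $1$, mean zero, and Dirichlet energy at most $\liminf \sigma_1(g_\epsilon)$, so $\sigma_1(g) \le \liminf \sigma_1(g_\epsilon)$. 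Taking $g$ to be a near-maximizer for $\sigma^*(\gamma, k)$ yields $\sigma^*(\gamma, k+1) \ge \sigma^*(\gamma, k)$.

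For the strict inequality, assume $\sigma^*(\gamma, k) = \sigma_1(g_0) L_{g_0}$ is achieved by a smooth metric $g_0$. Let $u_0$ be a first eigenfunction normalized by $\int_{\partial M_0} u_0^2 \, ds = L_{g_0}$. Since $u_0$ has boundary mean zero, its harmonic extension vanishes at some interior point $p$. Remove $D_\epsilon(p)$ to form $M_\epsilon$ and consider conformal perturbations $h_{\epsilon, t} = e^{2\phi_{\epsilon, t}} g_0|_{M_\epsilon}$, where $\phi_{\epsilon, t}$ is smooth, vanishes on $\partial M_0$, and makes the new boundary $\partial D_\epsilon$ have $h_{\epsilon, t}$-length $t$. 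Equivalently one studies the Steklov problem on $(M_\epsilon, g_0)$ with boundary measure $\mu_t = ds|_{\partial M_0} + t\, d\nu$, where $d\nu$ is a fixed probability measure on $\partial D_\epsilon$. At $t = 0$ one has the mixed Steklov-on-$\partial M_0$/Neumann-on-$\partial D_\epsilon$ problem on $M_\epsilon$; its first eigenvalue $\sigma_{1,0}(\epsilon)$ and normalized eigenfunction $u_{\epsilon,0}$ converge to $\sigma_1(g_0)$ and $u_0/\sqrt{L_{g_0}}$ respectively as $\epsilon \to 0$, by the same spectral-convergence argument as in the non-strict part. The Hellmann-Feynman formula then yields
\[
    \frac{d}{dt}\Bigl(\sigma_1(h_{\epsilon, t}) L_{h_{\epsilon, t}}\Bigr)\Big|_{t = 0^+}
       = \sigma_{1,0}(\epsilon)\,\Bigl(1 - L_{g_0}\!\int_{\partial D_\epsilon} u_{\epsilon, 0}^2 \, d\nu\Bigr),
\]
and the right-hand side tends to $\sigma_1(g_0) > 0$ because $u_{\epsilon, 0}^2|_{\partial D_\epsilon} \to u_0(p)^2/L_{g_0} = 0$. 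Hence for $\epsilon$ and then $t > 0$ small enough, $\sigma_1(h_{\epsilon, t}) L_{h_{\epsilon, t}} > \sigma_{1,0}(\epsilon) L_{g_0}$; combined with $\sigma_{1,0}(\epsilon) L_{g_0} \to \sigma_1(g_0) L_{g_0} = \sigma^*(\gamma, k)$ this gives $\sigma^*(\gamma, k+1) > \sigma^*(\gamma, k)$.

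The main technical obstacle is the spectral convergence of the relevant Steklov-type problems on $(M_\epsilon, g_0)$ to those on $(M_0, g_0)$ as $\epsilon \to 0$, and in particular the local uniform convergence of eigenfunctions on $M_0 \setminus \{p\}$ needed in the strict part. A secondary subtlety is the multiplicity of $\sigma_1(g_0)$: when the first eigenspace has dimension $\ge 2$, the perturbed eigenvalue is the smallest of several, so one really needs the maximum of $u(p)^2$ over unit-normalized first eigenfunctions to be strictly less than $1$. This can be arranged at an interior $p$ using the structure of the associated free boundary minimal immersion $\varphi:M_0 \to B^n$ provided by the existence theorem (for instance, by taking $p$ where $|\varphi(p)|$ is sufficiently small, since $|\varphi|^2 = 1$ on $\partial M_0$ forces $|\varphi|^2 < 1$ in the interior).
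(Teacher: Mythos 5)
Your first paragraph (the weak inequality) is essentially the argument the paper has in mind when it says the inequality ``can be seen in a direct manner''; it is fine, modulo the standard capacity argument needed to rule out concentration of the boundary $L^2$ mass of $v_\epsilon$ on the shrinking circle $\partial D_\epsilon$.

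The strict inequality, however, has a genuine gap in its final step. Your derivative computation is performed at the base point $t=0$, whose value is $\sigma_{1,0}(\epsilon)L_{g_0}$; but restricting competitors from $M_0$ to $M_\epsilon$ decreases the Rayleigh numerator without changing the denominator, so $\sigma_{1,0}(\epsilon)\le\sigma_1(g_0)$ and the base value is already $\le\sigma^*(\gamma,k)$. The concluding step ``$\sigma_1(h_{\epsilon,t})L_{h_{\epsilon,t}}>\sigma_{1,0}(\epsilon)L_{g_0}$ for small $t$, and $\sigma_{1,0}(\epsilon)L_{g_0}\to\sigma^*(\gamma,k)$, hence $\sigma^*(\gamma,k+1)>\sigma^*(\gamma,k)$'' is a non sequitur: from $a_{\epsilon,t}>b_\epsilon$ and $b_\epsilon\to c$ one cannot conclude $\sup a>c$. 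To close the argument you must quantify two competing effects: (i) the deficit $\sigma^*(\gamma,k)-\sigma_{1,0}(\epsilon)L_{g_0}\ge 0$, and (ii) how large $t$ may be taken while the first-order gain $\approx\sigma_1 t$ survives --- and this second point is delicate because your perturbation concentrates length $t$ on a circle of radius $\epsilon$, a singular perturbation for which the admissible range of $t$ shrinks as $\epsilon\to 0$ (compare Proposition \ref{pt.mass}: a limiting boundary measure with a point mass is incompatible with $\sigma_1 L>2\pi$). The paper's proof is organized precisely around this balance: it adds the length through the conformal factor $(\delta r\log(1/r))^{-2}$, so the new boundary component has length $2\pi/(\delta\log(1/\epsilon))$, and it proves the quantitative loss estimate $\sigma_1(g_\epsilon)\ge\sigma_1(g)-c\delta/\log(1/\epsilon)-c\delta^2$; the choice $\delta=\log(1/\epsilon)^{-1/2}$ then makes the length gain dominate the loss. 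A secondary problem is the multiplicity issue you flag at the end: your proposed fix (choose $p$ with $|\varphi(p)|$ small) only controls first eigenfunctions lying in the span of the coordinates of $\varphi$, which need not exhaust the first eigenspace. The paper avoids any choice of $p$ and any pointwise information about eigenfunctions: it bounds $\int_{\partial D_\epsilon}u_\epsilon^2\,ds\le c\delta^2\epsilon$ for the actual eigenfunction of the perturbed problem, via a Green's function estimate for its mean over $\partial D_\epsilon$ together with a Laurent decomposition controlling its oscillation there.
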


\begin{proof} The weak inequality $\sigma^*(\gamma,k+1)\geq \sigma^*(\gamma,k)$ can be
seen in a direct manner by showing that the surface gotten by removing a small disk from
$M$ has $\sigma_1L$ converging to that of $M$ as the radius of the disk tends to zero. It
is a consequence of the argument given below.

For the strict inequality, let $M$ be a surface of genus $\gamma$ with $k$ boundary components and let $g$ be a metric with $L_g(\p M)=1$ and $\sigma_1(g)=\sigma^*(\gamma,k)$.
Let $p$ be a point of $M$, and without loss of generality assume that $g$ is
flat in a neighborhood of $p$ with euclidean coordinates $x,y$ centered at $p$. 
Let $D_\epsilon$
be the disk of radius $\epsilon$ centered at $p$, and let $M_\epsilon$ be the surface 
$M\setminus D_\epsilon$ which has genus $\gamma$ and $k+1$ boundary components. 
For a positive constant $\lambda$ to be chosen, we let
$g_\lambda$ denote a metric which is equal to $g$ outside a fixed neighborhood of $p$ and which is equal to $\lambda^2(r\log(1/r))^{-2}$ times the euclidean metric near $p$.  
Let $M_{\epsilon,\lambda}$ denote $M\setminus D_\epsilon$ with the 
metric $g_\lambda$, and let $\sigma_1(\epsilon,\lambda)$ be the corresponding first Steklov eigenvalue. Our goal is to show that for $\epsilon$ small there is a choice of $\lambda$
bounded from above and below by positive constants (independent of $\epsilon$) such that
\begin{equation} \label{hole} \sigma_1(\epsilon,\lambda)\geq \sigma_1(g)-o(1)(\log(1/\epsilon))^{-1}
\end{equation}
where $o(1)$ refers to a term which goes to $0$ with $\epsilon$. Since $\sigma_1(g)=\sigma^*(\gamma,k)$ it then follows that 
\[ L_{g_\lambda}(\partial M_\epsilon)\sigma_1(\epsilon,\lambda)=\left(1+\frac{2\pi\lambda}{\log(1/\epsilon)}\right)\sigma_1(\epsilon,\lambda)
\geq \sigma^*(\gamma,k)+\frac{\pi\lambda}{\log(1/\epsilon)}>\sigma^*(\gamma,k)
\]
for $\epsilon$ small. It will then follow that $\sigma^*(\gamma,k+1)>\sigma^*(\gamma,k)$. (Note that the metric $g_\lambda$ is a metric of constant negative curvature $-\lambda^{-2}$ with a cusp.)

We let $\sigma_i(\epsilon,\lambda)$ denote the $i$-th eigenvalue of $M_{\epsilon,\lambda}$. To prove (\ref{hole}) we study the dependence of the low eigenvalues $\sigma_i(\epsilon,\lambda)$ on $\lambda$ for very small $\epsilon$. We show that there are positive constants 
$\lambda_1<\lambda_2$ such that for $\lambda\geq\lambda_2$ we have 
\[ \sigma_1(\epsilon, \lambda)<\frac{1}{2}\sigma_1(g)
\]
for $\epsilon$ small while for $\lambda\leq\lambda_1$ we have
\[ \lim_{\epsilon\to 0}\sigma_1(\epsilon,\lambda)=\sigma_1(g).
\]
We will show that there is a value $\lambda_\epsilon\in (\lambda_1,\lambda_2)$ such 
that the lowest eigenvalue $\sigma_1(\epsilon, \lambda_\epsilon)$ has multiplicity at least two 
and converges as $\epsilon$ goes to $0$ to $\sigma_1(g)$.

Assume that the multiplicity of $\sigma_1(g)$ is $k$. We let $\delta=c\sqrt{\lambda/\log(1/\epsilon)}$ for a suitable constant $c$ and we use Lemma \ref{approx_efcn}
with $\alpha=1/2$ to show that the dimension of the space $V$ which is the direct sum of eigenspaces for eigenvalues $\sigma(\epsilon,\lambda)$ of $M_{\epsilon,\lambda}$ with 
\[ |\sigma(\epsilon,\lambda)-\sigma_1(g)|<\sqrt{\delta}
\]
is at least $k$. (In fact we will show below that it is either $k$ or $k+1$ depending
on the value of $\lambda$ for $\epsilon$ small.) To show that the dimension of $V$
is at least $k$ we note that by elliptic theory there is a fixed constant $c$ so that any
normalized first eigenfunction $u$ is $g$ is bounded by $c$ in $D_\epsilon$. For
such a function $u$ we let $v=u/\|u\|$ where the normalization is done in 
$L^2(\partial M_{\epsilon,\lambda})$. From the discussion above we see that for a constant 
$c$ we have
\[ \|Tv-\sigma_1(g)v\|<\delta.
\]
It then follows from Lemma \ref{approx_efcn} that the dimension of $V$ is at least
$k$ and there are independent functions $w_1,\ldots, w_k$ in $V$ such that
$\|w_i-v_i\|<\sqrt{\delta}$ where $v_i=u_i/\|u_i\|$ for an orthonormal basis $u_1,\ldots, u_k$
of eigenfunctions on $M$.

We now consider a Green's function $G$ on $M$ with a pole at $p$, with $G=0$ on 
$\partial M$,  and such that $G(x,y)-\log(1/r)$ is regular near $p$ (the origin of the
$(x,y)$ coordinates). We then have the outer normal derivative on $\partial D_\epsilon$ with respect to $g_\lambda$ given by
\[ \nabla_\nu G=-\frac{\epsilon\log(1/\epsilon)}{\lambda}\partial_r G=\lambda^{-1}G+O(1)
\]
as $\epsilon$ goes to $0$ with $\lambda$ bounded. Letting $\hat{v}=G/\|G\|$ it
follows that
\[ \|T\hat{v}-\lambda^{-1}\hat{v}\|<\hat{\delta}\equiv c(\log(1/\epsilon))^{-1/2}
\]
since $\|G\|$ is a constant times $\sqrt{\log(1/\epsilon)}$. It then follows from
Lemma \ref{approx_efcn} with $\alpha=1/4$ that there are eigenvalues of 
$M_{\epsilon,\lambda}$
within $\hat{\delta}^{1/4}$ of $\lambda^{-1}$ for $\epsilon$ small. In particular we
can fix $\lambda_2$ so that $\sigma_1(\epsilon,\lambda_2)<\sigma_1(g)/2$. Again from
Lemma \ref{approx_efcn} ($\alpha=1/4$) we can also find $\hat{w}$ in $\hat{V}$ (the direct
sum of the eigenspaces for eigenvalues within $\hat{\delta}^{1/4}$ of $\lambda^{-1}$)
such that $\|\hat{w}-\hat{v}\|<\hat{\delta}^{3/4}$.

We choose $\lambda_1$ so that $\sigma\equiv\lambda_1^{-1}\in (\sigma_1(g),\sigma_2(g))$, and we show that for $\epsilon$ small there are exactly $k+1$ eigenvalues  counted
with multiplicity in $(0,\sigma)$. We consider a sequence $\epsilon_j\to 0$ and
harmonic functions $u_j$ on $M_{\epsilon_j}$ with boundary $L^2$ norm $\|u_j\|=1$.
We assume by extracting a subsequence that the $u_j$ converge uniformly
away from the point $p$ to a harmonic function $u$ on $M$. Under the assumptions:
(i) each $u_j$ is orthogonal in the boundary $L^2$ norm to the function $\hat{w}_j$
constructed in the previous paragraph, and (ii) $\|Tu_j\|\leq c$ for a constant $c$, we 
have $\|u\|=1$ in the $L^2$ norm of $\partial M$. To see this we do a Laurent-type decomposition for $u_j$
\[ u_j(r,\theta)=v_j(r,\theta)+w_j(r,\theta)+a_j\log(r)+b_j
\]
where $v_j$ is smooth and harmonic for $r<r_0$ ($r_0$ fixed) with $v_j(0)=0$, and $w_j$ is smooth and harmonic for $r\geq \epsilon$ with $|w_j(x)|\leq c|x|^{-1}$ for $|x|$ large. We
interpret the orthogonality condition ((i) above) by setting $\alpha_j=a_j\log(\epsilon_j)+b_j$
and observing that 
\[ \int_{\partial D_{\epsilon_j}}u_j\hat{v}_j\frac{\lambda_j}{\epsilon_j\log(1/\epsilon_j)}\ ds
=\langle u_j,(\hat{v}_j-\hat{w}_j)\rangle.
\]
This implies the bound
\[ |\alpha_j|\leq c(\log(1/\epsilon_j))^{1/8}
\]
since we have
\[ \left|\int_{\partial D_{\epsilon_j}}u_j\hat{v}_j\frac{\lambda_j}{\epsilon_j\log(1/\epsilon_j)}\ ds\right|
=\frac{2\pi|\alpha_j|\lambda_j}{\|G_j\|}\geq c|\alpha_j|(\log(1/\epsilon_j))^{-1/2}
\]
and $\|\hat{v}_j-\hat{w}_j\|<\hat{\delta}^{3/4}\leq c(\log(1/\epsilon_j))^{-3/8}$.
Now we have 
\[ \int_{\partial D_j}u_j^2\ ds_j=\int_{\partial D_j}(v_j+w_j)^2\ ds_j+(\alpha_j^2)2\pi 
\frac{\lambda_j}{\log(1/\epsilon_j)},
\]
and the second term goes to zero. To show that the first term goes to zero we
use the boundedness of $Tu_j$ from condition (ii). Recall that the outer unit normal
on $\partial D_\epsilon$ with respect to $g_\lambda$ is given by 
$-\frac{\epsilon\log(1/\epsilon)}{\lambda}\partial_r$. Therefore the bound on $Tu_j$
implies
\[ \int_{\partial D_j}(\partial_r u_j)^2\ ds\leq c(\epsilon_j\log(1/\epsilon_j))^{-1}
\]
where we have used the fact that $\lambda_j$ is bounded. Since $v_j$ is uniformly
bounded, harmonic, and vanishes at the origin, it follows that $(\partial_r v_j)^2\leq c$, and therefore we
have 
\[ \int_{\partial D_j}(\partial_r w_j)^2\ ds\leq c(\epsilon_j\log(1/\epsilon_j))^{-1}.
\]
It is a standard fact for harmonic functions in a planar annulus that the quantity
$\rho\int_{\p D_\rho}((\partial_T w)^2-(\partial_r w)^2)\ ds$ is independent of $\rho$ where $T$ denotes the unit
tangent vector with respect to the euclidean metric to $\p D_r$. In our case this integral must be zero for all $\rho\geq\epsilon$
since the integrand decays at infinity like $\rho^{-4}$. Therefore we have from above
\[ \int_{\p D_\epsilon}(\partial_Tw_j)^2\ ds=\int_{\p D_\epsilon}(\partial_rw_j)^2\ ds\leq 
c(\epsilon_j\log(1/\epsilon_j))^{-1}.
\] 
Since the integral of $w_j$ around $\p D_{\epsilon_j}$ is $0$, we have by the standard
Poincar\'e inequality on a circle
\[ \int_{\p D_{\epsilon_j}}w_j^2\ ds\leq \epsilon_j^2\int_{\p D_{\epsilon_j}}(\partial_Tw_j)^2\ ds.
\]
Combining this with the inequality above we have
\[
     \int_{\p D_{\epsilon_j}} w_j^2 \ ds 
     \leq c \epsilon_j(\log 1/\epsilon_j)^{-1}.
\]
It follows from the discussion above that 
\[ \lim_{j\to\infty}\int_{\partial D_j}u_j^2\ ds_j=0,
\]
and this completes the proof that the limit has norm one since we have uniform convergence
of $u_j$ to $u$ on $\partial M$.

We can now complete the proof that there are exactly $k+1$ eigenvalues in $(0,\sigma)$.
We take an orthonormal basis $u_1,\ldots, u_k$ for the first eigenspace of $\sigma_1(g)$,
and we consider the corresponding projections $w_1,\ldots, w_k$ which are in the finite span
of eigenspaces of $M_{\epsilon,\lambda}$. We take the direct sum of this $k$ dimensional
space with the one dimensional space spanned by $\hat{w}$ which also lies in a finite
direct sum of eigenspaces. We call this $k+1$ dimensional space $W$. If there were an eigenfunction $v$ with eigenvalue in $(0,\sigma)$ which does not lie in $W$, then we can take $u$ to be the component of $v$ orthogonal to $W$, and if we had a sequence $\epsilon_j\to 0$ and functions $u_j$ of this type, we could apply the previous paragraph to assert
that after normalizing and taking a subsequence the $u_j$ converge without loss of norm to a function $u$ which would be a sum of eigenfunctions on $M$ with eigenvalues in $(0,\sigma)$.
But the function is orthogonal to the $u_j$ (and to the constant functions), and this is a contradiction since $\sigma<\sigma_2(g)$. Thus we see that there are exactly $k+1$ eigenvalues in $(0,\sigma)$ for $\epsilon$ sufficiently small.

Finally we argue that for small positive $\epsilon$ there is a $\lambda_\epsilon$
(approximately equal to $\sigma_1(g)^{-1}$) such that $\sigma_1(\epsilon,\lambda_\epsilon)$
has multiplicity at least two. In fact we have shown that if $\lambda$ is slightly larger than
$\sigma_1(g)^{-1}$, then $\hat{w}$ is a first eigenfunction while if $\lambda$
is slightly smaller than $\sigma_1(g)^{-1}$ then $\hat{w}$ is not a first eigenfunction.
We can then take $\lambda_\epsilon$ to be the infimum of $\lambda$ for which $\hat{w}$
is a first eigenfunction. It is clear that the multiplicity cannot be one for this value of 
$\lambda$ since otherwise $\hat{w}$ would span the first eigenspace and this
would continue to hold for $\lambda$ slightly smaller.

To complete the proof, we let $u_\epsilon$ be a first eigenfunction for the problem
with $\lambda=\lambda_\epsilon$. Since the multiplicity is at least two, we can
choose $u_\epsilon$ so that $\int_{\partial D_\epsilon}u_\epsilon\ ds_\epsilon=0$.
We use the Laurent-type decomposition to write $u=v+w$ where $v$ is harmonic
in a fixed neighborhood of the origin with $v(0)=0$ and $w$ is harmonic on the plane
region with
$r\geq \epsilon$ such that $w=O(r^{-1})$ at infinity. The boundary condition then implies
\[ v_r+\sigma_1(g_\epsilon)\lambda(\epsilon\log(1/\epsilon))^{-1}v+w_r+\sigma_1(g_\epsilon)
\lambda(\epsilon\log(1/\epsilon))^{-1}w=0
\]
for $r=\epsilon$. Since $v(0)=0$ and the derivatives of $v$ are bounded near $0$, this implies 
\[ |w_r|\leq c\lambda(\epsilon\log(1/\epsilon))^{-1} |w|+c
\] 
on  $\p D_\epsilon$ where we use the fact that $\log(1/\epsilon)$ is large (since $\epsilon$ is
small). As above we use the fact for harmonic functions in a planar annulus the quantity
$\rho\int_{\p D_\rho}(|w_T|^2-|w_r|^2)\ ds$ is independent of $\rho$ where $T$ denotes the unit
tangent vector with respect to the euclidean metric to $\p D_r$. In our case this integral must be zero for all $\rho\geq\epsilon$
since the integrand decays like $\rho^{-4}$. Therefore we have from above
\[ \int_{\p D_\epsilon}|w_T|^2\ ds=\int_{\p D_\epsilon}|w_r|^2\ ds\leq 
c\lambda^2(\epsilon\log(1/\epsilon))^{-2}\int_{\p D_\epsilon}w^2\ ds+c\epsilon.
\] 
Since the integral of $w$ around $\p D_\epsilon$ is $0$, we have by the standard
Poincar\'e inequality on a circle
\[ \int_{\p D_\epsilon}w^2\ ds\leq \epsilon^2\int_{\p D_\epsilon}|w_T|^2\ ds.
\]
Combining this with the inequality above we have
\[
     \int_{\p D_\epsilon} |w_T|^2 \ ds 
     \leq c \lambda^2(\log 1/\epsilon)^{-2} \int_{\p D_\epsilon} |w_T|^2 \ ds + c\epsilon.
\]
For $\epsilon$ sufficintly small, we may absorb the squared $L^2$ norm of
$w_T$ back to obtain $\int_{\p D_\epsilon} |w_T|^2\ ds \leq c\epsilon$, and hence $\int_{\p D_\epsilon} w^2 \ ds \leq c \epsilon^3$. Since this inequality clearly holds for $v$, we have
\[
   \int_{\p D_\epsilon} u_\epsilon^2 \ ds 
     \leq 4 \int_{D_\epsilon} ( v^2+w^2) \ ds \leq c \epsilon^3.
\]

To complete the proof of (\ref{hole}) we extend $u_\epsilon$ to a function $\hat{u}$ on $M$
by defining $\hat{u}(r,\theta)=\frac{r}{\epsilon}u_\epsilon(\epsilon,\theta)$ on $D_\epsilon$. We then have 
$|\nabla\hat{u}|^2(r,\theta)= \hat{u}_r^2 + r^{-2} \hat{u}_\theta^2=\epsilon^{-2}u_\epsilon^2(\epsilon,\theta)+\epsilon^{-2} (u_\epsilon)_\theta^2(\epsilon,\theta)$. From the bounds we obtained above we can see that 
\begin{align*}
        \int_{D_\epsilon}|\nabla\hat{u}|^2\ da
        &= \frac{1}{\epsilon^2} \int_0^{2\pi} \int_0^\epsilon  [ u_\epsilon^2(\epsilon,\theta) 
                 +(u_\epsilon)_\theta^2(\epsilon,\theta) ] \ r \, dr \, d\theta  \\
        &= \frac{1}{2\epsilon}\int_{\p D_\epsilon}u_\epsilon^2\ ds
          +\frac{1}{2}\int_{\p D_\epsilon}(u_\epsilon)_T^2\ ds. 
 \end{align*}
But from above, since the derivatives of $v$ are bounded near 0,
\[
       \int_{\p D_\epsilon} (u_\epsilon)_T^2 \ ds 
       = \int_{\p D_\epsilon} v_T^2 \ ds + \int_{\p D_\epsilon} w_T^2 \ ds
       \leq c\cdot 2\pi \epsilon + c\epsilon \leq c \epsilon.
\]
Therefore, 
\[
        \int_{D_\epsilon}|\nabla\hat{u}|^2\ da 
        = \frac{1}{2\epsilon}\int_{\p D_\epsilon}u_\epsilon^2\ ds
          +\frac{1}{2}\int_{\p D_\epsilon}(u_\epsilon)_T^2\ ds
        \leq c\epsilon^2 + c \epsilon \leq c \epsilon.
\]
We now use
$\vp=\hat{u}-\int_{\p M}\hat{u}\ ds$ as a comparison function for the eigenvalue problem on $(M,g)$. 
Since $\hat{u}=u_\epsilon$ on $\p M$ and $L(\p M)=1$,
\[
      \int_{\p M} \vp^2 \ ds =\int_{\p M} \left( u_\epsilon - \int_{\p M} u_\epsilon \ ds \right)^2 \ ds
      =\int_{\p M} u_{\epsilon}^2 - \left( \int_{\p M} u_\epsilon \ ds \right)^2.
\]
This gives
\begin{align*} 
         \sigma_1(g) \left[\int_{\p M} u_{\epsilon}^2 \ ds - \left( \int_{\p M} u_\epsilon \ ds \right)^2\right]
         & \leq \int_M |\n \vp|^2 \ da
         =\int_M |\n \hat{u}|^2 \ da  \\
         & =\int_{M_\epsilon} |\n \hat{u}|^2 \ da + \int_{D_\epsilon} |\n \hat{u}|^2 \ da \\
         & \leq \int_{M_\epsilon} | \n u_\epsilon|^2 \ da + c\epsilon \\
         & = \sigma_1(g_\epsilon) + c\epsilon                  
\end{align*}
From the choice of $u_\epsilon$ we have $\int_{\p M}u_\epsilon\ ds=-\int_{\p D_\epsilon}u_\epsilon\ ds_\epsilon=0$. Using the bounds above we see that 
\begin{align*}
    \sigma_1(g)\int_{\p M}u_\epsilon^2 \ ds
    & =\sigma_1(g) \int_{\p M} u_\epsilon^2 \ ds_\epsilon
    =\sigma_1(g) 
       \left[ \int_{\p M_\epsilon} u_\epsilon^2 \ ds_\epsilon - \int_{\p D_\epsilon} u_\epsilon^2 \ ds_\epsilon \right] \\
    & =\sigma_1(g)\left[1-\frac{\lambda}{\epsilon\log(1/\epsilon)}\int_{\p D_\epsilon}u_\epsilon^2\ ds\right]
    \geq \sigma_1(g)-c\frac{\epsilon^2}{\log(1/\epsilon)}.
\end{align*}
Combining these we have
\[ \sigma_1(g_\epsilon)\geq \sigma_1(g)-c\frac{\epsilon^2}{\log(1/\epsilon)}-c\epsilon=\sigma_1(g)-o(1)(\log(1/\epsilon))^{-1}.
\]
This establishes (\ref{hole}) and completes the proof of Proposition \ref{sigma*_inc}.
\end{proof}

We will now prove the main theorem of this section which says roughly that for any metric $g$ on a surface with genus $0$ and $k$ boundary components with $\sigma_1L$ strictly above 
$\sigma^*(0,k-1)$, the conformal structure induced by $g$ lies in a compact subset of
the moduli space of conformal structures. In order to measure this notion, we note that, given a smooth surface $M$ of genus $\gamma$ with $k$ boundary components (except $\gamma=0$, $k=1,2$)
and a metric $g$ on $M$, there is a unique hyperbolic metric $g_0$ in the conformal class of $g$ such that the boundary curves are geodesics. This may be obtained by taking the hyperbolic metric
on the doubled conformal surface $\tilde{M}$ and restricting it to $M$. We can measure the
conformal class by considering the injectivity radius $i(g_0)$ on $\tilde{M}$; that is, compact
subsets of the moduli space are precisely those $g_0$ with $i(g_0)$ bounded below by a positive 
constant.

The case $\gamma=0$ and $k=1$ is handled by Weinstock's theorem, and for the case 
$\gamma=0$ and $k=2$ the doubled surface is a torus, and thus $g_0$ must be taken to be a flat metric which we normalize to have area $1$. Since the proofs for the annulus and M\"obius band  are slightly different from the case $k\geq 3$, we handle them separately in the following Proposition. We fix a topological annulus $M$ and we consider metrics $g$ on $M$. For the M\"obius band we consider the oriented double covering. We let 
$L_1\geq L_2$ denote the lengths of the boundary curves with $L=L_1+L_2$, and we let 
$\alpha=L_1/L_2\geq 1$ denote the ratio. The Riemannian surface $(M,g)$ is conformally equivalent to $[-T,T]\times S^1$ for a unique $T>0$. We have the following result.

\begin{proposition} \label{two_comp} Let $M$ be an annulus or a M\"obius band. Given $\delta>0$, there are positive numbers $\alpha_0$ and $\beta_0$ depending on $\delta$ such that if $g$ is any metric on $M$ with 
$\sigma_1(g)L\geq 2\pi(1+\delta)$, then $\alpha\leq \alpha_0$ and 
$\beta_0^{-1}\leq T\leq \beta_0$.
\end{proposition}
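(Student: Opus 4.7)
The plan is to establish, for the annulus, three bounds: $\alpha \leq \alpha_0$, $T \leq \beta_0$, and $T \geq \beta_0^{-1}$. For the M\"obius band the argument carries over to its orientable double cover, where the covering involution forces $L_1 = L_2$, so that $\alpha = 1$ automatically and only the bounds on $T$ remain.

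For the upper bound on $\alpha$ I apply a Hersch-type trick on the conformal uniformization $\psi \colon M \to A_{r_0} := \{r_0 < |z| < 1\}$ (with $r_0 = e^{-2T}$). For $a \in \mathbb{D}$ set $F_a = \phi_a \circ \psi$, where $\phi_a$ is the M\"obius automorphism of $\mathbb{D}$ sending $a$ to $0$. The hypothesis $\sigma_1 L > 2\pi$ forces $\mu$ to have no point masses (Proposition \ref{pt.mass}), so a standard degree argument produces some $a \in \mathbb{D}$ with $\int_{\p M} F_a\, d\mu = 0$. Taking the real and imaginary parts of $F_a$ as test functions and summing,
\[
    \sigma_1 \int_{\p M} |F_a|^2\, d\mu \leq 2 \operatorname{Area}(F_a(M)) \leq 2\pi.
\]
Since $|F_a| = 1$ on the outer boundary $\p_1 M$, we have $\int_{\p M} |F_a|^2\, d\mu \geq L_1$, so $\sigma_1 L \leq 2\pi(1 + 1/\alpha)$; combining with $\sigma_1 L \geq 2\pi(1+\delta)$ gives $\alpha \leq 1/\delta$. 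For the upper bound on $T$ I use the harmonic test function $u = t$ on the flat cylindrical model $[-T,T] \times S^1$ (equivalently $u = \log|z|$ on $A_{r_0}$), shifted to zero $\mu$-mean; a direct computation yields
\[
    \sigma_1 L \leq \frac{\pi (L_1+L_2)^2}{T L_1 L_2} = \frac{\pi (1+\alpha)^2}{\alpha T},
\]
and combining with the $\alpha$ bound produces an explicit $\beta_0(\delta)$ with $T \leq \beta_0$.

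For the lower bound on $T$ I argue by contradiction. Suppose $T_n \to 0$ along a sequence $g_n$ with $\sigma_1(g_n) L_n \geq 2\pi(1+\delta)$ and $L_n = 1$. Testing with functions depending only on the angular coordinate of the flat cylinder gives $\sigma_1(g_n) \leq 2 T_n\, \lambda_1(\nu_n)$, where $\nu_n = \mu_{n,1} + \mu_{n,2}$ is the projection of $\mu_n$ to $S^1$ and
\[
    \lambda_1(\nu) = \inf_{\int u\, d\nu = 0} \frac{\int |u'|^2\, d\theta}{\int u^2\, d\nu}.
\]
Hence $\lambda_1(\nu_n) \to \infty$, which by the weak-$*$ continuity of $\lambda_1$ on atom-free measures forces $\nu_n$---and consequently $\mu_n$---to develop a point mass in the limit.

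The main obstacle is to close the contradiction in this degenerate regime, since Proposition \ref{pt.mass} cannot be applied directly (the conformal structures $g_n$ do not converge in $C^2$). The argument of that proposition adapts: any rectangular neighborhood $[-T_n, T_n] \times (\theta_* - r, \theta_* + r)$ of the concentration point in the cylinder model is conformally a disk regardless of $T_n$. Mapping it conformally to $\overline{\mathbb{D}}$, applying Hersch to zero out the $\mu_n$-mean, and extending by a Lipschitz interpolation on the remainder of $M$ with controlled Dirichlet integral, one obtains a test function $F_n$ with $\int_M |\nabla F_n|^2\, dA \leq 2\pi + o(1)$ and $\int_{\p M} |F_n|^2\, d\mu_n \to 1$ as $\mu_n$ concentrates. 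This yields $\limsup_n \sigma_1(g_n) \leq 2\pi$, contradicting the lower bound on $\sigma_1(g_n)$. The residual subtle case, in which $\tilde\mu_1$ and $\tilde\mu_2$ both place atoms at the same $\theta_*$ (still consistent with $\alpha \leq 1/\delta$), is handled by a product-type test function $\chi(\theta)\psi(t)$ exploiting the separation of the two boundary points $(\pm T_n, \theta_*)$.
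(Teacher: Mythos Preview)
For the annulus your argument tracks the paper's closely: the $\alpha$ bound via a balanced conformal map to the disk and the upper bound on $T$ via a linear-in-$t$ test function are the paper's arguments with cosmetic changes. (Your invocation of Proposition~\ref{pt.mass} for the balancing is unnecessary, since the arclength of a smooth metric is automatically atom-free.) Your lower bound on $T$ also arrives at the same key construction---a balanced conformal map from a rectangle $R=[-T,T]\times(\theta_*-r,\theta_*+r)$ onto $D$---but the phrase ``extending by a Lipschitz interpolation on the remainder of $M$ with controlled Dirichlet integral'' hides the real difficulty. The boundary values of this map on the vertical segments $\{\theta=\theta_*\pm r\}$ are a priori uncontrolled, and a naive extension across the complementary thin rectangle can cost $O(1)$ energy. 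The paper resolves this with a two-stage Fubini argument: first locate slices $\theta_1,\theta_2$ on which $\int_{-T}^{T}|\nabla\varphi|^2\,dt$ is merely bounded, observe that their images in $D$ have length $O(\sqrt{T})$, combine this with the balancing condition and the isoperimetric inequality to show the region beyond them has area (hence energy) $O(T)$, and then run a second Fubini there to find $\theta_3,\theta_4$ with $\int|\nabla\varphi|^2\,dt\le cT$, from which one can interpolate linearly to a constant at cost $O(T)$. Your proposal needs this or something equivalent. Incidentally, the ``residual subtle case'' you isolate is not a separate case: both points $(\pm T_n,\theta_*)$ lie on $\partial R$ and are carried to $\partial D$, so the same construction already covers it.

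The M\"obius band reduction is a genuine gap. The test function $u=t$ satisfies $u(-t,\theta+\pi)=-t\ne u(t,\theta)$, so it does not descend to $M$ and cannot be used to bound $\sigma_1(M)$; likewise, functions of $\theta$ alone descend only when they are $\pi$-periodic, which changes your $\lambda_1(\nu)$ estimate. Nor can you simply apply the annulus result to the double cover $\tilde M$: one has $\sigma_1(\tilde M)\le\sigma_1(M)$, so the hypothesis $\sigma_1(\tilde M)L(\partial\tilde M)\ge 2\pi(1+\delta)$ need not follow from the hypothesis on $M$. The paper therefore gives separate constructions. For the upper bound on $T$ it balances a conformal map from the half-cylinder $[T/2,T]\times S^1$ to a planar annulus, extends toward $t=0$ by interpolation to $0$ (so the extension is automatically invariant under the covering involution), and estimates the resulting energy as $2\pi+O(T^{-1})$. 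For the lower bound the projected measure on $S^1$ is forced to be $\pi$-periodic, so its weak-$*$ limit is a pair of antipodal Dirac masses, and one runs the rectangle argument around one of them.
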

\begin{proof} We first handle the case of the annulus. By scaling the metric we may assume that $L=1$, so we have 
$L_1=\alpha/(1+\alpha)$ and $L_2=(1+\alpha)^{-1}$. We take a conformal diffeomorphism
from $M$ into the unit disk $D$ which takes $\Gamma_1$ to the unit circle and whose image
is a rotationally symmetric annulus. We then compose with a conformal diffeomorphism of
the disk to obtain $\vp:M\to D$ with $\int_{\Gamma_1}\vp\ ds=0$. We let $\bar{\vp}$
denote the mean value of $\vp$ over $\p M$ and observe
\[ |\bar{\vp}|=\left|\int_{\Gamma_2}\vp\ ds\right|<L_2=\frac{1}{1+\alpha}.
\]
We have $\sigma_1\int_{\p M}|\vp-\bar{\vp}|^2\leq \int_M|\nabla \vp|^2\ da<2\pi$.
This implies $\sigma_1(\alpha/(1-\alpha)-|\bar{\vp}|^2)<2\pi$ and using the lower bound on $\sigma_1$
and the upper bound on $\bar{\vp}$ we obtain $(1+\delta)(\alpha/(1+\alpha)-1/(1+\alpha))<1$ which
in turn implies $\alpha\leq \alpha_0$ where $\alpha_0=(2+\delta)/\delta$.

To obtain an upper bound on $T$ we consider the linear function $u=(L_1-L_2)+T^{-1}t$ on 
$[-T,T]\times S^1$, and observe that $\int_{\p M}u\ ds=0$. We thus have
\[ \sigma_1\int_{\p M}u^2\ ds\leq \int_M|\nabla u|^2\ da=4\pi T^{-1}.
\]
Using $\sigma_1\geq 2\pi$ and $\int_{\Gamma_2}u^2\ ds\geq L_2$ (since $u\geq 1$ when
$t=T$) we have $T\leq 2L_2^{-1}\leq \beta_0$ where we may take $\beta_0=2(1+\alpha_0)$.

The lower bound on $T$ is more subtle. We write the boundary measure as 
$\lambda(t,\theta)\ d\theta$ where $\lambda$ is defined at $t=-T$ and $t=T$. Now let us suppose
that we normalize $L=1$ and we define a function $\hat{\lambda}$ on $S^1$ to be
$\hat{\lambda}(\theta)=\lambda(-T,\theta)+\lambda(T,\theta)$. We then have 
$L=1=\int_{S^1}\hat{\lambda}\ d\theta$. There is an interval of length $\pi/2$ which carries
at least $1/4$ of the length, so let's choose the point $\theta=0$ so that
$\int_{-\pi/4}^{\pi/4}\hat{\lambda}\ d\theta\geq 1/4$. We now let $L_0=\int_{\pi/2}^{3\pi/2}\hat{\lambda}\ d\theta$, and we let $u(\theta)$ be a smooth $2\pi$-periodic function which satisfies $u(\theta)=1$ for $|\theta|\leq\pi/4$, $u(\theta)=-1$ for $\pi/2\leq\theta\leq3\pi/2$,
and $\int_{S^1}(u')^2\ d\theta\leq c_1$ for a fixed constant $c_1$. Using the test
function $u$ it is easy to derive the bound $\sigma_1L_0\leq c_2T$ for a fixed constant
$c_2$. We thus obtain a contradiction unless $L_0\leq c_3T$ with $c_3=(2\pi)^{-1}c_2$.

We now consider the case $L_0\leq c_3T$ where $T$ is small. In this case we consider the rectangle 
$R=[-T,T]\times [-3\pi/4,3\pi/4]$. We take the arclength measure to be $0$ on the ends
$\theta=-3\pi/4$ and $\theta=3\pi/4$, and the given arclength measure on the sides $t=\pm T$. We then let $\varphi:R\to D$ be a balanced conformal
diffeomorphism in the sense that $\int_{\p R}\varphi\ d\mu=0$ where $\mu$ is the boundary
measure that we have described. We then have $\int_R|\nabla\varphi|^2\ da=2\pi$, and
by Fubini's theorem there exist $\theta_1\in [\pi/2,5\pi/8]$ and $\theta_2\in [-5\pi/8,-\pi/2]$
such that for $i=1,2$ we have the bound 
\[ \int_{-T}^T|\nabla\varphi|^2(t,\theta_i)\ dt\leq c
\]
for a fixed constant $c$. Let $\alpha_i(t)=\varphi(t,\theta_i)$, and we have the bound
\[ L(\alpha_i)=\int_{-T}^T\left|\frac{\p \varphi(t,\theta_i)}{\p t}\right|\ dt\leq \int_{-T}^T|\nabla\varphi|(t,\theta_i)\ dt
\leq (2T)^{1/2}\left(\int_{-T}^T|\nabla\varphi|^2(t,\theta_i)\ dt\right)^{1/2},
\]
and thus $L(\alpha_i)\leq cT^{1/2}$ for a constant $c$. Now each curve $\alpha_i$ divides
the unit disk into a large region and a small region. Because of the balancing condition and the 
fact that most of the arclength is in the subset of $R$ with $|\theta|\leq \pi/2$, it follows that
the subrectangles $\theta_1\leq \theta\leq 3\pi/4$ and $-3\pi/4\leq \theta\leq \theta_2$ map
into the small region. By the isoperimetric inequality it follows that the areas of these small
regions are bounded by constants times $T$. Therefore we have the bounds
\[ \int_{[-T,T]\times [\theta_1,3\pi/4]}|\nabla\varphi|^2\ da\leq cT,\ \  \int_{[-T,T]\times [-3\pi/4,\theta_2]}|\nabla\varphi|^2\ da\leq cT.
\]
Another application of Fubini's theorem now gives a $\theta_3\in [5\pi/8,3\pi/4]$ and 
$\theta_4\in [-3\pi/4,-5\pi/8]$ with
\[  \int_{-T}^T|\nabla\varphi|^2(t,\theta_i)\ dt\leq cT
\]
for $i=3,4$. We can now define a Lipschitz map $\hat{\varphi}:M\to D$ on $[-T,T]\times [-\pi,\pi]=M$ such that $\hat{\varphi}=\varphi$ for $\theta_4\leq t\leq \theta_3$, $\hat{\varphi}(t,-\pi)=0$, and such that $\hat{\varphi}$ on the intervals $[\theta_3,\pi]$ and 
$[-\pi,\theta_4]$ changes linearly in the $\theta$ variable from $0$ to $\varphi$. By the above bounds, the energy of $\hat{\varphi}$ in these regions is bounded by a constant times $T$, and therefore
\[ \int_M|\nabla\hat{\varphi}|^2\ da\leq 2\pi+cT.
\] 
From the fact that $\hat{\varphi}$ is bounded and the bounds on the arclength in the region
on which $\varphi$ was modified, we see
\[ \left|\int_{\p M}\hat{\varphi}\ ds\right|\leq cT, \quad \left|\int_{\p M}|\hat{\varphi}|^2\ ds-1\right|\leq cT.
\]
We then use the components of $\hat{\varphi}-\int_{\p M}\hat{\varphi}\ ds$ as comparison functions for $\sigma_1(g)$, and we have
\[ \sigma_1(g)\left(\int_{\p M}|\hat{\varphi}|^2\ ds-\left|\int_{\p M}\hat{\varphi}\ ds\right|^2\right)\leq 
\int_M|\nabla\hat{\varphi}|^2\ da.
\]
Therefore we conclude that $\sigma_1(g)\leq 2\pi+cT$ which is a contradiction if $T$ is small.

In case $M$ is a M\"obius band, we let $[-T,T]\times S^1$ be the oriented double covering so
that $M$ is the quotient gotten by identifying $(t,\theta)$ with $(-t,\theta+\pi)$. In this case 
$L_1=L_2=L(\p M)$ which we normalize to be $1$. The arguments follow very much along
the lines above for the annulus. To get an upper bound on $T$ we take a conformal map $\varphi$  from $[T/2,T]\times S^1$ to a concentric annulus in the plane $D\setminus D_r$ noting that 
$r=e^{-T/2}$. We then balance $\varphi$ using a conformal automorphism of $D$
making $\int_{\{T\}\times S^1}\varphi\ ds=0$. This balanced $\varphi$ takes the circle
$\{3T/4\}\times S^1$ to a circle with radius at most $e^{-T/4}$. In particular it follows that the
Dirichlet integral of $\varphi$ between $T/2$ and $3T/4$ is bounded by a constant times
$e^{-T/2}$ (the area enclosed by the circle). Thus by Fubini's theorem we can find 
$T_1\in [T/2,3T/4]$ such that 
\[ \int_{\{T_1\}\times S^1}|\nabla\varphi|^2\ d\theta\leq cT^{-1}
\] 
where we have made a coarse estimate. We can then extend $\varphi$ to a map 
$\hat{\varphi}:[0,T]\times S^1$ by linearly interpolating in the $t$ variable from $\varphi$
to $0$ on the interval $[0,T_1]$ with the properties that $\hat{\varphi}(0,\theta)=0$, $\int_{\{T\}\times S^1}|\hat{\varphi}|^2\ ds=1$,  $\int_{\{T\}\times S^1}\hat{\varphi}\ ds=0$, and
\[ \int_{[0,T]\times S^1}|\nabla\hat{\varphi}|^2\ dtd\theta\leq 2\pi+cT^{-1}.
\]
This gives the upper bound since we may extend $\hat{\varphi}$ to $[-T,T]\times S^1$ so that
it is invariant under the identification hence defined on $M$.

To get the lower bound we use essentially the same argument as for the annulus. We consider
the measure $\hat{\lambda}\ d\theta$ as above. Because of the identification we have that
$\hat{\lambda}(\theta)=\hat{\lambda}(\theta+\pi)$ and the integral of $\hat{\lambda}$ is
equal to $2$. We take the weak limit of these measures for a sequence $T_i\to 0$,
and observe that if the weak limit is not a pair of point masses we have $\sigma_1$ tending
to $0$, while if it is a pair of antipodal point masses then $\sigma$ is bounded above by 
$2\pi$ plus a term which tends to $0$. In either case we get a contradiction and we have
finished the proof.
\end{proof}

We now address the general case and prove the following theorem.
\begin{theorem} \label{k_comp} Let $M$ be a smooth oriented surface of genus $0$ with $k\geq 2$ boundary components. If $\lambda>\sigma^*(0,k-1)$, then there is a $\delta>0$ depending on 
$\lambda$ such that if $g$ is a smooth metric on $M$ with $\sigma_1L\geq\lambda$ then the
injectivity radius of $g_0$ is at least $\delta$ where $g_0$ is the unique constant curvature metric
which parametrizes the conformal class of $g$.
\end{theorem}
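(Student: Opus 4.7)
The plan is to argue by contradiction: suppose there is a sequence of smooth metrics $g_i$ on $M$ with $\sigma_1(g_i)L_{g_i}(\p M) \geq \lambda > \sigma^*(0,k-1)$ whose conformal classes leave every compact subset of moduli space, equivalently for which the injectivity radius of the hyperbolic representative $g_{0,i}$ on the doubled surface $\tilde M$ tends to zero. Passing to a subsequence, Mumford compactness and the collar lemma yield a simple closed geodesic $\gamma_i$ in $(\tilde M, g_{0,i})$ of length $\ell_i \to 0$, surrounded by a conformal collar in $M$ whose modulus $T_i$ tends to infinity. Using the involution defining $\tilde M$, the pinching on $M$ itself takes one of three forms --- an interior closed curve, an orthogonal boundary-to-boundary arc, or a boundary component shrinking --- but in every case $M$ splits as $A_i \cup B_i$ along the pinching locus into two genus zero subsurfaces, with $A_i$ containing $k_1 \geq 1$ of the original boundary components of $M$ and $B_i$ containing $k_2 = k - k_1 \geq 1$. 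Capping the pinching slit on each side yields topological genus zero surfaces $A_i^+$, $B_i^+$ with $k_1$ and $k_2$ boundary components respectively.

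The heart of the proof is to show that in the pinching limit,
\[
     \sigma_1(g_i)L_{g_i}(\p M) \leq \max\bigl\{\sigma^*(0,k_1),\,\sigma^*(0,k_2)\bigr\} + o(1).
\]
Since $k_1, k_2 \leq k-1$, the weak monotonicity part of Proposition \ref{sigma*_inc} then gives $\max\{\sigma^*(0,k_1),\sigma^*(0,k_2)\} \leq \sigma^*(0,k-1)$, contradicting the hypothesis. To implement this I would normalize $L_{g_i} = 1$ and let $u_i$ be a first Steklov eigenfunction with $\int_{\p M} u_i\,ds = 0$ and $\int_{\p M} u_i^2\,ds = 1$. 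On the thin collar around $\gamma_i$, where $u_i$ is harmonic, the non-constant angular Fourier modes decay exponentially in the collar length, so $u_i$ is nearly constant in the angular variable on the bulk of the collar. Proposition \ref{pt.mass}, applied to the boundary measures of $g_i$, rules out concentration at the pinching locus because $\lambda > 2\pi$; the balancing $\int_{\p M} u_i\,ds = 0$ then forces the constant angular values of $u_i$ on the two sides of the neck to tend to zero. With a logarithmic cutoff $\chi_i$ vanishing near $\gamma_i$ and equal to one outside the collar, the restrictions $\chi_i u_i|_{A_i}$ and $\chi_i u_i|_{B_i}$ extend by zero across the capped slit to give admissible comparison functions on metrics on $A_i^+$ and $B_i^+$ which are conformal to $g_i$ on $A_i$, $B_i$ and carry arbitrarily small boundary measure on the cap. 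The cutoff error in Dirichlet energy is $O(T_i^{-1}) \to 0$ and the corresponding boundary $L^2$ error likewise tends to zero, so invoking the definition of $\sigma^*$ on each capped surface yields the claimed asymptotic inequality.

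The main obstacle I anticipate is the careful bookkeeping needed so that the cutoff-and-subtract procedure yields functions that are genuinely admissible (mean zero with respect to the chosen boundary measures) on both capped surfaces without introducing errors that overwhelm the gap $\lambda - \sigma^*(0,k-1)$. Subsidiary technicalities include unifying the three topological types of pinching and controlling the weak-$*$ limits of the boundary measures of $g_i$ on each side of the collar; but once the collar analysis is set up, the interplay between exponential Fourier decay across the neck, Proposition \ref{pt.mass}, and the monotonicity of Proposition \ref{sigma*_inc} should drive the contradiction uniformly in all cases.
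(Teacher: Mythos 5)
Your overall skeleton matches the paper's: argue by contradiction, extract a pinching geodesic from the degenerating hyperbolic structures, compare $\sigma_1L$ to $\sigma^*$ of a genus zero surface with fewer boundary components, and use the gap $\lambda>\sigma^*(0,k-1)$. However, there are genuine gaps in the execution. The most serious is that your test-function transplantation runs in the wrong direction. Using $\chi_i u_i|_{A_i}$ as a competitor on the capped surface $A_i^+$ gives an \emph{upper} bound for $\sigma_1(A_i^+)$ in terms of $\sigma_1(g_i)$, and the definition of $\sigma^*(0,k_1)$ gives another \emph{upper} bound for $\sigma_1(A_i^+)L(\p A_i^+)$; two upper bounds on the same quantity cannot be combined to bound $\sigma_1(g_i)$ from above. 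What you need is the reverse inequality $\sigma_1(g_i)\leq\sigma_1(A_i^+)+o(1)$, which is obtained by transplanting competitors (in particular the first eigenfunction) \emph{from} the capped or cut surface \emph{to} $M$, cut off near the cap and extended by zero across the neck. This is exactly what the paper does, and it is where the real analysis lives: one must show the comparison eigenfunction is uniformly bounded near the degenerating region (via the homogeneous Neumann condition there, the maximum principle, and nodal-set/Harnack arguments) before the logarithmic cutoff loses only $o(1)$ in energy. Relatedly, your claim that the balancing $\int_{\p M}u_i\,ds=0$ forces the near-constant value of $u_i$ on the neck to tend to zero is unjustified: balancing controls a measure-weighted average of boundary values, not the value of $u_i$ deep in the collar, and without an $L^\infty$ bound there your $O(T_i^{-1})$ energy-error estimate for the cutoff does not follow.

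Two further points. First, Proposition \ref{pt.mass} assumes the metrics converge in $C^2$, which is precisely what fails here; it cannot be invoked to rule out concentration of the boundary measure at the pinching locus. The paper instead reproves the needed statement in the degenerate setting: if essentially all the boundary measure concentrates in a half-cusp, a balanced conformal map of the neck rectangle to the disk forces $\limsup\sigma_1(g_i)\leq 2\pi<\lambda$, and this case must be treated separately. Second, your topological trichotomy is not quite right: an orthogonal arc joining two \emph{distinct} boundary components does not separate $M$ (cutting along it keeps $M$ connected and reduces the number of boundary components to $k-1$), and a shrinking boundary component separates nothing; so "in every case $M$ splits as $A_i\cup B_i$ with $k_1,k_2\geq 1$" is false. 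This particular issue is repairable (in the non-separating cases one compares directly to $\sigma^*(0,k-1)$), but the first two gaps — the direction of the comparison and the missing uniform bounds at the neck — are the essential content of the paper's proof and are absent from yours.
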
       
\begin{proof} We have done the case $k=2$ in Proposition \ref{two_comp}, so we now treat
the cases $k\geq 3$. We will prove the theorem by contradiction assuming that we have
a sequence of hyperbolic metrics $g_{0,i}$ on the doubled surface and a sequence of
metrics $g_i$ conformal to $g_{0,i}$ and with $L_{g_i}(\p M)=1$ and $\sigma_1(g_i)\geq \lambda$.
We assume that the injectivity radius $\delta_i$ of $g_{0,i}$ on the doubled surface tends to $0$,
and we take a subsequence so that the metrics $g_{0,i}$ converge in $C^2$ norm on compact
subsets to a complete hyperbolic metric $g_0$ on a surface $\tilde{M}_0$ with finite area. The surface
$\tilde{M}_0$ is the limit of the surfaces $(\tilde{M},g_{0,i})$ after a finite collection of disjoint
simple closed curves have been pinched to curves of $0$ length. The surface $\tilde{M}_0$
is the double of a surface $M_0$ which is a (possibly disconnected) hyperbolic surface with finite area and (possibly noncompact) boundary. Each curve which is pinched corresponds to two
ends of $\tilde{M}_0$. There are two possibilities depending on whether a pinched curve lies in $M$
or crosses the boundary of $M$. The surface $M_0$ is disconnected if and only if there is a
pinched curve in $M$ which is not a boundary component. The hyperbolic metric on an end
corresponding to the pinching of a curve in $M$ is given in suitable coordinates on $(t_0,\infty)\times S^1$ by $dt^2+e^{-2t}d\theta^2$. We refer to such an end as a {\it cusp}. If the pinched curve $\gamma$ crosses $\p M$, then it must be invariant under the reflection of $\tilde{M}$ across 
$\p M$, and it is either a component 
of $\p M$ or the reflection fixes two points of $\gamma$, and thus $\gamma$ intersects exactly
two boundary components. In this case the metric on the end associated with pinching $\gamma$
is the same except that it is defined on $(t_0,\infty)\times (0,\pi)$ where $\theta=0$ and $\theta=\pi$
are portions of two separate components of $\p M_0$. We refer to such an end as a {\it half-cusp}.

We let $M_i=(M,g_{0,i})$ and we recall that near a geodesic $\gamma$ which is being pinched
the metric $g_{0,i}$ in a neighborhood of $\gamma$ is given by 
$g_{0,i}=dt^2+(e^{-t}+\epsilon e^{t})^2d\theta^2$ in suitable coordinates $(t,\theta)$ on an
interval of $\mathbb R\times S^1$. Here $\epsilon=\epsilon_i$ is determined
by the condition that $4\pi\sqrt{\epsilon}=L_i(\gamma)$ and this representation of $g_{0,i}$ is valid
for $|t-t^*|\leq t^*-c$ where $t^*=1/2\log(1/\epsilon)$ is the value of $t$ which represents
$\gamma$ and $c$ is a fixed constant. Thus as the length of $\gamma$ goes to $0$ the metric
converges to the cusp metric $dt^2+e^{-2t}d\theta^2$. For a large $T$, we let $M_{i,T}$ be the subset of $M_i$ with boundary equal to the set $t=T$ on each end. We then have $M_{i,T}$
converging in $C^2$ norm to $M_{0,T}$, the corresponding subset of $M_0$.

We normalize $L_{g_i}(\p M)=1$ and by extracting a subsequence we may assume that the
arclength measures converge to a limiting measure $\mu$ on compact subsets of $\p M_0$.
The outline of the proof of the theorem is as follows: 
For any connected component $M_0'$ of $M_0$ we show that $\mu(\p M_0')$ is either $0$ or $1$.
We then show that there is a unique component $M_0'$ of $M_0$ such that $\mu(\p M_0')=1$. We show that $M_0'$ has no cusps. Finally we show that $M_0'$ has no half-cusps, and is therefore all of $M_0$, and $M_0$ is compact, completing the proof.
\\

\noindent
{\em Step 1.} For any connected component $M_0'$ of $M_0$ we show that $\mu(\p M_0')$ is either $0$ or $1$. \\

To see this we suppose that $a=\mu(\p M_0')\in (0,1)$. There exists a $T$ such that 
$L_{g_i}(\p M'_{i,T})>a/2$ for $i$ sufficiently large, where $M'_i$ is the part of $M_i$ converging to
$M_0'$. For a large number $R$ and $i$ large enough we
let $u$ be the function which is $1$ on $M'_{i,T}$, decays linearly as a function of $t$ to $0$ at $t=T+R$, and is extended to $M_i$ to be $0$ outside $M'_{i,T+R}$. We then have the Dirichlet integral $E(u)\leq c R^{-2}$ since the areas are uniformly bounded. On the other hand, given 
$\epsilon>0$ we have the average $\bar{u}=\int_{\p M_i}u\ ds_i\leq a+\epsilon$ for $i$ sufficiently large (where 
$ds_i$ is the arclength measure for $g_i$). It follows
that $\int_{\p M_i}(u-\bar{u})^2\ ds_i\geq (1-a-\epsilon)^2a/2$, and so
\[ \sigma_1(g_i)(1-a-\epsilon)^2a/2\leq \sigma_1(g_i)\int_{\p M_i}(u-\bar{u})^2\ ds_i
\leq E(u)\leq cR^{-2}.
\] 
This is a contradiction for $R$ sufficiently large. 
\\

\noindent
{\em Step 2.} There is a unique component $M_0'$ of $M_0$ such that $\mu(\p M_0')=1$. \\

We have at most one connected component of $M_0$ with positive boundary measure, and
we now show that there is exactly one such component. If this were not the case then from above
we would have $\mu(\p M_0)=0$ and all of the arclength would concentrate at infinity in the
half-cusps. Suppose we have a half-cusp such that for any large $T$, we have 
$\limsup_i L_i(\{T\leq t\leq 2t^*-T\})=a>0$. We then must have $a=1$ since otherwise we can 
show that $\sigma_1(g_i)$ becomes arbitrarily small as in the previous paragraph. Thus we are left with the situation where $\lim_i L_i(\{T\leq t\leq 2t^*-T\})=1$ for some half-cusp end. In this case
we claim that $\limsup_i \sigma_1(g_i)\leq 2\pi$. This follows from the argument used in
Proposition \ref{two_comp}; that is, we extend the arclength measure $ds_i$ to be zero at 
$t=T$ and $t=2t^*-T$ and we take a balanced conformal map $\varphi$ from the
simply connected region $[T,2t^*-T]\times [0,\pi]$ to the unit disk. Since the total measure is nearly $1$ and it is concentrating away from the ends we can argue that the map $\varphi$ is close to constants near the ends. We may then extend it to a map $\hat{\varphi}:M_i\to D$ which is
zero away from the end, which is nearly balanced, has $L^2$ norm near $1$, and Dirichlet
integral near that of $\varphi$ (which is $2\pi$). Using $\hat{\varphi}$ minus its average as a test function we then can show that $\sigma_1(g_i)$ is bounded above by a number arbitrarily close
to $2\pi$, a contradiction. We have shown that there is a unique component $M_0'$ of $M_0$ such that $\mu(\p M_0')=1$. 
\\

\noindent
{\em Step 3.} We now show that $M_0'$ has no cusps. \\

Suppose $M_0'$ has a cusp, and consider the curve $\gamma$ which is being pinched to form the cusp. For
$i$ sufficiently large we modify $M_i$ by cutting along $\gamma$. If $\gamma$ is an interior
curve of $M$, then we consider the connected component of $M\setminus \gamma$ which
contains most of the boundary measure. We now fill in the curve $\gamma$ with a disk of
circumference $L_i(\gamma)$ and extend the metric $g_{0,i}$ to form a surface $\hat{M}_i$
of genus $0$ with $l$ boundary components where $l<k$. Whether $\gamma$ is an interior
or boundary curve we have $L_i(\p \hat{M}_i)\to 1$ as $i$ tends to infinity. For a small $r>0$ we consider the surface $\hat{M}_i\setminus D_r$ where $D_r$ is a disk in a constant curvature metric concentric with the disk we added to form $\hat{M}_i$. We let $\sigma_1(i,r)$ denote the
infimum of the Steklov Rayleigh quotient for $\hat{M}_i\setminus D_r$ with competitors of zero
average over $\p \hat{M}_i$ with respect to $ds_i$ and which
are $0$ on $\p D_r$. Since for $i$ large, any such function can be extended to $M_i$ to be $0$ 
outside $\p D_r$, we see that $\sigma_1(i,r)\geq \sigma_1(g_i)$ for any $r>0$ and $i$ sufficiently
large. We will show that $\sigma_1(i,r)\leq \sigma^*(0,k-1)+\epsilon$ for any given $\epsilon>0$
if $r$ is chosen small enough and $i$ is chosen large enough. For $\epsilon$ small this
contradicts the assumption $\sigma_1(g_i)\geq \lambda>\sigma^*(0,k-1)$.

First, fix $i$ large and $r>0$ small.
As observed above, there exists $\rho(i)$ such that for $\rho(i) \leq \rho \leq r$ we have
$\sigma_1(i, \rho) \geq \sigma_1(g_i) \geq \lambda$.
We will use this lower bound on $\sigma_1(i,\rho)$ to get the quantitative lower bound
$\sigma_1(i,r)\leq \sigma_1(i,\rho)+c|\log r|^{-1/2}$.
Let $u_{i,r}$ be a first eigenfunction of $\hat{M}_i\setminus D_r$ (with eigenvalue $\sigma_1(i,r)$) normalized to have $L^2$ norm $1$ on $\p \hat{M}_i$. We claim that for any $T>T(\lambda)$, $u_{i,\rho}$ is pointwise uniformly bounded on compact subsets of  $\hat{M}_{i,T}\setminus\p \hat{M}_{i,T}$ for all $\rho(i)\leq \rho\leq r$. In particular, $u_{i,\rho}$ is uniformly bounded near the disk we added to form
$\hat{M}_i$. 
To see this bound, observe that on any component $\Omega$ of the complement of the zero set of $u_{i,\rho}$, the function $u_{i,\rho}$ is a {\it first} eigenfunction for the eigenvalue problem in $\Omega$ with Dirichlet boundary 
condition on $\p \Omega\cap \hat{M}_i$ and Steklov boundary condition on 
$\Omega\cap\p \hat{M}_i$. In other words the function $u_{i,\rho}$ on $\Omega$ minimizes
the Rayleigh quotient
\[ \frac{\int_\Omega|\nabla v|^2}{\int_{(\p \hat{M}_i)\cap \bar{\Omega}}v^2\ ds_i}
\]
over functions $v$ which are $0$ on $\p \Omega\cap\hat{M}_i$. Moreover the minimum
value of this ratio is $\sigma_1(i,\rho)$. This follows directly
from the variational characterization of $\sigma_1(\hat{M}_i)$. From here we see that
no such component can be too large. First, for $T$ large enough, the set $\hat{M}_{i,T}$
cannot be contained in such an $\Omega$. This follows from the fact that for $T$ large,
the length $L_i(\p\hat{M}_{i,T/2})$ is greater than $1/2$ (in fact is near $1$). We can then
choose a function $v$ which is $1$ on $\hat{M}_{i,T/2}$ and zero outside $\hat{M}_{i,T}$
with Dirichlet integral bounded by $cT^{-2}$. This contradicts the fact that 
$\sigma_1(i,\rho) \geq \lambda$. It follows that there are points in 
$\p\hat{M}_{i,T}$ at which $u_{i,\rho}=0$. If we can find a point a fixed distance from 
$\p\hat{M}_{i,T}$ at which $u_{i,\rho}$ is bounded, the result then follows from bounds on
harmonic functions with bounded Dirichlet integral. To find such a point we assert that
there is a point $p\in \p\hat{M}_{i,T}$ and a fixed $\delta>0$ with the property that each half
circle of radius $\tau$ centered at $p$ for $0<\tau\leq\delta$ intersects the zero set of $u_{i,\rho}$.
Indeed if this were not true it would follow that each component of the zero set of $u_{i,\rho}$
is contained in a small disk $D_\delta(p)$ centered at a boundary point. Since the zero set divides
$\hat{M}_i \setminus D_r$ into two components (Courant nodal domain theorem), it follows that there can be at most one such disk, and therefore there is a component $\Omega$ of the complement of the zero set which contains $\hat{M}_{i,T}\setminus D_\delta(p)$. Since for $\delta$ small the arclength of the
interval of radius $\delta$ about $p$ on the boundary is small,
we can construct a function which is $1$ away from $p$, vanishes in $D_\delta(p)$
and has small Rayleigh quotient, contradicting the lower bound on $\sigma_1(i,\rho)$. Therefore
$u_{i,\rho}$ is locally bounded in $\hat{M}_{i,T}\setminus\p \hat{M}_{i,T}$ as claimed.

We now choose a Lipschitz radial function $\zeta$ which is $1$ outside $D_{\sqrt{r}}$ and $0$ inside $D_r$ with Dirichlet integral $E(\zeta)\leq c|\log r|^{-1}$. We then let $v=\zeta u_{i,\rho}$
and estimate the Dirichlet integral
\[ E(v)\leq (1+\epsilon)E(u_{i,\rho})+c(1+\epsilon^{-1})E(\zeta)
\]
for any $\epsilon>0$ where we have used the arithmetic-geometric mean inequality and the bound on $u_{i,\rho}$. We take $\epsilon=|\log r|^{-1/2}$ and conclude 
$E(v)\leq \sigma_1(i,\rho)+c|\log r|^{-1/2}$, and this implies the bound
$\sigma_1(i,r)\leq \sigma_1(i,\rho)+c|\log r|^{-1/2}$ for all $\rho(i) \leq \rho \leq r$.

Now we claim that given $\epsilon>0$, if $i$ is sufficiently large, then $\sigma_1(i,\rho(i)) \leq \sigma_1(M_0')+\epsilon$. This follows since as $i \rightarrow \infty$, $u_{i,\rho(i)}$ converges on compact subsets to a function $u$ on $M_0'\setminus\{p\}$, where $p$ is the cusp point. Since $u$ is a bounded harmonic function, it extends to $M_0'$, and must be a first eigenfunction. Finally, since $\sigma_1(M_0') \leq \sigma^*(0,k-1)$, we obtain the desired bound $\sigma_1(i,r)\leq \sigma_1(i,\rho(i))+c|\log r|^{-1/2}\leq \sigma^*(0,k-1) + \epsilon$ for any given $\epsilon>0$ if $r$ is chosen small enough and $i$ is chosen large enough. Thus we have shown that $M_0$ has no cusps.
\\

\noindent
{\em Step 4.} $M_0'$ has no half-cusps. \\

Finally we finish the proof by showing that $M_0'$ has no half-cusps and is therefore all of $M_0$, and $M_0$ is compact. 
Assume we have a half-cusp in $M_0'$ and consider the surfaces $M_i$ converging to $M_0$.
Note that we have two half-cusps for each pinched curve, one on each side of the pinched
curve. For $i$ large, let $(t,\theta)$ be the coordinates described above on the half-cusp. We have 
shown that the measure concentrates away from the neck, so for any $\epsilon>0$ we may choose $T_0$ large enough that $L_i(\{t\geq T_0\})<\epsilon$. We form a surface $M_1$ with at most $k-1$
boundary components by making a cut along the $t=t^*,\ 0\leq\theta\leq\pi$ curve. We take the
boundary measure $\mu$ on $\p M_1$ to be the original measure $ds_i$ on $\{t\leq T_0\}$ and $0$ on the remainder of the boundary. Thus we have $\mu(\p M_1)\geq 1-\epsilon$. We let 
$\sigma_1(M_1)$ be the corresponding first eigenvalue and let $u_1$ be a first eigenfunction
normalized to have boundary $L^2$-norm equal to $1$ with respect to $\mu$. From above
we know that $u_1$ is bounded by a constant depending on $T_0$ for points
of $\{t=T_0\}$ away from the boundary points. By Fubini's theorem we may then choose 
$T_1\in [T_0,T_0+1]$ so that $\int_{\{t=T_1\}}|\nabla u_1|^2 ds\leq c$, and therefore we have
$|u_1(T_1,\theta)|\leq c$ for $0\leq \theta\leq\pi$. Since $u_1$ satisfies the homogeneous
Neumann boundary condition for $t\geq T_1$, it follows from the maximum principle that
$|u_1|\leq c$ on all of $M_1$. For $T>T_1$ to be chosen sufficiently large we choose a Lipschitz function $\zeta(t)$ which is $1$ for $t\leq T$, $0$ for $t\geq T+1$, and with $|\zeta'|\leq 1$. We then set $v=\zeta u_1$ and we extend $v$ to $M_i$ to be $0$ for $t\geq T+1$ (on both sides of the neck being pinched). Using the bound on $u_1$ we may estimate the Dirchlet integral of $v$ on
$M_i$ by $E(v)\leq (1+\epsilon_1)E(u_1)+c(1+\epsilon_1^{-1})e^{-T}$ for any $\epsilon_1>0$.
Thus we may choose $T$ sufficiently large so that $E(v)\leq E(u_1)+\epsilon$ for the $\epsilon$
chosen above. We let $\bar{v}=\int_{\p M}v\ ds_i$, and we have
\[ \sigma_1(M_i)\left(\int_{\p M}v^2\ ds_i-\bar{v}^2\right)\leq E(v)\leq E(u_1)+\epsilon=\sigma_1(M_1)+\epsilon.
\]
From the choice of $\mu$ and the normalization of $u_1$ we have by the Schwarz inequality
\[ \int_{\p M}v^2\ ds_i=1+\int_{\{t\geq T_0\}}v^2\ ds_i\geq 1+\left(\int_{\{t\geq T_0\}}v\ ds_i\right)^2.
\]
On the other hand we have $\bar{v}=\int_{\{t\geq T_0\}}v\ ds_i$ since $v=u_1$ integrates to $0$
on $\{t\leq T_0\}$. Therefore we have 
\[ \sigma_1(M_i)\leq \sigma_1(M_1)+\epsilon\leq \mu(\p M_1)^{-1}\sigma^*(0,k-1)+\epsilon
\]
since $M_1$ has at most $k-1$ boundary components. This implies
\[ \sigma_1(M_i)\leq (1-\epsilon)^{-1}\sigma^*(0,k-1)+\epsilon,
\]
and this is a contradiction for $\epsilon$ small. We have shown that $M_0$ is compact,
and this is a contradiction to our assumption that the injectivity radius of $g_{0,i}$ was converging
to $0$. We have completed the proof of Theorem \ref{k_comp}.
\end{proof}

\section{The existence and regularity of extremal metrics} \label{section:extremal}

In this section we prove existence of smooth metrics which maximize $\sigma_1 L$ on surfaces provided the conformal structure is bounded for any metric near the maximum. In particular we prove existence of smooth maximizing metrics on oriented surfaces of genus $0$ with $k \geq 2$ boundary components. We also prove existence and regularity of such metrics on the M\"obius band.
 
We first establish the connection between extremal metrics and minimal surfaces that are free boundary solutions in the ball. 
Let $M$ be a compact surface with boundary, and assume $g_0$ is a metric on $M$ with
\[
      \sigma_1(g_0)L_{g_0}(\p M)=\max_g \sigma_1(g) L_g(\p M)
\] 
where the max is over all smooth metrics on $M$. 
Let $g(t)$ be a family of smooth metrics on $M$ with $g(0)=g_0$ and $\frac{d}{dt}g(t) =h(t)$, where  
$h(t)\in S^2(M)$ is a smooth family of symmetric $(0,2)$-tensor fields on $M$.
Denote by $V_{g(t)}$ the eigenspace associated to the first nonzero Steklov eigenvalue $\sigma_1(t)$ of $(M,g(t))$.
Define a quadratic form $Q_{h}$ on smooth functions $u$ on $M$ as follows
\[
      Q_{h}(u)=-\int_M \la \tau(u) , h \ra \; da_t 
         -\frac{\sigma_1(t)}{2} \int_{\p M} u^2 h(T,T) \; ds_t,
\]
where $T$ is the unit tangent to $\p M$ for the metric $g(t)$, and
where $\tau(u)$ is the stress-energy tensor of $u$ with respect to the metric $g(t)$,
\[
    \tau(u)= du \otimes du -\frac{1}{2} |\n u|^2 g.
\]

\begin{lemma}
$\sigma_1(t)$ is a Lipschitz function of $t$, and if $\dot{\sigma}_1(t_0)$ exists, then
\[
       \dot{\sigma}_1(t_0) = Q_{h}(u)
\] 
for any $u \in V_{g(t_0)}$ with $||u||_{L^2(\p M)}=1$.
\end{lemma}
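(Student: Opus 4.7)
The plan is to establish Lipschitz continuity of $\sigma_1(t)$ by a symmetric comparison of Rayleigh quotients at $t$ and $t_0$, and then to obtain the derivative formula by inserting a fixed eigenfunction at $t_0$ as a test function for nearby times, exploiting the fact that this yields a tangentially matching upper bound at $t=t_0$.

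\textbf{Step 1: Lipschitz continuity.} For $t$ near $t_0$, let $u_0$ be a first eigenfunction of $g(t_0)$ normalized by $\int_{\p M} u_0^2\, ds_{t_0}=1$, and set $\tilde u_0(t) = u_0 - \bar u_0(t)$, where $\bar u_0(t) = L_{g(t)}(\p M)^{-1}\int_{\p M} u_0\, ds_t$, so $\tilde u_0(t)$ has $ds_t$-mean zero. Plugging $\tilde u_0(t)$ into the Rayleigh quotient for $g(t)$ gives an upper bound
\[
\sigma_1(t) \le \frac{\int_M |\n u_0|^2_{g(t)}\, da_t}{\int_{\p M}(u_0-\bar u_0(t))^2\, ds_t},
\]
which is a Lipschitz function of $t$ because $g(t)$, $da_t$ and $ds_t$ depend Lipschitz on $t$, and equals $\sigma_1(t_0)$ at $t=t_0$. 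Swapping the roles of $t$ and $t_0$ (using a first eigenfunction of $g(t)$, whose $H^1$ norm is controlled via the coarse bound of Theorem \ref{theorem:yy}) yields the matching inequality.

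\textbf{Step 2: Tangential upper bound.} Fix $u\in V_{g(t_0)}$ with $\int_{\p M} u^2\, ds_{t_0}=1$ and define
\[
\Phi(t) = \frac{\int_M |\n u|^2_{g(t)}\, da_t}{\int_{\p M}(u-\bar u(t))^2\, ds_t}, \qquad \bar u(t) = L_{g(t)}(\p M)^{-1}\int_{\p M} u\, ds_t,
\]
noting that $\bar u(t_0)=0$ since $u$ is a first eigenfunction at $t_0$. The variational characterization of $\sigma_1$ gives $\sigma_1(t)\le \Phi(t)$ near $t_0$, with equality at $t_0$. Hence $\sigma_1-\Phi$ has a local maximum at $t_0$ and, since $\Phi$ is smooth in $t$ (the family $g(t)$ is smooth and $u$ is fixed), whenever $\dot\sigma_1(t_0)$ exists we must have $\dot\sigma_1(t_0)=\dot\Phi(t_0)$.

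\textbf{Step 3: Computing $\dot\Phi(t_0)$.} Using the standard variation formulas
\[
\frac{d}{dt}g^{ij} = -h^{ij}, \quad \frac{d}{dt}\,da_t = \tfrac12(\text{tr}_g h)\, da_t, \quad \frac{d}{dt}\,ds_t = \tfrac12 h(T,T)\, ds_t,
\]
together with the pointwise identity $-h_{ij}u^iu^j + \tfrac12(\text{tr}_g h)|\n u|^2 = -\la h,\tau(u)\ra$, differentiation at $t_0$ with $u$ fixed gives
\[
\frac{d}{dt}\bigg|_{t_0}\int_M |\n u|^2_{g(t)}\, da_t = -\int_M \la h,\tau(u)\ra\, da_{t_0}.
\]
For the denominator, $\bar u(t_0)=0$ and $\int_{\p M} u\, ds_{t_0}=0$ combine to give
\[
\frac{d}{dt}\bigg|_{t_0}\int_{\p M}(u-\bar u(t))^2\, ds_t = -2\dot{\bar u}(t_0)\int_{\p M} u\, ds_{t_0} + \tfrac12\int_{\p M} u^2\, h(T,T)\, ds_{t_0} = \tfrac12\int_{\p M} u^2\, h(T,T)\, ds_{t_0}.
\]
Applying the quotient rule at $t=t_0$ (where the denominator is $1$ and the numerator is $\sigma_1(t_0)$) yields $\dot\Phi(t_0)=Q_h(u)$, completing the proof.

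\textbf{Main obstacle.} The principal subtlety is that $\sigma_1(t_0)$ may have multiplicity greater than one, so there is no canonical choice of eigenfunction $u$, and analytic perturbation theory would in general only give one-sided derivatives for the individual branches of the eigenvalue cluster. The argument above sidesteps this by working with a single fixed $u$ and using the fact that the test-function bound $\Phi(t)$ touches $\sigma_1(t)$ tangentially at $t_0$; the conclusion is that whenever $\dot\sigma_1(t_0)$ exists, the functional $Q_h$ is automatically constant over the unit $L^2$-sphere in $V_{g(t_0)}$.
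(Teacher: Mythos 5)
Your proof is correct and follows essentially the same route as the paper: the Lipschitz bound comes from inserting a normalized first eigenfunction at one time as a test function at the other, and the derivative formula comes from the tangency at $t_0$ of the smooth test-function bound $\Phi(t)$ --- the paper phrases this as $F(t)=N(t)-\sigma_1(t)D(t)\ge 0$ with $F(t_0)=0$, hence $\dot F(t_0)=0$, which is just the cleared-denominator version of your quotient argument. Your computation of $\dot\Phi(t_0)$ matches the paper's, with your $\dot{\bar u}(t_0)$ term vanishing for the same reason the paper's $\dot u_0$ terms cancel via the eigenfunction equation.
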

\begin{proof}
To see that $\sigma_1(t)$ is Lipschitz for small t, let $t_1\neq t_2$ and assume without
loss of generality that $\sigma_1(t_1)\leq\sigma_1(t_2)$. Now let $u$ be a first Steklov
eigenfunction for $g(t_1)$ normalized so that $\int_{\partial M}u^2\ ds_{t_1}=1$. It then follows
easily from the fact that the path $g(t)$ is smooth that
\[ \left|\int_M|\nabla^{t_1}u|^2\ da_{t_1}-\int_M|\nabla^{t_2}u|^2\ da_{t_2}\right|\leq c|t_1-t_2|
\]
and
\[ \left|\int_{\partial M}u^2\ ds_{t_1}-\int_{\partial M}u^2\ ds_{t_2}\right|\leq c|t_1-t_2|,\ \ 
|\bar{u}(t_1)-\bar{u}(t_2)|\leq c|t_1-t_2|
\]
where $\bar{u}(t)$ denotes the average of $u$ over $\partial M$ with respect to the metric
$g(t)$. Therefore we have
\[ |\sigma_1(t_1)-\sigma_1(t_2)|=\sigma_1(t_2)-\sigma_1(t_1)\leq \frac{\int_M|\nabla^{t_2}u|^2\ da_{t_2}}{\int_{\partial M}(u-\bar{u}(t_2))^2\ ds_{t_2}}-\int_M|\nabla^{t_1}u|^2\ da_{t_1}\leq c|t_1-t_2|
\]
and $\sigma_1(t)$ is Lipschitz.

Choose $u_0 \in V_{g(t_0)}$ and a family of functions $u(t)$ such that $u(t_0)=u_0$ and 
$\int_{\p M} u(t) \; ds_t=0$; e.g. $u(t)=u_0 - \frac{\int_{\p M} u_0 \; ds_t}{\int_{\p M} \, ds_t}$.
Let
\[
       F(t)=\int_M |\n u(t)|^2 \; da_t -\sigma_1(t) \int_{\p M} u^2(t) \; ds_t.
\]
Then $F(t) \geq 0$, and $F(t_0)=0$, and we have $\dot{F}(t_0)=0$.
Differentiating $F$ with respect to $t$ at $t=t_0$ we therefore obtain
\begin{align*}
    \int_M [\, 2 & \la \n u_0 , \n \dot{u}_0 \ra-\la  du_0 \otimes du_0 
    -\frac{1}{2} |\n u_0|^2 g , h \ra \,] \; da_{t_0} \\
    & = \dot{\sigma_1}(t_0) \int_{\p M}  u_0^2 \; ds_{t_0} 
    + \sigma_1(t_0) \int_{\p M} [\,2u_0  \dot{u}_0 + \frac{1}{2}u_0^2 h(T,T) \,] \; ds_{t_0}.
\end{align*}
Since $u_0$ is a first Steklov eigenfunction, we have
\[
     \int_M \la \n u_0, \n \dot{u}_0 \ra \; da_{t_0}  = \sigma_1(t_0) \int_{\p M} u_0 \, \dot u_0    \; ds_{t_0}.    
\]
Using this, and if we normalize $u_0$ so that $||u_0||_{L^2(\p M)}=1$, we have
\[
      \dot{\sigma}_1(t_0)=-\int_M \la du_0 \otimes du_0 -\frac{1}{2} |\n u_0|^2 g , h \ra \; da_{t_0} 
         -\frac{\sigma_1(t_0)}{2} \int_{\p M} u_0^2 h(T,T) \; ds_{t_0} = Q_{h}(u_0).
\]
\end{proof}
\begin{proposition} \label{prop:extremal}
If $M$ is a surface with boundary, and $g_0$ is a metric on $M$ with
\[
      \sigma_1(g_0)L_{g_0}(\p M)=\max_g \sigma_1(g) L_g(\p M)
\] 
where the max is over all smooth metrics on $M$. Then there exist independent first eigenfunctions $u_1, \ldots, u_n$ which give a branched conformal minimal immersion 
$u=(u_1, \ldots, u_n)$ of $M$ into the unit ball $B^n$ such that u(M) is a free boundary solution, and up to rescaling of the metric $u$ is an isometry on $\p M$.
\end{proposition}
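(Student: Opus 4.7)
\emph{Proof proposal.} The plan is to translate the extremality of $g_0$ into an infinite-dimensional family of first-order inequalities for $\s_1$ along metric deformations, and then to extract via a Hahn--Banach separation a finite collection of first eigenfunctions whose stress-energy tensors sum to zero and whose squared $\R^n$-norm is constant on $\p M$. Normalizing $L_{g_0}(\p M)=1$ by scale invariance, for any $h\in S^2(M)$ with $\int_{\p M}h(T,T)\,ds_0=0$ the path $g_0+th$ preserves boundary length to first order, so maximality forces $\limsup_{t\to 0^+}t^{-1}(\s_1(t)-\s_1(0))\le 0$. The preceding lemma, together with standard perturbation theory for the Lipschitz function $\s_1(t)$, identifies this one-sided derivative with $\min\{Q_h(u):u\in V_{g_0},\ \|u\|_{L^2(\p M)}=1\}$, so for every such $h$ there exists a unit-norm $u\in V_{g_0}$ with $Q_h(u)\leq 0$, equivalently
\[
\int_M\la\tau(u),h\ra\,da_0+\tfrac{\s_1}{2}\int_{\p M}u^2\,h(T,T)\,ds_0\geq 0.
\]

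Define $\Phi:V_{g_0}\to S^2(M)^*$ by $\la\Phi(u),h\ra=\int_M\la\tau(u),h\ra\,da_0+\tfrac{\s_1}{2}\int_{\p M}u^2\,h(T,T)\,ds_0$. Since $\Phi$ is quadratic in $u$, its image on the unit $L^2(\p M)$-sphere of $V_{g_0}$ lies in a finite-dimensional subspace of $S^2(M)^*$, so its convex hull $K$ is closed. If $0$ failed to lie in the restriction of $K$ to the subspace $Y=\{h\in S^2(M):\int_{\p M}h(T,T)\,ds_0=0\}$, Hahn--Banach would produce $h\in Y$ with $\la\Phi(u),h\ra<0$ uniformly in $u$, contradicting the first-order condition. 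Thus there exist $\alpha_1,\dots,\alpha_N\geq 0$ summing to $1$ and $v_1,\dots,v_N\in V_{g_0}$ of unit boundary $L^2$-norm such that $\sum_j\alpha_j\la\Phi(v_j),h\ra=c'\int_{\p M}h(T,T)\,ds_0$ for all $h\in S^2(M)$ and some constant $c'$. Testing against $h$ supported in $M\setminus\p M$ gives $\sum_j\alpha_j\tau(v_j)=0$ on $M$; testing against boundary perturbations gives $\sum_j\alpha_j v_j^2\equiv c$ on $\p M$ for a positive constant $c$. Setting $u_j=\sqrt{\alpha_j}\,v_j$ and composing the vector-valued map $(u_1,\dots,u_N)$ with a suitable orthogonal transformation of $\R^N$ yields independent first eigenfunctions $u_1,\dots,u_n$ satisfying $\sum_j\tau(u_j)=0$ on $M$ and $\sum_j u_j^2=c$ on $\p M$.

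The identity $\sum_j du_j\otimes du_j=\tfrac12\bigl(\sum_j|\n u_j|^2\bigr)g_0$ is precisely the weak conformality of $u=(u_1,\dots,u_n):M\to\R^n$; combined with harmonicity of each $u_j$ (a Steklov eigenfunction equals its own harmonic extension), this exhibits $u$ as a possibly branched conformal minimal immersion. The boundary identity $|u|^2=c$, together with subharmonicity of $|u|^2$ on $M$, forces $|u|^2\leq c$ inside $M$; after rescaling we may take $c=1$, so $u(M)\subset B^n$ with $u(\p M)\subset S^{n-1}$. The Steklov equation $\p_\eta u_j=\s_1 u_j$ on $\p M$ yields $\p_\eta u=\s_1 u$, so the image conormal is parallel to the outward unit normal of $S^{n-1}$ at $u$, which is the free boundary condition. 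Finally, conformality at the boundary gives $|du(T)|^2=|du(\eta)|^2=\s_1^2$ on $\p M$, so $u^*\delta=\s_1^2\,g_0$ there, and after rescaling the metric (or equivalently the ball) $u$ becomes an isometry on $\p M$.

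The main technical difficulty is the Hahn--Banach step, where one must reconcile the infinite-dimensional space of metric deformations with the finite-dimensional eigenspace $V_{g_0}$; the crucial observation making this work is that $\Phi$ is quadratic in $u$ and hence takes values in a finite-dimensional subspace of $S^2(M)^*$, so the convex hull $K$ is automatically closed and no weak-$*$ compactness argument is required. A secondary delicate point is the identification of $\min_u Q_h(u)$ with the upper right derivative of $\s_1(t)$, which requires standard but non-trivial stability results for the spectral projection onto $V_{g_0}$ under smooth perturbation of the metric and must accommodate possible multiplicity of $\s_1$.
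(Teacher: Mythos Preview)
Your proof is correct and follows essentially the same route as the paper: compute the first variation $Q_h$, use maximality to conclude that for each $h$ with $\int_{\p M}h(T,T)\,ds=0$ there is a unit eigenfunction with $Q_h(u)\leq 0$, and then run a Hahn--Banach separation in a finite-dimensional space to produce eigenfunctions with $\sum_j\tau(u_j)=0$ on $M$ and $\sum_j u_j^2$ constant on $\p M$. The only notable difference is in how the first-order inequality is obtained: you invoke analytic perturbation theory to identify the right derivative of $\sigma_1(t)$ with $\min_u Q_h(u)$, whereas the paper avoids this by using only that $\sigma_1(t)$ is Lipschitz, picking sequences $t_j\to 0^{\pm}$ at which $\dot\sigma_1(t_j)$ exists with the required sign, and passing eigenfunctions $u_j\in V_{g(t_j)}$ to a $C^2$ limit in $V_{g_0}$. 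Your appeal is legitimate (the Dirichlet-to-Neumann family is analytic in $t$), but the paper's argument is slightly more elementary and sidesteps the need to justify the $\min$ formula you flag at the end.
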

The outline of the proof of the proposition follows an argument of Nadirashvili (\cite{N}, Theorem 5) and El Soufi and Ilias (\cite{EI2}, Theorem 1.1).

Consider the Hilbert space $\mathcal{H}=L^2(S^2(M)) \times L^2(\p M)$, the space of pairs of $L^2$ symmetric $(0,2)$-tensor fields on $M$ and $L^2$ functions on boundary of $M$.

\begin{lemma} \label{lemma:indefinite}
Assume $g_0$ satisfies the conditions of Proposition \ref{prop:extremal}. For any $(\omega,f) \in \mathcal{H}$ with $\int_{\p M} f \;ds=0$
there exists $u \in V_{g_0}$ with $||u||_{L^2(\p M)}=1$ such that $\la (\omega,\frac{\sigma_1(g_0)}{2}f), (\tau(u),u^2) \ra_{L^2}=0$.
\end{lemma}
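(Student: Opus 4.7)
The plan is to argue by contradiction, following the Nadirashvili / El Soufi--Ilias strategy adapted to the Steklov setting. Suppose the conclusion fails, so there exists $(\omega, f) \in \mathcal{H}$ with $\int_{\p M} f\,ds = 0$ such that
\[
\Phi(u) := \int_M \la \omega, \tau(u)\ra\, da + \tfrac{\sigma_1(g_0)}{2}\int_{\p M} f u^2\, ds
\]
is nonzero for every $u \in V_{g_0}$ with $\|u\|_{L^2(\p M)} = 1$. Because $\Phi$ is even in $u$ and the quotient of the unit sphere of $V_{g_0}$ by $\pm 1$ is connected (a point if $\dim V_{g_0}=1$, otherwise $\mathbb{RP}^{d-1}$), continuity of $\Phi$ forces a fixed sign; replacing $(\omega,f)$ by its negative if necessary, I may assume $\Phi \geq c_0 > 0$ uniformly. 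By $L^2$--approximation in $\mathcal{H}$ together with compactness of the unit sphere in the finite-dimensional $V_{g_0}$, it suffices to treat smooth $(\omega, f)$.

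The next step is to realize these data as a genuine metric variation. The key observation is that in dimension two the stress--energy tensor $\tau(u) = du \otimes du - \tfrac{1}{2}|\n u|^2 g$ is pointwise traceless, so any pure conformal piece of a symmetric tensor $h$ is annihilated by the bulk pairing $\int_M \la \tau(u), h\ra\, da$. I pick a smooth function $\psi$ on $M$ with $\psi|_{\p M} = \omega(T,T) - f$ and set $h := -\omega + \psi\, g_0$. Then $h(T,T)|_{\p M} = -f$ while $\int_M \la \tau(u), h\ra\, da = -\int_M \la \tau(u), \omega\ra\, da$ for every $u$, and a direct substitution into the definition of $Q_h$ produces $Q_h(u) = \Phi(u) \geq c_0$ on the unit sphere of $V_{g_0}$. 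Moreover $\dot L(0) = \tfrac{1}{2}\int_{\p M} h(T,T)\, ds = -\tfrac{1}{2}\int_{\p M} f\, ds = 0$ by the hypothesis.

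Finally I contradict maximality of $g_0$ along the path $g(t) = g_0 + t h$. The previous lemma makes $\sigma_1(t) := \sigma_1(g(t))$ Lipschitz, hence absolutely continuous, with $\dot\sigma_1(t) = Q_h^{g(t)}(u)$ at every point of differentiability and every unit $u \in V_{g(t)}$. Continuity of the spectral projection onto the cluster of eigenvalues emerging from $\sigma_1(g_0)$ (the cluster is confined above by Rayleigh quotients of a basis of $V_{g_0}$ and isolated below from higher eigenvalues by the spectral gap at $g_0$) forces $V_{g(t)}$ to lie close to $V_{g_0}$ for small $t$; combined with the smooth $t$--dependence of the quadratic form $Q_h^{g(t)}$, this yields $\dot\sigma_1(t) \geq c_0/2$ for almost every sufficiently small $t > 0$. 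Integrating and combining with $\dot L(0) = 0$ gives
\[
\sigma_1(t) L_{g(t)}(\p M) \geq \sigma_1(g_0) L_{g_0}(\p M) + \tfrac{c_0 L}{2}\, t + o(t),
\]
strictly exceeding $\sigma_1(g_0) L_{g_0}(\p M)$ for small $t > 0$, which contradicts the maximality of $g_0$. The main technical point is the spectral continuity of the degenerate eigenspace cluster; this is standard perturbation theory for the smoothly varying self-adjoint Steklov operators, and it is the only place beyond the previous lemma where careful analysis is required.
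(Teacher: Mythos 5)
Your proof is correct, but it is organized differently from the paper's, and the comparison is worth recording. The paper argues directly rather than by contradiction: it approximates $(\omega,f)$ by pairs $(h_i,h_i(T,T))$ (modifying $h_i$ in a thin collar so its boundary trace matches $f$, at small $L^2$ cost), runs the length-normalized path $\tfrac{L_{g_0}}{L_{g_0+th_i}}(g_0+th_i)$ on \emph{both} sides of $t=0$, and extracts sequences $t_j\to 0^{\mp}$ with $\dot\sigma_1(t_j)\gtrless 0$; compactness of the eigenfunctions then yields two limits $u_{\pm}\in V_{g_0}$ at which the pairing is $\geq 0$ and $\leq 0$ respectively, the zero being found (implicitly) by connectedness of the eigensphere. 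You instead run the connectedness/evenness argument first, in contrapositive form, which reduces everything to a one-sided variation and a single uniform sign; this buys a cleaner endgame ($\sigma_1(t)L(t)$ strictly increases for small $t>0$) at the price of needing the lower bound $\dot\sigma_1(t)\geq c_0/2$ for a.e.\ small $t$ rather than just at a sequence of points. That bound is fine: it reduces, exactly as in the paper's compactness step, to the fact that normalized first eigenfunctions of $g(t)$ converge in $C^2$ to first eigenfunctions of $g_0$ (using that $\sigma_1(t)\to\sigma_1(0)$ by the Lipschitz bound), whether one phrases this via cluster spectral projections or via elliptic estimates and Arzel\`a--Ascoli. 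Your construction $h=-\omega+\psi g_0$ with $\psi|_{\p M}=\omega(T,T)-f$ is also a genuine improvement in transparency: it exploits the pointwise tracelessness of $\tau(u)$ in dimension two to decouple the bulk pairing from the boundary trace exactly, where the paper only achieves this approximately by a collar modification. The only caveats are minor: the construction requires the prior reduction to smooth $(\omega,f)$ so that $\omega(T,T)$ is defined on $\p M$ (you do this, and the reduction preserves the uniform positivity $\Phi\geq c_0/2$ by compactness of the eigensphere), and one should note that subtracting the mean from the approximating $f_i$ to restore $\int_{\p M}f_i\,ds=0$ perturbs $\Phi$ only by $o(1)$.
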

\begin{proof}
Let $(\omega,f) \in \mathcal{H}$, and assume that $\int_{\p M} f \; ds=0$. 
Since $C^\infty(S^2(M)) \times C^\infty(M)$ is dense in $L^2(S^2(M)) \times L^2(M))$, we can approximate $(\omega, f)$ arbitrarily closely in $L^2$ by a smooth pair $(h,\tilde{f})$ with $\int_{\p M} \tilde{f} \; ds =0$. We may redefine $h$ in a neighborhood of the boundary to a smooth tensor whose restriction to $\p M$ is equal to the function $\tilde{f}$, and such that the change in the $L^2$ norm is arbitrarily small. In this way, we obtain a smooth sequence $h_i$ with $\int_{\p M} h_i(T,T) \; ds=0$, such that $(h_i, h_i(T,T)) \rightarrow (\omega,f)$ in $L^2$.

Let $g(t)=\frac{L_{g_0}(\p M)}{L_{g_0+th_i}(\p M)} (g_0+ th_i)$. 
Then $g(0)=g_0$, $L_{g(t)}(\p M)=L_{g_0}(\p M)$, and since
\[
      \frac{d}{dt}\Big|_{t=0} L_{g_0+th_i}(\p M)=\int_{\p M} h_i(T,T) \; ds=0
\]
we have $\frac{dg}{dt}\big|_{t=0} =h_i$.
Given any $\varepsilon >0$, by the fundamental theorem of calculus, 
\[
     \int_{-\varepsilon}^0 \dot{\sigma}_1(t) \; dt = \sigma_1(0) - \sigma_1(-\varepsilon) \geq 0
\]
by the assumption on $g_0$. Therefore there exists $t$, $-\varepsilon < t < 0$, such that 
$\dot{\sigma}_1(t)$ exists and $\dot{\sigma}_1(t) \geq 0$. Let $t_j$ be a sequence of points with $t_j<0$ and $t_j \rightarrow 0$, such that $\dot{\sigma}_1(t_j) \geq 0$. Choose $u_j \in V_{g(t_j)}$ with 
$||u_j||_{L^2(\p M)}=1$. Elliptic boundary value estimates (\cite{M}) give bounds on $u_j$ and its derivatives up to the boundary, thus after passing to a subsequence $u_j$ converges in $C^2(M)$ to an eigenfunction $u^{(i)}_- \in V_{g_0}$ with $||u^{(i)}_-||_{L^2(\p M)}=1$.
Since $Q_{h_i}(u_j)=\dot{\sigma}_1(t_j) \geq 0$, it follows that $Q_{h_i}(u^{(i)}_-) \geq 0$. By a similar argument, taking a limit from the right, there exists $u^{(i)}_+ \in V_{g_0}$ with $||u^{(i)}_+||_{L^2(\p M)}=1$, such that $Q_{h_i}(u^{(i)}_+) \leq 0$.

As above, after passing to subsequences, $u^{(i)}_+ \rightarrow u_+$ and $u^{(i)}_- \rightarrow u_-$ in $C^2(M)$, and
\begin{align*}
     -\la (\omega,\frac{\sigma_1(g_0)}{2}f), (\tau(u_+),u_+^2) \ra_{L^2} 
     &= \lim_{i \rightarrow \infty} Q_{h_i}(u^{(i)}_+)\leq 0 \\
     -\la (\omega,\frac{\sigma_1(g_0)}{2}f), (\tau(u_-),u_-^2) \ra_{L^2}  
     &= \lim_{i \rightarrow \infty} Q_{h_i}(u^{(i)}_-)\geq 0.
\end{align*}
\end{proof}
\begin{proof}[Proof of Proposition \ref{prop:extremal}]
Without loss of generality, rescale the metric $g_0$ so that $\sigma_1(g_0)=1$.
Let $K$ be the convex hull in $\mathcal{H}$ of 
\[
        \{ \, (\tau(u), u^2) \, : \, u \in V_{g_0} \}.
\]        
We claim that $(0,1) \in K$.
If $(0,1) \notin K$, then since $K$ is a convex cone which lies in a finite dimensional subspace, the Hahn-Banach theorem implies the existence of $(\omega,f) \in \mathcal{H}$ that separates $(0,1)$ from $K$; in particular such that
\begin{align*}
      \la (\omega,\frac{1}{2}f), (0,1) \ra_{L^2} & >0, \quad \mbox{and} \\
      \la (\omega,\frac{1}{2}f), (\tau(u),u^2) \ra_{L^2} & <0 \quad \mbox{ for all } u \in V_{g_0} \setminus \{0\}.
\end{align*}
Let $\tilde{f}=f-\frac{\int_{\p M} f \; ds}{L_{g_0}(\p M)}$. Then, $\int_{\p M} \tilde{f} \;ds=0$, and
\begin{align*}
      \la (\omega, \frac{1}{2}\tilde{f}), (\tau(u),u^2) \ra_{L^2}
      &= \int_M \la \omega, \tau(u) \ra \; da + \frac{1}{2}\int_{\p M} \tilde{f} u^2 \; ds \\
      &=  \int_M \la \omega, \tau(u) \ra \; da + \frac{1}{2}\int_{\p M} f u^2 \; ds 
            - \frac{\int_{\p M} f}{2L_{g_0} (\p M)} \, \int_{\p M} u^2 \; ds \\
      &= \la (\omega,\frac{1}{2}f), (\tau(u),u^2) \ra_{L^2} 
          -\frac{\int_{\p M} u^2 \; ds}{L_{g_0}(\p M)} \; \la (\omega,\frac{1}{2}f),(0,1) \ra_{L^2} \\
      &< 0.
\end{align*}
This contradicts Lemma \ref{lemma:indefinite}. Therefore, $(0,1) \in K$, and since $K$ is contained in a finite dimensional subspace, there exist independent eigenfunctions 
$u_1, \ldots, u_n \in V_{g_0}$ such that
\begin{align*}
     0 & = \sum_{i=1}^n \tau(u_i) 
     = \sum_{i=1}^n (du_i \otimes du_i -\frac{1}{2} |\n u_i|^2 g_0) \quad \mbox{ on } M \\
     1& = \sum_{i=1}^n u_i^2  \quad \mbox{ on } \p M
\end{align*}
Thus $u=(u_1, \ldots, u_n): M \rightarrow B^n$ is a conformal minimal immersion. Since $u_i$ is a first Steklov eigenfunction and $\sigma_1(g_0)=1$, we have $\fder{u_i}{\eta}=u_i$ on $\p M$ for $i=1, \ldots, n$. Therefore, 
\[
      \left|\fder{u}{\eta}\right|^2= |u|^2=1 \qquad \mbox{on } \p M,
\]
and since $u$ is conformal, $u$ is an isometry on $\p M$.      
\end{proof}

We now consider the question of the existence and regularity of maximizing metrics. The strategy
for obtaining such results involves first proving bounds on the conformal structure and boundary
lengths for metrics which are nearly maximizing. We succeeded in doing this in the special case 
of genus $0$ surfaces in Section \ref{section:properties}. Here we assume that such bounds have been obtained and we proceed to construct a smooth maximizing metric under this assumption. We let $\sigma^*(\gamma,k)$ denote the supremum of $\sigma_1L$ taken over all smooth metrics
on a surface $M$ of genus $\gamma$ with $k\geq 1$ boundary components.

Since Weinstock's theorem handles the case of $\gamma=0$ and $k=1$, we assume that 
$\gamma+k>1$. If we choose a conformal class of metrics on $M$, then there is a unique
(normalized) constant curvature metric $g_0$ in that conformal class with curvature either $K=0$ or $K=-1$ such that the boundary components are geodesics. This may be obtained by
taking a reflection invariant constant curvature metric on the closed surface $\tilde{M}$ gotten by 
doubling $M$. The Euler characteristic of $\tilde{M}$ is $2(2-2\gamma-k)$ (twice that of $M$),
and this is negative except when $\gamma=0$ and $k=2$. Thus the curvature of $g_0$
is $-1$ except in this case, and in this case we normalize $g_0$ to have area $1$ to make it
unique. With this normalization we have a unique constant curvature metric in every conformal
class of metrics on $M$.

We are going to do a canonical smoothing procedure for measures on $\p M$. For $\epsilon>0$
we let $K_\epsilon(x,y)$ for $x,y\in\p M$ denote the heat kernel for the metric (arclength) $ds_0$ induced on $\p M$ by $g_0$. Given a probability measure $\mu$ on $\p M$ we let $\mu_\epsilon$
be the smoothed measure given by
\[ \mu_\epsilon=\left(\int_{\p M}K_\epsilon(x,y)d\mu(y)\right)\ ds_0.
\]
Thus $\mu_\epsilon$ is a probability measure with a smooth density function relative to $ds_0$.

Given $g_0$, $\mu$, and $\epsilon$, we let $\sigma_1(g_0,\mu,\epsilon)$ be the first 
Steklov eigenvalue for a metric $g$ in the conformal class of $g_0$ with boundary measure
given by $\mu_\epsilon$. We have the following result.
\begin{proposition}\label{confclass} Given $g_0$ and $\epsilon>0$, there exists a probability measure $\nu$ on $\p M$ such that
\[ \sigma_1(g_0,\nu,\epsilon)=\sup_\mu \sigma_1(g_0,\mu,\epsilon).
\]
Moreover, we have
\[ \sigma^*(\gamma,k)=\sup_{(g_0,\mu,\epsilon)}\sigma_1(g_0,\mu,\epsilon).
\]
\end{proposition}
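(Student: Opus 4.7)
The proposition has two statements: attainment of the supremum for fixed $(g_0,\epsilon)$, and agreement of the iterated supremum with $\sigma^*(\gamma,k)$. I would prove the first by weak-$*$ compactness and the second by a direct two-sided inequality.

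For the first assertion, I would exhibit $\mu\mapsto\sigma_1(g_0,\mu,\epsilon)$ as a continuous function on the weak-$*$ compact space of probability measures on $\partial M$. The key is that convolution against the heat kernel $K_\epsilon$ (which is smooth and, on each component of $\partial M$, strictly positive for $\epsilon>0$) is a continuous map from the weak-$*$ topology on probability measures into the $C^\infty$ topology on density functions relative to $ds_0$: if $\mu_i\to\mu$ weak-$*$, then for each $x\in\partial M$ the density $\int K_\epsilon(x,y)\,d\mu_i(y)$ converges to $\int K_\epsilon(x,y)\,d\mu(y)$, and standard derivative bounds on $K_\epsilon(x,y)$ give the same convergence for all derivatives uniformly in $x$. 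The Rayleigh quotient characterization of $\sigma_1(g_0,\mu,\epsilon)$ (whose numerator depends only on $g_0$ and whose denominator integrates against $\mu_\epsilon$) then shows $\sigma_1(g_0,\cdot,\epsilon)$ is continuous, and Banach-Alaoglu produces the maximizer $\nu$.

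For the second assertion, the upper bound $\sigma_1(g_0,\mu,\epsilon)\leq\sigma^*(\gamma,k)$ follows from conformal invariance: given the smooth probability measure $\mu_\epsilon$ on $\partial M$ (after an infinitesimal perturbation of $\mu$ to arrange that $\mu_\epsilon$ is strictly positive on every boundary component, which changes $\sigma_1(g_0,\mu,\epsilon)$ by an arbitrarily small amount by the continuity above), one produces a smooth metric $g$ in the conformal class of $g_0$ whose boundary arclength measure equals $\mu_\epsilon$. Then $L_g(\partial M)=1$ and conformal invariance of the two-dimensional Dirichlet energy give $\sigma_1(g_0,\mu,\epsilon)=\sigma_1(g)L_g(\partial M)\leq\sigma^*(\gamma,k)$. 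For the reverse inequality, given any smooth metric $g$ on $M$ normalized so that $L_g(\partial M)=1$, let $g_0$ be the canonical constant curvature representative of its conformal class and set $\mu=ds_g$, which is a smooth probability measure. Then $\mu_\epsilon\to\mu$ in $C^\infty$ as $\epsilon\to 0^+$ since $K_\epsilon$ is an approximation to the identity on smooth functions, and the same continuity argument yields $\sigma_1(g_0,\mu,\epsilon)\to\sigma_1(g_0,\mu)=\sigma_1(g)L_g(\partial M)=\sigma_1(g)$. Taking the supremum over $g$ proves $\sup_{(g_0,\mu,\epsilon)}\sigma_1(g_0,\mu,\epsilon)\geq\sigma^*(\gamma,k)$.

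The main technical subtlety is that $\mu_\epsilon$ may vanish on components of $\partial M$ not charged by $\mu$, so that the Rayleigh quotient does not directly control a uniform $L^2(\partial M,\mu_\epsilon)$ norm. The resolution is that a small perturbation of $\mu$ (distributing an arbitrarily small amount of mass on each component) makes $\mu_\epsilon$ strictly positive everywhere while moving $\sigma_1(g_0,\mu,\epsilon)$ by an arbitrarily small amount, so it suffices to take the supremum over such non-degenerate $\mu$, in which case standard Steklov eigenvalue theory applies without modification.
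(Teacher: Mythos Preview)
Your proposal is correct and follows essentially the same line as the paper's proof: weak-$*$ compactness of probability measures plus continuity of the smoothing $\mu\mapsto\mu_\epsilon$ into $C^l$ for the first assertion, and the heat-equation limit $\sigma_1(g_0,\mu,\epsilon)\to\sigma_1(g)$ for the lower bound in the second. The paper dispatches the upper bound $\sigma_1(g_0,\mu,\epsilon)\leq\sigma^*(\gamma,k)$ with the phrase ``the reverse inequality is clear by definition,'' whereas you spell it out and, in doing so, notice a genuine subtlety the paper glosses over: when $\mu$ fails to charge some boundary component, $\mu_\epsilon$ vanishes identically there (the heat kernel on $\partial M$ does not couple distinct components), so there is no smooth metric on $M$ with arclength measure $\mu_\epsilon$. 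Your perturbation fix---redistribute an arbitrarily small amount of mass onto each component and invoke the continuity already established---is the right way to close this, and it is an honest addition rather than a deviation from the paper's argument.
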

\begin{proof} The existence of a maximizer follows from straightforward compactness
arguments since the space of probability measures is compact in the weak* topology and
the map $\mu\to\mu_\epsilon$ is continuous into smooth measures with the $C^l$ topology
for any integer $l$. 

To prove the second assertion observe that if $g$ is any smooth metric with boundary
length $1$, we may let $g_0$ be the associated constant curvature metric and $\mu$ the
arclength measure for $g$. From convergence of the heat equation we have
\[ \lim_{\epsilon\to 0}\sigma_1(g_0,\mu,\epsilon)=\sigma_1(g).
\]
Therefore 
\[ \sigma_1(g)\leq \sup_{(g_0,\mu,\epsilon)}\sigma_1(g_0,\mu,\epsilon),
\]
and the result follows since the reverse inequality is clear by definition.
\end{proof}

We let $\sigma^*(g_0,\epsilon)$ denote the maximum taken over measures $\mu$. In order to vary the conformal class we must understand how things change under diffeomorphisms. To this effect we have the following.
\begin{proposition}\label{diffeo} Given a smooth diffeomorphism $F$ of $M$ we have 
$\sigma_1(F^*g_0,(F^{-1})_\#(\mu),\epsilon)=\sigma_1(g_0,\mu,\epsilon)$. In particular
we have $\sigma^*(F^*g_0,\epsilon)=\sigma^*(g_0,\epsilon)$.
\end{proposition}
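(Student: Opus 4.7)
The plan is to show that everything in sight transforms functorially under the diffeomorphism $F$, so the two Rayleigh quotients coincide. Since the Steklov Rayleigh quotient depends only on the conformal class of the metric (in dimension two) and on the boundary measure, it suffices to exhibit specific metrics in each conformal class whose boundary measures are the prescribed smoothed ones, and then check that $F$ provides an isometry between the two Riemannian surfaces so constructed.

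The first step is to verify that the canonical smoothing commutes with pullback. Since $g_0$ and $F^*g_0$ induce boundary metrics that differ by the pullback $F|_{\partial M}$, the heat kernel enjoys the identity
\[
K^{F^*g_0}_\epsilon(x,y) = K^{g_0}_\epsilon(F(x), F(y)),
\]
which is just the naturality of the heat equation under isometries. Combined with the change-of-variables formula for the pushforward measure $(F^{-1})_\#\mu$, a direct computation then gives
\[
\bigl((F^{-1})_\#\mu\bigr)_\epsilon = F^*(\mu_\epsilon),
\]
where on the left the smoothing is done with the $F^*g_0$-heat kernel and on the right we take the pullback of the smooth measure $\mu_\epsilon$ (i.e.\ if $\mu_\epsilon = \rho_\epsilon \, ds_0$, then $F^*(\mu_\epsilon) = (\rho_\epsilon\circ F)\, ds_{F^*g_0}$).

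The second step is to realize each side as the Steklov eigenvalue of an honest metric. Choose $g = \lambda^2 g_0$ in the conformal class of $g_0$ whose boundary arclength measure is $\mu_\epsilon$; this is possible since $\mu_\epsilon$ has a smooth positive density relative to $ds_0$. Then $F^*g = (\lambda\circ F)^2 F^*g_0$ lies in the conformal class of $F^*g_0$, and its boundary arclength measure is
\[
(\lambda\circ F)\, ds_{F^*g_0} = (\lambda\circ F)\, F^*ds_0 = F^*(\lambda \, ds_0) = F^*(\mu_\epsilon) = \bigl((F^{-1})_\#\mu\bigr)_\epsilon.
\]
Hence $F^*g$ is an admissible metric computing $\sigma_1(F^*g_0, (F^{-1})_\#\mu, \epsilon)$, while $g$ computes $\sigma_1(g_0,\mu,\epsilon)$.

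The third and last step is that $F\colon (M, F^*g) \to (M,g)$ is by definition an isometry, and isometries preserve the entire Steklov spectrum (they identify harmonic extensions and boundary $L^2$ inner products on the nose). This yields the equality $\sigma_1(F^*g_0,(F^{-1})_\#\mu,\epsilon) = \sigma_1(g_0,\mu,\epsilon)$. The second assertion then follows by taking the supremum over probability measures $\mu$, since $\mu \mapsto (F^{-1})_\#\mu$ is a bijection on the space of probability measures on $\partial M$. There is no real obstacle; the only point requiring care is the commutativity $((F^{-1})_\#\mu)_\epsilon = F^*\mu_\epsilon$, which depends on the boundary heat kernel being built out of the metric $g_0$ in a natural way, so that diffeomorphisms act covariantly on both the kernel and the measure.
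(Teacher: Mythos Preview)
Your proof is correct and follows essentially the same approach as the paper: both hinge on the naturality of the boundary heat kernel, $K^{F^*g_0}_\epsilon(x,y)=K^{g_0}_\epsilon(Fx,Fy)$, which yields $((F^{-1})_\#\mu)_\epsilon=(F^{-1})_\#(\mu_\epsilon)$. The only cosmetic difference is that the paper matches Rayleigh quotients directly via $v=u\circ F$, whereas you package the same computation by choosing a concrete conformal representative and invoking isometry invariance of the Steklov spectrum; these are equivalent formulations of the same argument.
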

\begin{proof} We first claim that $(F^{-1})_\#(\mu_\epsilon)=((F^{-1})_\#\mu)_\epsilon$ for
any measure $\mu$ on $\p M$. This follows from the fact that the heat kernel $\hat{K}$
of the metric $F^*g_0$ is given by $\hat{K}_\epsilon(x,y)=K_\epsilon(Fx,Fy)$. Therefore
\[ \int_{\p M}\hat{K}_\epsilon(x,y)\ d((F^{-1})_\#\mu)(y)=\int_{\p M}K_\epsilon(Fx,\eta)\ d\mu(\eta).
\]
Now we have 
\[ (F^{-1})_\#(\mu_\epsilon)= \int_{\p M}K_\epsilon(Fx,\eta)\ d\mu(\eta)\ F^*(ds_0)
\]
and the result follows since the right hand side is $((F^{-1})_\#\mu)_\epsilon$ from the
previous equation.

Given a function $u$ on $\p M$ with $\int_{\p M}u\ d\mu_\epsilon=0$, we let
$v=u\circ F$, and we observe from above that $\int_{\p M}v\ d[(F^{-1})_\#(\mu)]_\epsilon=0$ where the smoothing is done with respect to $F^*g_0$, and similarly
\[ \int_{\p M}u^2\ d\mu_\epsilon=\int_{\p M}v^2\ d[(F^{-1})_\#(\mu)]_\epsilon.
\]
If $\hat{u}$ is the harmonic extension of $u$ with respect to $g_0$, then $\hat{v}=\hat{u}\circ F$
is the harmonic extension of $v$ with respect to $F^*g_0$, and their Dirichlet integrals coincide.
It follows that the set of Rayleigh quotients which determine $\sigma_1$ is the same for
$(g_0,\mu,\epsilon)$ as for $(F^*g_0,(F^{-1})_\#\mu,\epsilon)$ and the first claim follows.
The second assertion then follows since we are maximizing over the set of measures, and 
$(F^{-1})_\#$ maps this set to itself in a one-to-one and onto fashion.
\end{proof}

We now consider the problem of maximizing over the conformal classes of metrics on $M$. We 
define $\sigma^*(\gamma,k,\epsilon)$  to be the supremum of $\sigma^*(g_0,\epsilon)$ taken
over the metrics $g_0$. We assume here that the geometry of $g_0$ (that is, the conformal class) can be controlled for metrics near the maximum, so that (after applying diffeomorphisms) we can find a maximizing sequence $g_0^{(i)}$ such that the metrics converge in $C^2$ norm. Under this assumption we can find a metric $g_0$ such that
\[ \sigma^*(g_0,\epsilon)=\sigma^*(\gamma,k,\epsilon).
\]

We use the notation $f_\epsilon$ to denote the time $\epsilon$ solution of the heat equation on 
$\p M$ with initial data $f$.  We will now derive the properties of a maximizing measure with 
$\epsilon$ and the conformal class $g_0$ fixed. 
\begin{proposition}\label{epsilon.extremal} Assume $g_0$ is a fixed constant curvature metric and
$\epsilon>0$ is fixed.  Let $\nu$ be a maximizing measure for $g_0$ and $\epsilon$; that is,
$\sigma_1(g_0,\nu,\epsilon)=\sigma^*(g_0,\epsilon)$. There are independent first eigenfunctions $u_1,\ldots, u_n$ such that the map $u=(u_1,\ldots, u_n)$
is a harmonic map into $\mathbb R^n$ with the following properties $(|u|^2)_\epsilon=1$ and $\nabla_T(|u|^2)_\epsilon=0$ for $\nu$-almost all points of $\p M$ where $T$ is the unit tangent vector with respect to $g_0$.

Furthermore, if $g_0$ is chosen so that $ \sigma^*(g_0,\epsilon)=\sigma^*(\gamma,k,\epsilon)$,
then in addition to the above conditions on the map $u$, we may assume that for any
$h$ of compact support the function 
\[ F(t)=\int_M|\nabla_t u|^2\ da_t-\sigma_1(g_0,\epsilon)\int_{\p M}|u|^2\ d(\nu)_{t,\epsilon}
\]
has a critical point at $t=0$ where we have $g_t=g+th$, and the quantities are computed
with respect to $g_t$. Note that $(\nu)_{t,\epsilon}$ denotes the time $\epsilon$ solution of the heat equation with initial data $\nu$ for the canonical metric in the conformal class of $g_t$.
\end{proposition}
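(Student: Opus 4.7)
My plan is to adapt the Lipschitz/Hahn-Banach scheme of Proposition \ref{prop:extremal} to smooth perturbations of the \emph{measure} $\nu$ (for the first assertion) and then also to perturbations of the conformal class (for the second). As a preliminary, fixing $g_0$ and $\epsilon>0$ and considering a path of probability measures $\nu_t = \nu + t\mu$ with $\mu(\partial M)=0$, the linearity of the smoothing together with self-adjointness of the heat kernel, $K_\epsilon(x,y)=K_\epsilon(y,x)$, yields $\int u^2\,d\mu_\epsilon = \int (u^2)_\epsilon\,d\mu$. A direct adaptation of the Lipschitz lemma preceding Proposition \ref{prop:extremal} then shows that $\sigma_1(g_0,\nu_t,\epsilon)$ is Lipschitz in $t$ with derivative $\dot\sigma_1 = -\sigma_1\int_{\partial M}(u^2)_\epsilon\,d\mu$ at points of differentiability, for some eigenfunction $u\in V_{g_0,\nu,\epsilon}$ normalized by $\int u^2\,d\nu_\epsilon=1$.

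\textbf{Conditions on $u$ from measure variations.} Two admissible classes of two-sided $\mu$ are relevant: (a) $\mu = (f-\bar f)\,\nu$ with $f\in L^\infty(\nu)$ and $\bar f=\int f\,d\nu$, and (b) $\mu = \mathcal L_X\,\nu$ for a tangent vector field $X$ on $\partial M$; both keep $\nu_t$ a probability measure for $|t|$ small. By the maximality of $\nu$, a Lipschitz integral argument on $(-\varepsilon,0)$ and $(0,\varepsilon)$ exactly as in the proof of Lemma \ref{lemma:indefinite} produces $u_\pm\in V_{g_0,\nu,\epsilon}$ with $\int (u_\pm^2)_\epsilon\,d\mu$ of opposite signs. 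Applying the Hahn-Banach argument of Proposition \ref{prop:extremal} inside the finite-dimensional span of $\{(u^2)_\epsilon : u\in V_{g_0,\nu,\epsilon}\}\subset L^2(\partial M)$ together with the finite-dimensional functionals associated with class (b), one obtains independent eigenfunctions $u_1,\dots,u_n$ (with nonnegative convex-combination coefficients absorbed into the $u_i$) so that for $u=(u_1,\dots,u_n)$ the functional $\mu\mapsto\int(|u|^2)_\epsilon\,d\mu$ vanishes on both classes. Class (a) yields $(|u|^2)_\epsilon$ constant $\nu$-a.e., fixed to $1$ by normalization; class (b) with $X=\phi T$ for arbitrary scalar $\phi$ gives $\int\phi\,\nabla_T(|u|^2)_\epsilon\,d\nu=0$ for every $\phi$, hence $\nabla_T(|u|^2)_\epsilon=0$ $\nu$-a.e. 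Harmonicity of $u\colon M\to\R^n$ is immediate since each $u_i$ is a harmonic extension of its boundary trace.

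\textbf{Critical-point condition from metric variations.} For the second assertion, variations $g_t=g_0+th$ with $h$ of compact support enter the same framework. Maximality of $g_0$ over conformal classes gives $\sigma_1(g_t,\nu,\epsilon)\le\sigma^*(g_t,\epsilon)\le\sigma^*(g_0,\epsilon)=\sigma_1(g_0,\nu,\epsilon)$, so $t\mapsto\sigma_1(g_t,\nu,\epsilon)$ achieves a maximum at $t=0$. Its Lipschitz derivative along the path equals the derivative of the Rayleigh quotient with the fixed normalized eigenfunction $u\in V_{g_0,\nu,\epsilon}$, giving exactly the quadratic form $u\mapsto F'_h(0)$: a Dirichlet term $-\int_M\langle\tau(u),h\rangle\,da_{g_0}$ together with boundary terms coming from variation of the $g_t$-arclength and $g_t$-heat kernel. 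The left/right Lipschitz argument again yields $u_\pm$ with $F'_h(0)$ of opposite signs. Enlarging the previous Hahn-Banach step by appending, for each $h$ in a dense countable family of compactly supported symmetric $2$-tensors, the linear functional $u\mapsto F'_h(0)$, one re-runs the finite-dimensional separation to produce a single $u=(u_1,\dots,u_n)$ satisfying all conditions of the first part together with $F'_h(0)=0$ for every such $h$, and hence for all compactly supported $h$ by continuity.

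The main technical obstacle is the enlarged joint Hahn-Banach step: one must verify that the admissible classes (a) and (b) of measure perturbations are rich enough to yield pointwise $\nu$-a.e.\ conclusions rather than merely integrated ones, handle the non-smoothness of $\sigma_1$ coming from multiplicity via the Lipschitz left/right integral argument, and incorporate the metric-variation functionals without breaking compatibility with the conditions already obtained from classes (a) and (b).
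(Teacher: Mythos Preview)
Your approach is essentially the paper's: vary the measure by density multiplication and by diffeomorphism pushforward, compute the formal derivative of $\sigma_1$, run the Lipschitz left/right argument of Lemma~\ref{lemma:indefinite}, and apply the Hahn--Banach scheme of Proposition~\ref{prop:extremal}; for the second part, enlarge to include interior metric variations and do a single joint separation. Two technical points deserve tightening.

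First, for class~(b) you write $\nu_t=\nu+t\,\mathcal L_X\nu$ and claim this is a probability measure for small $|t|$. That fails in general: the extremal $\nu$ is expected to be a finite sum of point masses (see the Remark following the proposition), and then $\mathcal L_X\nu$ is a distribution of order one, not a signed measure. The paper avoids this by working with the genuine one-parameter family $\nu_t=(F_t)_\#(f_t\nu)$, which is always a probability measure; the first-variation formula you want, $\dot\sigma_1=-\sigma_1\int_{\partial M}\psi\,(u^2)_\epsilon\,d\nu-\sigma_1\int_{\partial M}\phi\,\nabla_T(u^2)_\epsilon\,d\nu$, drops out from differentiating $\int(u_t^2)_\epsilon\circ F_t\,d(f_t\nu)$ directly.

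Second, for the joint step you propose to ``append, for each $h$ in a dense countable family, the linear functional $u\mapsto F'_h(0)$''. This is unnecessary and slightly off. The clean formulation is the paper's: place the data $\bigl(\tau(u)+T^*(u^2),\,(u^2)_\epsilon,\,\nabla_T(u^2)_\epsilon\bigr)$ in $L^2(S^2(M))\times L^2(\partial M,\nu)\times L^2(\partial M,\nu)$ and show $(0,1,0)$ lies in the finite-dimensional convex cone they span. A separating functional is then a single triple $(h,\psi,\phi)$, and to contradict it you must run the left/right argument for the \emph{combined} variation $g_t=g_0+th$, $\nu_t=(F_t)_\#(f_t\nu)$, not for metric and measure separately; the inequality chain $\sigma_1(g_t,\nu_t,\epsilon)\le\sigma^*(g_t,\epsilon)\le\sigma^*(g_0,\epsilon)$ still shows the maximum is at $t=0$, so this goes through. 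Once $(0,1,0)$ is in the cone, the vanishing of the first component already gives $F'_h(0)=0$ for \emph{all} compactly supported $h$, with no density argument needed.
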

\begin{proof} Let $F_t$ be a one parameter family of diffeomorphisms of $\p M$, and we vary the measure $\nu$ by setting $\nu_t=(F_t)_\#(f_t\nu)$ where $f_t>0$ with $f_0=1$ and such
that each $\nu_t$ is a probability measure.

For a family $u_t$ of eigenfunctions for $(g_0,\nu_t)$, we may compute the first variation of 
$\sigma_1(t)$ by differentiating the expression
\[ \int_M|\nabla u_t|^2\ da_0-\sigma_1(t)\int_{\p M}u_t^2\ d(\nu_t)_\epsilon=0.
\]
 Note that the boundary integral can also be written 
 \[ \int_{\p M}u_t^2\ d(\nu_t)_\epsilon=\int_{\p M}(u_t^2)_\epsilon\ d(F_t)_\#(f_t\nu)=\int_{\p M}(u_t^2)_\epsilon\circ F_t\ d(f_t\nu).
 \]
 Thus we can normalize the $L^2$ norm at $t=0$ to be $1$ and
 we find the formal expression for $\sigma'(0)$ 
 \[ \sigma'(0)=-\sigma_1\int_{\p M}\psi(u_0^2)_\epsilon\ d\nu-\sigma_1\int_{\p M}\phi\nabla_T(u_0^2)_\epsilon\ d\nu
 \]
 where we have $\dot{F}_0=\phi T$ on $\p M$ for a smooth function $\phi$, $T$ the unit tangent vector, and $\psi=\dot{f}_0$. Thus we have the 
expression $\sigma'(0)=Q^{(\epsilon)}_{\phi,\psi}(u_0,u_0)$ where
 \[ Q^{(\epsilon)}_{\phi,\psi}(u,u)=-\sigma_1\int_{\p M}\psi(u^2)_\epsilon\ d\nu-\sigma_1\int_{\p M}\phi\nabla_T(u^2)_\epsilon\ d\nu
 \] 
 is a quadratic form on the eigenspace at $t=0$. We also have the requirement 
 $\int_{\p M}\psi\ d\nu=0$.
 Now the proof is similar to the proof of Proposition \ref{prop:extremal} with the
quadratic form $Q_h$ replaced by $Q^{(\epsilon)}_{\phi,\psi}$. The same Hahn-Banach argument implies that there are independent eigenfunctions $u_1,\ldots, u_n$ such that the map
$u=(u_1,\ldots, u_n)$ satisfies $(|u|^2)_\epsilon=1$ and $\nabla_T(|u|^2)_\epsilon=0$ for
$\nu$ almost all points of $\p M$. 

To prove the last statement we consider the deformation $g_t=g+th$ and $\nu_t$ as above. 
We observe that the corresponding quadratic form may be written
\[ Q^{(\epsilon)}_{\phi,\psi,h}(u,u)=-\int_M\langle \tau(u)+T^*(u^2),h\rangle\ da_0+Q^{(\epsilon)}_{\phi,\psi}(u,u)
\]
where $T^*$ is the formal adjoint of the linear operator $T$ which takes smooth compactly
supported tensors $h$ to functions on $\p M$ and is defined by
\[ \left.\frac{d}{dt}\right |_{t=0}\ \int_{\p M}f\ d(\nu)_{t,\epsilon}=\int_{\p M}f\ T(h)\ ds_0
\]
for any smooth function $f$ on $\p M$. We may then apply the Hahn-Banach
theorem in the space $L^2(S^2(M))\times L^2(\p M)\times L^2(\p M)$ to assert that
the point $(0,1,0)$ is in the (finite dimensional) convex cone over the subset consisting
of $(\tau(u)+T^*(u^2),(u^2)_\epsilon,\nabla_T(u^2)_\epsilon)$ for $u$ in the first
eigenspace at $t=0$. It follows that we can find independent eigenfunctions so that
$u=(u_1,\ldots, u_n)$ satisfies the three conditions $(|u|^2)_\epsilon=\nabla_T(|u^2|)_\epsilon=0$
for $\nu$ almost all points of $\p M$, and $F'(0)=0$. 
\end{proof}
\begin{remark} We expect the support of $\nu$ to be a finite set of points since otherwise
the analyticity of $(|u|^2)_\epsilon$ would imply that $(|u|^2)_\epsilon$ is identically $1$
on any component of $\p M$ on which $\nu$ has infinite support. It would then follow that $|u|^2$ is identically $1$ on that component of $\p M$. We do not expect this
to be true for an $\epsilon$-maximizer in a conformal class.
\end{remark}

We have now shown that for a fixed $g_0$ and any sufficiently small $\epsilon>0$ we can find a measure $\nu^{(\epsilon)}$, and a harmonic map $u^{(\epsilon)}$ 
from $M$ to $\mathbb R^{n_\epsilon}$ consisting of independent eigenfunctions such that 
$(|u^{(\epsilon)}|^2)_\epsilon=1$ and $\nabla_T(|u^{(\epsilon)}|^2)_\epsilon=0$ 
$\nu^{(\epsilon)}$-almost everywhere on $\p M$. 

We assume that we have chosen a conformal class that maximizes for a given 
$\epsilon>0$ so that $\sigma_1(g_0,\nu,\epsilon)=\sigma^*(\gamma,k,\epsilon)$. By Theorem \ref{multiplicity} the multiplicity is bounded independent of $\epsilon$. We may extract a sequence $\epsilon_i\to 0$ so that the $n_i$ are the same integer $n$, the background metrics
$g_0^{(i)}$ converge in $C^3$-norm to $g_0$, and the sequence of smooth measures 
$(\nu^{(i)})_{\epsilon_i}$ converges in the weak* topology to a measure $\nu$. The main theorem is the following.
\begin{theorem}\label{regularity} The measure $\nu$ has a smooth nonzero density, and there
is a set of independent first eigenfunctions which define a proper conformal harmonic map 
$u:M\to B^n$ whose image $\Sigma$ is a free boundary surface. Furthermore there is a positive constant $c$ so that $\nu=c\lambda\ ds_0$ on $\p M$ where $\lambda^2=\frac{1}{2}|\nabla u|^2$ is the induced metric on $\Sigma$. If $g$ 
is a smooth metric which has boundary measure $\nu$, then we have $L_g(\p M)=1$ and
\[ \sigma_1(g)=\sigma^*(\gamma,k).
\]
\end{theorem}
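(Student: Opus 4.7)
The plan is to build $(u,\nu)$ as a strong $H^1$ limit of $(u^{(\epsilon_i)},\nu^{(\epsilon_i)})$, extract the weak equations satisfied by the limit, and upgrade to smoothness using free boundary minimal surface regularity. First, the pointwise identity $(|u^{(\epsilon_i)}|^2)_{\epsilon_i}=1$ on $\mathrm{supp}\,\nu^{(\epsilon_i)}$, combined with symmetry of the heat kernel, gives $\int_{\partial M}|u^{(\epsilon_i)}|^2\,d(\nu^{(\epsilon_i)})_{\epsilon_i}=\int_{\partial M}(|u^{(\epsilon_i)}|^2)_{\epsilon_i}\,d\nu^{(\epsilon_i)}=1$, and the eigenfunction identity summed over components yields the uniform bound $\int_M|\nabla u^{(\epsilon_i)}|^2\,da_0^{(i)}=\sigma_1^{(i)}\le\sigma^*(\gamma,k)+o(1)$. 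After a subsequence $u^{(\epsilon_i)}\rightharpoonup u$ weakly in $H^1$. Because the target is Euclidean, interior harmonic-map bubbling is ruled out, and Proposition \ref{pt.mass} --- applicable since $\sigma^*(\gamma,k)>2\pi$ throughout --- rules out concentration of the boundary measure at points; I therefore obtain strong $H^1$ convergence $u^{(\epsilon_i)}\to u$ and strong $L^2(\partial M,\nu)$ convergence of the boundary traces.

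Next I would extract the PDE and boundary conditions on the limit. Weak harmonicity of each component passes directly to the limit. Conformality follows from Proposition \ref{epsilon.extremal}: taking the tensor $h$ in the critical-point condition $F'(0)=0$ to be compactly supported in the interior of $M$ kills the $T^*(|u|^2)$ contribution (which lives on $\partial M$), so $\sum_j\tau(u_j^{(\epsilon_i)})=0$ pointwise in the interior; strong convergence gives $\sum_j\tau(u_j)=0$ in the limit, so $u$ is weakly conformal. The pointwise boundary conditions pass to $|u|^2=1$ and $\nabla_T|u|^2=0$ at $\nu$-a.e.\ point of $\partial M$, and the Steklov equation becomes the distributional identity
\begin{equation*}
\int_M\nabla u\cdot\nabla\phi\,da_{g_0}=\sigma\int_{\partial M}u\,\phi\,d\nu\qquad\text{for all }\phi\in C^\infty(M),
\end{equation*}
with $\sigma=\lim\sigma_1^{(i)}$; since $\int_M|\nabla u|^2=\sigma>0$, $u$ is nonconstant.

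The main obstacle is the regularity step, since $\nu$ is a priori only a Radon measure. I would argue that $\Sigma:=u(M)$ is a stationary varifold in $\bar B^n$ with free boundary on $S^{n-1}$ (encoded by the distributional Steklov equation together with conformality), and then invoke boundary regularity for such objects: concretely, inverting $u$ across $S^{n-1}$ and gluing across $\partial M$ produces a weakly conformal, weakly harmonic map on the double $\tilde M$, and interior regularity for weakly conformal harmonic maps (as in \cite{FS} and via the Gr\"uter--Jost theory) shows that $u$ extends to a smooth branched conformal minimal immersion of $M$ into $\bar B^n$ with $|u|\equiv 1$ on $\partial M$ and $\partial u/\partial\eta=u$ on $\partial M$, so that $\Sigma$ is a free boundary surface. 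Writing the induced boundary arclength as $\lambda\,ds_0$ with $\lambda^2=\tfrac12|\nabla u|^2$, the distributional boundary condition collapses to $\sigma\,d\nu=\lambda\,ds_0$, giving $\nu=c\lambda\,ds_0$ with $c=1/\sigma$ smooth and strictly positive. Finally, letting $g$ be the smooth metric conformal to $g_0$ with boundary arclength $\nu$, we have $L_g(\partial M)=\nu(\partial M)=1$ and the coordinates of $u$ are first Steklov eigenfunctions on $(M,g)$ with eigenvalue $\sigma$, so $\sigma_1(g)\le\sigma$; Proposition \ref{confclass} gives $\sigma=\lim\sigma^*(\gamma,k,\epsilon_i)=\sigma^*(\gamma,k)$, and combined with the defining inequality $\sigma_1(g)L_g(\partial M)\le\sigma^*(\gamma,k)$ this yields $\sigma_1(g)=\sigma^*(\gamma,k)$.
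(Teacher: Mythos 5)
There are genuine gaps at the two places where the real work of the theorem lives. First, your conformality argument fails: choosing $h$ compactly supported in the interior does \emph{not} kill the boundary term. The smoothed measure $(\nu)_{t,\epsilon}$ is defined via the boundary heat kernel of the \emph{canonical constant-curvature metric in the conformal class of} $g_t=g_0+th$, and an interior perturbation of the conformal class changes that uniformizing metric, hence the boundary heat kernel, hence $(\nu)_{t,\epsilon}$. So $F'(0)=0$ only gives $\int_M\langle\tau(u^{(i)}),h\rangle\,da_0=-\sigma_1^{(i)}B_i'(0)$ with $B_i'(0)\neq 0$ in general; Lemma \ref{almost conformal} is devoted precisely to showing that $B_i'(0)\to 0$ as $\epsilon_i\to 0$, using the expansion of the heat kernel and the fact that $|u^{(i)}|^2\approx 1$ on the scale $\sqrt{\epsilon_i}$ near the support of $\nu^{(i)}$. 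In particular the regularized maps $u^{(i)}$ are only \emph{asymptotically} conformal; they are not conformal for each $i$ as you assert.

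Second, the boundary regularity cannot be outsourced to ``reflect and apply interior regularity'' or to free-boundary varifold theory, because at the point where you invoke it you only know that $u$ is a bounded harmonic $H^1$ map satisfying $\int_M\nabla u\cdot\nabla\phi\,da_0=\sigma\int_{\partial M}\phi\cdot\hat u\,d\nu$, where $\nu$ is a Radon measure that could a priori be quite singular (for fixed $\epsilon$ the paper expects $\nu^{(\epsilon)}$ to be supported on finitely many points) and $\hat u$ is merely the $\nu$-measurable density of the weak* limit of $(u^{(i)})_{\epsilon_i}\nu^{(i)}$: it is not known at this stage that $\hat u$ coincides with the trace of $u$, that $|\hat u|$ is bounded below, or that $\mathrm{supp}\,\nu=\partial M$. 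Establishing these facts --- via the multiplicity bound on boundary zeros of eigenfunctions, the convex-hull argument giving $|\hat u|\ge\delta$, and the equicontinuity of the angle $u^{(i)}/|u^{(i)}|$ --- is the content of Proposition \ref{convergence}, and only then does Lemma \ref{weak reg} apply to give smoothness and the radial Neumann condition. Relatedly, Proposition \ref{pt.mass} excluding point masses does not by itself yield strong $H^1$ convergence of $u^{(i)}$. Finally, your closing deduction is incomplete: from $\sigma_1(g)\le\sigma=\sigma^*(\gamma,k)$ and $\sigma_1(g)L_g(\partial M)\le\sigma^*(\gamma,k)$ one cannot conclude equality; the reverse inequality $\sigma_1(g)\ge\sigma^*(\gamma,k)$ is needed, and the paper obtains it by approximating an arbitrary admissible test function for $(g_0,\nu)$ by normalized test functions for $(g_0^{(i)},\nu^{(i)})$ and passing to the limit in the Rayleigh quotient.
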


\begin{remark}
It follows from the strong maximum principle that $\lambda$ is everywhere nonzero on $\p M$; and on the interior of $M$, the conformal metric is arbitrary. This is in contrast to the case of Laplace eigenvalues on closed surfaces, where there can be points where the density vanishes, and at these point the maximizing metric has conical singularities (though the angles at the conical points are integer multiples of $2\pi$; see \cite{K}, p. 18-19). In the Steklov case, we have that the maximizing metric $g$ is smooth on the boundary, and can be taken smooth in the interior.
\end{remark}

\begin{remark}
The argument uses in an essential way that the eigenvalue is the {\em first} nonzero Steklov eigenvalue, since the analogue of Proposition \ref{pt.mass}, $\sigma_n > 2\pi n$, does not appear to hold in general. Girouard and Polterovich \cite{GP} showed that on a surface of genus zero with one boundary component, the $n$-th Steklov eigenvalue is maximized in the limit by the disjoint union of $n$ identical disks, and for $n=2$ the maximum is not attained. In this example, one could interpret $n$ disjoint disks as $n$ point masses.
\end{remark}

One tool in the proof is the following free boundary minimal surface regularity theorem
whose proof can be found in \cite{GHN}. We use $E(\cdot)$ to denote the Dirichlet integral
of a map.
\begin{proposition}\label{minimal.reg} Suppose $\Omega$ is a smooth bounded domain in
$\mathbb R^n$, and $u:M\to\Omega$ is a conformal harmonic map such that $u\in C^0(\bar{M})\cap H^1(M)$ and such that $u(\p M)\subset \p\Omega$. Assume that for any family of diffeomorphisms $F_t$ of $\Omega$ with $F_0=I$, the function $E(F_t\circ u)$ has a
critical point at $t=0$. It follows that the map $u$ is smooth up to the boundary, and the
normal derivative at the boundary is parallel to the normal vector of $\p \Omega$. If $\Omega$
is convex, then $u$ has no branch point on $\p M$.
\end{proposition}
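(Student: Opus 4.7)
The plan is to prove three things in sequence: the weak free boundary condition coming from stationarity, the upgrade of that weak condition to smoothness by a reflection/bootstrap argument, and the absence of boundary branch points under convexity via an asymptotic expansion.

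First, differentiating $E(F_t\circ u)$ at $t=0$ and using that $u$ is harmonic gives, for every vector field $X=\dot{F}_0$ on $\Omega$ tangent to $\p\Omega$,
\[
0 \;=\; \int_M \la du,\,d(X\circ u)\ra\,da \;=\; \int_{\p M} \la \tfrac{\p u}{\p \eta},\,X(u)\ra\,ds,
\]
where the boundary term arises by integration by parts since $\Delta u=0$ componentwise. Since $X$ ranges over all tangent fields to $\p\Omega$, this says that $\p u/\p\eta$ is orthogonal to $\p\Omega$ in the weak (distributional) sense along $u(\p M)$. This is the weak form of the free boundary condition we must eventually upgrade to a pointwise statement.

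Next, I would localize near a point $p\in\p M$. Choose isothermal coordinates on $M$ so that $M$ corresponds to a half-disk $\{y\ge 0\}$ near $p$, and straighten $\p\Omega$ near $u(p)$ using a smooth diffeomorphism $\Phi$ chosen so that, to sufficient order along $\p\Omega$, the coordinate normal coincides with the geometric normal to $\p\Omega$ (for instance, normal-exponential coordinates). In these coordinates the map $\tilde u=\Phi\circ u$ satisfies a semilinear elliptic system $\Delta \tilde u^i = q^i(\tilde u,\n\tilde u)$ with smooth coefficients and quadratic gradient growth, while the weak boundary condition combined with $u(\p M)\subset\p\Omega$ reads $\tilde u^n=0$ and $\p \tilde u^\alpha/\p y=0$ (weakly) for $\alpha<n$ along $\{y=0\}$. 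Extend $\tilde u$ across $\{y=0\}$ by odd reflection in the $n$-th component and even reflection in the tangential components. The careful choice of $\Phi$ ensures that the extended nonlinearity matches the required parity, so the extension is an $H^1$ weak solution of the same elliptic system on a full neighborhood. Standard bootstrap then applies: Caccioppoli plus the $C^0\cap H^1$ hypothesis gives higher integrability of $\n\tilde u$; Morrey-space estimates (as in \cite{GHN}) give $C^{1,\alpha}$; and Schauder iteration yields $C^\infty$. Transferring back through $\Phi$ shows $u$ is smooth up to $\p M$ and satisfies $\p u/\p\eta \parallel \nu_{\p\Omega}(u)$ pointwise.

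Finally, suppose $\Omega$ is convex and, toward a contradiction, that $p\in\p M$ is a branch point. Using isothermal coordinates with $\p M=\{y=0\}$ and writing $z=x+iy$, the conformal harmonicity of $u$ together with the boundary regularity from the previous step produces an asymptotic expansion $u(z)-u(p) = \mathrm{Re}(A z^{k+1}) + o(|z|^{k+1})$ with $k\ge 1$ and $A\in\mathbb C^n$ satisfying $A\cdot A=0$ (isotropy from conformality). Let $\nu$ be the outward unit normal to $\p\Omega$ at $u(p)$. By convexity, $\nu\cdot(u-u(p))\le 0$ on a neighborhood of $p$ in $M$, so on the half-disk $\{y\ge 0\}$ near $0$ we have $\nu\cdot \mathrm{Re}(A z^{k+1})\le 0$ up to lower order. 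A harmonic polynomial of degree $k+1\ge 2$ restricted to a half-disk cannot have constant sign unless its coefficient vanishes; hence $\nu\cdot A=0$, i.e.\ the leading coefficient is tangent to $\p\Omega$. The free boundary condition, applied to the first nonvanishing order in the expansion, now forces the normal derivative $\p u/\p y$ at $y=0$ to be parallel to $\nu$, but our leading term contributes only tangential normal derivatives at the boundary, which rules out this order of branching. Iterating the argument (with the tangent space of $\p\Omega$ replacing the ambient space and using $A\cdot A=0$) forces $A=0$, contradicting the assumption that $k$ is the true branching order.

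The main obstacle is the regularity step: making the reflection argument genuinely work requires choosing the straightening of $\p\Omega$ so that the nonlinearity of the transformed harmonic map system respects the mixed even/odd reflection, and then running the Morrey/Schauder bootstrap uniformly up to the boundary. The branch-point argument is also delicate because one must track the interaction of the isotropy condition $A\cdot A=0$ with the one-sidedness coming from convexity and the tangency forced by the free boundary condition.
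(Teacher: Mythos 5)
The paper does not actually prove this proposition: it is quoted as a known result with the proof deferred entirely to \cite{GHN}. Your outline is a reasonable reconstruction of that standard argument, and the three-step structure (weak free boundary condition from stationarity, reflection plus bootstrap for regularity, asymptotic expansion plus convexity for the absence of boundary branch points) is exactly the route taken in the literature. A few soft spots are worth flagging. First, your opening display is formal: the integration by parts producing $\int_{\p M}\la \p u/\p\eta, X(u)\ra\,ds$ presupposes boundary regularity you do not yet have; at this stage the only legitimate statement is the interior identity $\int_M\la du, d(X\circ u)\ra\,da=0$ for all tangential $X$, and it is this identity (tested with $\zeta$ times tangential frame fields) that serves as the weak Neumann condition for the reflection. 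Second, after straightening $\p\Omega$ by Fermi coordinates the evenly extended metric coefficients are only Lipschitz across $\{y=0\}$, not smooth, so the reflected system has Lipschitz rather than smooth coefficients; this still yields $C^{1,\alpha}$, after which one must return to the one-sided oblique boundary value problem to bootstrap to $C^\infty$. Third, the genuinely hard analytic step --- obtaining the initial Morrey decay $\int_{D_r}|\nabla u|^2\leq Cr^{2\alpha}$ for a map that is merely $C^0\cap H^1$ and stationary at the free boundary --- is precisely the content of \cite{GHN} and is waved through in one clause; since the paper itself only cites \cite{GHN}, this is acceptable, but it is where the real work lives.

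Your branch point argument is correct in substance but ends in an unnecessary muddle. Writing $A=a-ib$ with $a,b\in\R^n$, isotropy gives $|a|=|b|$ and $a\cdot b=0$. Convexity forces $\nu\cdot A=0$ exactly as you argue, so in particular $\nu\cdot b=0$. On $\{y=0\}$ the normal derivative has leading term $(k+1)\,b\,x^{k}$, and the pointwise free boundary condition forces $b$ to be parallel to $\nu$; combined with $\nu\cdot b=0$ this gives $b=0$, and then $|a|=|b|$ gives $a=0$, i.e.\ $A=0$, a contradiction. No iteration over tangent spaces is needed --- the argument closes in one step once isotropy is invoked at the end rather than midway.
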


We may think of $f_\epsilon$ as a weighted average of $f$ taken in an interval of size roughly
$\sqrt{\epsilon}$. The following lemma shows that on this scale the map $u$ of Proposition \ref{epsilon.extremal} ($u=u^{(\epsilon)}$ depends on $\epsilon$, but we suppress the $\epsilon$ for notational purposes) is almost constant,
and so it follows that for any $\epsilon>0$ and for all points $x$ in the support of the extremal measure $\nu^{(\epsilon)}$ we have that $u_\epsilon(x)$ is approximately a unit vector and
is approximately equal to $u(y)$ for all $y$ near $x$. For a point $x\in\p M$ we let 
$I_r(x)\subseteq\p M$ denote the interval of radius $r$ centered at $x$.
\begin{lemma}\label{e-scale} There is a continuous increasing function $\omega(\epsilon)$
with $\omega(0)=0$ with the following properties: For any point $x$ in the support of 
$\nu^{(\epsilon)}$ and for any $y\in I_{\omega(\epsilon)^{-1}\sqrt{\epsilon}}(x)$ we have 
$|u(x)-u(y)|\leq \omega(\epsilon)$, $|u_\epsilon(x)-u(x)|\leq\omega(\epsilon)$, and $||u_\epsilon(x)|-1|\leq\omega(\epsilon)$.
Furthermore we have $|u|\leq c$ for all $x\in \p M$ for a fixed constant $c$ independent of
$\epsilon$.
\end{lemma}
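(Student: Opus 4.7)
The plan is to establish the three pointwise estimates in the order they are stated: first the uniform $L^\infty$ bound $|u|\leq c$; next the oscillation estimate $|u(x)-u(y)|\leq\omega(\epsilon)$; and finally the two remaining estimates, which will follow easily from the first two.

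For the $L^\infty$ bound, I would use that each component $u_i$ is harmonic on $M$ with the Robin-type condition $\p u_i/\p\eta=\sigma_1\rho_\epsilon u_i$, where $\rho_\epsilon=K_\epsilon\ast\nu^{(\epsilon)}$ is the smooth density. Hence $|u|^2$ is subharmonic with $\p(|u|^2)/\p\eta=2\sigma_1\rho_\epsilon|u|^2$, so its maximum is attained on $\p M$. The energy identity $\int_M|\n u|^2\,da=\sigma_1$ is uniformly bounded by Theorem \ref{theorem:yy}, and $\int_{\p M}|u|^2\rho_\epsilon\,ds_0=1$ by the normalization. Naive Moser iteration fails because the $L^p$ norm of $\rho_\epsilon$ blows up as $\epsilon\to 0$ for any $p>1$; the extra control must come from the extremality identity $(|u|^2)_\epsilon\equiv 1$ on $\mathrm{supp}\,\nu^{(\epsilon)}$, which caps the weighted averages of $|u|^2$ and allows the iteration to close.

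For the oscillation estimate at scale $\sqrt{\epsilon}$, I would fix $x\in\mathrm{supp}\,\nu^{(\epsilon)}$ and pass to boundary normal coordinates rescaled by $\sqrt{\epsilon}$: set $\tilde u(X)=u(\exp_x(\sqrt{\epsilon}X))$ on an expanding region of the upper half plane. With respect to the rescaled metric (which is $C^2$-close to Euclidean on bounded sets), $\tilde u$ is harmonic, has Dirichlet integral bounded on bounded regions (energy is conformally invariant in two dimensions), is uniformly bounded in $L^\infty$ by the first step, and satisfies $\p\tilde u/\p Y=\sqrt{\epsilon}\sigma_1\tilde\rho(X)\tilde u$ with $\sqrt{\epsilon}\tilde\rho$ uniformly bounded in $L^1_{\mathrm{loc}}$. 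The right-hand side of the Neumann condition is therefore uniformly $L^1$-bounded, and standard elliptic boundary regularity gives an equicontinuity estimate for $\tilde u$ on every fixed ball, uniform in $\epsilon$. Unwinding the scaling, $\sup_{y\in I_{R\sqrt{\epsilon}}(x)}|u(y)-u(x)|\to 0$ as $\epsilon\to 0$ for any fixed $R$, and a diagonal argument extracts a function $\omega(\epsilon)\to 0$ for which the desired oscillation bound holds with $R=\omega(\epsilon)^{-1}$.

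Finally, the estimates $|u_\epsilon(x)-u(x)|\leq\omega(\epsilon)$ and $||u_\epsilon(x)|-1|\leq\omega(\epsilon)$ will follow from decomposing the heat average. Write $u_\epsilon(x)-u(x)=\int K_\epsilon(x,y)[u(y)-u(x)]\,ds_0(y)$ and split the domain into $I_r(x)$, $r=\omega(\epsilon)^{-1}\sqrt{\epsilon}$, and its complement. On $I_r(x)$ the oscillation estimate bounds the integrand by $\omega(\epsilon)$; on the complement, the $L^\infty$ bound together with the Gaussian tail $\int_{|y-x|>r}K_\epsilon(x,y)\,ds_0(y)\leq C\exp(-r^2/4\epsilon)=C\exp(-1/(4\omega(\epsilon)^2))$ gives an exponentially small contribution, provided $\omega(\epsilon)\to 0$ slowly enough. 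Applying the same decomposition to $|u|^2$ and using $(|u|^2)_\epsilon(x)=1$ then yields $||u(x)|^2-1|\leq\omega(\epsilon)$, hence $||u_\epsilon(x)|-1|\leq\omega(\epsilon)$. The main obstacle will be the first step: the singular, $\epsilon$-dependent weight $\rho_\epsilon$ defeats straightforward elliptic estimates, and it is essential to use the extremality identity $(|u|^2)_\epsilon\equiv 1$ on the support together with the $H^1$ bound to rule out concentration of $|u|$ away from the support.
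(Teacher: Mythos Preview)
Your overall plan is sound and Steps~2 and~3 are essentially right, but the order of Steps~1 and~2 is inverted relative to what actually works, and Step~1 as written contains a genuine gap.

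\textbf{The gap in Step~1.} You correctly identify that the Robin weight $\rho_\epsilon$ has no uniform $L^p$ bound for $p>1$, so Moser iteration on $|u|^2$ fails. You then assert that the extremality identity $(|u|^2)_\epsilon\equiv 1$ on $\mathrm{supp}\,\nu^{(\epsilon)}$ ``allows the iteration to close'', but you never say how. The support may be a finite set, so this identity only controls Gaussian-weighted averages of $|u|^2$ centred at those finitely many points. It gives no direct handle on $|u|$ at boundary points far from the support, and there is no obvious way to feed it into a global iteration.

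\textbf{How the paper proceeds.} The paper reverses your order: it uses your Step~2 \emph{before} Step~1, not after. Fix $x\in\mathrm{supp}\,\nu^{(\epsilon)}$ and rescale by $\sqrt{\epsilon}$. The identity $(|u|^2)_\epsilon(x)=1$ becomes, after rescaling, a \emph{weighted} $L^2$ bound $\int|v(\eta,0)|^2e^{-|\eta|^2/4}\,d\eta\leq c$ for the rescaled map $v$, and the rescaled Robin coefficient $\lambda_\epsilon=\sqrt{\epsilon}\rho_\epsilon(\sqrt{\epsilon}\,\cdot)$ is uniformly bounded with bounded derivatives. Standard elliptic estimates then give $|v|\leq c$ on any fixed ball, hence $|u|\leq c$ at and near every support point. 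This is the role of the extremality identity: it furnishes the $L^2$ input for the rescaled elliptic problem, replacing the global $L^\infty$ bound you wanted to assume.

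Only \emph{after} this does the paper prove the global bound $|u|\leq c$ on all of $\p M$, by a separate Harnack argument at the maximum point $p$: if $p$ is within a fixed distance of the support one rescales by the distance $a$ to the nearest support point (not by $\sqrt{\epsilon}$), uses that the rescaled Neumann data vanishes on $|x|<1$, and compares via Harnack to the already-bounded value at the nearest support point. If $p$ is far from the support, the $H^1$ bound and ordinary elliptic theory suffice.

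\textbf{The oscillation estimate.} With the global $L^\infty$ bound in hand, the paper obtains the modulus of continuity not via your $L^1$-Neumann argument but via Dirichlet-integral decay: integrating by parts on $D_r(x)$ and using $|u|\leq c$, the capacity estimate $(\nu^{(\epsilon)})_\epsilon(I_r(x))\leq c|\log r|^{-1}$ (from the lower bound on $\sigma_1$), and Courant--Lebesgue, one gets $\int_{D_r(x)}|\nabla u|^2\leq c|\log r|^{-1/2}$. Rescaling then gives the oscillation bound. Your route via $L^1$ Neumann data would need a bit more care (Neumann data merely in $L^1$ does not give continuity in general), but since the rescaled coefficient $\lambda_\epsilon$ is actually $L^\infty$ bounded, this is fixable.

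Your Step~3 is correct and matches the paper's reasoning.
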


\begin{proof} We choose arclength coordinates on a component of $\p M$, and we observe that
the heat kernel at time $\epsilon$ for $|x-y|$ small has the form
\[ K_\epsilon(x,y)=(4\pi\epsilon)^{-1/2}e^{\frac{-|x-y|^2}{4\epsilon}}+O(e^{-1/\sqrt{\epsilon}}).
\]
Now let $x$ be in the support of $\nu^{(\epsilon)}$, and for simplicity we choose coordinates
so that $x=0$. We assume for convenience that the metric $g_0$ is flat near $\p M$ and we let
$(y,z)$ ($z\geq 0$) be euclidean coordinates in the half-disk $D_r(0)\subseteq M$ centered at $0$. We then scale coordinates by setting $(y,z)=(\sqrt{\epsilon}\eta,\sqrt{\epsilon}\zeta)$ for 
$(y,z)\in D_r(0)$. 
The condition $(|u|^2)_\epsilon(0)=1$ then rescales to imply the inequality 
\[ \int_{\{\eta:\ |\eta|\leq r/\sqrt{\epsilon}\}}|v(\eta,0)|^2e^{-|\eta|^2/4}\ d\eta\leq c
\]
where $v(\eta,\zeta)=u(\sqrt{\epsilon}\eta,\sqrt{\epsilon}\zeta)$. The function $v$ is harmonic and satisfies the boundary condition
\[ \frac{\p v}{\p\zeta}(\eta,0)=-\sigma_1^{(\epsilon)}v(\eta,0)\lambda_\epsilon(\eta),\ \ \lambda_\epsilon(\eta)=
\sqrt{\epsilon}\int_{\p M}K_\epsilon(\sqrt{\epsilon}\eta,x)\ d\nu^{(\epsilon)}(x).
\]
It is easily seen that $\lambda_\epsilon$ and its derivatives are small on an arbitrarily large
interval as $\epsilon$ goes to zero. Elliptic estimates imply that $v$ is bounded on finite
intervals.

To prove the final claim we choose a point $p\in\p M$ at which $|u|$ is maximized, and we choose
a unit vector $\alpha\in\mathbb R^n$ with $\alpha\cdot u(p)=|u(p)|$. From our previous result we
know that if $p$ is in the support of $\nu$ or within distance a constant times $\sqrt{\epsilon}$
from the support of $\nu$, then the bound holds. Therefore we assume that
$p$ lies in the interior of an interval of the complement of the support of $\nu$. Assume that $p$ lies on a boundary
component $\Gamma$ of length $L$, and note that if the distance from $p$ to the support
of $\nu$ is bounded from below by $1/4\min\{1,L\}$, then the result follows from the
$H^1$ boundedness and standard elliptic theory. Thus we may assume that the distance
from $p$ to the support of $\nu$ is less than $1/4\min\{1,L\}$. We choose an arclength coordinate $s$ so that  $0$ corresponds to $p$ and $a=a(\epsilon)>0$ corresponds to the nearest point on the support of $\nu$ to $p$. We then dilate coordinates by letting $x=a^{-1}s$. This produces a harmonic function $h(x,y)$ on a half disk of radius at least $2$ in the upper half plane which has 
bounded Dirichlet integral and satisfies
\[ \frac{\p h}{\p y}(x,0)=-\sigma_1h(x,0)(\mu)_\epsilon
\]
where the point $(1,0)$ is the nearest point to $0$ of the support of the measure $\mu$. We need to prove that $h(0,0)$ is bounded. First observe that if $h=0$ at some point of $D_{3/4}(0,0)$,
then the zero set has length bounded from below in $D_2(0,0)$, and the Poincar\'e 
inequality implies 
\[ \int_{D_2(0,0)}h^2\leq c\int_{D_2(0,0)}|\nabla h|^2\ dxdy.
\]
It then follows from elliptic theory that $h(0,0)$ is bounded since $(\mu)_\epsilon$
is bounded in $D_{1/2}(0,0)$. Thus we may assume that $h>0$ on $D_{3/4}(0,0)$
and we may apply the Harnack inequality to show that $h(0,0)\leq ch(x,y)$ for any
$(x,y)\in D_{1/2}(0,0)$. If $C$ denotes the half circle of radius $1$ centered at
$(1,0)$, then because $h$ is weakly superharmonic we have
\[ 1/\pi\int_Ch\ ds\leq h(1,0)=1.
\]
Therefore we have
\[ h(0,0)\leq 2c\int_{C\cap D_{1/2}(0,0)}h\ ds\leq 2c\int_Ch\ ds\leq 2\pi c.
\]

We finally prove the modulus of continuity estimate. We see that the Dirichlet integral of $u$ on 
$D_r(x)$ is small if $r$ is small since we have
\[ \int_{D_r(x)}|\nabla u|^2\ da_0=\sigma_1\int_{I_r(x)}|u|^2\ d(\nu^{(\epsilon)})_\epsilon
+\int_{\p D_r(x)\cap M}u\cdot \frac{\p u}{\p r}\ ds.
\]
From the boundedness of $u$ this implies
\[ \int_{D_r(x)}|\nabla u|^2\ da_0\leq c(\nu^{(\epsilon)})_\epsilon(I_r(x))+c\int_{\p D_r(x)\cap M}
 |\nabla u|\ ds.
\]
Capacity estimates imply $(\nu^{(\epsilon)})_\epsilon(I_r(x))\leq c|\log(r)|^{-1}$, and by the Courant-Lebesgue lemma, we may choose $r$ so that $\int_{\p D_r(x)\cap M}
 |\nabla u|\ ds$ is bounded by $c|\log(r)|^{-1/2}$. We therefore get the bound 
\[ \int_{D_r(x)}|\nabla u|^2\ da_0\leq c|\log(r)|^{-1/2}.
\]
Using the conformal invariance of the Dirichlet integral we find that the Dirichlet integral
of $v$ is small on arbitrarily large balls. The result now follows from elliptic estimates and
scaling back from $v$ to $u$. This completes the proof.
\end{proof}
The next result tells us that the $u^{(i)}$ become almost conformal in a strong sense
as $i$ tends to infinity. It will play an important role in the regularity proof.
\begin{lemma}\label{almost conformal} The Hopf differential $\tau(u^{(i)})$ tends
to zero on compact subsets of the interior of $M$; in particular the weak limit
is a conformal harmonic map. 
\end{lemma}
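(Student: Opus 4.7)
The plan is to combine the harmonicity of $u^{(i)}$ with the variational identity extracted from the Hahn-Banach argument in Proposition \ref{epsilon.extremal}, and then pass to the small-$\epsilon_i$ limit to see that the right-hand side of that identity vanishes in the interior.

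First, since each $u^{(i)}:M\to\R^n$ is a vector of first Steklov eigenfunctions and hence harmonic, the Hopf differential $\phi_i = u_z^{(i)}\cdot u_z^{(i)}\,dz^2$ is a holomorphic quadratic differential on the interior of $M$ with respect to the complex structure of $g_0^{(i)}$. The pointwise and $H^1$ bounds supplied by Lemma \ref{e-scale}, together with standard interior elliptic estimates for harmonic functions, give uniform $C^k$ bounds on $u^{(i)}$, and hence on $\phi_i$, on any compact $K\subset M\setminus\p M$.

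Second, I would revisit the Hahn-Banach step at the end of the proof of Proposition \ref{epsilon.extremal}. After absorbing the positive convex-combination coefficients into the eigenfunctions $u_j^{(i)}$, that argument yields the tensor identity
\[
\tau(u^{(i)}) \;=\; \sum_j \tau(u_j^{(i)}) \;=\; -T_{\epsilon_i}^*\bigl(|u^{(i)}|^2\bigr)
\]
on $M$, where $T_{\epsilon_i}^*$ is the formal adjoint of the operator $T_{\epsilon_i}$ that sends a tensor perturbation $h$ to the first-order variation of the smoothed density of $(\nu^{(i)})_{t,\epsilon_i}$. Thus, to prove that $\tau(u^{(i)}) \to 0$ on $K$, it suffices to show that $T_{\epsilon_i}^*(|u^{(i)}|^2)\big|_K \to 0$ in $L^2$.

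Third, for any test tensor $h$ supported in $K\subset M\setminus\p M$, duality gives
\[
\int_K \la h,\, T_{\epsilon_i}^*(|u^{(i)}|^2)\ra\,da \;=\; \int_{\p M} |u^{(i)}|^2\,T_{\epsilon_i}(h)\,ds_0.
\]
Because $h$ vanishes near $\p M$, the function $T_{\epsilon_i}(h)$ captures only the nonlocal effect on the smoothed boundary measure arising from the change in the canonical constant-curvature metric $g_0^{(i)}(t)$ in the conformal class of $g_0^{(i)}+th$. As $\epsilon_i\to 0$ the smoothing tends to the identity on measures, and since the underlying measure $\nu^{(i)}$ is itself independent of $t$, the variation $T_{\epsilon_i}(h)$ must tend to zero. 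Using the uniform boundary bound $|u^{(i)}|\le c$ from Lemma \ref{e-scale} together with elliptic control of the gauge equation determining $g_0^{(i)}(t)$, the right-hand side is bounded by a quantity of the form $\omega(\epsilon_i)\|h\|_{L^2}$ with $\omega(\epsilon_i)\to 0$, forcing $T_{\epsilon_i}^*(|u^{(i)}|^2) \to 0$ in $L^2_{\mathrm{loc}}$ of the interior.

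Finally, the $L^2_{\mathrm{loc}}$-convergence of $\tau(u^{(i)})$ to zero is upgraded to locally uniform convergence by exploiting the holomorphicity of $\phi_i$: since $|\phi_i|^2$ is subharmonic, the mean-value inequality
\[
|\phi_i(z)|^2 \;\le\; \frac{1}{\pi r^2}\int_{B_r(z)} |\phi_i|^2\,da
\]
combined with the uniform pointwise bounds of the first paragraph promotes $L^2$ smallness on a slightly larger compact neighborhood to $C^0$ smallness on $K$. Hence $\phi_i\to 0$ uniformly on compact subsets of $M\setminus\p M$, and any weak limit of $u^{(i)}$ is a conformal harmonic map. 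I expect the main obstacle to be the third step: quantifying the decay as $\epsilon_i\to 0$ of the nonlocal effect an interior conformal-class perturbation has on the boundary heat kernel, so that the error term $\omega(\epsilon_i)$ can be extracted from the variation of the canonical-metric gauge.
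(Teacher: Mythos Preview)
Your overall strategy coincides with the paper's: use the variational identity $F_i'(0)=0$ from Proposition~\ref{epsilon.extremal} to write $\int_M\langle\tau(u^{(i)}),h\rangle\,da_0=-\sigma_1^{(i)}B_i'(0)$ for compactly supported $h$, then show the boundary term $B_i'(0)\to 0$. Your framing via $T_{\epsilon_i}^*$ is equivalent, and your step~4 (upgrading distributional convergence to locally uniform via holomorphicity of $\phi_i$) is a correct, if inessential, embellishment.

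The gap is exactly where you suspected: step~3. The heuristic ``smoothing tends to the identity and $\nu^{(i)}$ is $t$-independent, hence $T_{\epsilon_i}(h)\to 0$'' is not valid; convergence of $(\nu)_{t,\epsilon}\to\nu$ does not imply convergence of the $t$-derivative, and individual pieces of $\partial_t K_{t,\epsilon}$ are of order $\epsilon^{-1}$ pointwise. What actually happens is a cancellation that you have to see explicitly. The paper computes, at a point $y=0$ in the support of $\nu^{(i)}$,
\[
(F_0^{(i)})'(0)=(\lambda_0)'(0)\int_I|u^{(i)}(x)|^2\bigl[K_\epsilon(x,0)-G_\epsilon(x,0)\bigr]\,dx+E_i,
\]
where $G_\epsilon(x,0)=\frac{x^2}{2\epsilon}K_\epsilon(x,0)$ arises from differentiating the Gaussian in the distance, $K_\epsilon$ arises from differentiating the arclength density $\lambda_t$, and $E_i\to 0$. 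Both $K_\epsilon$ and $G_\epsilon$ are approximate identities at scale $\sqrt{\epsilon}$ with unit mass, so their difference integrates to zero against constants. The proof then uses the \emph{fine} information from Lemma~\ref{e-scale} that $|u^{(i)}(x)|^2=1+o(1)$ for $|x|\leq\omega(\epsilon)^{-1}\sqrt{\epsilon}$ (a window much wider than the $\sqrt{\epsilon}$-scale of $K_\epsilon-G_\epsilon$) near support points, together with the uniform bound $|u^{(i)}|\leq c$ elsewhere on $\partial M$ to control the tails. Your argument invokes only the coarse bound $|u^{(i)}|\leq c$, which is not enough: without the near-constancy of $|u^{(i)}|^2$ on the correct scale you cannot conclude that the $K_\epsilon-G_\epsilon$ pairing is small, since the test function $|u^{(i)}|^2$ varies with $i$ and is not known to be equicontinuous on all of $\partial M$.
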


\begin{proof}
Let $h$ be a smooth symmetric $(0,2)$ tensor of compact support in $M$. We let 
$g_t^{(i)}=g_0^{(i)}+th$ and we consider the function
\[ F_i(t)=\int_M|\nabla_t u^{(i)}|^2\ da_t^{(i)}-\sigma_1^{(i)}\int_{\p M}|u^{(i)}|^2\ d(\nu^{(i)})_{\epsilon_i}.
\]
By Proposition \ref{epsilon.extremal} we have $F_i'(0)=0$. We let 
$B_i(t)$ denote the boundary term
\[ B_i(t)=\int_{\p M}|u^{(i)}|^2\ d(\nu^{(i)})_{\epsilon_i},
\]
so we have
\[ 0=F_i'(0)=-\int_M\langle \tau(u^{(i)}),h\rangle\ da_0-\sigma_1^{(i)}B_i'(0).
\]
where $\tau(u^{(i)})=du^{(i)}\otimes du^{(i)}-1/2|\nabla u^{(i)}|^2g_0$ is the Hopf differential of 
$u^{(i)}$. Note that
the dependence of $B_i$ on $t$ comes from the fact that the averaging is with respect to the
heat kernel of the canonical metric $g^{(i)}_0(t)$ in the conformal class of $g_t^{(i)}$. To
complete the proof that $u$ is conformal we must show that $B_i'(0)$ tends to $0$ as $i$ 
tends to infinity. Since $h$ is arbitrary we can then conclude that $\tau(u)=0$ and $u$ is conformal. To analyze $B_i'(0)$, we assume that $ds_{0,t}^{(i)}=\lambda_t^{(i)}ds_0^{(i)}$ and we
have
\[ B_i(t)=\int_{\p M}\int_{\p M}|u^{(i)}(x)|^2(K_t)_{\epsilon_i}(x,y)\ d\nu^{(i)}(y)\ \lambda_t^{(i)}(x)\ dx
\]
where $K_t$ denotes the boundary heat kernel for $g_{0,t}^{(i)}$ and $x,y$ are arclength
variables for the boundary metric $ds_0^{(i)}$. Reversing the order of integration we write
this in the form
\[ B_i(t)=\int_{\p M}F_t^{(i)}\ d\nu^{(i)},\ F_t^{(i)}(y)=\int_{\p M}|u^{(i)}(x)|^2(K_t)_{\epsilon_i}(x,y)\lambda_t^{(i)}(x)\ dx.
\]
We will show that $(F_0^{(i)})'(y)$ goes uniformly to $0$ for $y$ in the support of $\nu^{(i)}$.
We choose coordinates so that $y=0$ and we recall that the heat kernel has the form
\[ K_t(x,0)=(4\pi\epsilon_i)^{-1/2}e^{\frac{-d_t(x,0)^2}{4\epsilon_i}}+O(e^{-1/\sqrt{\epsilon_i}})
\]
where for $x$ small $d_t(x,0)=|\int_0^x\lambda_t(s)\ ds|$ and $d_0(x,0)=|x|$. We may
differentiate to obtain
\[ (K_0)'(x,0)=-(4\pi\epsilon_i)^{-1/2}\frac{x\int_0^x(\lambda_0)'(s)\ ds}{2\epsilon_i}e^{\frac{-x^2}{4\epsilon_i}}+O(e^{-1/\sqrt{\epsilon_i}}).
\]
Using the fact that $(\lambda_0)'$ is a smooth function we may write this in the form
\[ (K_0)'(x,0)=-(\lambda_0)'(0)G_{\epsilon_i}(x,0)+O\left(\frac{|x|^3}{\epsilon_i}K_0(x,0)\right)+O(e^{-1/\sqrt{\epsilon_i}})
\]
where
\[ G_{\epsilon_i}(x,0)=\frac{x^2}{2\epsilon_i}K_0(x,0).
\]
Observe that $\int_{\mathbb R}G_{\epsilon_i}(x,0)\ dx=1$, so $G_{\epsilon_i}$ is an approximation
to the unit point mass at $0$ as $\epsilon_i$ goes to $0$. Since $u$ is bounded (Lemma \ref{e-scale}), we therefore have
\[ (F_0^{(i)})'(0)=(\lambda_0)'(0)\int_I|u^{(i)}(x)|^2[K_{\epsilon_i}(x,0)-G_{\epsilon_i}(x,0)]\ dx+E_i
\]
where $I$ is a fixed interval about $0$ and $E_i$ is an error term which goes to zero with $i$.
By Lemma \ref{e-scale} we know that $|u^{(i)}(x)|^2=1+\omega_i$ for 
$x\in I_{\omega_i^{-1}\sqrt{\epsilon_i}}$, and therefore
\[ (F_0^{(i)})'(0)=(\lambda_0)'(0)\int_{-\frac{\sqrt{\epsilon_i}}{\omega_i}}^{\frac{\sqrt{\epsilon_i}}{\omega_i}}[K_{\epsilon_i}(x,0)-G_{\epsilon_i}(x,0)]\ dx+O(\int_{I\setminus I_{\omega_i^{-1}\sqrt{\epsilon_i}}}K_{\epsilon_i}(x,0)+G_{\epsilon_i}(x,0)\ dx)+\tilde{E}_i.
\]
Since $\omega_i$ tends to infinity, it is easy to check that the first two terms go to zero. Since
these bounds are uniform over points in the support of $\nu^{(i)}$ we have shown that $B_i'(0)$
converges to zero and we have completed the proof that $u$ is conformal.
\end{proof}

The following regularity result will be needed in the proof of Theorem \ref{regularity}. We 
assume here that $u\in H^1(M,\mathbb R^n)$ is a harmonic map which satisfies the boundary
condition $\nabla_\eta u=v\nu$ where $\nu$ is a probability measure on $\p M$ and
$v$ is a bounded $\nu$-measurable map to $\mathbb R^n$. This means we have for any
$\varphi\in H^1\cap C^0(\bar{M},\mathbb R^n)$ the condition
\[ \int_M\langle\nabla\varphi,\nabla u\rangle\ da_0=\int_{\p M}\langle\varphi,v\rangle\ d\nu.
\]
We will say that $u$ has radial normal derivative if $u=av$ for $\nu$-almost all points of $\p M$
for a positive $\nu$-measurable function $a$. We see that if $u$ has radial normal derivative,
then we have 
\[ \int_M\langle\nabla\varphi,\nabla u\rangle\ da_0=0
\]
for all $\varphi\in H^1\cap C^0(\bar{M},\mathbb R^n)$ with $\varphi\cdot u=0$ for $\nu$-almost 
all points of $\p M$.
\begin{lemma}\label{weak reg} Assume that $u$ is conformal and harmonic with radial normal derivative in an interval $I$ of $\p M$. Assume that there is a neighborhood $\Omega$ of $I$
in $M$ such that $|u|\geq \lambda$ in $\Omega$ for a positive number $\lambda$, and that
$u/|u|$ is continuous in $\Omega\cup I$. Assume also that $|v|\geq\lambda$ for $\nu$-almost all points of $I$. It follows that $u$ is smooth in the interior of $I$, and $|u|$ is a positive constant 
in $I$.
\end{lemma}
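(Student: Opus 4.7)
The proof of Lemma \ref{weak reg} proceeds in two main stages: a reflection argument on the portion of $I$ disjoint from $\mathrm{supp}(\nu)$, and a conformality-based argument at $\mathrm{supp}(\nu)$ itself. Throughout, I would work in local isothermal coordinates $(x,y)$ so that $I$ lies on $\{y=0\}$ and $M$ is locally $\{y\geq 0\}$, with $\Omega$ a half-disk.

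First, on any open subinterval $J\subset I\setminus\mathrm{supp}(\nu)$, for test functions $\varphi\in H^1\cap C^0$ supported in $\Omega\cup J$ the right-hand side $\int_{\p M}\varphi\cdot v\,d\nu$ vanishes, so the weak formulation becomes $\int_\Omega\n\varphi\cdot\n u\,dA=0$. This is the weak homogeneous Neumann condition on $J$. By the Schwarz reflection principle, $u$ extends to a real-analytic harmonic map across each such $J$; in particular $u_y\equiv 0$ on $J$. The conformality identities $|u_x|^2=|u_y|^2$ and $u_x\cdot u_y=0$ are preserved by the reflection $(x,y)\mapsto(x,-y)$ and extend by analyticity, so on $J$ they force $u_x=0$ as well. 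Hence $u$ is constant on each connected component of $I\setminus\mathrm{supp}(\nu)$, and in particular $|u|$ is locally constant there.

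Second, near points of $\mathrm{supp}(\nu)$, the radial normal derivative condition $u=av$ with $|v|\geq\lambda$ and $|u|\geq\lambda$ lets us rewrite the weak boundary condition as $\p_\eta u=(|v|/|u|)u\,d\nu$, where the coefficient $|v|/|u|$ is bounded above and below. At any point $p\in\mathrm{supp}(\nu)$ where boundary differentiability is available, $u_y(p)$ is a positive multiple of $u(p)$, and conformality $u_x(p)\cdot u_y(p)=0$ then yields $u_x(p)\cdot u(p)=0$, i.e.\ $\p_x|u|^2(p)=0$. Combining this with the first stage, the tangential derivative $\p_x|u|^2$ vanishes everywhere on $I$ in a distributional sense: it is zero classically on the open set $I\setminus\mathrm{supp}(\nu)$ (where $u$ itself is constant) and zero $\nu$-a.e.\ on $\mathrm{supp}(\nu)$ by the conformality identity. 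Using the continuity of $|u|^2$ on $I$ (from the $C^0$ hypothesis) together with the bounded-density structure supplied by the lower bound $|v|\geq\lambda$ and the continuity of $u/|u|$, one verifies $|u|^2$ is absolutely continuous on $I$ and concludes $|u|$ is a positive constant $c$ on $I$.

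Once $|u|\equiv c$, the boundary condition $\p_\eta u=c^{-1}|v|\,u\,d\nu$ has bounded continuous coefficient in the direction $u/|u|$, and a standard bootstrap — using that $u$ is already real-analytic across $I\setminus\mathrm{supp}(\nu)$ and that the measure is controlled by the continuous length element through the established constancy of $|u|$ — yields smoothness of $u$ in the interior of $I$. The main obstacle is the distributional argument in the second stage: going from pointwise vanishing of $\p_x|u|^2$ on $\mathrm{supp}(\nu)$ (valid $\nu$-a.e.) to genuine constancy of $|u|^2$ across all of $I$. This requires careful handling of the possibly singular measure $\nu$ and relies essentially on the uniform lower bound $|v|\geq\lambda$, without which the boundary condition could degenerate and permit nontrivial tangential variation of $|u|$.
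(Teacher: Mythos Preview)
Your approach has a genuine gap: you try to use the conformality relations pointwise on $I$ before you have established any differentiability of $u$ up to $I$. In Stage~2 you write ``at any point $p\in\mathrm{supp}(\nu)$ where boundary differentiability is available, $u_y(p)$ is a positive multiple of $u(p)$\ldots'', but the whole difficulty is that no such differentiability is available a priori. The weak condition $\partial_\eta u=v\,\nu$ is an equality of distributions, not a pointwise statement, and the measure $\nu$ may be singular. Knowing that $\partial_x|u|^2=0$ classically off $\mathrm{supp}(\nu)$ and ``$\nu$-a.e.\ on $\mathrm{supp}(\nu)$'' (the latter not even well-defined yet) does not yield constancy of $|u|^2$; for a Cantor-type support that argument simply fails. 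You also invoke ``continuity of $|u|^2$ on $I$ from the $C^0$ hypothesis'', but the hypothesis only gives continuity of $u/|u|$, not of $|u|$.

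The paper's route avoids this circularity by extracting a clean elliptic problem for the angular part alone. Write $u$ in polar form $\rho=|u|$, $\xi=u/|u|$. The radial normal derivative hypothesis says exactly that the weak normal derivative of $u$ has no component in the sphere directions, so $\xi$ satisfies the \emph{homogeneous} Neumann condition on all of $I$, independently of $\nu$. Harmonicity of $u$ gives $|\Delta\xi|\leq c|\nabla\xi|^2$ with a lower-order term controlled by $|\nabla\rho|$, and conformality yields $|\nabla\rho|^2\leq c|\nabla\xi|^2$. After an even reflection of the continuous $H^1$ map $\xi$ across $I$, standard harmonic-map regularity gives $\xi\in C^{1,\beta}$, whence $\rho$ and then $u$ are $C^{1,\beta}$ up to $I$. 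Only now can one use conformality pointwise: $\nabla_\eta u\cdot\nabla_T u=0$ together with $\nabla_\eta u\parallel u$ gives $(|u|^2)_T=0$ on $I$, so $|u|$ is constant; the higher regularity then comes from the free-boundary regularity result (Proposition~\ref{minimal.reg}). The step you are missing is precisely this passage through the angular variable to obtain $C^{1,\beta}$ regularity \emph{before} invoking conformality on the boundary.
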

\begin{proof} We introduce spherical coordinates on $\mathbb R^n$ by setting $\rho=|u|$,
and choosing local coordinates $\xi^1,\ldots,\xi^{n-1}$ on $\mathbb S^{n-1}$. The equation
satisfied by $u$ (the Laplace equation) implies the equations
\[ \Delta \xi^i+\sum_{\alpha=1}^2\sum_{j,k=1}^{n-1}\Gamma^i_{jk}(\xi)\frac{\p \xi^j}{\p x^\alpha}\frac{\p \xi^k}{\p x^\alpha}+\sum_{\alpha=1}^2\frac{\p \xi^i}{\p x^\alpha}\frac{\p \rho}{\p x^\alpha}=0
\]
where $\Gamma^i_{jk}$ are the Christoffel symbols for the standard metric on $\mathbb S^{n-1}$.
The conformality condition on $u$ implies
\[ \left(\frac{\p \rho}{\p z}\right)^2=-\rho^2\left\langle\frac{\p \xi}{\p z},\frac{\p\xi}{\p z}\right\rangle
\]
where the inner product is taken with respect the spherical metric and $z=x^1+\sqrt{-1}x^2$.
Since $\rho$ is bounded it follows that $|\nabla\rho|^2\leq c|\nabla\xi|^2$. We also observe
that the $\xi^i$ satisfy the homogeneous Neumann boundary condition, so we do an
even reflection of $\xi$ and $\rho$ across $I$, and reduce the regularity (up to $C^{1,\beta}$) of 
$\xi$ to interior regularity. Thus the map $\xi$ is a continuous $H^1$ map satisfying
\[ |\Delta\xi|\leq c|\nabla\xi|^2.
\] 
Standard regularity arguments then imply that $\xi$ is Lipschitz (see Lemma 3.1 of \cite{Sc1} for a direct argument). It then follows that $\Delta\xi$ is bounded and therefore $\xi$ is
in $W^{2,p}$ for any finite $p$. Thus we conclude that $\xi$ is $C^{1,\beta}$ for any $\beta<1$. 
From conformality it follows that $\rho$ and hence the map $u$ is in $C^{1,\beta}$ for any 
$\beta<1/2$. Since $\nabla_\eta u\cdot\nabla_T u=0$ on $I$ and $\nabla_\eta u$ is parallel to
$u$ it follows that $1/2(|u|^2)_T=u\cdot\nabla_T u=0$ on $I$ where $T$ and $\eta$ are the unit
tangent and normal vectors. Thus $|u|^2$ is constant on $I$, and the higher regularity of $u$ now follows from Proposition \ref{minimal.reg}. 
\end{proof}

The key result which will allow us to prove Theorem \ref{regularity} is a regularity result for the weak* limit $\nu$ of the $\nu^{(i)}$ on $\p M$. By extracting a subsequence we may assume that the maps $u^{(i)}$ converge weakly in $H^1(M)$ to a limiting harmonic map $u$. We will show that this map is nontrivial and is regular up to the boundary, and we will use this to prove
that $\nu$ has a smooth density. 
\begin{proposition}\label{convergence} We may choose a sequence $u^{(i)}$ which converges weakly in $H^1$ to a map $u$, and so that the boundary measures $\nu^{(i)}$ converge weak* to a measure $\nu$. The limiting map $u$ is a nontrivial conformal harmonic map which is regular up to $\p M$, and the measure $\nu$ is a smooth measure.
\end{proposition}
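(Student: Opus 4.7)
The plan is to establish Proposition \ref{convergence} in four stages: compactness and extraction of subsequences, passage to the limit in the weak Steklov equation, application of Lemma \ref{weak reg} for boundary regularity, and identification of the density of $\nu$ as smooth.

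First I would establish compactness. The probability measures $\nu^{(i)}$ on the compact $\p M$ have a weak-star convergent subsequence with limit $\nu$. The extremality identity $(|u^{(i)}|^2)_{\epsilon_i}\equiv 1$ on $\mathrm{supp}(\nu^{(i)})$ from Proposition \ref{epsilon.extremal} forces $\int_{\p M}|u^{(i)}|^2\,d(\nu^{(i)})_{\epsilon_i}=1$ and hence $\int_M|\nabla u^{(i)}|^2\,da=\sigma_1^{(i)}$, which is bounded above by Theorem \ref{theorem:yy}. Combined with the uniform $L^\infty$ bound on $\p M$ from Lemma \ref{e-scale} and the maximum principle (applied to the harmonic $u^{(i)}$), the sequence is uniformly bounded in $H^1(M)\cap L^\infty(M)$, and along a further subsequence $u^{(i)}\rightharpoonup u$ weakly in $H^1(M)$ to a harmonic limit $u$. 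The uniform modulus-of-continuity bound in the last paragraph of Lemma \ref{e-scale}, via Arzela--Ascoli, permits a further extraction so that $u^{(i)}\to u$ uniformly on $\p M$.

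Second, each $u^{(i)}$ satisfies the weak identity
\[
\int_M \langle \nabla\phi,\nabla u^{(i)}\rangle\,da=\sigma_1^{(i)}\int_{\p M}\phi\,u^{(i)}\,d(\nu^{(i)})_{\epsilon_i},\qquad\forall\,\phi\in H^1(M).
\]
Because $\epsilon_i\to 0$, for any $f\in C(\p M)$ we have $\int f\,d(\nu^{(i)})_{\epsilon_i}=\int f_{\epsilon_i}\,d\nu^{(i)}\to\int f\,d\nu$ (using uniform convergence $f_{\epsilon_i}\to f$), so $(\nu^{(i)})_{\epsilon_i}\rightharpoonup^{*}\nu$. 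Uniform boundary convergence of the $u^{(i)}$ together with $\sigma_1^{(i)}\to\sigma^{*}(\gamma,k)$ (by a diagonal extraction and Proposition \ref{confclass}) passes the identity to
\[
\int_M\langle\nabla\phi,\nabla u\rangle\,da=\sigma^{*}(\gamma,k)\int_{\p M}\phi\,u\,d\nu.
\]
Nontriviality of $u$ is forced by $\int|u|^2\,d\nu=1$, and Lemma \ref{almost conformal} gives that $u$ is conformal.

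Third, the limiting boundary relation is exactly $\nabla_\eta u=v\,\nu$ with $v=\sigma^{*}(\gamma,k)\,u$, which is automatically parallel to $u$, so $u$ has radial normal derivative in the sense of Lemma \ref{weak reg}. Lemma \ref{e-scale} and the uniform convergence on $\p M$ force $|u|=1$ at every point of $\mathrm{supp}(\nu)$, so $|v|=\sigma^{*}(\gamma,k)$ is bounded below on $\mathrm{supp}(\nu)$; continuity of $u$ on $\bar M$ together with harmonicity keeps $|u|$ bounded below in a neighborhood in $M$ of any interval $I\subset\mathrm{supp}(\nu)$. Lemma \ref{weak reg} then yields that $u$ is smooth on $I$ with $|u|$ a positive constant there, and Proposition \ref{minimal.reg} upgrades this to smoothness up to $\p M$ near $I$ with the image a free boundary surface and no boundary branch points. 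Integration by parts on $I$ yields $\p_\eta u\,ds_0=\sigma^{*}(\gamma,k)u\,d\nu$; taking inner product with $u$ and using $|u|$ constant on $I$ gives $d\nu=(\sigma^{*}(\gamma,k)|u|^2)^{-1}(u\cdot\p_\eta u)\,ds_0$, a smooth positive density on $I$. The main obstacle I anticipate is showing that $\mathrm{supp}(\nu)=\p M$, that is, ruling out arcs disjoint from $\mathrm{supp}(\nu)$ on which $u$ might degenerate; I expect this to follow from real-analyticity of $u$ on the regular part of $\p M$, the nodal structure of first eigenfunctions, and the normalization $\int|u|^2\,d\nu=1$, but carrying this out globally is the delicate point.
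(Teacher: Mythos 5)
Your overall architecture (compactness, passage to the limit in the weak equation, Lemma \ref{weak reg}, identification of the density) matches the paper's, and the first stage (the $H^1$ and $L^\infty$ bounds, the weak* convergence $(\nu^{(i)})_{\epsilon_i}\rightharpoonup^*\nu$) is correct. But there is a genuine gap at the hinge of the argument: you claim that the last paragraph of Lemma \ref{e-scale} gives a \emph{uniform modulus of continuity} for the $u^{(i)}$ on $\p M$, hence uniform convergence $u^{(i)}\to u$ on $\p M$ by Arzel\`a--Ascoli. Lemma \ref{e-scale} gives no such thing. It controls the oscillation of $u^{(i)}$ only over intervals of the shrinking radius $\omega(\epsilon_i)^{-1}\sqrt{\epsilon_i}$ centered at points of $\mathrm{supp}(\nu^{(i)})$, together with a uniform $L^\infty$ bound and the local energy decay $\int_{D_r(x)}|\nabla u^{(i)}|^2\leq c|\log r|^{-1/2}$; none of this bounds $|u^{(i)}(x)-u^{(i)}(y)|$ for $x,y$ at a fixed distance, uniformly in $i$. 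Without equicontinuity, your passage to the limit in the boundary term is unjustified: the weak* limit of $u^{(i)}(\nu^{(i)})_{\epsilon_i}$ is a measure $\hat{u}\,\nu$ for some a priori merely bounded $\nu$-measurable $\hat{u}$, and the paper states explicitly that identifying $\hat{u}$ with $u$ ``is not clear at this stage.'' Consequently you cannot yet assert that the limiting boundary relation is $\nabla_\eta u=\sigma^*(\gamma,k)\,u\,\nu$ (radial normal derivative), nor that $|u|=1$ on $\mathrm{supp}(\nu)$, and the hypotheses of Lemma \ref{weak reg} are not verified.

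Closing this gap is the bulk of the paper's proof and requires ideas absent from your proposal: (i) a lower bound $|\hat{u}|\geq\delta>0$ $\nu$-a.e., obtained from the multiplicity bound (Theorem \ref{multiplicity}), which limits the boundary zeros of each component $u_j^{(i)}$ and confines the images to a convex set avoiding the origin; (ii) the construction of the open dense set $\Omega\subset\p M$ complementary to the common closure of the boundary zero sets, where one has a uniform lower bound on the local $L^2$ averages of $u$; and (iii) a uniform equicontinuity estimate not for $u^{(i)}$ but only for the \emph{angle} $\xi_i=u^{(i)}/|u^{(i)}|$ on $\Omega$, proved via the Poincar\'e inequality applied to $a\cdot u^{(i)}$ for $a\perp(u^{(i)})_{\epsilon_i}$ together with the Courant--Lebesgue lemma. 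This is exactly why Lemma \ref{weak reg} is formulated with the hypotheses ``$|u|\geq\lambda$ and $u/|u|$ continuous'' rather than continuity of $u$ itself. A minor further point: you flag $\mathrm{supp}(\nu)=\p M$ as the delicate step, but that is actually the easy part (on an interval with $\nu(I)=0$ the map satisfies the homogeneous Neumann condition, is smooth there, and conformality forces $\nabla u=0$ on $I$, hence $u$ constant); the delicate step is the boundary identification just described.
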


\begin{proof}  
We consider the weak* limit of the measures $u^{(i)}(\nu^{(i)})_{\epsilon_i}$. Since for any
$\zeta\in C^0(\p M)$ we have
\[ \int_{\p M}\zeta u^{(i)}\ d(\nu^{(i)})_{\epsilon_i}=\int_{\p M}(\zeta u^{(i)})_{\epsilon_i}\ d\nu^{(i)}
=\int_{\p M}[(\zeta u^{(i)})_{\epsilon_i}-\zeta(u^{(i)})_{\epsilon_i}]\ d\nu^{(i)}+\int_{\p M}\zeta (u^{(i)})_{\epsilon_i}\ d\nu^{(i)}
\]
and the difference term tends to zero because of the continuity of $\zeta$, it follows that the
weak* limits of $u^{(i)}(\nu^{(i)})_{\epsilon_i}$ and $(u^{(i)})_{\epsilon_i}\nu^{(i)}$ are the
same. Since, by Lemma \ref{e-scale}, the $(u^{(i)})_{\epsilon_i}$ are bounded in length on the support of $\nu^{(i)}$,
it follows that the weak limit is absolutely continuous with respect to $\nu$ and has the form
$\hat{u}\nu$ where $\hat{u}$ is an $\mathbb R^n$ valued $\nu$-measurable function with the
property that $|\hat{u}|\leq 1$ for $\nu$-almost every point of $\p M$. We recall that the weak $H^1$ limit of $u^{(i)}$ is $u$, so it would be reasonable to expect that $\hat{u}=u$
$\nu$-almost everywhere, but this is not clear at this stage. 
 
 We now show that the limit function $\hat{u}$ is strictly nonzero on the support of $\nu$,
 and in fact there is $\delta_0>0$ so that $|\hat{u}|\geq \delta_0$ for $\nu$-almost every point of
 $\p M$.  To control the weak limit on the boundary we show that if there is an interval $I$
 of $\p M$ and a compact convex subset $K$ of $\mathbb R^n$ such that 
 $(u^{(i)})_{\epsilon_i}(x)\in K$ for each $x\in I\cap spt(\nu^{(i)})$, then the weak limit
 $\hat{u}$ has the property that $\hat{u}(x)\in K$ for $\nu$-almost every $x\in I\cap spt(\nu)$. This follows
 because if we are given a linear function $v=a\cdot u+b$ and a non-negative smooth
 function $\zeta$ on $M$ such that $\zeta=0$ on $\p M\setminus I$, we then have
 \[ \lim_{i\to\infty}\int_I\zeta (v^{(i)})_{\epsilon_i}\ d\nu^{(i)}=\int_{\p M}\zeta \hat{v}\ d\nu
 \]
from the observation above and since $\nu$ is the weak* limit of $\nu^{(i)}$. It follows that
an inequality $(v^{(i)})_{\epsilon_i}\geq 0$ on the support of $\nu^{(i)}$ implies the inequality
$\hat{v}\geq 0$ at $\nu$-almost every point of $I$. The statement for compact convex sets follows.

To obtain the lower bound on $\hat{u}$ we use the fact that each component function $u_j^{(i)}$
for $j=1,\ldots, n$ has a bounded number of zeroes on $\p M$ depending only on the topology
of $M$. This follows from the multiplicity bound Theorem \ref{multiplicity}. Thus we may assume
that the zero points are all convergent to a fixed finite set of points in $\p M$. We choose an
interval $I$ in the complement of these points, and we use Lemma \ref{e-scale} to see that for any compact subinterval $I'$ of $I$ and for $i$ sufficiently large, the component functions 
$(u^{(i)}_j)_{\epsilon_i}$ have a fixed sign on $I'\cap spt(\nu^{(i)})$ for $j=1,\ldots, n$ up to
terms which tend to $0$ with $i$. Again from Lemma \ref{e-scale} we have that the images 
$u^{(i)}$ lie in the convex hull $K$ of a small
neighborhood of the portion of $\mathbb S^{n-1}$ lying in an octant (the coordinates each have 
a fixed sign). Such a $K$ omits a fixed neighborhood of the origin of radius $\delta=\delta(n)$.
We thus conclude that $|\hat{u}|\geq \delta$ for $\nu$-almost every point of $\p M$. 

We now show that $u$ is nontrivial by deriving the weak form of the boundary condition satisfied by $u$. We have that $u$ is harmonic on $M$ and satisfies the condition
\[ \int_M\langle \nabla\zeta,\nabla u\rangle\ da_0=\sigma^*(\gamma,k)\int_{\p M}\zeta\cdot\hat{u}\ d\nu
\]
for all $\zeta\in H^1(M,\mathbb R^n)\cap C^0(\bar{M},\mathbb R^n)$. This follows by taking the 
weak* limit for the corresponding equations satisfied by $u^{(i)}$. Since the measure $\hat{u}\nu$
is nonzero, it follows that $u$ is not a constant function. 

It now follows that the support of $\nu$ is $\p M$ since on an open interval $I$ with $\nu(I)=0$
the map $u$ satisfies the homogeneous Neumann boundary condition and is therefore
smooth on $I$ and since $u$ is conformal we would have $\nabla u=0$ on $I$. This
would imply that $u$ is a constant map, a contradiction.

We write $u=(u_1,\ldots, u_n)$, and for each component $u_j$ we define a subset $K_j$ 
of $\p M$ to be the set of points $x$ such that
\[ \liminf_{r\to 0}|D_r(x)|^{-1}\int_{D_r(x)}u_j^2\ da_0=0.
\]
We claim that each $K_j$ is a closed subset of $\p M$. This follows because $K_j$ can be
characterized as the intersection of the closure of the zero set of $u_j$ with $\p M$. To see this
observe that if $x$ is not in the closure of the zero set of $u_j$ then there is an $r>0$ so that
$u_j$ is either positive or negative on $D_r(x)\cap M$. If $u_j$ is positive, then it follows that
for $i$ sufficiently large $u^{(i)}_j$ is also positive on $D_r(x)\cap M$ and therefore we
have $\hat{u}_j\geq 0$ for $\nu$-almost every point of $I_r(x)=\overline{D_r(x)}\cap M$. It follows
from the boundary condition satisfied by $u_j$ that $u_j$ is weakly super-harmonic and
the average of $u_j$ over $D_r(x)$ is monotone decreasing in $r$. Since $x\in K_j$, this average
must tend to $0$ as $r$ tends to $0$. Since $u_j>0$ in $D_r(x)$ we have a contradiction.
Conversely, if $x$ is in the closure of the zero set of $u_j$, then for any $r>0$ and for
$i$ sufficiently large there is a point of the zero set of $u_j^{(i)}$ in $D_{r/2}(x)$ (see the proof of Theorem \ref{k_comp}). It then follows
that the zero set of $u_j^{(i)}$ must intersect $\p D_\rho(x)$ for $r/2\leq\rho\leq r$, and therefore
the Poincar\'e inequality implies
\[ |D_r(x)|^{-1}\int_{D_r(x)}(u^{(i)}_j)^2\ da_0\leq c\int_{D_r(x)}|\nabla u^{(i)}_j|^2\ da_0
\]
for a fixed constant $c$. Since the term on the right is bounded by a fixed constant times
$|\log(r)|^{-1/2}$ (see the proof of Lemma \ref{e-scale}) and for each $r$ the term on the left converges to the average of $u$ over
$D_r(x)$ we conclude that $x\in K_j$. If $u_j<0$ on $D_r(x)$ we apply the same argument
to $-u_j$.

We now let $K$ be the intersection of the $K_j$, and we let $\Omega$ be the nonempty
open subset of $\p M$ which is the complement of $K$. Thus for $x\in \Omega$ we have
\[  \liminf_{r\to 0}|D_r(x)|^{-1}\int_{D_r(x)}|u|^2\ da_0>0.
\]
Furthermore it is easily seen that for any compact subset $C\subseteq \Omega$ there exists
$\delta_0>0$ so that
\[ \liminf_{r\to 0}|D_r(x)|^{-1}\int_{D_r(x)}|u|^2\ da_0\geq \delta_0
\]
for $x\in C$. We also observe that the set $K$ has empty interior since if we had an
interval $I\subseteq K$, then the map $u$ would be smooth and constant on $I$ which
because of the conformality would imply that $u$ is constant, a contradiction.

We now prove a uniform equicontinuity estimate for the angle $\xi_i$ given by
$\xi_i=u^{(i)}/|u^{(i)}|$ on $\Omega$. In particular this gives 
uniform equicontinuity of $u^{(i)}_{\epsilon_i}$ on the portion of the support of $\nu^{(i)}$ that lies in $\Omega$. We note that Lemma \ref{e-scale} shows that 
$(u^{(i)})_{\epsilon_i}$ has magnitude almost $1$ and is close to $u^{(i)}$ at all points of the support of $\nu^{(i)}$, so it is enough to obtain
a continuity estimate on the angle of $\xi_i$, and this we do at all points of $\Omega$. The key to doing this is
to observe that if we have a point $x\in\p M$ and an eigenfunction $v$ with $v(x)=0$,
then the zero set of $v$ must intersect the boundaries of disks $D_r(x)$ up to a fixed 
radius depending on a lower bound on $\sigma_1$ (see the proof of Theorem \ref{k_comp}). 
It then follows from the Poincar\'e inequality (since the zero set of $v$ is large enough) that
\[ |D_r(x)|^{-1}\int_{D_r(x)}v^2\leq c\int_{D_r(x)}|\nabla v|^2\ da_0
\]
for a fixed constant $c$. We can use this together with the lower bound on $u$ to get the
equicontinuity of the angle of $u^{(i)}$. We consider a unit vector $a$ orthogonal
to $(u^{(i)})_{\epsilon_i}$ and apply the previous observation to the function $v=a\cdot u^{(i)}$
which has a zero very near to the point $x$ we are considering. We also have
\[ \int_{D_r(x)}|\nabla v|^2\ da_0=\sigma_1^{(i)}\int_{I_r(x)}v^2\ d(\nu^{(i)})_{\epsilon_i}
+\int_{(\p D_r(x))\cap M}v\frac{\p v}{\p r}\ ds.
\]
Using the bound $(\nu^{(i)})_{\epsilon_i}(I_r)\leq c|\log(r)|^{-1}$ and the Courant-Lebesgue lemma which implies that $r$ may be chosen so that $|v|\leq c|\log(r)|^{-1/2}$ this easily implies the bound
\[ \int_{D_r(x)}|\nabla v|^2\ da_0\leq c|\log(r)|^{-1/2}.
\]
We conclude that for a fixed radius $r>0$ and $i$ large we have 
\[ |D_r(x)|^{-1}\int_{D_r(x)}(a\cdot u^{(i)})^2\leq c|\log(r)|^{-1/2}
\]
for any unit vector $a$ orthogonal to $\xi_i$. On the other hand, because
the weak limit of $u^{(i)}$ is strictly nonzero, for any $x$ there is a unit vector $v$ such that
\[ |D_r(x)|^{-1}\int_{D_r(x)}v\cdot u^{(i)}\geq \delta_0
\]
for a fixed positive constant $\delta_0$ and fixed $r$. These bounds imply the equicontinuity
of the angle since if $x,y$ are close together, then we consider the unit vectors $\xi_i(x)$ and
$\xi_i(y)$. If these vectors are not close together, we can decompose any unit vector
$v$ as $v=v_1+v_2$ where $v_1$ is orthogonal to $\xi_i(x)$ and $v_2$ is orthogonal to $\xi_i(y)$ with
$v_1$ and $v_2$ of bounded length. If we fix a small radius $r$ with $y\in D_{r/2}(x)$, then
we may choose $v$ as above and we contradict the lower bound.
This proves uniform equicontinuity of the angle $\xi_i$ in $\Omega$, and also of the functions 
$(u^{(i)})_{\epsilon_i}$ on the support of $\nu^{(i)}$.

We may now choose a sequence $\epsilon_i\to 0$ so that the sequence $\xi_i$ converges uniformly on $\Omega$ to a function $u'$. Thus $(u^{(i)})_{\epsilon_i}$ converges uniformly to $u'$ on the portion of the support of $\nu$ that is contained in $\Omega$. Since the measures
$(u^{(i)})_{\epsilon_i}\nu^{(i)}$ converge weakly to $\hat{u}\nu$, it follows that $u'=\hat{u}$ for 
$\nu$ almost every point of $\Omega$. 

We have now verified the hypotheses of Lemma \ref{weak reg}, and so we may
conclude that $u$ is smooth up to $\p M$, and the limiting measure $\nu$ is smooth. This
completes the proof.
\end{proof}  

\begin{proof} We now complete the proof of Theorem \ref{regularity}. We let $g$ be a smooth
metric in the conformal class of $g_0$ whose boundary arclength measure is $\nu$, so we have
$\sigma_1(g_0,\nu)=\sigma_1(g)$. If we can show that $\sigma_1(g)=\sigma^*(\gamma,k)$,
the result then follows from Proposition \ref{prop:extremal}. To see this we use the variational
characterization of $\sigma_1$
\[ \sigma_1(g_0,\nu)=\inf\left\{E(\varphi,g_0):\ \varphi\in H^1(M)\cap C^0(\bar{M}),\ \int_{\p M}\varphi^2\ d\nu=1,\ \int_{\p M}\varphi\ d\nu=0\right\}.
\]
We choose any $\varphi\in H^1(M)\cap C^0(\bar{M})$ with average $0$ and $L^2$ norm $1$
with respect to $\nu$. It then follows from the weak* convergence of $\nu^{(i)}$ to $\nu$ and the $C^2(\bar{M})$ convergence of $g_0^{(i)}$ to $g_0$ that if we let $m^{(i)}=\int_{\p M}\varphi\ d\nu^{(i)}$ and we define $\varphi^{(i)}$ by
\[ \varphi^{(i)}=\left(\int_{\p M}(\varphi-m^{(i)})^2\ d\nu^{(i)}\right)^{-1/2}(\varphi-m^{(i)}),
\]
we have $\varphi^{(i)}$ converging in $H^1(M)\cap C^0(\bar{M})$ to $\varphi$. Therefore
we have
\[ \sigma^*(\gamma,k)=\lim_{i\to\infty}\sigma_1(g_0^{(i)},\nu^{(i)})\leq \lim_{i\to\infty}E(\varphi^{(i)},g_0^{(i)})=E(\varphi).
\]
Since $\varphi$ was arbitrary we conclude that $\sigma^*(\gamma,k)\leq \sigma_1(g)$,
and since $g$ is a smooth metric we have equality. This completes the proof of 
Theorem \ref{regularity}.
\end{proof}

We now combine the above result with those of Section \ref{section:properties} to establish the main existence and regularity theorem of this paper.
\begin{theorem}\label{main} Let $M$ be either an oriented surface of genus $0$ with $k\geq 2$
boundary components or a M\"obius band. There exists on $M$ a smooth metric $g$ which
maximizes $\sigma_1L$ over all metrics on $M$. Moreover there is a branched conformal minimal 
immersion $\varphi:(M,g)\to B^n$ for some $n\geq 3$ by first eigenfunctions so that $\varphi$ is a 
$\sigma$-homothety from $g$ to the induced metric $\varphi^*(\delta)$ where $\delta$ is
the euclidean metric on $B^n$.
\end{theorem}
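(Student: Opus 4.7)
The plan is to assemble the compactness results of Section \ref{section:properties} with the existence and regularity machinery of Theorem \ref{regularity} and Proposition \ref{prop:extremal}, which together reduce the theorem to a bookkeeping exercise.

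First I would verify the compactness hypothesis required by Theorem \ref{regularity}: that some maximizing sequence of metrics admits constant-curvature conformal representatives lying in a compact subset of moduli space. For an oriented surface $M$ of genus $0$ with $k\geq 2$ boundary components I argue by induction on $k$. The base case is Weinstock's theorem, which gives $\sigma^*(0,1)=2\pi$ attained smoothly by the round disk. Assuming inductively that $\sigma^*(0,k-1)$ is attained by a smooth metric, Proposition \ref{sigma*_inc} yields the strict inequality $\sigma^*(0,k)>\sigma^*(0,k-1)$, so any sequence with $\sigma_1 L\to\sigma^*(0,k)$ eventually satisfies $\sigma_1 L\geq\lambda$ for some fixed $\lambda>\sigma^*(0,k-1)$, and Theorem \ref{k_comp} supplies the required lower bound on the injectivity radius of the associated hyperbolic representative. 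For the M\"obius band I take as input an explicit comparison metric with $\sigma_1 L>2\pi$ (for instance the critical M\"obius band, which realizes $\sigma_1 L=2\pi\sqrt{3}$); Proposition \ref{two_comp} then yields uniform bounds on the conformal parameters $\alpha$ and $T$ for any nearly maximizing sequence.

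Given compactness, Theorem \ref{regularity} produces a smooth metric $g$ on $M$ with $\sigma_1(g)L_g(\partial M)=\sigma^*$, together with a proper conformal harmonic map $\varphi=(u_1,\ldots,u_n)\colon M\to B^n$ whose image is a free boundary minimal surface, and the identification $\nu=c\lambda\,ds_0$ of the limiting boundary measure with the induced arclength. Proposition \ref{prop:extremal}, applied to this smooth maximizer, ensures the $u_i$ may be chosen as independent first Steklov eigenfunctions and that, after rescaling so that $\sigma_1=1$, one has $|\varphi|^2=1$ on $\partial M$ with $\varphi$ restricting to an isometry there. The conformality of $\varphi$ together with the boundary normalization then delivers the $\sigma$-homothety from $g$ to $\varphi^*(\delta)$, giving a branched conformal minimal immersion of $(M,g)$ into $B^n$ by first eigenfunctions.

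The dimension bound $n\geq 3$ is handled by ruling out $n\leq 2$. For the M\"obius band, non-orientability immediately forbids any nonconstant conformal harmonic immersion into an oriented two-dimensional target, since the pullback of the Euclidean area form would have to trivialize the orientation line. For oriented genus $0$ with $k\geq 2$ boundary components, the case $n=2$ would make $u_1+iu_2$ holomorphic after an orientation choice, and the eigenvalue equation $\partial_\eta u_i=\sigma_1 u_i$ combined with the Cauchy--Riemann relations forces $\varphi$ to wind around $S^1$ at constant angular speed $\sigma_1$ on each boundary component; the closing condition then forces $\sigma_1 L$ to be a positive integer multiple of $2\pi$ at least $2\pi k$, which is contradicted by the coarse bound of Theorem \ref{theorem:yy} combined with the strict propagation of Proposition \ref{sigma*_inc} starting from $\sigma^*(0,1)=2\pi$.

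The hard part of the entire argument is not this assembly but the preparatory compactness theorems: the inductive strict-monotonicity propagation in the genus-$0$ case rests on the delicate cusp and half-cusp analysis in the proof of Theorem \ref{k_comp}, while the M\"obius band case relies on the short-neck estimates of Proposition \ref{two_comp}. Once those are in place, Theorem \ref{main} follows by assembling the pieces.
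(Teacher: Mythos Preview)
Your assembly is essentially the same as the paper's: control the conformal class (Proposition \ref{two_comp} for the M\"obius band and for $k=2$; Theorem \ref{k_comp} inductively for larger $k$, fed by the strict monotonicity of Proposition \ref{sigma*_inc}), then invoke Theorem \ref{regularity}. Your induction is anchored at $k=1$ via Weinstock, whereas the paper anchors at $k=2$ via the critical catenoid; both are fine, since Theorem \ref{k_comp} already subsumes Proposition \ref{two_comp} for the $k=2$ step and Proposition \ref{sigma*_inc} applies with the disk as the smooth maximizer at $k=1$. The appeal to Proposition \ref{prop:extremal} after Theorem \ref{regularity} is harmless but redundant, since Theorem \ref{regularity} already produces the free boundary immersion by first eigenfunctions with the boundary $\sigma$-homothety.

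One remark on the extra paragraph ruling out $n\le 2$: the paper's own proof of Theorem \ref{main} does not argue this point at all, so you are attempting more than the paper does here. Your M\"obius band case is fine, but the oriented case has a gap. Your winding argument gives $\sigma_1 L\ge 2\pi k$ when $n=2$, and Theorem \ref{theorem:yy} gives only $\sigma_1 L\le 2\pi k$ (for $k\le 4$), so there is no strict contradiction; Proposition \ref{sigma*_inc} supplies lower bounds on $\sigma^*(0,k)$, not upper bounds, so it does not close the gap. If you want to keep this addendum you would need an independent strict upper bound $\sigma^*(0,k)<2\pi k$, which is not available at this point in the paper.
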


\begin{proof} The result for the M\"obius band follows from combining Proposition \ref{two_comp}
with Theorem \ref{regularity}. In order to apply Proposition \ref{two_comp} we must check
that the supremum of $\sigma_1 L$ for metrics on the M\"obius band is strictly larger than $2\pi$.
This follows from the fact that for the critical M\"obius band the value of $\sigma_1 L$ is $2\sqrt{3}\pi$ (see Section \ref{section:band}). It thus follows that
the conformal structure is controlled for any metric with $\sigma_1 L$ near the supremum, and the existence and regularity then follows from Theorem \ref{regularity}.

Now assume that $M$ is a smooth surface of genus $0$ with $k$ boundary components. We
prove the result inductively on $k$. First for $k=2$ it follows from Proposition \ref{two_comp}
and Theorem \ref{regularity} as for the M\"obius band where we use the fact that $\sigma^*(0,2)$
is at least as large as $\sigma_1 L$ for the critical catenoid and this is greater than $2\pi$.

Now assume that we have proven Theorem \ref{main} for $k$ boundary components
and let $M$ be a surface of genus $0$ with $k+1$ boundary components. From Proposition \ref{sigma*_inc} it then follows that $\sigma^*(0,k+1)>\sigma^*(0,k)$, and thus from
Theorem \ref{k_comp} we see that the conformal structure is controlled for metrics with
$\sigma_1 L$ near $\sigma^*(0,k+1)$. The existence and regularity then follows from
Theorem \ref{regularity}.
\end{proof}

\section{Uniqueness of the critical catenoid} \label{section:unique_cc}

In this section we will show that the critical catenoid is the only minimal annulus in $B^n$
which is a free boundary solution with the coordinate functions being first eigenfunctions.
Recall that the critical catenoid is the unique portion of a suitably scaled catenoid which defines a free boundary surface in $B^3$ (see section 3 of \cite{FS}). 
To this end, suppose $\sig=\vp(M)$ is a free boundary solution in $B^n$ where $M$ is
the Riemann surface $[-T,T]\times S^1$ and $\vp$ is a conformal harmonic map. We denote
the coordinates on $M$ by $(t,\theta)$, and we consider the conformal Killing vector
field $X=\frac{\p\vp}{\p\theta}$ defined along $\sig$. We will show that $X$ is the restriction
of a Killing vector field of $\R^n$, and hence $\sig$ is a surface of revolution which must
be the critical catenoid. We observe that since $X$ is a conformal Killing vector field it satisfies
the conditions
\[ D_{e_1}X\cdot e_2=-D_{e_2}X\cdot e_1,\ \ D_{e_1}X\cdot e_1=D_{e_2}X\cdot e_2
\]
for any orthonormal basis $e_1,e_2$ of the tangent space. 

We will have need to consider nontangential vector fields satisfying similar conditions. We call 
a (not necessarily tangential) vector field $V$ a {\it conformal vector field} if 
\[ D_{e_1}V\cdot e_2=-D_{e_2}V\cdot e_1,\ \  D_{e_1}V\cdot e_1=D_{e_2}V\cdot e_2
\]
for any oriented orthonormal basis $e_1,e_2$ of the tangent space of $\sig$. A consequence is that if $v$ is any unit vector in the tangent space the expression $D_vV\cdot v$ is constant. This may be
seen by writing $v=\cos\theta e_1+\sin\theta e_2$ and calculating
\[ D_vV\cdot v=\cos^2\theta\ D_{e_1}V\cdot e_1+\sin^2\theta\ D_{e_2}V\cdot e_2
+\sin\theta\cos\theta(D_{e_1}V\cdot e_2+D_{e_2}V\cdot e_1)
\]
and this is equal to $D_{e_1}V\cdot e_1$ for any choice of $v$.

The following result applies generally to free boundary solutions $\sig=\vp(M)$ in $B^n$ for any surface  $M$. For vector fields $V,\ W$ defined along $\sig$ (but not necessarily tangent to $\sig$) and tangent to $S^{n-1}$ along $\p\sig$ we consider the quadratic form
\[ Q(V,W)=\int_\sig \langle D V,DW\rangle\ da-\int_{\p\sig}V\cdot W\ ds.
\]
The following lemma relates the second variations of energy and area.
\begin{lemma} \label{lemma:area-energy}
If $\vp_s$ is a family of maps from $M$ to $B^n$ with $\dot{\vp}=V$, then
\[ Q(V,V)=\frac{1}{2}\frac{d^2}{ds^2} E(\vp_s)\ \ \mbox{at}\ \ s=0.
\]
If $V$ is a conformal vector field, then $Q(V,V)=S(V^\perp,V^\perp)$.
\end{lemma}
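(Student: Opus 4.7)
The plan is to prove the two statements in sequence. For the first, the argument is a direct second variation computation of energy. For the second, I compare the energy and area second variations via the pointwise inequality $|d\vp|^2\geq 2\sqrt{\det\vp^*\delta}$, which becomes equality precisely at conformal maps, then reduce the area second variation to the normal component of $V$ by reparametrization.

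For the first claim, set $Z:=\ddot\vp_s\big|_{s=0}$ and differentiate $E(\vp_s)=\int_M|d\vp_s|^2\,da_0$ twice in $s$ to obtain
\[
E''(0) \;=\; 2\int_M|dV|^2\,da_0 \;+\; 2\int_M\langle d\vp,dZ\rangle\,da_0.
\]
The second term becomes, after integration by parts and use of $\Delta\vp=0$ together with the free boundary condition that $\p_\eta\vp$ is a positive multiple of $\vp$ on $\p M$ (say $\p_\eta\vp=\lambda\vp$), a boundary integral equal to $2\int_{\p M}\lambda(\vp\cdot Z)\,ds_0$. Differentiating the constraint $|\vp_s|^2=1$ on $\p M$ twice in $s$ gives $V\cdot\vp=0$ and $\vp\cdot Z=-|V|^2$ on $\p M$, and the conversion $ds_{\sig}=\lambda\,ds_0$ turns this boundary contribution into $-2\int_{\p\sig}|V|^2\,ds$. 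Similarly, conformal invariance of the two-dimensional Dirichlet integral yields $\int_M|dV|^2\,da_0=\int_\sig|DV|^2\,da$. Altogether, $\tfrac12 E''(0)=Q(V,V)$.

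For the second claim, I split it into the two sub-identities $E''(0)=2A''(0)$ (under the conformal hypothesis on $V$) and $A''(0)=S(V^\perp,V^\perp)$. The first rests on the pointwise identity
\[
|d\vp_s|^2 - 2\sqrt{\det\vp_s^*\delta} \;=\; \bigl(\sqrt{\lambda_1(s)}-\sqrt{\lambda_2(s)}\bigr)^2,
\]
where $\lambda_1(s),\lambda_2(s)$ are the eigenvalues of $\vp_s^*\delta$ measured against $\vp^*\delta=g_0$. The paper's definition of a conformal vector field $V$ translates, in any orthonormal frame of $(\sig,\vp^*\delta)$, to the statement that $\dot{(\vp_s^*\delta)}|_{s=0}$ is a scalar multiple of $\vp^*\delta$, so $\dot\lambda_1(0)=\dot\lambda_2(0)$ and $\sqrt{\lambda_1(s)}-\sqrt{\lambda_2(s)}=O(s^2)$ uniformly on the compact surface $M$. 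Therefore $E(\vp_s)-2A(\vp_s)=O(s^4)$, giving $(E-2A)''(0)=0$ and hence $E''(0)=2A''(0)$.

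For the second sub-identity, I observe that at any $p\in\p\sig$ the free boundary condition forces $T_p\sig=T_p(\p\sig)\oplus\mathbb{R}\,p$. Since $V(p)\cdot p=0$ (tangency of $V$ to $S^{n-1}$) and $V^\perp(p)\cdot p=0$ (because $p\in T_p\sig$ while $V^\perp$ is normal to $\sig$), one has $V^T(p)\cdot p=0$, so $V^T(p)\in T_p(\p\sig)$. The flow $\beta_s$ of $V^T$ on $M$ therefore preserves $\p M$, and the reparametrized variation $\psi_s:=\vp_s\circ\beta_s^{-1}$ satisfies $\dot\psi_s|_0=V^\perp$ together with $A(\psi_s)=A(\vp_s)$. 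The standard second variation formula for area under a normal variation of a free boundary minimal surface in $B^n$ then yields $A''(0)=S(V^\perp,V^\perp)$. Chaining these gives $Q(V,V)=\tfrac12 E''(0)=A''(0)=S(V^\perp,V^\perp)$.

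The main obstacle will be the careful bookkeeping of the conformal factor $\lambda$ and the boundary measure conversion in the first claim, together with ensuring that the flow $\beta_s$ of $V^T$ is well defined and smooth on an $s$-interval around $0$, so that the reparametrization $\psi_s=\vp_s\circ\beta_s^{-1}$ rigorously yields $A(\psi_s)=A(\vp_s)$ to second order.
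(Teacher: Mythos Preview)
Your proof is correct. The first part (the second variation of energy) is essentially identical to the paper's argument: differentiate $E$ twice, integrate by parts using harmonicity and the free boundary condition, and use $|\vp_s|^2=1$ on $\p M$ to convert the acceleration term into $-|V|^2$.

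For the second part your route differs from the paper's. The paper works directly in conformal coordinates $(t,\theta)$, writes $E=\int(g_{11}+g_{22})$ and $A=\int\sqrt{g_{11}g_{22}-g_{12}^2}$, differentiates $A$ twice, and checks by hand that the terms involving $\dot g_{ij}$ cancel under the conformal hypothesis $\dot g_{11}=\dot g_{22}$, $\dot g_{12}=0$, leaving $\ddot A=\tfrac12\int(\ddot g_{11}+\ddot g_{22})=\tfrac12\ddot E$. It then simply quotes $\ddot A=S(V^\perp,V^\perp)$ as a known fact. Your argument instead packages the same cancellation via the pointwise identity $|d\vp_s|^2-2\sqrt{\det\vp_s^*\delta}=(\sqrt{\lambda_1}-\sqrt{\lambda_2})^2$: the conformal condition on $V$ says exactly that $\dot{(\vp_s^*\delta)}|_{s=0}$ is pure trace, which forces $\lambda_1-\lambda_2=O(s^2)$ and hence $E-2A=O(s^4)$. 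This is a cleaner, more conceptual explanation of \emph{why} conformality of $V$ is the right hypothesis (it is precisely the first-order condition for the energy--area gap to vanish), at the cost of a small subtlety you gloss over: the individual eigenvalues $\lambda_i(s)$ need not be smooth at $s=0$ where they coincide, so the phrase ``$\dot\lambda_1(0)=\dot\lambda_2(0)$'' should be read as ``$h(s)=(1+s\mu)I+O(s^2)$ forces $\lambda_1-\lambda_2=O(s^2)$.'' You also supply, via the reparametrization $\psi_s=\vp_s\circ\beta_s^{-1}$, a justification of $\ddot A=S(V^\perp,V^\perp)$ that the paper omits; this is a nice addition, and your check that $V^T$ is tangent to $\p\sig$ (so that $\beta_s$ preserves $\p M$) is exactly the point that makes the free boundary version work.
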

\begin{proof} By direct calculation we have
\[ \frac{1}{2}\ddot{E}=\int_\sig (\|DV\|^2+D\vp\cdot D\ddot{\vp})\ da.
\]
Integrating the second term by parts using the harmonicity of $\vp$ and the free boundary 
condition we have
\[ \frac{1}{2}\ddot{E}=\int_\sig \|DV\|^2\ da+\int_{\p\sig}x\cdot \ddot{\vp}\ ds.
\]
Since $\vp_t(x)$ is a curve on $S^{n-1}$ for fixed $x\in\p\sig$, the normal component of the
acceleration is the second fundamental form of $S^{n-1}$ in the direction $V(x)$; thus we have
\[ \frac{1}{2}\ddot{E}=\int_\sig \|DV\|^2\ da-\int_{\p\sig}\|V\|^2\ ds=Q(V,V).
\]

Now assume that $V$ is a conformal vector field and consider a variation $\vp_s$
with $\dot{\vp}=V$. We work in local conformal coordinates $(t,\theta)$ on $M$ and we let 
\[ g_{11}=\|\vp_t\|^2,\ \ g_{22}=\|\vp_\theta\|^2,\ \ \mbox{and}\ \ g_{12}=\vp_t\cdot\vp_\theta.
\]
We then have $E=\int_M (g_{11}+g_{22})\ dtd\theta$ and $A=\int_M\sqrt{g_{11}g_{22}-g_{12}^2}\ dtd\theta$. The condition that $V$ is conformal implies that at $s=0$
we have $\dot{g}_{11}=\dot{g}_{22}$ and $\dot{g}_{12}=0$. We know that the second variation of area is given by $\ddot{A}=S(V^\perp,V^\perp)$. At $s=0$ we have $g_{11}=g_{22}=\lambda$
and $g_{12}=0$. We compute 
\[ \dot{A}=\frac{1}{2}\int_M (g_{11}g_{22}-g_{12}^2)^{-1/2}(\dot{g}_{11}g_{22}+g_{11}\dot{g}_{22}-2g_{12}\dot{g}_{12})\ dtd\theta.
\]
Taking a second derivative and setting $s=0$ we obtain
\[ \ddot{A}=\frac{1}{2}\int_M\big[-\frac{1}{2}\lambda^{-3}(\lambda\dot{g}_{11}+\lambda\dot{g}_{22})^2
+2\lambda^{-1}\dot{g}_{11}\dot{g}_{22}+(\ddot{g}_{11}+\ddot{g}_{22})\big]\ dtd\theta.
\]
The conditions on $V$ imply that the first two terms cancel and we have
\[ \ddot{A}=\frac{1}{2}\int_M(\ddot{g}_{11}+\ddot{g}_{22})\ dtd\theta=\frac{1}{2}\ddot{E},
\]
and therefore $Q(V,V)=S(V^\perp,V^\perp)$ as claimed.

\end{proof}

We now specialize to the annulus case, and assume that $\sig=\vp(M)$ is a free boundary solution in $B^n$ where $M$ is the Riemann surface $[-T,T]\times S^1$ and $\vp$ is a conformal harmonic map, and denote the coordinates on $M$ by $(t,\theta)$.
We observe the following properties of the vector field $X=\fder{\vp}{\theta}$.
\begin{lemma}  \label{lemma:null}
The vector field $X$ is harmonic as a vector valued function on $\sig$.
Moreover $X$ is in the nullspace of $Q$ in the sense that $Q(X,Y)=0$ for any vector
field $Y$ along $\sig$ which is tangent to $S^{n-1}$ along $\p\sig$.
\end{lemma}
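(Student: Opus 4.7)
The plan is to establish the two assertions by direct calculation in the conformal coordinates $(t,\theta)$, exploiting the harmonic equation for $\varphi$ and the free boundary condition on $\partial\Sigma$.

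For the harmonicity of $X$, I would observe that $(t,\theta)$ are isothermal coordinates on $\Sigma$ with conformal factor $\lambda^2 = |\varphi_t|^2 = |\varphi_\theta|^2$, so the Laplacian of a vector-valued function $F$ along $\Sigma$ is $\Delta_\Sigma F = \lambda^{-2}(F_{tt}+F_{\theta\theta})$ applied componentwise. Since $\varphi$ is a conformal harmonic map into $\mathbb{R}^n$, each component satisfies $\varphi^i_{tt}+\varphi^i_{\theta\theta}=0$. Differentiating in $\theta$ gives $X^i_{tt}+X^i_{\theta\theta}=0$, and hence $\Delta_\Sigma X =0$ componentwise, which is what it means for $X$ to be harmonic as a vector-valued function on $\Sigma$.

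For the nullspace property, I would integrate by parts. Green's identity gives
\[
\int_\Sigma \langle DX,DY\rangle\,da = -\int_\Sigma Y\cdot \Delta_\Sigma X\,da + \int_{\partial\Sigma} Y\cdot\nabla_\eta X\,ds = \int_{\partial\Sigma} Y\cdot\nabla_\eta X\,ds,
\]
where $\eta$ is the outward unit conormal to $\partial\Sigma$ in $\Sigma$. Consequently
\[
Q(X,Y) = \int_{\partial\Sigma} Y\cdot(\nabla_\eta X - X)\,ds,
\]
so it suffices to prove that $\nabla_\eta X - X$ is parallel to the position vector $\varphi$ along $\partial\Sigma$. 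On $\partial M$ we have $\eta=\pm\varphi_t/\lambda$ at $t=\pm T$, and the free boundary condition forces $\eta=\varphi$ at each boundary point, which in conformal coordinates reads $\varphi_t=\pm\lambda\varphi$ at $t=\pm T$. Differentiating this relation in $\theta$ gives $\varphi_{t\theta}=\pm(\lambda_\theta\varphi+\lambda\varphi_\theta)$, so on either boundary circle
\[
\nabla_\eta X = \pm\lambda^{-1}\varphi_{t\theta} = \frac{\lambda_\theta}{\lambda}\,\varphi + \varphi_\theta = \frac{\lambda_\theta}{\lambda}\,\varphi + X.
\]
Thus $\nabla_\eta X - X = (\lambda_\theta/\lambda)\,\varphi$, which is normal to $S^{n-1}$ at every point of $\partial\Sigma$. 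Since $Y$ is tangent to $S^{n-1}$ along $\partial\Sigma$, we have $Y\cdot\varphi=0$ there, and therefore $Q(X,Y)=0$.

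There is no real obstacle here; the only care needed is bookkeeping of the signs at the two boundary circles $t=\pm T$, where the sign in $\eta=\pm\varphi_t/\lambda$ precisely cancels the sign from $\varphi_t=\pm\lambda\varphi$, so that the radial identity for $\nabla_\eta X - X$ holds uniformly on both components of $\partial\Sigma$.
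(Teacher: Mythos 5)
Your proof is correct, and the overall skeleton matches the paper's: harmonicity of $X$ follows by differentiating the componentwise harmonic equation in $\theta$, and integration by parts reduces $Q(X,Y)$ to the boundary integral $\int_{\p\sig}(D_\eta X-X)\cdot Y\,ds$. Where you diverge is in how the boundary integrand is killed. The paper decomposes $Y=Y^t+Y^\perp$ relative to $\sig$, notes that both pieces remain tangent to $S^{n-1}$ because $x$ is tangent to $\sig$ along $\p\sig$, uses the vanishing of the second fundamental form term $D_xX\cdot Y^\perp$, and then invokes the conformal Killing antisymmetry $D_xX\cdot Y^t=-D_{Y^t}X\cdot x$ together with the second fundamental form of the sphere to match the $-X\cdot Y$ term. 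You instead differentiate the free boundary identity $\vp_t=\pm\lambda\vp$ in $\theta$ and obtain the pointwise identity $D_\eta X-X=(\lambda_\theta/\lambda)\,\vp$ along $\p\sig$, i.e.\ the integrand is purely radial, which is slightly stronger than what the paper extracts and makes the orthogonality to $Y$ immediate. Your route is more elementary and tied to the conformal parametrization of the annulus; the paper's is coordinate-free and is phrased so that the same mechanism (tangency of $x$, conformality of $X$, umbilicity of the sphere) recurs in the index computation of Theorem \ref{theorem:index} and in the M\"obius band case. Your sign bookkeeping at $t=\pm T$ is right: the sign in $\eta=\pm\lambda^{-1}\p_t$ cancels against the sign in $\vp_t=\pm\lambda\vp$, so the radial identity holds on both boundary circles.
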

\begin{proof} Since $\vp$ is harmonic and $X=\frac{\p\vp}{\p\theta}$ it follows that
$X$ is harmonic. Thus we may integrate by parts to write
\[ Q(X,Y)=\int_{\p\sig}(D_xX\cdot Y-X\cdot Y)\ ds.
\]
We write $Y=Y^t+Y^\perp$ as the sum of vectors tangential and normal to $\sig$. Since $Y$
is tangent to $S^{n-1}$ it follows that both $Y^t$ and $Y^\perp$ are also tangent to $S^{n-1}$. Since
$X$ is perpendicular to $x$, we have the second fundamental form term $D_xX\cdot Y^\perp=0$, and thus the first term becomes  $D_xX\cdot Y^t$, and since $X$ is conformal Killing this is equal
to $-D_{Y^t}X\cdot x$. This term is the second fundamental form of $S^{n-1}$ in the directions
$Y^t$ and $X$, and thus is equal to $X\cdot Y^t$. Since $X$ is tangential to $\sig$ this is
equal to $X\cdot Y$, and thus $Q(X,Y)=0$ as claimed.
\end{proof}

For the next two lemmas we assume that $\Sigma$ is a free boundary minimal surface in $B^3$ with unit normal $\nu$. The following result, which will not be used, relates the Laplacian of a conformal vector field $V$ to the Jacobi operator.
\begin{lemma} Assume that $V$ is a conformal vector field. If we let $\psi=V\cdot\nu$,
then we have $\Delta V=(\Delta\psi+|A|^2\psi)\nu$. In particular, if $\psi$ is a Jacobi field
then $V$ is harmonic.
\end{lemma}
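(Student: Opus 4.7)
The plan is to split the proof into two steps: first show that $\Delta V$ has no tangential component, and then identify its normal component with $\Delta\psi+|A|^2\psi$ using the minimal hypersurface identity $\Delta\nu=-|A|^2\nu$. The only nontrivial input beyond routine differentiations will be a short algebraic observation combining the conformality of $V$ with the tracelessness of the second fundamental form of a minimal surface, and this is where I expect the main (though mild) obstacle to lie.

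For the first step I would work in local conformal coordinates $(x,y)$ on $\Sigma$, in which $\vp_{xx}+\vp_{yy}=0$ (because the inclusion is a conformal harmonic map) and $\Delta F=\lambda^{-2}(F_{xx}+F_{yy})$ for any $\R^3$-valued function $F$ on $\Sigma$. The conformality hypothesis on $V$ translates in these coordinates to
\[ \la V_x,\vp_x\ra=\la V_y,\vp_y\ra,\qquad \la V_x,\vp_y\ra+\la V_y,\vp_x\ra=0. \]
Differentiating the first identity in $x$, the second in $y$, adding, and invoking $\vp_{xx}+\vp_{yy}=0$ yields $\la V_{xx}+V_{yy},\vp_x\ra=0$; the analogous manipulation gives $\la V_{xx}+V_{yy},\vp_y\ra=0$. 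Hence $\Delta V$ is pointwise parallel to $\nu$.

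For the second step I would compute the scalar Laplacian of $\psi=V\x\nu$ via the product rule:
\[ \Delta\psi=\la\Delta V,\nu\ra+\la V,\Delta\nu\ra+2\sum_i D_{e_i}V\x D_{e_i}\nu. \]
The standard identity $\Delta\nu=-|A|^2\nu$ for a minimal hypersurface in $\R^3$ turns the middle term into $-|A|^2\psi$, and the Weingarten relation $D_{e_i}\nu=-A(e_i)$ rewrites the cross term as $-\sum_{i,j}\la D_{e_i}V,e_j\ra h_{ij}$. The crucial observation is that this cross term vanishes: the conformality of $V$ forces the symmetric part of the matrix $m_{ij}:=\la D_{e_i}V,e_j\ra$ to be a scalar multiple of the identity (its off-diagonal symmetric part is zero and its two diagonal entries agree), whereas by minimality $h_{ij}$ is symmetric and trace-free, so $\sum_{i,j}m_{ij}h_{ij}=0$. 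Combined with Step 1 this gives $\la\Delta V,\nu\ra=\Delta\psi+|A|^2\psi$, and hence $\Delta V=(\Delta\psi+|A|^2\psi)\nu$. The Jacobi field corollary is then immediate, since the equation $\Delta\psi+|A|^2\psi=0$ is exactly what makes $V$ harmonic.
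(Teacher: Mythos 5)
Your proof is correct and follows essentially the same route as the paper: first show $\Delta V$ is purely normal, then identify the normal component, with the decisive cancellation in both cases being that $\sum_{i,j}\langle D_{e_i}V,e_j\rangle h_{ij}=0$ because conformality makes the symmetric part of the matrix $\langle D_{e_i}V,e_j\rangle$ a multiple of the identity while minimality makes $h$ trace-free. The only differences are cosmetic: you run the tangential computation in conformal coordinates (using harmonicity of the immersion) rather than in a parallel orthonormal frame, and you package the normal computation as the product rule for $\Delta(V\cdot\nu)$ together with $\Delta\nu=-|A|^2\nu$, whereas the paper expands $\Delta V\cdot\nu$ directly and invokes Codazzi.
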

\begin{proof} We do the calculation in a local orthonormal basis $e_1,e_2$ which is parallel
at a point; thus $D_{e_i}e_j=h_{ij}\nu$ at the point. We first compute the tangential component
$\Delta V\cdot e_j$. We have
\[ D_{e_1}V=(D_{e_1}V\cdot e_1)\ e_1+(D_{e_1}V\cdot e_2)\ e_2+(D_{e_1}V\cdot \nu)\ \nu.
\]
Therefore
\[ D_{e_1}D_{e_1}V\cdot e_j=D_{e_1}(D_{e_1}V\cdot e_1)\ \delta_{1j}+D_{e_1}(D_{e_1}V\cdot e_2)\ \delta_{2j}-(D_{e_1}V\cdot \nu)\ h_{1j}.
\]
Similarly,
\[ D_{e_2}D_{e_2}V\cdot e_j=D_{e_2}(D_{e_2}V\cdot e_1)\ \delta_{1j}+D_{e_2}(D_{e_2}V\cdot e_2)\ \delta_{2j}-(D_{e_2}V\cdot \nu)\ h_{2j}.
\]
Using the conformal condition on $V$ we have
\[ D_{e_1}(D_{e_1}V\cdot e_1)+D_{e_2}(D_{e_2}V\cdot e_1)=D_{e_1}(D_{e_2}V\cdot e_2)
-D_{e_2}(D_{e_1}V\cdot e_2).
\]
Now we have 
\[ D_{e_1}(D_{e_2}V\cdot e_2)=D_{e_1}D_{e_2}V\cdot e_2+D_{e_2}V\cdot D_{e_1}e_2=
D_{e_2}D_{e_1}V\cdot e_2+h_{12}D_{e_2}V\cdot \nu.
\]
This implies
\[ D_{e_1}(D_{e_2}V\cdot e_2)=D_{e_2}(D_{e_1}V\cdot e_2)-h_{22}D_{e_1}V\cdot\nu
+h_{12}D_{e_2}V\cdot \nu.
\]
Thus we have
\[ D_{e_1}(D_{e_1}V\cdot e_1)+D_{e_2}(D_{e_2}V\cdot e_1)=-h_{22}D_{e_1}V\cdot\nu
+h_{12}D_{e_2}V\cdot \nu.
\]
Thus we have
\[ \Delta V\cdot e_1=-h_{22}D_{e_1}V\cdot\nu +h_{12}D_{e_2}V\cdot \nu-(D_{e_1}V\cdot\nu)h_{11}
-(D_{e_2}V\cdot \nu)h_{12}=0.
 \]
 Similarly we have $\Delta V\cdot e_2=0$, and we have shown that $\Delta V$ is a normal
 vector field.
 
 We calculate $\Delta V\cdot\nu$,
 \[ D_{e_1}D_{e_1}V\cdot\nu=(D_{e_1}V\cdot e_1)h_{11}+(D_{e_1}V\cdot e_2)h_{12}+
 D_{e_1}(D_{e_1}V\cdot\nu),
 \]
 and
 \[ D_{e_2}D_{e_2}V\cdot\nu=(D_{e_2}V\cdot e_1)h_{12}+(D_{e_2}V\cdot e_2)h_{22}+
 D_{e_2}(D_{e_2}V\cdot\nu).
 \]
 Summing these and using the conformal condition on $V$ and minimality we have
 \[ \Delta V\cdot\nu=D_{e_1}(D_{e_1}V\cdot\nu)+ D_{e_2}(D_{e_2}V\cdot\nu).
 \]
 Now $D_{e_i}V\cdot\nu=\sum_{j=1}^2(V\cdot e_j)h_{ij}+D_{e_i}\psi$ where $\psi=V\cdot\nu$.
Therefore, using the Codazzi equations and minimality we have
 \[ \Delta V\cdot\nu=\sum_{i,j=1}^2D_{e_i}(V\cdot e_j)h_{ij}+\Delta \psi.
 \]
 Now we have $\sum_{i,j=1}^2(D_{e_i}V\cdot e_j)h_{ij}=0$ by the conformal condition and
 minimality, so we get
 \[  \Delta V\cdot\nu=\Delta \psi+|A|^2\psi
 \]
 as claimed.
 
\end{proof}

Let $\mC$ denote the linear span of the functions $\{\nu_1,\nu_2,\nu_3,x\cdot\nu\}$ and we
observe the following.
\begin{lemma}  \label{lemma:space}
If $\sig$ is a free boundary solution in $B^3$ which is not a plane disk, then $\mC$ is
a four dimensional vector space of functions on $\sig$.
\end{lemma}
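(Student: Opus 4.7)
\emph{Plan.} I argue by contradiction. Suppose the four functions are linearly dependent, so there exist $a \in \R^3$ and $b \in \R$, not both zero, such that
\[
    a \cdot \nu + b(x \cdot \nu) \equiv 0 \quad \mbox{on } \sig.
\]
This is equivalent to the statement that the vector field $V(x) = a + bx$ on $\R^3$ is everywhere tangent to $\sig$. When $b = 0$, $V$ is a nonzero constant vector generating translations; when $b \neq 0$, $V = b(x - c)$ with $c := -a/b$ generates dilations centered at $c$.

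Since $V$ is tangent to $\sig$ at every point, its ambient integral curves starting at interior points of $\sig$ remain in $\sig$. These integral curves are straight lines: parallel lines in direction $a$ if $b = 0$, and rays from $c$ if $b \neq 0$. Thus $\sig$ is ruled by straight line segments in its interior---mutually parallel rulings in the first case, concurrent at $c$ in the second. By Catalan's classical theorem, a connected minimal ruled surface in $\R^3$ is either contained in a plane or is a piece of a helicoid; since the helicoid admits no foliation by mutually parallel rulings nor by concurrent rulings, $\sig$ must be contained in a plane $P$.

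To finish, the free boundary condition requires $\sig$ to meet $S^2 = \p B^3$ orthogonally along $\p\sig$. A plane meets $S^2$ orthogonally only if it passes through the origin, so $P$ contains $0$. The boundary $\p\sig$ then lies on the great circle $P \cap S^2$, which forces the connected surface $\sig \subset P \cap B^3$ to fill out the entire plane disk $P \cap B^3$, contradicting the hypothesis that $\sig$ is not a plane disk.

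The main subtlety is the ruled-surface step, in which one must pass from tangency of $V$ along $\sig$ to planarity of $\sig$; Catalan's classification handles both cases uniformly. Alternatively, one can treat them separately: when $b = 0$, the anti-holomorphic Gauss map of $\sig$ has image in the great circle $\{v : v \cdot a = 0\}$, and the open mapping theorem forces $\nu$ to be constant, so $\sig$ is planar; when $b \neq 0$, $\sig$ is locally a cone with apex $c$, and the only minimal cones in $\R^3$ are planes through the apex.
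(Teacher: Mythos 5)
Your proof is correct, and it takes a genuinely different route from the paper's. The paper restricts the putative relation $(av+bx)\cdot\nu\equiv 0$ to $\partial\Sigma$, where the free boundary condition gives $x\cdot\nu=0$, leaving only $av\cdot\nu=0$ along the boundary; when $a\neq 0$ it shows the tangent plane is constant along each boundary component (so each component lies in a $2$-plane through the origin) and then invokes uniqueness for the Cauchy problem to force $\Sigma$ to be a plane disk, while the case $a=0$ gives $x\cdot\nu\equiv 0$ on all of $\Sigma$, hence a cone. You instead exploit the relation on all of $\Sigma$: the affine field $a+bx$ is everywhere tangent, so $\Sigma$ is swept out by line segments (a cylinder when $b=0$, a cone with apex $-a/b$ when $b\neq 0$), and classical facts --- Catalan's theorem, or the holomorphicity of the Gauss map, or the classification of smooth minimal cones --- force $\Sigma$ to be planar; the free boundary condition then puts the plane through the origin and makes $\Sigma$ the full disk. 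What each approach buys: yours avoids unique continuation for the Cauchy problem, replacing it with softer classification results, and handles the two cases uniformly; the paper's boundary-only formulation is the one that transfers verbatim to the higher-codimension analogue in Section \ref{section:band} (with $\nu$ replaced by $v^\perp$ and $x^\perp$), where the Gauss-map and Catalan arguments are less immediate, though your cone alternative still goes through. Two minor points to keep in mind: the integral-curve/ruling step should be applied away from the isolated zero of $a+bx$ and away from possible branch points, and connectedness of $\Sigma$ is used when passing from local to global planarity; neither affects the validity of the argument.
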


\begin{proof} 
If there is  a linear relation, then there would be a $v\in S^2$ and
numbers $a,b$, not both zero, such that $(av+bx)\cdot\nu\equiv 0$ on $\sig$. Thus on the boundary of $\sig$ we would have $av\cdot\nu\equiv 0$. This implies that either $a=0$ or $v$ lies in the tangent plane to $\sig$ along each component of $\p\sig$. In the latter case, the tangent plane must be constant along each component of $\p\sig$. This is because the position vector $x$
lies in the tangent plane, and $x$ can be parallel to $v$ at only a finite number of points. It follows that the two-vector $x\wedge v$ represents the tangent plane $T_x\p\sig$ at all but a finite number of points. If $T$ is the unit tangent, we have  $D_T(x\wedge v)=T\wedge v$, and this is parallel to $x\wedge v$. It follows that the tangent plane is constant along each component of $\p\sig$,
and hence each boundary component lies in a $2$-plane through the origin. It follows from
uniqueness for the Cauchy problem that the surface is a plane disk contrary to our assumption. Therefore we must have $a=0$ and hence $x\cdot\nu\equiv 0$ on $\sig$. This implies $\sig$ is a cone and hence again a plane disk since $\sig$ is smooth.
\end{proof}

We will need the following existence theorem for conformal vector fields for annular free boundary solutions.
\begin{proposition}  \label{prop:dbar}
Assume that $\sig$ is an annular free boundary solution in $B^3$. 
There is subspace $\mC_1$ of $\mC$ of dimension at least three such that for 
all $\psi\in\mC_1$ there is a tangential vector field $Y^t$ with $x\cdot Y^t=0$ on $\p\sig$ such that the vector field $Y=Y^t+\psi\ \nu$ is conformal. Furthermore we have $Q(Y,Y)=S(\psi \nu,\psi \nu)$.
For brevity of notation we denote $S(\psi \nu,\psi \nu)$ by $S(\psi,\psi)$.
\end{proposition}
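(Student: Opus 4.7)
The plan is to recast the conformal condition on $Y = Y^t + \psi\nu$ as an inhomogeneous Cauchy--Riemann equation on the annulus $M = [-T,T]\times S^1$. Writing the conformal coordinate as $z = t + i\theta$, I will decompose a real tangential field as $Y^t = a\,\p_t + b\,\p_\theta$ and set $W = a + ib$. A direct computation in the orthonormal frame $e_1 = \lambda^{-1}\p_t$, $e_2 = \lambda^{-1}\p_\theta$, combined with $D_{e_i}(\psi\nu)\cdot e_j = -\psi h_{ij}$ and the minimality condition $h_{11}+h_{22}=0$, will show that the two conformal conditions on $Y$ collapse to the single equation
\[ \p_{\bar z} W = \psi H, \qquad H := h_{11} + i h_{12}. \]
Here $H$ is essentially the conjugate Hopf datum of $\sig$, so the right-hand side depends linearly on $\psi$. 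The boundary condition $x\cdot Y^t = 0$ on $\p\sig = \{t=\pm T\}\times S^1$, since $x$ is the outward conormal to $\p\sig$, becomes $\mathrm{Re}\,W = 0$ on $\p M$.

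This is a first-order elliptic Riemann--Hilbert problem, and the next step is to compute its cokernel. Pairing the equation against a complex test function $q = q_1 + iq_2$ with the real $L^2$ inner product and integrating by parts, using $\mathrm{Re}\,W = 0$ on $\p M$ and $\theta$-periodicity, will give an adjoint problem: $q$ must be antiholomorphic on $M$ with $\mathrm{Im}\,q = 0$ on $\p M$. Any antiholomorphic function on the annulus has a Laurent expansion $q = \sum_n c_n e^{n\bar z}$, and the boundary conditions at $t = \pm T$ force $c_n e^{\pm nT} = \bar c_{-n} e^{\mp nT}$ for every $n$. For $n\neq 0$ this is incompatible simultaneously for both signs since $T>0$, so $c_n = 0$ for $n\neq 0$ and $c_0\in\R$. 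The cokernel is therefore one-real-dimensional, spanned by the constant function $1$.

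With this in hand, the obstruction map $\psi \mapsto [\psi H]$ is an $\R$-linear map from $\mC$, which has dimension $4$ by Lemma~\ref{lemma:space}, into a space of dimension $1$; its kernel $\mC_1$ therefore has dimension at least $3$. For each $\psi \in \mC_1$ the BVP admits a solution $W$, smooth up to $\p M$ by elliptic regularity for Riemann--Hilbert problems, producing the desired tangential field $Y^t$ with $x\cdot Y^t = 0$ on $\p\sig$ such that $Y = Y^t + \psi\nu$ is conformal. For the final identity, along $\p\sig$ the free boundary condition places $x$ in $T\sig$, so $\nu\cdot x = 0$; combined with $x\cdot Y^t = 0$ this gives $x\cdot Y = 0$, so $Y$ is tangent to $S^{n-1}$. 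Then Lemma~\ref{lemma:area-energy} applies to yield $Q(Y,Y) = S(Y^\perp, Y^\perp) = S(\psi\nu,\psi\nu)$.

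The hardest part will be pinning down the cokernel of the Riemann--Hilbert operator precisely. The algebra converting conformality into a $\bar\p$-equation is mechanical, and once the problem is in standard form the Fourier computation is short; but the integration-by-parts bookkeeping identifying the correct adjoint problem, and the verification that on the annulus (as opposed to higher-genus surfaces) this adjoint kernel really is one-dimensional, is where genuine care is required and where the specific topology of the annulus enters decisively.
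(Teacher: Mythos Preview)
Your proposal is correct and follows essentially the same route as the paper: reduce the conformal condition on $Y=Y^t+\psi\nu$ to a $\bar\partial$-equation $\partial_{\bar z}f=k$ with the Riemann--Hilbert condition $\mathrm{Re}\,f=0$ on $\partial M$, identify the adjoint problem (kernel the real constants), and define $\mathcal{C}_1$ as the kernel of the resulting one-dimensional real obstruction, then invoke Lemma~\ref{lemma:area-energy}. Your Laurent-series verification that the adjoint kernel is exactly the real constants is slightly more explicit than the paper's statement, but the argument is otherwise the same; be aware that your $H$ may differ from the paper's right-hand side by a power of the conformal factor depending on whether $h_{ij}$ is taken in the coordinate or orthonormal frame, though this does not affect the structure.
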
 

\begin{proof} 
For any function $\psi$ the equations for $Y^t$ which dictate the
condition that $Y=Y^t+\psi\ \nu$ is conformal are
\[ D_{\p_t}Y^t\cdot \p_\theta+ D_{\p_\theta}Y^t\cdot \p_t=2\psi h_{12}\ \mbox{and}\ D_{\p_t}Y^t\cdot \p_t- D_{\p_\theta}Y^t\cdot \p_\theta=2\psi h_{11}
\]
where $\p_t,\p_\theta$ denote the coordinate basis and $h_{ij}$ the second fundamental form
of $\sig$ in this basis. If we write $Y^t=u\vp_t+v\vp_\theta$ we have $u=|\vp_t|^{-2}Y^t\cdot\vp_t$
and $v=|\vp_\theta|^{-2}Y^t\cdot\vp_\theta$. Setting $\lambda=|\vp_t|^2=|\vp_\theta|^2$ we then have
\[ u_t=\lambda^{-2}\big[\lambda (D_{\p_t}Y^t\cdot \p_t+Y^t\cdot\vp_{tt})-2(\vp_t\cdot\vp_{tt})(Y^t\cdot \p_t)\big].
\]
We have $Y^t\cdot\ \vp_{tt}=\lambda^{-1}[(Y^t\cdot\vp_t)(\vp_{tt}\cdot\vp_t)+(Y^t\cdot\vp_\theta)(\vp_{tt}\cdot\vp_\theta)]$ and therefore
\[ u_t=\lambda^{-1}D_{\p_t}Y^t\cdot \p_t+\lambda^{-2}[(Y^t\cdot\vp_\theta)(\vp_{tt}\cdot\vp_\theta)
-(Y^t\cdot\vp_t)(\vp_{tt}\cdot\vp_t)].
\]
Similarly we have 
\[  v_\theta=\lambda^{-1}D_{\p_\theta}Y^t\cdot \p_\theta+\lambda^{-2}[(Y^t\cdot\vp_t)(\vp_{\theta\theta}\cdot\vp_t)-(Y^t\cdot\vp_\theta)(\vp_{\theta\theta}\cdot\vp_\theta)]
\]
Using the fact that $\vp$ is harmonic we obtain
\[ u_t-v_\theta=\lambda^{-1} (D_{\p_t}Y^t\cdot \p_t-D_{\p_\theta}Y^t\cdot\p_\theta)=2\lambda^{-1}
\psi h_{11}.
\]
We can similarly check that
\[ u_\theta+v_t=2\lambda^{-1}\psi h_{12}.
\]
Thus if we set $f=u+\sqrt{-1}v$ and $z=t+\sqrt{-1}\theta$, the equations become
\[ \frac{\p f}{\p\z}=k
\]
where $k=\lambda^{-1}\psi (h_{11}+\sqrt{-1}h_{12})$. We impose the boundary condition $\Re f=0$ on $\p M$ which is the condition that $Y^t$ be tangent to $S^2$ along $\p\sig$. 
If solvable the solution is unique up to a pure imaginary constant (corresponding to the vector
field which is a real multiple of $\frac{\p\vp}{\p\theta}$). The adjoint boundary value problem
corresponds to the operator $-\frac{\p}{\p z}$ with boundary condition $\Im f=0$. This has 
kernel the real constants, and so by the Fredholm alternative our problem is solvable
if and only if $\int_M \Re k\ dtd\theta=0$. 

We now define 
\[ \mC_1=\left\{\psi\in\mC: \int_M \Re(\lambda^{-1}\psi (h_{11}+\sqrt{-1}h_{12}))\ dtd\theta=0 \right\}
\]
which is a subspace of dimension at least three. The final statement follows from Lemma \ref{lemma:area-energy}.
\end{proof}

We are now in a position to prove the main theorem of the section.
\begin{theorem} \label{thm:uniqueness}
If $\sig$ is a free boundary minimal surface in $B^n$ which is homeomorphic
to the annulus and such that the coordinate functions are first eigenfunctions, then $n=3$ and $\sig$
is congruent to the critical catenoid.
\end{theorem}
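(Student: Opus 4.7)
The plan is to reduce to $n=3$, then show that $\Sigma$ is rotationally symmetric in $\mathbb{R}^3$, and finally invoke the classification of $S^1$-invariant free boundary minimal annuli from \cite{FS}.

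To reduce to $n=3$, I would apply Theorem \ref{multiplicity} to the genus-zero surface $\Sigma$: the multiplicity of $\sigma_1$ is at most $4\gamma+2i+1=3$, so since the coordinate functions $\varphi^1,\dots,\varphi^n$ are linearly independent first Steklov eigenfunctions (with eigenvalue $1$ by the free boundary condition) we have $n\leq 3$. An annulus cannot arise as a free boundary minimal surface in $B^n$ for $n\leq 2$, so $n=3$. Because $\Sigma\subset B^3$ is an annulus and in particular not a plane disk, Lemma \ref{lemma:space} then applies and gives $\dim\mathcal{C}=4$.

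Next, pass to conformal coordinates $(t,\theta)\in[-T,T]\times S^1$ on $M$ and consider the tangential conformal Killing field $X=\varphi_\theta$. Lemma \ref{lemma:null} places $X$ in the nullspace of the bilinear form $Q$. To produce negative directions of $Q$, apply Proposition \ref{prop:dbar} to obtain a three-dimensional subspace $\mathcal{C}_1\subseteq \mathcal{C}$ such that each $\psi\in\mathcal{C}_1$ lifts to a conformal vector field $Y_\psi=Y_\psi^t+\psi\nu$ tangent to $S^2$ along $\partial\Sigma$ with $Q(Y_\psi,Y_\psi)=S(\psi\nu,\psi\nu)$. Since $\dim\mathcal{C}=4$, the three-dimensional subspace $\{v\cdot\nu : v\in\mathbb{R}^3\}\subseteq\mathcal{C}$ meets $\mathcal{C}_1$ in a subspace of dimension at least two, and for $\psi=v\cdot\nu$ in this intersection Theorem \ref{theorem:index} gives $S(\psi\nu,\psi\nu)=-2\int_\Sigma(v\cdot\nu)^2\,da<0$. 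Thus $Q$ admits at least two linearly independent negative directions coming from conformal variations of $\Sigma$, while simultaneously containing $X$ in its nullspace.

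The central step is then to combine this index information with the Steklov eigenspace structure---since $\sigma_1=1$ has multiplicity exactly three, the first eigenspace equals $\mathrm{span}(\varphi^1,\varphi^2,\varphi^3)$---to conclude that $X$ is the restriction of a rotational Killing field of $\mathbb{R}^3$; concretely, $X=A\varphi$ for some antisymmetric matrix $A$. The form $Q$ decouples componentwise as $Q(V,W)=\sum_j\big[\int_\Sigma \nabla V^j\cdot\nabla W^j\,da - \int_{\partial\Sigma}V^j W^j\,ds\big]$ into scalar Steklov quadratic forms, and this decoupling, together with the two negative directions $Y_v$ and the null direction $X$, is what constrains $X$ to lie in the three-dimensional space of ambient rotations. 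Once $X=A\varphi$ with $A$ antisymmetric, the tangency $X\cdot\nu\equiv 0$ on $\Sigma$ implies that $\Sigma$ is invariant under the one-parameter group $e^{tA}$ of rotations of $\mathbb{R}^3$, so $\Sigma$ is a surface of revolution. The classification in \cite{FS} of rotationally symmetric free boundary minimal annuli in $B^3$ whose coordinate functions are first Steklov eigenfunctions then identifies $\Sigma$ as the critical catenoid up to congruence.

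The main obstacle is the algebraic step in the previous paragraph: translating the abstract facts $X\in\ker Q$, $Q(X,Y_v)=0$, and the at-least-two negative eigenvalues $Q(Y_v,Y_v)<0$ into the conclusion $X=A\varphi$ with $A$ antisymmetric. This requires careful bookkeeping of the tangency constraints $x^\top A x=0$ on $\partial\Sigma$ and $\nu\cdot A\varphi=0$ on $\Sigma$, together with an argument that the kernel of $Q$ on admissible vector fields cannot support a non-trivial symmetric contribution once the two negative directions $Y_v$ have been accounted for.
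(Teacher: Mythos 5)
Your reduction to $n=3$, the identification of the relevant objects ($X=\varphi_\theta$ null for $Q$ by Lemma \ref{lemma:null}, the space $\mathcal{C}$ from Lemma \ref{lemma:space}, the conformal completions $Y_\psi$ from Proposition \ref{prop:dbar}, the index computation of Theorem \ref{theorem:index}), and the endgame via the rotationally symmetric classification in \cite{FS} all match the paper. But the step you yourself flag as "the main obstacle" is genuinely missing, and it is the heart of the proof. Merely exhibiting two negative directions $Y_v$ of $Q$ together with the null direction $X$ constrains nothing: $Q$ is not positive semidefinite on arbitrary admissible vector fields, only on \emph{balanced} ones. The inequality $Q(V,V)\geq 0$ follows from $\sigma_1=1$ only when each component of $V$ has zero average over $\partial\Sigma$, and your $Y_v$ have no reason to be balanced. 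The paper's mechanism is precisely a balancing argument: the space $\{Y(\psi)+cX:\psi\in\mathcal{C}_1,\ c\in\mathbb{R}\}$ has dimension at least $4$, so the linear map $V\mapsto\int_{\partial\Sigma}V\,ds\in\mathbb{R}^3$ has a nontrivial kernel; for a nonzero balanced $V=Y(\psi)+cX$ with $\psi=(av+bx)\cdot\nu$ one gets $0\leq Q(V,V)=S(\psi,\psi)=a^2S(v\cdot\nu,v\cdot\nu)\leq 0$ (using $S(x\cdot\nu,\cdot)=0$ since $x\cdot\nu$ is a Jacobi field vanishing on $\partial\Sigma$), forcing $a=0$. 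This is also where the case distinction $\int_{\partial\Sigma}X\,ds=0$ versus $\neq 0$ enters, which your outline omits entirely.

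Two further gaps remain even after that. First, the balanced null vector is then $V=bY(x\cdot\nu)+cX$, and you must rule out $b\neq 0$; the paper does this by showing the eigenfunction property of $V$ forces $D_x(x\cdot\nu)=0$ along $\partial\Sigma$, whence $x\cdot\nu\equiv 0$ by uniqueness for the Cauchy problem for the Jacobi equation, making $\Sigma$ a cone — a contradiction. Nothing in your sketch produces this unique-continuation step. Second, your concluding claim that "components of $X$ are first eigenfunctions" yields $X=A\varphi$ with $A$ \emph{antisymmetric} is asserted, not proved; the paper sidesteps this by instead using the Steklov boundary condition $X_t=\lambda X$ together with $\varphi_t=\lambda\varphi$ to show $\partial\lambda/\partial\theta=0$ on $\partial M$, so the induced metric is $\sigma$-homothetic to a flat annulus and the rotationally symmetric analysis of \cite{FS} applies. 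So the architecture is right, but the balancing/dimension-count argument, the sign analysis eliminating $v\cdot\nu$, and the unique-continuation elimination of $x\cdot\nu$ all need to be supplied before this is a proof.
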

\begin{proof} 
The multiplicity bound of Theorem \ref{multiplicity} implies that $n=3$.
Let $X=\frac{\p\vp}{\p\theta}$ be the conformal Killing vector field associated with rotations of the annulus. We first consider the case in which $\int_{\p\sig}X\ ds=0$. Since
$Q(X,X)=0$ and $\sigma_1=1$ it follows that the components of $X$ are first eigenfunctions.
In this case we can complete the proof by observing that $X$ must satisfy the Steklov boundary
condition
\[ \frac{\p X}{\p t}=X\lambda
\]
where $\lambda=|\varphi_t|=|\varphi_\theta|$ is the induced conformal metric. Since we have
$\frac{\p \varphi}{\p t}=\varphi\lambda$ we have
\[ \frac{\p^2\varphi}{\p t\p\theta}\cdot\frac{\p\varphi}{\p t}=\frac{1}{2}\frac{\p\lambda^2}{\p\theta}=0.
\] 
It follows that $\lambda$ is constant on each component of $\p M$, and therefore $M$ with the
metric induced from $\varphi$ is $\sigma$-homothetic to a flat annulus, and the rotationally
symmetric analysis of \cite{FS} implies that $\Sigma$ is the critical catenoid since it is the unique
free boundary conformal immersion by first eigenfunctions in the rotationally symmetric case. 

Now let's assume that $\int_{\p\sig}X\ ds\neq 0$. 
By Proposition \ref{prop:dbar}, for any $\psi\in\mC_1$ there is a conformal vector field whose normal component is $\psi \nu$, and which is unique up to addition of a real multiple of $X$. We denote by $Y(\psi)$ the
unique conformal vector field  with $Y(\psi)\cdot\nu=\psi$ and with $(\int_{\p\sig}Y(\psi)\ ds)\cdot (\int_{\p\sig}X\ ds)=0$. The map from $\psi$ to $Y(\psi)$ is linear. We consider the
vector space 
\[ \V=\{Y(\psi)+cX:\ \psi\in\mC_1,\ c\in\R\}, 
\]
and we observe that $\V$ is at least four dimensional since any nonzero vector field $Y(\psi)$ has a nontrivial normal component while
$X$ is tangential to $\sig$.

We define a linear transformation $T:\V\to\R^3$ by $T(V)=\int_{\p\sig}\ V\ ds$, and observe
that for dimensional reasons $T$ has a nontrivial nullspace. Thus there is a $\psi\in\mC_1$
and $c\in\R$ such that $\int_{\p\sig}\ (Y(\psi)+cX)\ ds=0$ with $V=Y(\psi)+cX\neq 0$. From the definition of $\mC_1$
there is a vector $v\in S^2$ and real numbers $a,b$ so that $\psi=(av+bx)\cdot\nu$. From
Lemmas \ref{lemma:null} and \ref{lemma:area-energy} we have $Q(Y(\psi)+cX,Y(\psi)+cX)=Q(Y(\psi),Y(\psi))=S(\psi,\psi)$. Now we
observe that $S(\psi,\psi)=a^2S(v\cdot\nu,v\cdot\nu)+2abS(x\cdot\nu,v\cdot\nu)+b^2S(x\cdot\nu,x\cdot\nu)$. Since both $x\cdot\nu$ and $v\cdot\nu$ are Jacobi fields and $x\cdot\nu=0$ on $\p\sig$,
it follows from integration by parts that $S(x\cdot\nu,v\cdot\nu)=S(x\cdot\nu,x\cdot\nu)=0$ and
so $S(\psi,\psi)=a^2S(v\cdot\nu,v\cdot\nu)$, and 
by Theorem \ref{theorem:index} we see that $Q(Y(\psi)+cX,Y(\psi)+cX)\leq 0$ and is strictly negative unless $a=0$. Since $\sigma_1=1$, we must have 
$Q(Y(\psi)+cX,Y(\psi)+cX)\geq 0$. Therefore $a=0$, and $\psi=bx\cdot\nu$. It follows that
the components of $V=bY(x\cdot\nu)+cX$ are eigenfunctions. 

We now compute the derivative in the $x$-direction along $\p\sig$. We have 
$Y(x\cdot\nu)=Y^t+(x\cdot\nu)\ \nu$. Now we have $D_x V=V$ and we observe that $D_xX$ and
$D_xY^t$ are both tangent to $\sig$ because the second fundamental form is diagonal along
$\p\sig$ and both $Y^t$ and $X$ are parallel to the unit tangent $T$ to $\p\sig$. Therefore
if we take the derivative and the dot product with $\nu$ we have $(D_x(x\cdot\nu)\ \nu)\cdot\nu=0$
if $b\neq 0$. This implies that $D_x(x\cdot\nu)=0$ along $\p\sig$. Since $x\cdot\nu$ is also zero along $\p\sig$ and $x\cdot\nu$ is a solution of the Jacobi equation it follows from uniqueness for the Cauchy problem that $x\cdot\nu\equiv 0$ on $\sig$. This contradiction shows that $b=0$, and so it follows that $V=cX$ on $\sig$
and so the components of $X$ are first eigenfunctions. We are now in the situation discussed in the first paragraph of this proof and it follows that $\sig$ is the critical catenoid.
\end{proof}

Combining this with the results of the previous sections we are now ready to prove the sharp upper bound for the annulus. Recall that 
$\sigma^*(\gamma,k)=\sup_g \,\sigma_1\,L$
where the supremum is over all smooth metrics on a surface of genus $\gamma$ with $k$ boundary components, and that by Weinstock's \cite{W} result  $\sigma^*(0,1)=2\pi$. The next result identifies $\sigma^*(0,2)$.
\begin{theorem}
For any metric on the annulus $M$ we have 
\[
         \sigma_1L\leq (\sigma_1L)_{cc}
\]
with equality if and only if $M$ is $\sigma$-homothetic to the critical catenoid.
In particular,
\[
     \sigma^*(0,2)=(\sigma_1L)_{cc}\approx 4\pi/1.2.
\]
\end{theorem}

\begin{proof}
By Theorem \ref{main} there exists on $M$ a smooth metric $g$ which maximizes $\sigma_1L$ over all metrics on $M$. Moreover there is a branched conformal minimal immersion $\varphi:(M,g)\to B^n$ for some $n\geq 3$ by first eigenfunctions so that $\varphi$ is a $\sigma$-homothety from $g$ to the induced metric $\varphi^*(\delta)$ where $\delta$ is the euclidean metric on $B^n$.
By the uniqueness result Theorem \ref{thm:uniqueness} above, this immersion is congruent to the critical catenoid.
\end{proof}

\section{Uniqueness of the critical M\"{o}bius band} \label{section:band}

In this section we show that there is a free boundary minimal embedding of the M\"obius band into $B^4$ by first Steklov eigenfunctions, and that it is the unique free boundary minimal M\"obius band in $B^n$ such that the coordinate functions are first eigenfunctions. Finally, combining this with the results of the previous sections this implies that the critical M\"obius band uniquely maximizes $\sigma_1L$ over all smooth metrics on the M\"obius band. 

We think of the M\"obius band $M$ as $\mathbb R\times S^1$ with the identification
$(t,\theta)\approx (-t,\theta+\pi)$. 

\begin{proposition} \label{prop:cmb}
There is a minimal embedding of the M\"obius band $M$ into $\mathbb R^4$
given by
\[ 
       \varphi(t,\theta)
       =(2\sinh t\cos\theta,2\sinh t\sin\theta,\cosh 2t\cos2\theta,\cosh 2t\sin2\theta)
\]
For a unique choice of $T_0$ the restriction of $\varphi$ to $[-T_0,T_0]\times S^1$ defines
a proper embedding into a ball by first Steklov eigenfunctions. 
We may rescale the radius of the ball to $1$ to get the {\em critical M\"obius band}. 
Explicitly $T_0$ is the unique positive solution of
$\coth t=2\tanh 2t$. Moreover, the maximum of $\sigma_1L$ over all rotationally symmetric metrics on the M\"obius band is uniquely achieved (up to $\sigma$-homothety) by the critical M\"obius band, and is equal to $(\sigma_1L)_{cmb}=2\pi \sqrt{3}$.
\end{proposition}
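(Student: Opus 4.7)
The plan is to verify the properties of $\vp$ by direct calculation, and then establish the last (maximality) assertion by reducing the Steklov problem for any $S^1$-invariant metric on $M$ to an ODE via separation of variables on the orientation double cover. First I would check that $\vp$ descends to $M$ under the identification $(t,\theta)\sim(-t,\theta+\pi)$: the first two components pick up two sign changes (from $\sinh(-t)=-\sinh t$ and $\cos(\theta+\pi)=-\cos\theta$ resp.\ $\sin(\theta+\pi)=-\sin\theta$), while the last two are invariant because $\cosh$ is even and $\cos 2\theta$, $\sin 2\theta$ are $\pi$-periodic. A direct computation gives $|\vp_t|^2=|\vp_\theta|^2=4\cosh^2 t+4\sinh^2 2t$ and $\vp_t\cdot\vp_\theta=0$, and each coordinate is harmonic, so $\vp$ is a conformal harmonic (hence minimal) immersion. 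Injectivity on $M$ follows by reading the first two components as polar coordinates with radius $r=2\sinh t$: equality of two images forces either $(t_1,\theta_1)=(t_2,\theta_2)$ or the equivalence $(t_1,\theta_1)\sim(-t_2,\theta_2+\pi)$.

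Second, the free boundary condition $\vp_t\parallel\vp$ at $t=T$, read off in the two orthogonal $2$-planes, reduces to the single equation $\coth T = 2\tanh 2T$. Substituting $\tanh 2T = 2\tanh T/(1+\tanh^2 T)$ and simplifying gives $\tanh^2 T = 1/3$, so $T_0 = \tanh^{-1}(1/\sqrt 3)$ is the unique positive solution. At $T_0$ one finds $|\vp|^2 = 4\sinh^2 T_0+\cosh^2 2T_0 = 2+4 = 6$, so rescaling by $1/\sqrt 6$ sends $\p M$ into $S^3$. Using $\coth T_0=\sqrt 3$, the proportionality $\vp_t=\sqrt 3\,\vp$ combined with the rescaled boundary conformal factor $|\vp_t|_{t=T_0}/\sqrt 6 = \sqrt{18/6} = \sqrt 3$ shows that the unit conormal derivative of each rescaled coordinate equals the coordinate itself, so the coordinates solve the Steklov equation with eigenvalue $1$. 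A parametrization count shows that $\p M$ is a single circle whose length, computed with the rescaled conformal factor $\sqrt 3$, is $L=2\pi\sqrt 3$.

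To verify that $1$ is indeed the first nonzero Steklov eigenvalue, and simultaneously prove the maximality claim, I would work on the oriented double cover $[-T,T]\times S^1$ with a rotationally invariant metric $e^{2w(t)}(dt^2+d\theta^2)$, $w$ even. Steklov eigenfunctions on $M$ lift to Steklov eigenfunctions on the annulus invariant under $(t,\theta)\to(-t,\theta+\pi)$. Using conformal invariance of the Dirichlet integral I can replace the interior metric by the flat one and separate variables; eigenfunctions take the form $f_n(t)(\cos n\theta,\sin n\theta)$ with $f_n''=n^2 f_n$, and the $\mathbb Z/2$-symmetry forces $f_n$ even for $n$ even and odd for $n$ odd. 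Hence the admissible nontrivial modes are $\sinh(nt)$ for $n$ odd and $\cosh(nt)$ for $n$ even $\ge 2$, and the Steklov condition $f_n'(T)=\sigma\,e^{w(T)}f_n(T)$ gives
\[
\sigma_n \;=\; e^{-w(T)}I_n(T),\qquad I_n(T)\;=\;\begin{cases} n\coth nT & n\text{ odd},\\ n\tanh nT & n\text{ even.}\end{cases}
\]
Since $L=2\pi e^{w(T)}$, we have $\sigma_n L = 2\pi I_n(T)$, so $\sigma_1 L$ depends only on the modulus $T$.

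The concluding step is a monotonicity comparison. The functions $x\coth x$ and $x\tanh x$ are strictly increasing on $(0,\infty)$ (the first has derivative $(\sinh 2x-2x)/(2\sinh^2 x)>0$), so $I_n(T)$ is strictly increasing in $n$ within the odd and even families separately. Hence $\min_{n\ge 1}I_n(T) = \min(\coth T,\,2\tanh 2T)$. Since $\coth T$ is strictly decreasing and $2\tanh 2T$ is strictly increasing in $T$, the two curves cross uniquely at $T=T_0$ with common value $\sqrt 3$, and $\min(\coth T,2\tanh 2T)\le\sqrt 3$ with equality iff $T=T_0$. This gives $\sigma_1 L\le 2\pi\sqrt 3$ over all rotationally symmetric metrics on $M$ with equality exactly when the metric is $\sigma$-homothetic to the critical Möbius band, and in particular confirms $\sigma_1=1$ for the normalization built from $\vp$. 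The main technical obstacle is the parity bookkeeping: one must correctly identify which separated modes survive the $\mathbb Z/2$ quotient, and verify that the first eigenvalue is always governed by the competition between $I_1$ (the smallest on the odd branch) and $I_2$ (the smallest on the even branch); once this is in place the optimization is elementary.
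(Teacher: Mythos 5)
Your proposal is correct and follows essentially the same route as the paper: separation of variables on the oriented double cover with the parity constraint forcing $\sinh(nt)$ modes for odd $n$ and $\cosh(nt)$ modes for even $n$, reduction of $\sigma_1$ to $\min\{f(T)^{-1}\coth T,\,2f(T)^{-1}\tanh 2T\}$, and the opposite monotonicities in $T$ locating the unique maximum at $\coth T_0=2\tanh 2T_0$ with value $2\pi\sqrt{3}$. Your explicit verifications (conformality, the reduction to $\tanh^2 T_0=1/3$, $|\varphi|^2=6$, and the rescaled conformal factor $\sqrt 3$) are accurate and merely fill in computations the paper leaves implicit.
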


\begin{proof}
To prove this, following the approach of \cite{FS} Section 3 for rotationally symmetric metrics on the annulus, we do explicit analysis using separation of variables to compute the eigenvalues and eigenfunctions of the Dirichlet-to-Neumann map for rotationally symmetric metrics on the M\"obius band.
Consider the product $[-T,T] \times S^1$ with the identification $(t,\theta) \approx (-t,\theta+\pi)$, and with metric of the form $g=f^2(t)(dt^2 + d\theta^2)$ for a positive function $f$ such that $f(-t)=f(t)$. The outward unit normal vector at a boundary point $(T,\theta)$ is given by $\eta=f(T)^{-1}\der{t}$. To compute the Dirichlet-to-Neumann spectrum, as in Section 3 of \cite{FS}, we separate variables and look for harmonic functions of the form $u(t,\theta)=\alpha(t)\beta(\theta)$, but here additionally satisfying the symmetry condition $u(t,\theta)=u(-t,\theta+\pi)$. We obtain solutions for each nonnegative integer $n$ given by linear combinations of $\sinh(nt)\sin(n\theta)$ and $\sinh(nt)\cos(n\theta)$ when $n$ is odd, and $\cosh(nt)\sin(n\theta)$ and $\cosh(nt)\cos(n\theta)$ when $n$ is even. For $n=0$ the solutions are constants.

In order to be an eigenfunction for the Dirichlet-to-Neumann map we must have $u_\eta=\lambda u$ on the boundary, or $f(T)^{-1}u_t=\lambda u$ at the boundary point $(T,\theta)$. For $n=0$ we have $u(t,\theta)=a$ and $\lambda=0$. For $n \geq 1$ odd the eigenfunctions have $\alpha(t)=a \sinh (nt)$ and the condition is
\[
              n f(T)^{-1} \cosh(nT)=\lambda \sinh(nT).
\]
Therefore $\lambda=n f(T)^{-1} \coth(nT)$. For $n \geq 1$ even the eigenfunctions have $\alpha(t)=a \cosh (nt)$ and the condition is
\[
              n f(T)^{-1} \sinh(nT)=\lambda  \cosh(nT).
\]
Therefore $\lambda=n f(T)^{-1} \tanh(nT)$. Note that both $n f(T)^{-1} \coth(nT)$ and $n f(T)^{-1} \coth(nT)$ are increasing functions of $n$. Thus if we want to find the smallest nonzero eigenvalue $\sigma_1$ of the Dirichlet-to-Neumann map we need only consider $n=1,\,2$. We must have $\sigma_1=\min \{ f(T)^{-1} \coth (T), 2f(T)^{-1} \tanh (2T)\}$. If we fix the boundary length $2\pi f(T)$, we see that $2f(T)^{-1} \tanh(2T)$ is an increasing function of $T$ and $f(T)^{-1}\coth(T)$ is a decreasing function of $T$. It follows that if we fix the boundary length, then $\sigma_1 L$ is maximized for $T=T_0$ where $T_0$ is the positive solution of $\coth(T)=2\tanh (2T)$. Therefore, the maximum of $\sigma_1L$ over all rotationally symmetric metrics on the M\"obius band is $2\pi \coth T_0=2\pi \sqrt{3}$.

The map $\varphi: [-T_0,T_0] \times S^1 \rightarrow \mathbb{R}^4$ given by
\[
\varphi(t,\theta)
       =(2\sinh t\cos\theta,2\sinh t\sin\theta,\cosh 2t\cos2\theta,\cosh 2t\sin2\theta)
\]
is a proper conformal map into a ball by first Steklov eigenfunctions, and gives a free boundary minimal embedding into that ball.          
\end{proof}

We now show that the critical M\"obius band is the unique free boundary minimal M\"obius band in $B^n$ such that the coordinate functions are first eigenfunctions. 
First we need the following analogs of Lemma \ref{lemma:space} and Proposition \ref{prop:dbar} from section \ref{section:unique_cc}. 

\begin{lemma}
Suppose $\Sigma$ is a free boundary minimal surface in $B^n$ which is not a plane disk. Let 
\[
      \mathcal{C}=\{ E_1^\perp, \ldots, E_n^\perp, x^\perp \}
\]
where $E_1, \ldots, E_n$ are the standard basis vectors in $\mathbb{R}^n$, $x$ is the position vector, and $v^\perp$ denotes the component of $v$ normal to $\Sigma$. Then $\mathcal{C}$ is an $(n+1)$-dimensional space of vector fields along $\Sigma$.
\end{lemma}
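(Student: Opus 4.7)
My plan is to prove the lemma by contradiction, following the strategy of Lemma \ref{lemma:space} and adapting its $n = 3$ argument to arbitrary codimension. Suppose there is a nontrivial linear relation $\sum_{i=1}^n a_i E_i^\perp + b\, x^\perp \equiv 0$ on $\Sigma$, and set $v = \sum_i a_i E_i \in \mathbb{R}^n$. The relation is equivalent to saying that the vector $v + bx$ is tangent to $\Sigma$ at every point. Restricting to $\partial \Sigma$ and using the free boundary condition---which forces the position vector $x$ to equal the outward conormal of $\partial \Sigma$, hence to be tangent to $\Sigma$---we conclude that $v$ itself is tangent to $\Sigma$ along $\partial \Sigma$.

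The easy case is $v = 0$. Then we must have $b \neq 0$, and hence $x^\perp \equiv 0$ on all of $\Sigma$, so the position vector is tangent to $\Sigma$ everywhere. The integral curves of $x$ on $\Sigma$ are then rays through the origin, making $\Sigma$ a cone; since $\Sigma$ is compact in $\overline{B^n}$ and smoothly immersed, it must contain the origin, and smoothness there forces $\Sigma$ to be a plane disk, contradicting the hypothesis.

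The main case, and the main obstacle, is $v \neq 0$. At every point $p$ of a boundary component $\gamma$ away from the (at most two) points where $p$ is parallel to $v$, we have $T_p \Sigma = \mathrm{span}(v, p)$, represented by the $2$-vector $x \wedge v$. The unit tangent $T$ to $\gamma$ therefore lies in $\mathrm{span}(x, v)$, and since $v$ is constant in $\mathbb{R}^n$ while $D_T x = T$ along $\gamma$, the identity $D_T(x \wedge v) = T \wedge v$ shows that $D_T(x \wedge v)$ is a scalar multiple of $x \wedge v$. Thus the direction of $x \wedge v$ is constant along $\gamma$, i.e.\ the $2$-plane $P = \mathrm{span}(x, v)$ is constant, $\gamma \subset P$, and the tangent plane of $\Sigma$ agrees with $P$ all along $\gamma$. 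Since $\Sigma$ and $P$ are both real analytic minimal surfaces sharing first-order Cauchy data along $\gamma$, unique continuation forces $\Sigma \subset P$, so $\Sigma$ is a plane disk---again a contradiction. This rules out every nontrivial relation and gives $\dim \mathcal{C} = n+1$.
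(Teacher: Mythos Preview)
Your proof is correct and follows essentially the same approach as the paper's: you set up the linear relation $(v+bx)^\perp\equiv 0$, use the free boundary condition to conclude $v$ is tangent along $\partial\Sigma$, handle the $v=0$ case via the cone argument, and in the $v\neq 0$ case use $D_T(x\wedge v)=T\wedge v$ to show the tangent plane is constant along each boundary component before invoking unique continuation. The only cosmetic difference is that the paper normalizes $v$ to a unit vector and writes the relation as $av^\perp+bx^\perp=0$, but the logic is identical.
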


\begin{proof}
If there is a linear relation, then there would be a $v\in S^{n-1}$ and numbers $a,b$, not both zero, such that $av^\perp+bx^\perp\equiv 0$ on $\sig$. 
If $a=0$, then $x^\perp \equiv 0$ on $\Sigma$ which would imply that $\sig$ was a cone and hence  a plane
disk since $\sig$ is smooth, a contradiction. Therefore, $a \neq 0$, and so $v^\perp=-\frac{b}{a} x^\perp$. Since $x^\perp=0$ on $\p \Sigma$ this implies that $v^\perp=0$ on $\p \Sigma$. Therefore $v$ lies in the tangent space to $\Sigma$ at each point of $\p \Sigma$. Since $x$ also lies in the tangent plane and is independent of $v$ at all but finitely many points, the $2$-vector $x\wedge v$
represents the tangent plane when they are independent. If $T$ is a unit tangent to $\p \Sigma$, we have $D_T(x \wedge v)=T\wedge v$ and this is parallel to $x\wedge v$. Therefore the tangent plane $T_x\Sigma$ is constant along each component of $\p\sig$, and each component lies in a $2$-plane through the origin. It follows from uniqueness for the Cauchy problem that 
$\Sigma$ is a plane disk contrary to our assumption. 
\end{proof}

We now specialize to the case where $\Sigma$ is the M\"obius band. That is, suppose $\sig=\vp(M)$ is a free boundary solution in $B^n$ where $M=[-T,T]\times S^1$ with the identification
$(t,\theta)\approx (-t,\theta+\pi)$, and $\vp$ is a conformal harmonic map. The following existence theorem for conformal vector fields on the M\"obius band is analogous to Proposition \ref{prop:dbar} for the annulus, and the proof is similar.

\begin{proposition} \label{prop:dbar2}
Assume that $\sig=\vp(M)$ is a free boundary minimal M\"obius band in $B^n$. 
There is subspace $\mC_1$ of $\mC$ of dimension at least $n$ such that for 
all $V \in \mC_1$ there is a tangential vector field $Y^t$ with $x\cdot Y^t=0$ on $\p\sig$ such that the vector field $Y=Y^t+V$ is conformal. Furthermore we have $Q(Y,Y)=S(V,V)$.
\end{proposition}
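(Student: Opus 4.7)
The plan is to mimic the proof of Proposition \ref{prop:dbar}, working on the orientable double cover to handle non-orientability. Write $M=\tilde M/\tau$ where $\tilde M=[-T,T]\times S^1$ and $\tau(t,\theta)=(-t,\theta+\pi)$, and lift $\vp$ to a conformal harmonic $\tilde\vp:\tilde M\to B^n$ with $\tilde\vp\circ\tau=\tilde\vp$. Any $V\in\mC$ pulls back to a $\tau$-equivariant normal field, $V\circ\tau=V$ in $\R^n$, and we seek a tangential $Y^t$ also satisfying $Y^t\circ\tau=Y^t$ so that $Y=Y^t+V$ descends to $M$ and is conformal. Setting $Y^t=u\tilde\vp_t+v\tilde\vp_\theta$, $f=u+\sqrt{-1}v$, and $z=t+\sqrt{-1}\theta$, the computation of Proposition \ref{prop:dbar} generalizes in a straightforward way (the codimension enters only through an inner product with $V$) to show that conformality of $Y$ is equivalent to
\[
\frac{\p f}{\p \z}=k,\qquad k=\lambda^{-1}V\cdot(A_{11}+\sqrt{-1}A_{12}),
\]
where $A_{ij}=(\tilde\vp_{ij})^\perp$ is the vector-valued second fundamental form in coordinates, and the boundary requirement $x\cdot Y^t=0$ becomes $\Re f=0$ on $\p\tilde M$.

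The main obstacle is identifying the right Fredholm theory on the Möbius band. Since $\tilde\vp_t\circ\tau=-\tilde\vp_t$ and $\tilde\vp_\theta\circ\tau=\tilde\vp_\theta$, the condition $Y^t\circ\tau=Y^t$ forces $u$ to be $\tau$-odd and $v$ to be $\tau$-even, which is exactly $\sigma f=f$ for the anti-linear involution $\sigma f(z):=-\overline{f(\tau z)}$. A short direct calculation gives $\bar\p\circ\sigma=-\sigma\circ\bar\p$, so $\bar\p$ swaps the $\pm 1$ real eigenspaces of $\sigma$. Differentiating $\tilde\vp\circ\tau=\tilde\vp$ twice gives $A_{11}\circ\tau=A_{11}$ and $A_{12}\circ\tau=-A_{12}$; combined with $\lambda\circ\tau=\lambda$ and $V\circ\tau=V$ this yields $k\circ\tau=\bar k$, i.e.\ $\sigma k=-k$, consistent with the equation. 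The equivariant boundary value problem is then
\[
\bar\p:\ \{\,f\in H^1(\tilde M,\mathbb{C}):\ \sigma f=f,\ \Re f|_{\p\tilde M}=0\,\}\ \longrightarrow\ \{\,k:\ \sigma k=-k\,\}.
\]
By Schwarz reflection on the annulus, $\bar\p$-closed $f$ with pure imaginary boundary values are pure imaginary constants, and every such constant is $\sigma$-invariant, so the kernel is one real-dimensional. The formal adjoint problem $\p g=0$ with $\Im g|_{\p\tilde M}=0$ has real constants as its only solutions, and every real constant satisfies $\sigma g=-g$, so the cokernel is also one real-dimensional. Solvability therefore reduces to the single real linear condition
\[
\int_M \Re\!\big(\lambda^{-1}V\cdot(A_{11}+\sqrt{-1}A_{12})\big)\,dt\,d\theta=0
\]
on $V$ (the integrand descends to $M$ since each factor is $\tau$-invariant).

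Define $\mC_1\subset\mC$ to be the kernel of this linear functional. Since $\mC$ has real dimension $n+1$, this gives $\dim\mC_1\geq n$. For every $V\in\mC_1$ the equation is solvable, producing a tangential $Y^t$ (unique up to an additive multiple of $\tilde\vp_\theta$, which itself descends since $\tilde\vp_\theta$ is $\tau$-invariant) with the descent symmetry and $x\cdot Y^t=0$ on $\p\sig$, and $Y=Y^t+V$ is conformal by construction. Since $Y^\perp=V$, Lemma \ref{lemma:area-energy} yields $Q(Y,Y)=S(Y^\perp,Y^\perp)=S(V,V)$, completing the plan.
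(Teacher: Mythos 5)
Your proposal is correct and follows essentially the same route as the paper: lift to the oriented double cover, impose the equivariance $u\circ\tau=-u$, $v\circ\tau=v$, reduce conformality of $Y=Y^t+V$ to the $\bar\partial$-equation with boundary condition $\Re f=0$, and apply the Fredholm alternative to get the single real linear condition $\int\Re\bigl(\lambda^{-1}V\cdot(h_{11}+\sqrt{-1}h_{12})\bigr)\,dt\,d\theta=0$ defining $\mC_1$. Your repackaging of the symmetry via the anti-linear involution $\sigma$ and the compatibility check $\sigma k=-k$ is a clean way of organizing what the paper does with the explicit domains $\mathcal F_1$, $\mathcal F_2$; the substance is identical.
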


\begin{proof}
We lift to the oriented double cover $\widetilde{M}=[-T,T] \times S^1$ and look for a vector field $Y^t=u\vp_t+v \vp_\theta$ that is invariant $Y^t(t,\theta)=Y^t(-t,\theta+\pi)$ and hence descends to the quotient $M$. Since the lifted map $\vp$ is invariant we have $\vp_t(t,\theta)=-\vp_t(-t,\theta+\pi)$, $\vp_\theta(t,\theta)=\vp_\theta(-t,\theta+\pi)$ and so $Y^t$ is invariant if 
\begin{equation} \label{uv}
      u(-t,\theta+\pi)=-u(t,\theta) 
      \mbox{ and }
      v(-t,\theta+\pi)=v(t,\theta).
\end{equation}
The equations for $Y^t$ which dictate the condition that $Y=Y^t+V$ is conformal are
\[
       D_{\p_t} Y^t \cdot \p_\theta + D_{\p_\theta} Y^t \cdot \p_t =2h_{12} \cdot V
       \mbox{ and }
       D_{\p_t} Y^t \cdot \p_t - D_{\p_\theta} Y^t \cdot \p_\theta =2h_{11} \cdot V
\]
where $\p_t$, $\p_\theta$ denote the coordinate basis and $h_{ij}$ the vector-valued second fundamental form of $\Sigma$ in this basis, and as in the proof of Proposition \ref{prop:dbar} we have 
\[ 
      u_t-v_\theta=2\lambda^{-1} h_{11} \cdot V
      \mbox{ and }
      u_\theta+v_t=2\lambda^{-1} h_{12} \cdot V.
\]
Thus if we set $f=u+\sqrt{-1}v$ and $z=t+\sqrt{-1}\theta$, the equations become
\[ \frac{\p f}{\p\z}=k
\]
where $k=\lambda^{-1}(h_{11}\cdot V+\sqrt{-1}h_{12}\cdot V)$. We impose the boundary condition $\Re f=0$ on $\p \widetilde{M}$ which is the condition that $Y^t$ be tangent to $S^{n-1}$ along $\p\sig$.  Furthermore, by (\ref{uv}), we require that $f(-t,\theta+\pi)=-\bar{f}(t,\theta)$. If solvable the solution is unique up to a pure imaginary constant (corresponding to the vector field which is a real multiple of $\fder{\vp}{\theta}$).

Consider the operator
\[
       L: \mathcal{F}_1 \rightarrow L^2(M,\mathbb{C})
\]
defined by 
\[
       Lf=\fder{f}{\bar{z}}
\]
on the domain
\[
     \mathcal{F}_1=\{ f \in H^1(\widetilde{M},\mathbb{C}) : 
     f(-t,\theta+\pi)=-\bar{f}(t,\theta) \mbox{ on } \widetilde{M}, \; \Re f=0 \mbox{ on } \p \widetilde{M} \}.
\]       
Since $\widetilde{M}$ is compact with boundary, and $L$ is an elliptic operator with elliptic boundary condition, $L$ is a Fredholm operator on the domain $\mathcal{F}_1$. Then,
\begin{align*}
      \la Lf,g \ra &= \Re \int_{\widetilde{M}} \fder{f}{\bar{z}} \bar{g} \; dt d\theta \\
      &= \Re \left[ - \int_{\widetilde{M}} f \overline{\fder{g}{z}} \; dt d\theta 
                            + \frac{1}{2} \int_{\p \widetilde{M}} f \bar{g} \; d\theta \right] \\
      &= -\left\la f, \fder{g}{z} \right\ra + \frac{1}{2} \Re \int_{\p \tilde{M}} f \bar{g} \, d\theta.
\end{align*}
Therefore the $L^2$-adjoint $L^*$ of $L$ is defined on the domain
\[
      \mathcal{F}_2=\{ g \in H^1(\widetilde{M}, \mathbb{C}) : 
      g(-t, \theta+\pi)=\bar{g}(t,\theta) \mbox{ on } \widetilde{M}, \; \Im g=0 \mbox{ on } \p \widetilde{M} \}
\]
and is given by
\[
      L^* g = - \fder{g}{z}.
\]
This has kernel the real constants, and so by the Fredholm alternative our problem is solvable if and only if $\int_{\widetilde{M}} \Re k\ dtd\theta=0$. 

We now define 
\[ 
      \mC_1=\left\{ V  \in \mC:  \int_{\widetilde{M}} 
      \Re(\lambda^{-1}(h_{11} \cdot V+\sqrt{-1}h_{12}\cdot V))\ dtd\theta=0 \right\}
\]
which is a subspace of dimension at least $n$. The final statement follows from Lemma \ref{lemma:area-energy}.
\end{proof}

We now  prove the uniqueness result for the critical M\"obius band. The proof is almost identical to the annulus case Theorem \ref{thm:uniqueness}, however we include the details for completeness. 

\begin{theorem} \label{theorem:mobius}
Assume that $\Sigma$ is a free boundary minimal M\"obius band in $B^n$ such that the coordinate functions are first eigenfunctions. Then $n=4$ and  $\Sigma$ is the critical M\"obius band.
\end{theorem}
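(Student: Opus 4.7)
The plan is to mirror the proof of the annulus case, Theorem \ref{thm:uniqueness}, using the Möbius-band analogs of Lemma \ref{lemma:space} and Proposition \ref{prop:dbar} just established. Lift to the oriented double cover $\widetilde{M}=[-T,T]\times S^1$ with involution $(t,\theta)\mapsto(-t,\theta+\pi)$, and let $X=\partial\widetilde{\varphi}/\partial\theta$. This $X$ is conformal Killing along the image, invariant under the involution, and hence descends to $\Sigma$; the proof of Lemma \ref{lemma:null} then shows $X$ is harmonic and $Q(X,Y)=0$ for every vector field $Y$ along $\Sigma$ tangent to $S^{n-1}$ on $\partial\Sigma$.

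Split on whether $\int_{\partial\Sigma}X\,ds$ vanishes. If it does, $Q(X,X)=0$ together with $\sigma_1=1$ force each component of $X$ to be a first Steklov eigenfunction. Differentiating the free boundary identity $\partial_t\varphi=\lambda\varphi$ in $\theta$ and comparing with the Steklov condition $\partial_t X=\lambda X$ on $\partial M$ yields $\partial_\theta\lambda=0$ there, so the induced metric is $\sigma$-homothetic to a rotationally symmetric metric on $M$. Proposition \ref{prop:cmb} then identifies the immersion (after ruling out degenerations into a single component of the $\sigma_1$-eigenspace) as the critical Möbius band, and in particular forces $n=4$.

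If instead $\int_{\partial\Sigma}X\,ds\neq 0$, consider $\mathcal{V}=\{Y(V)+cX:V\in\mathcal{C}_1,\ c\in\mathbb{R}\}$, which has dimension at least $n+1$ since any nonzero $Y(V)$ has nontrivial normal component while $X$ is tangential. The linear map $T:\mathcal{V}\to\mathbb{R}^n$ given by $T(Z)=\int_{\partial\Sigma}Z\,ds$ has nontrivial kernel, so pick nonzero $W=Y(V)+cX$ with $T(W)=0$ and write $V=a v^\perp+b x^\perp$ for some $v\in S^{n-1}$. By Lemma \ref{lemma:null} and Lemma \ref{lemma:area-energy}, $Q(W,W)=Q(Y(V),Y(V))=S(V,V)$; since $v^\perp$ and $x^\perp$ are Jacobi with $x^\perp=0$ on $\partial\Sigma$, this reduces to $a^2 S(v^\perp,v^\perp)$. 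Theorem \ref{theorem:index} makes this strictly negative unless $a=0$, while the fact that $W$'s components are mean-zero competitors for $\sigma_1=1$ forces $Q(W,W)\geq 0$. Hence $a=0$ and $V=bx^\perp$. Using $D_x W=W$ on $\partial\Sigma$ together with the tangentiality on $\partial\Sigma$ of $D_x X$ and $D_x Y^t$ (since the second fundamental form is diagonal there and both $Y^t,X$ are parallel to the unit tangent), the normal component reads $b\,D_x(x^\perp)=0$ on $\partial\Sigma$; combined with $x^\perp|_{\partial\Sigma}=0$ and Cauchy uniqueness for the Jacobi equation, $b\neq 0$ would force $x^\perp\equiv 0$ on $\Sigma$, which is impossible. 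So $W=cX$ and we return to the first case.

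The principal technical obstacle will be the rotationally symmetric step: Proposition \ref{prop:cmb} gives the full Steklov spectrum of a rotationally symmetric Möbius band, but one must verify that a proper conformal minimal immersion by first eigenfunctions cannot live inside a single ($n=1$ or $n=2$) rotational component of the first eigenspace — the only configuration producing a genuine Möbius-band immersion is $T=T_0$ with the full four-dimensional eigenspace, and the resulting immersion is congruent to the explicit critical Möbius band in $B^4$.
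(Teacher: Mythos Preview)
Your proposal is correct and follows essentially the same route as the paper's proof: the same conformal Killing field $X$, the same case split on $\int_{\partial\Sigma}X\,ds$, the same use of $\mathcal{V}$, the kernel of $T:\mathcal{V}\to\mathbb{R}^n$, the reduction $S(V,V)=a^2S(v^\perp,v^\perp)$ via Theorem~\ref{theorem:index}, and the Cauchy-uniqueness argument to force $b=0$. You are in fact slightly more careful than the paper at one point: the paper simply invokes Proposition~\ref{prop:cmb} once the boundary metric is seen to be rotationally symmetric, whereas you correctly flag that one must still check, using the explicit eigenfunction computation there, that a conformal free boundary immersion by first eigenfunctions cannot sit inside a single two-dimensional rotational component and hence forces $T=T_0$ and $n=4$.
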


\begin{proof}
Let $X=\frac{\p\vp}{\p\theta}$ be the conformal Killing vector field associated with rotations of the M\"obius band. We first consider the case that $\int_{\p\sig}X\ ds=0$. Since $Q(X,X)=0$ and $\sigma_1=1$ it follows that the components of $X$ are first eigenfunctions. Therefore on $\p M$
we have
\[ \frac{\p X}{\p t}=\frac{\p^2 \varphi}{\p t\p\theta}=X\lambda
\]
where $\lambda=|\varphi_t|=|\varphi_\theta|$ is the induced conformal metric. Taking the
dot product with $\varphi_t=\varphi\lambda$ we obtain
\[ \frac{\p^2 \varphi}{\p t\theta}\cdot\frac{\p\varphi}{\p t}=\frac{1}{2}\frac{\p\lambda^2}{\p t}=0.
\]
It follows that $M$ with the induced metric is $\sigma$-homothetic to a rotationally symmetric
flat M\"obius band, and therefore by Proposition \ref{prop:cmb}, $\Sigma$ must be the critical 
M\"obius band. 

Now let's assume that $\int_{\p\sig}X\ ds\neq 0$. By Proposition \ref{prop:dbar2}, for any $V\in\mC_1$ there is a conformal vector field whose normal component is $V$, and which is unique up to addition of a real multiple of $X$. We denote by $Y(V)$ the unique conformal vector field with $Y(V)^\perp=V$ and with $(\int_{\p\sig}Y(V)\ ds)\cdot (\int_{\p\sig}X\ ds)=0$. The map from $V$ to $Y(V)$ is linear. We consider the
vector space 
\[ 
     \V=\{ Y(V)+cX:\ V \in \mC_1, \ c \in \R \}, 
\]
and we observe that $\V$ is at least $(n+1)$-dimensional since any nonzero vector field $Y(V)$ has a nontrivial normal component while $X$ is tangential to $\sig$.

We define a linear transformation $T:\V\to\R^n$ by $T(W)=\int_{\p\sig}\ W\ ds$, and observe that for dimensional reasons $T$ has a nontrivial nullspace. Thus there is a $V \in \mC_1$ and $c \in \R$ such that $\int_{\p\sig}\ (Y(V)+cX)\ ds=0$ with $Y(V)+cX\neq 0$. From the definition of $\mC_1$ there is a vector $v\in S^{n-1}$ and real numbers $a,b$ so that $V=av^\perp+bx^\perp$. From Lemmas \ref{lemma:null} and \ref{lemma:area-energy} we have 
$Q(Y(V)+cX,Y(V)+cX)=Q(Y(V),Y(V))=S(V,V)$. 
Now we observe that $S(V,V)=a^2S(v^\perp,v^\perp)+2abS(x^\perp,v^\perp)+b^2S(x^\perp,x^\perp)$. Since both $x^\perp$ and $v^\perp$ are Jacobi fields and $x^\perp=0$ on $\p\sig$,
it follows from integration by parts that $S(x^\perp,v^\perp)=S(x^\perp,x^\perp)=0$ and
so $S(V,V)=a^2S(v^\perp,v^\perp)$, and  by Theorem \ref{theorem:index} we see that $Q(Y(V)+cX,Y(V)+cX)\leq 0$ and is strictly negative unless $a=0$. Since $\sigma_1=1$, we must have 
$Q(Y(V)+cX,Y(V)+cX)\geq 0$. Therefore $Q(Y(V)+cX,Y(V)+cX) = 0$, which implies that $a=0$ so $V=bx^\perp$, and the components of $W=bY(x^\perp)+cX$ are first eigenfunctions. 

We now observe that the normal component of the derivative $D_xW$ along $\p\sig$ is
$0$ since $D_xW=W$ along $\p\Sigma$ and $W^\perp=0$ along $\p \Sigma$.
We have $Y(x^\perp)=Y^t+x^\perp$, and so
\[
     0=(D_xW)^\perp=b(D_x Y^t)^\perp + b (D_x x^\perp)^\perp + c (D_x X)^\perp.
 \]
We observe that $D_xX$ and $D_xY^t$ are both tangent to $\sig$ because the second fundamental form is diagonal in the basis $\{x,T\}$ along $\p\sig$ and both $Y^t$ and $X$ are parallel to the unit tangent $T$ to $\p\sig$. Therefore $b(D_x x^\perp)^\perp=0$, and if $b\neq 0$
we have $(D_x x^\perp)^\perp=0$. But $(D_x x^\perp)^t=0$ on $\p \Sigma$ since this is a second fundmental form term and $x^\perp=0$ on $\p \Sigma$. Therefore $D_x x^\perp=0$ along $\p\sig$. Since $x^\perp$ is also zero along $\p\sig$ and $x^\perp$ is a solution of the Jacobi equation it follows from uniqueness for the Cauchy problem for the Jacobi operator that $x^\perp\equiv 0$ on $\sig$. This contradiction shows that $b=0$, and so it follows that $X=c^{-1}W$ is a first eigenfunction. It now follows that $\sig$ must be the critical M\"obius band
by the argument given in the first paragraph of this proof.
\end{proof}

We now show that the critical M\"obius band uniquely maximizes $\sigma_1L$ over all smooth metrics on the M\"obius band. 

\begin{theorem}
For any metric on the M\"obius band $M$ we have 
\[
        \sigma_1L\leq (\sigma_1L)_{cmb} =2\pi\sqrt{3}
\]
with equality if and only if $M$ is $\sigma$-homothetic to the critical M\"obius band. 
\end{theorem}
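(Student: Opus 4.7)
The plan is to combine the existence/regularity theory from Section \ref{section:extremal} with the uniqueness theorem for the critical M\"obius band and the explicit computation of $(\sigma_1L)_{cmb}$ from Proposition \ref{prop:cmb}, following the template of the proof for the annulus given just above. First I would invoke Theorem \ref{main}: there exists a smooth metric $g$ on $M$ realizing $\sigma^\#(0,1)$, together with a branched conformal minimal immersion $\varphi:(M,g)\to B^n$ by first Steklov eigenfunctions, which is a $\sigma$-homothety onto its image $\Sigma=\varphi(M)$. Since the eigenfunctions are the coordinate functions of $\varphi$, $\Sigma$ is a free boundary minimal M\"obius band in $B^n$ whose coordinate functions are first eigenfunctions (Proposition \ref{minimal.reg} rules out boundary branch points).

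Next I would apply the uniqueness result Theorem \ref{theorem:mobius} to conclude that $n=4$ and $\Sigma$ is congruent to the critical M\"obius band. By Proposition \ref{prop:cmb}, this gives
\[
       \sigma^\#(0,1) = \sigma_1(g)L_g(\p M) = (\sigma_1L)_{cmb}=2\pi\sqrt{3}.
\]
This establishes the sharp inequality $\sigma_1 L\leq 2\pi\sqrt 3$ for every smooth metric on $M$.

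For the equality statement, suppose $g$ is any smooth metric on $M$ with $\sigma_1(g)L_g(\p M)=2\pi\sqrt{3}$. Then $g$ is itself a maximizer, so the existence theorem produces a $\sigma$-homothetic conformal minimal immersion of $(M,g)$ into some $B^n$ by first eigenfunctions, and the uniqueness theorem identifies this image with the critical M\"obius band in $B^4$. Hence $g$ is $\sigma$-homothetic to the induced metric on the critical M\"obius band, giving the ``only if'' direction; the ``if'' direction is immediate since the Steklov spectrum normalized by boundary length is a $\sigma$-homothety invariant.

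The main conceptual obstacle was already dispatched in Theorem \ref{theorem:mobius}, where one must show that a free boundary minimal M\"obius band in $B^n$ whose coordinates are first eigenfunctions is necessarily rotationally symmetric (via the index argument combined with the solvability of the $\bar\partial$-equation on the oriented double cover); once that is in hand, the present theorem is a direct corollary of Theorem \ref{main}, Theorem \ref{theorem:mobius}, and Proposition \ref{prop:cmb}, exactly as in the annulus case. The only point requiring any care is verifying that the maximizing metric provided by Theorem \ref{main} really does fall into the hypothesis of Theorem \ref{theorem:mobius} despite the possibility of interior branch points; but the uniqueness theorem identifies the image as the (unbranched) critical M\"obius band, and the $\sigma$-homothety statement in Theorem \ref{main} then transfers smoothness back to $g$ on $M$.
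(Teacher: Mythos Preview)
Your proposal is correct and follows essentially the same approach as the paper: invoke Theorem \ref{main} to produce a maximizing metric together with a free boundary minimal immersion by first eigenfunctions, then apply the uniqueness Theorem \ref{theorem:mobius} to identify the image as the critical M\"obius band. Your write-up is somewhat more explicit than the paper's (particularly in spelling out the equality case and the value $2\pi\sqrt{3}$ via Proposition \ref{prop:cmb}), but the logical structure is identical.
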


\begin{proof}
By Theorem \ref{main} there exists on $M$ a smooth metric $g$ that maximizes $\sigma_1L$ over all metrics on $M$. Moreover there is a branched conformal minimal immersion $\varphi:(M,g)\to B^n$ for some $n\geq 3$ by first eigenfunctions so that $\varphi$ is a $\sigma$-homothety from $g$ to the induced metric $\varphi^*(\delta)$ where $\delta$ is the euclidean metric on $B^n$.
Then by the uniqueness result Theorem \ref{theorem:mobius} above we have $n=4$ and this immersion is congruent to the critical M\"obius band.
\end{proof}

\section{The asymptotic behavior as $k$ goes to infinity} \label{section:asymptotics}

In this section we discuss the limit of the extremal surfaces which were constructed in Section \ref{section:extremal}. Thus we will be considering free boundary minimal surfaces $\Sigma_k$ in 
$B^3$ which are of genus zero with $k$ boundary components. We first derive an important
result concerning the geometry of such surfaces.
\begin{proposition} Assume that  $\Sigma_k$ is a branched minimal immersion in $B^3$ which satisfies the free boundary condition and has genus zero with $k>1$ boundary components. Assume also that the coordinate functions are first Steklov eigenfunctions. It then follows
that $\Sigma_k$ is embedded, does not contain the origin $0$, and is star-shaped in the sense that
a ray from $0$ hits $\Sigma_k$ at most once. Furthermore $\Sigma_k$ is a stable minimal
surface with area bounded by $4\pi$. 
\end{proposition}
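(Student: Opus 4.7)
The plan is to exploit the fact that, since the coordinate functions are first Steklov eigenfunctions with $\sigma_1 = 1$, every linear function $u_v = v \cdot \varphi$ for $v \in \mathbb{R}^3 \setminus \{0\}$ is itself a first eigenfunction. By the Courant nodal domain bound for Steklov eigenfunctions (cf.\ the proof of Theorem \ref{multiplicity} and \cite{KS}), $\Sigma_k \setminus u_v^{-1}(0)$ has exactly two connected components; equivalently, $\Sigma_k$ is cut by every plane through the origin into precisely two pieces. This structural fact drives everything. I will also use the bound, also from the proof of Theorem \ref{multiplicity}, that on a genus zero surface a first eigenfunction vanishes to order at most $2\gamma + 1 = 1$ at any point.

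First I would rule out $\varphi(p) = 0$. Boundary points map to $S^2$ so cannot be $0$. At an interior $p$ with $\varphi(p) = 0$, $p$ cannot be a branch point, since otherwise all three coordinate functions would vanish to order $\geq 2$ at $p$, violating the order-one bound. Taking $v = \nu(p)$ then yields $u_v(p) = 0$ and $\nabla^\Sigma u_v(p) = v^T = 0$, while the Hessian of $u_v$ at $p$ equals (up to sign) the scalar second fundamental form of $\Sigma_k$ in the direction $\nu$, which is traceless by minimality. Hence $u_v$ vanishes at $p$ to order $\geq 2$, contradicting the order-one bound.

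Next comes the heart of the proof: embeddedness and star-shapedness, which together are equivalent to injectivity of $\pi : \Sigma_k \to S^2$, $\pi(x) = \varphi(x)/|\varphi(x)|$, well defined by the previous step. The two-nodal-domain property shows that for every open hemisphere $H_v = \{\omega \in S^2 : v \cdot \omega > 0\}$ the preimage $\pi^{-1}(H_v)$ coincides with the nodal domain $\{u_v > 0\}$ and is therefore connected. I would use this to exclude a purported pair $p \neq q$ with $\pi(p) = \pi(q) = \omega_0$: separate $p$ and $q$ by disjoint open sets and rotate $v$ through $S^2$. For $v$ close to $\omega_0$ both $p$ and $q$ lie in $\pi^{-1}(H_v)$, but as $v$ tilts toward the plane perpendicular to $\omega_0$ the component containing $p$ eventually pinches away from the component containing $q$, contradicting connectedness of $\pi^{-1}(H_v)$. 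Making this limiting/separation argument rigorous, in particular handling degenerate positions of $\omega_0$ relative to $T_p\Sigma_k$ and $T_q\Sigma_k$ and accounting for possible branch points away from the origin, is the main obstacle of the proof.

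Finally, for stability I would use $u = x \cdot \nu$, with $\nu$ the unit normal in the outward (star-shaped) orientation. Since dilation is a conformal Killing field and $\Sigma_k$ is minimal, $u$ solves $Lu = \Delta u + |A|^2 u = 0$; the free boundary condition gives outward conormal $\eta = x$ and hence $\nu \perp x$ on $\partial\Sigma_k$, so $u = 0$ there. Star-shapedness and $0 \notin \varphi(\Sigma_k)$ force $u \geq 0$; then $\Delta u = -|A|^2 u \leq 0$, so $u$ is superharmonic, and the strong minimum principle gives either $u \equiv 0$, which would make $\Sigma_k$ a cone through $0$ contradicting Step 1, or $u > 0$ in the interior. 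Existence of such a positive solution of $Lu = 0$ with Dirichlet boundary values gives, by the standard Fischer--Colbrie--Schoen argument, non-negativity of the first Dirichlet eigenvalue of $L$, which is fixed-boundary stability. For the area bound, combining the minimal-surface identity $\Delta|\varphi|^2 = 2|\nabla\varphi|^2 = 4$ with $\partial|\varphi|^2/\partial\eta = 2$ on $\partial\Sigma_k$ (from $d\varphi(\eta) = \varphi$ and $|\varphi|^2 = 1$) and the divergence theorem yields $\mathrm{Area}(\Sigma_k) = L/2$; since $\sigma_1 = 1$, Theorem \ref{theorem:yy} in the genus-zero case gives $L \leq 8\pi[(\gamma + 3)/2] = 8\pi$, so $\mathrm{Area}(\Sigma_k) \leq 4\pi$.
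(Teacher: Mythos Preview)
Your arguments for $0\notin\Sigma_k$, stability, and the area bound are essentially correct and match the paper. The genuine gap is in your treatment of embeddedness and star-shapedness, which you yourself flag as ``the main obstacle.'' The hemisphere-connectedness idea is hard to finish: there is no clear mechanism forcing the nodal domain $\{u_v>0\}$ to separate $p$ from $q$ as $v$ rotates, and you still owe an argument ruling out branch points away from the origin. Also, in your ordering, stability depends on star-shapedness, which you have not yet established.

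The paper sidesteps all of this by extracting a pointwise consequence of the order-one vanishing first: since every $u_v$ is a first eigenfunction, its gradient is nonzero along its zero set, so \emph{no affine tangent plane of $\Sigma_k$ passes through the origin}. This single transversality statement immediately yields three things. First, there are no branch points anywhere, since at a branch point every linear function vanishing there would have a critical zero. Second, $x\cdot\nu$ is nowhere zero (if $x\cdot\nu(x)=0$ then the tangent plane at $x$ contains the segment from $0$ to $x$, hence the origin), so $x\cdot\nu$ has a global sign and stability follows \emph{before} star-shapedness, not after. Third, the radial projection $x\mapsto x/|x|$ is then a local diffeomorphism on $\Sigma_k$. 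To upgrade this to a global diffeomorphism, the paper shows each boundary curve is embedded in $S^2$ (a first eigenfunction meets any boundary circle in at most two points, by the two-nodal-domain argument in genus $0$), caps each boundary circle by the spherical disk on the correct side to obtain an immersion of $S^2$ into $B^3\setminus\{0\}$ still with $x\cdot\nu>0$, and concludes that the resulting map $S^2\to S^2$ is a covering, hence a diffeomorphism. Star-shapedness and embeddedness of $\Sigma_k$ are then immediate.

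So the missing idea is to use order-one vanishing \emph{pointwise} to force $x\cdot\nu\neq 0$, making the radial map a local diffeomorphism; star-shapedness then comes from elementary covering-space reasoning rather than a delicate limiting argument on nodal domains.
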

\begin{proof} From the nodal domain theorem it follows that the gradient of any first eigenfunction
$u$ is nonzero on the zero set of $u$. By assumption, for any unit vector $v$ in $\mathbb R^3$, the function $u=x\cdot v$ is a first eigenfunction. This implies that any plane through the origin
intersects $\Sigma_k$ transversally, or to put it another way, the (affine) tangent plane at each point of $\Sigma_k$ does not contain $0$. It follows that $\Sigma_k$ cannot contain the origin
since otherwise its tangent plane would violate the condition.

Let $\nu$ be a choice of unit normal vector chosen so that $x\cdot \nu$ is positive at some point
of $\Sigma_k$. We now claim that $x\cdot\nu$ is positive everywhere. To see this observe that
if $x\cdot\nu=0$ at a point $x\in\Sigma_k$ then the tangent plane contains the line between
the origin and $x$. It follows that the tangent plane contains the origin and this violates the
transversality condition.

It also follows that $\Sigma_k$ is free of branch points since, if $x\in\Sigma_k$ were a branch point, then any linear function vanishing at $x$ would have a critical point there, a contradiction.

We note that the zero set of a first Steklov eigenfunction $u$ (of a genus $0$ surface $M$) which is transverse to a boundary component $\Gamma$ must intersect it in either no points or two points. To see this
observe that if the zero set of  $u$ separated $\Gamma$ into at least four arcs on which $u$ alternates in sign, then the arcs on which $u$ is positive must lie in a single connected component of the positive set of $u$ since there are two nodal domains. Thus we can join two points $p$
and $q$ in separate arcs on which $u>0$ by a path in $M$ on which $u>0$. This path together with one of the arcs of $\Gamma$ between $p$ and $q$ then separates $M$ into two connected
components (since $M$ has genus $0$). But there is a negative arc of $\Gamma$ in each of the components and this contradicts the fact that the negative set of $u$ is connected. 

We now show that each boundary component of $\Sigma_k$ is an embedded curve. Assume we have parametrized $\Sigma_k$ by an immersion $\varphi$ from a domain surface
$M$ into the ball. We show that $\varphi$ is an embedding on each boundary component
of $M$. Suppose to the contrary that we had a boundary component  $\Gamma$ and distinct points $p$ and $q$ on $\Gamma$ such $\varphi(p)=\varphi(q)=x_0$,
then we can choose a linear function $l(x)$ vanishing at $x_0$ and at another chosen point
$x_1$ of $\varphi(\Gamma)$. The function $l\circ\varphi$ is then a first Steklov eigenfunction
vanishing at more than two points of $\Gamma$, a contradiction. 

We now show that $\Sigma_k$ is star-shaped and therefore embedded. To see this we let
$\varphi$ be a parametrizing immersion from a surface $M$. Since $M$ has genus $0$,
we may take $M$ to be a subset of $S^2$ whose complement consists of $k$ disks. For
each boundary curve $\Gamma$ of $M$ we have shown that $\varphi$ is an embedding of
$\Gamma$ into $S^2$. Since we have chosen a unit normal $\nu$ for $\Sigma_k$ by the
requirement that $x\cdot\nu>0$, we may choose the disk $D_\Gamma$ bounded by 
$\varphi(\Gamma)$ such that $\nu$ is the outward pointing unit normal to $D_\Gamma$. We now extend $\varphi$ to be an immersion of $S^2$ by filling each disk with a diffeomorphism  to 
$D_\Gamma$. We may then smooth out the $90^0$ corner along $\Gamma$ to obtain an
immersion of $S^2$ into $B^3$ with $x\cdot\nu>0$ everywhere. It follows that the map $f$ from 
$S^2$ to $S^2$ given by $f(p)=\varphi(p)/|\varphi(p)|$ is a local diffeomorphism at each point.
Therefore $f$ is a global diffeomorphism and $\Sigma_k$ is star-shaped and hence embedded.

Since $x\cdot\nu$ is a positive Jacobi field, we know (see \cite{FCS}) that 
$\Sigma_k$ is a stable minimal surface.
Finally, by the coarse upper bound of Theorem \ref{theorem:yy} we have $\sigma_1 L(\p \Sigma_k) \leq 8\pi$. 
Since $\sigma_1=1$, this implies that 
$L(\partial\Sigma_k)\leq 8\pi$. But $2A(\Sigma_k)=L(\p \Sigma_k)$ (see \cite{FS} Theorem 5.4) and this shows that the area of $\Sigma_k$ is at most $4\pi$. 
\end{proof}

We are now ready to prove the main convergence theorem.
\begin{theorem} After a suitable rotation of each $\Sigma_k$, the sequence $\Sigma_k$ converges in $C^3$ norm on compact subsets of $B^3$ to the disk $\{z=0\}$ taken with multiplicity two. Furthermore we have $\lim_{k\to\infty}A(\Sigma_k)=2\pi$ and $\lim_{k\to\infty}L(\partial\Sigma_k)=4\pi$.
\end{theorem}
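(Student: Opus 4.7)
The plan is compactness-and-classification: I extract a subsequential limit $\Sigma_\infty$ of the maximizers $\Sigma_k$ after applying rotations, identify it as a great disk through the origin using the nodal structure of coordinate first eigenfunctions together with star-shapedness, and pin down the multiplicity as two by exploiting the multiplicity bound of Theorem~\ref{multiplicity}.

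\textbf{Compactness.} Each $\Sigma_k$ is an embedded stable free-boundary minimal surface in $B^3$ with $A(\Sigma_k)\leq 4\pi$. Schoen's curvature estimate for embedded stable minimal surfaces (with free-boundary analogues) yields uniform second-fundamental-form bounds on compact subsets of $B^3$. After passing to a subsequence and applying rotations $R_k\in \mathrm{SO}(3)$, the surfaces $R_k(\Sigma_k)$ converge in $C^3_{\mathrm{loc}}(B^3)$ to a smooth stable free-boundary minimal surface $\Sigma_\infty$ with some integer multiplicity $m\geq 1$. The identity $2A(\Sigma_k)=L(\partial\Sigma_k)$, obtained by integrating $\Delta_{\Sigma}|x|^2=4$ and using the free-boundary condition, passes to the limit to give $L^{\ast}=2A^{\ast}$ where $A^{\ast}=m\,A(\Sigma_\infty)$.

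\textbf{Identification as a great disk.} Star-shapedness of each $\Sigma_k$ gives $x\cdot\nu\geq 0$ on $\Sigma_\infty$. Since $x\cdot\nu$ is a Jacobi field on the minimal surface $\Sigma_\infty$ and satisfies $D_\eta(x\cdot\nu)=x\cdot\nu$ along $\partial\Sigma_\infty$, the strong maximum principle and the Hopf boundary lemma yield the dichotomy: either $x\cdot\nu>0$ throughout $\Sigma_\infty$ or $x\cdot\nu\equiv 0$. In the second case, $\Sigma_\infty$ is a cone from the origin, and smoothness together with free-boundary orthogonality forces $\Sigma_\infty$ to be a great disk; after incorporating the rotations $R_k$, we may take $\Sigma_\infty=\{z=0\}\cap\bar B^3$. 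The first case is ruled out by combining the nodal-domain structure (for each unit $v\in\mathbb R^3$, the plane $v^{\perp}$ separates $\Sigma_k$ into exactly the two nodal domains of the first eigenfunction $x\cdot v$) with the strict monotonicity of Proposition~\ref{sigma*_inc}: a fixed smooth strictly star-shaped $\Sigma_\infty$ would have $k_\infty<\infty$ boundary components and a rigidly determined value $L^{\ast}=m L(\partial\Sigma_\infty)$, whereas the requirement that $\Sigma_k$ have strictly increasing values $\sigma^{\ast}(0,k)$ with diverging numbers of boundary components forces a contradiction. I expect the careful spectral analysis behind this exclusion to be the main technical obstacle.

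\textbf{Multiplicity two.} Since $\Sigma_\infty$ is a flat unit disk, $A^{\ast}=m\pi$ and $L^{\ast}=2m\pi$; the area bound $A^{\ast}\leq 4\pi$ restricts $m\in\{1,2,3,4\}$. I rule out $m=1$ via $L^{\ast}\geq\sigma^{\ast}(0,2)=(\sigma_1L)_{cc}>2\pi$. To rule out $m\geq 3$ I use Theorem~\ref{multiplicity}: for genus zero, the multiplicity of $\sigma_1$ is at most $4\gamma+2\cdot 1+1=3$. For an $m$-sheeted disk limit with $m\geq 3$, the space of sheet-indicator functions (locally constant on each sheet, changing sign between sheets) is $(m-1)$-dimensional modulo constants, and their smooth approximants on $\Sigma_k$ have Rayleigh quotients tending to $1$. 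One sheet-indicator coincides (up to normalization) with the rescaled limit of $z=x\cdot e_3$; for $m\geq 3$ the remaining $(m-2)\geq 1$ sheet-indicators are linearly independent of $x,y,z$, producing $\geq 4$ independent first eigenfunctions on $\Sigma_k$ for $k$ large, in contradiction with the multiplicity bound $3$. Hence $m=2$, so $A^{\ast}=2\pi$ and $L^{\ast}=4\pi$. Since the limit is unique up to rotation, the whole rotated sequence converges in $C^3_{\mathrm{loc}}(B^3)$ to the double disk, and the strict monotonicity of Proposition~\ref{sigma*_inc} combined with $L_k\to 4\pi$ completes the claim.
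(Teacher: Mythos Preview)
Your outline has the right overall architecture (compactness, identification, multiplicity), but two of the three substantive steps contain genuine gaps, and the paper's argument is in fact considerably more direct.

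\textbf{The identification step is incomplete.} You correctly note the dichotomy $x\cdot\nu>0$ versus $x\cdot\nu\equiv 0$ on $\Sigma_\infty$, but you do not actually rule out the first case: you invoke ``strict monotonicity of $\sigma^*(0,k)$ with diverging numbers of boundary components,'' but a smooth strictly star-shaped limit $\Sigma_\infty$ with multiplicity $m\geq 1$ does not by itself contradict anything---the boundary components of $\Sigma_k$ could collapse or accumulate near $\partial B^3$, and nothing in your sketch prevents this. You yourself flag this as ``the main technical obstacle,'' and indeed no argument is given. The paper bypasses this entirely by reversing the order: it first rules out $m=1$, and \emph{then} uses $m\geq 2$ together with star-shapedness of $\Sigma_k$ in a purely geometric way. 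If $0\notin\Sigma_\infty$, the ray to the nearest point meets $\Sigma_k$ in $m\geq 2$ points; if $m\geq 3$, one of the two rays normal to $T_0\Sigma_\infty$ meets $\Sigma_k$ twice; if $x\cdot\nu>0$ somewhere, that ray meets $\Sigma_k$ twice. Each contradicts star-shapedness. This is much cleaner than a spectral argument.

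\textbf{Your exclusion of $m\geq 3$ is not rigorous.} You want to build ``sheet-indicator'' test functions on $\Sigma_k$ with Rayleigh quotients tending to $1$, linearly independent of $x,y,z$, and conclude the first eigenspace has dimension $\geq 4$. But a sequence of functions with Rayleigh quotient converging to $1$ need not be asymptotically first eigenfunctions unless you have a uniform gap $\sigma_2(\Sigma_k)\geq 1+\epsilon$, which is not established. Nor is it clear how to construct these indicators with the right orthogonality properties. The paper's geometric argument (rays through the origin) avoids this issue completely.

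\textbf{Your exclusion of $m=1$ presupposes the identification.} Since your identification step has a gap, so does this. The paper instead rules out $m=1$ directly (before identifying the limit) by the hole-punching argument of Proposition~\ref{sigma*_inc}: if $m=1$, the convergence is strong enough that punching a small hole in $\Sigma_\infty$ and transplanting to $\Sigma_{k}$ for large $k$ produces a genus-$0$ surface with $\sigma_1 L$ exceeding $\lim_k\sigma^*(0,k)$, a contradiction. This step also requires a careful boundary-monotonicity argument to show $A(\Sigma_k)\to mA(\Sigma_\infty)$ (no area escaping to $\partial B^3$), which the paper carries out and which your ``passes to the limit'' elides.
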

\begin{proof} Since $\Sigma_k$ is stable, by the curvature estimates \cite{Sc} we have
a uniform bound on the second fundamental form of $\Sigma_k$ on each compact subset
of $B^3$; in fact, at all points a fixed geodesic distance from $\p \Sigma_k$. Since the area is also bounded we have a sequence $k'$ such that $\Sigma_{k'}$
converges in $C^3$ norm to a smooth minimal surface $\Sigma_\infty$ possibly with multiplicity (see for example \cite{CM}, Proposition 7.14 and its proof).

We show that $\Sigma_\infty$ is a disk containing the origin and the multiplicity is two. Suppose
the (integer) multiplicity is $m\geq 1$. We first show that the area of $\Sigma_{k'}$ converges to
$mA(\Sigma_\infty)$. To see this we observe that the statement follows from the $C^3$ 
convergence on compact subsets together with uniform bounds on the area near the boundary.
Specifically we can show that $A(\Sigma_k\cap (B_1\setminus B_{1-\delta}))\leq c\delta$ for 
a fixed constant $c$. This follows from approximate monotonicity in balls around boundary
points together with the global upper bound on the area. Precisely we have for $v\in\partial\Sigma_k$ and $r<1/4$ the bound $A(\Sigma_k\cap B_r(v))\leq cr^2$. The bound on the
annular region then follows by covering the boundary with $N\approx c/\delta$ balls of radius
$\delta$ (possible since $L(\partial\Sigma_k)$ is uniformly bounded). The union of the corresponding radius $2\delta$ balls then covers $\Sigma_k\cap(B_1-B_{1-\delta})$ and yields the area bound. We can prove the monotonicity of the weighted area ratio $e^{cr}r^{-2}A(B_r(v))$
for $0<r\leq 1/4$ by using an appropriate vector field which is tangent to the sphere along
the boundary. For example we can take the conformal vector field
\[ Y=(x\cdot v)x-\frac{1+|x|^2}{2}v.
\]
For any unit vector $e$ we have $\nabla_e Y\cdot e=x\cdot v$, and an easy
estimate implies $|Y-(x-v)|\leq c|x-v|^2$. We apply the first variation formula on 
$\Sigma_k\cap B_r(v)$ to obtain
\[ 2\int_{\Sigma_k\cap B_r(v)}x\cdot v\ da=\int_{\Sigma\cap\partial B_r(v)}Y\cdot\eta\ ds
\]
where $\eta$ is the unit conormal vector and there is no contribution along $\partial B^3$
since $\eta$ is orthogonal to $Y$ at those points. The information we have about $Y$ then
implies the differential inequality 
\[ (2-cr)A(\Sigma_k\cap B_r(v))\leq r\frac{d}{dr}A(\Sigma_k\cap B_r(v)).
\]
This implies the desired monotonicity statement.

To complete the proof that $A(\Sigma_{k'})\to mA(\Sigma_\infty)$ we need to show that
for $k$ large, all points of $\Sigma_k$ which are near the boundary of $B^3$ are also
near $\p \Sigma_k$ and thus have small area. In particular we must show that for $\epsilon>0$
there is a $\delta>0$ and $N$ such that for $k\geq N$
\[ \{x\in\Sigma_k:\ |x|\geq 1-\delta\}\subseteq \{x\in\Sigma_k:\ d(x,\p\Sigma_k)<\epsilon\}.
\]
This follows from curvature estimates by an indirect argument. Indeed, suppose there is 
$\epsilon>0$ such that the conclusion fails for a sequence $\delta_i\to 0$ and $k_i\to\infty$.
Then we would have a sequence of points $x_i\in \Sigma_{k_i}$ which are at least a distance
$\epsilon$ from $\p\Sigma_{k_i}$ and which converge to $\p B^3$. The curvature estimates 
imply that a fixed neighborhood of $x_i$ in $\Sigma_{k_i}$ has bounded curvature and
area, and therefore has a subsequence which converges in $C^2$ norm to a minimal surface
in $B^3$. This violates the maximum principle since these surfaces have an interior point
(the limit of $x_i$) which lies on $\p B^3$. Combining this result with the boundary
monotonicity we then conclude that $\lim_{i'}A(\Sigma_{i'})=mA(\Sigma_\infty)$.

Now we show that the multiplicity cannot be $1$. Indeed, if $m=1$, then $\Sigma_\infty$
is a smooth embedded free boundary solution inside the ball. At each boundary point $x$
of $\Sigma_\infty$ there is a density which is an integer multiple of $1/2$ and the tangent cone at any boundary point is a half-plane with integer multiplicity. This follows from the curvature estimate which holds for a blow-up sequence at a fixed distance from the boundary and implies that
the rescaled surface is locally, away from the boundary, a union of graphs over the tangent plane. By Allard-type minimal surface regularity theorems (see \cite{GJ}), a boundary point $x$ is a smooth point if and only if the density at $x$ is equal to $1/2$. We use the argument of \cite{FL} to first show that $\Sigma_\infty$ is connected. Indeed if we have two connected components $\Sigma'$ and $\Sigma''$, then we can find nearest points $x'\in\bar{\Sigma'}$ and $x''\in\bar{\Sigma''}$. By the maximum principle both points lie on $\p B^3$, and it follows that the density of $\Sigma'$ at $x'$ is $1/2$ as is the density of $\Sigma''$ at $x''$. By the boundary maximum principle (see \cite{FL}) we get a contradiction. Therefore we conclude that $\Sigma_\infty$ is connected. We do not know how to
rule out the existence of a free boundary surface with infinite topology in general, but in this
case since the metrics on $\Sigma_{k'}$ are converging to that of $\Sigma_\infty$, we may use
the argument of Proposition \ref{sigma*_inc} to punch a hole in $\Sigma_\infty$ and strictly increase 
$\sigma_1L$. Because of the convergence, when we punch a corresponding hole in 
$\Sigma_{k'}$ for $k'$ large we get a sequence of genus $0$ surfaces with $\sigma_1L$ converging to a number greater than $\lim_k\sigma^*(0,k)$ a contradiction. To be more precise the increase
we obtain on $\sigma_1L$ depends on the number $\epsilon$ in the proof of Proposition \ref{sigma*_inc}, and if the $\Sigma_k$ are converging to a smooth limiting surface, then
this number is uniform as we see directly from the argument. Therefore the surfaces cannot
be converging and we must have $m\geq 2$.

Since $m\geq 2$, it must be true that $\Sigma_\infty$ contains the origin since otherwise the distance from the origin to $\Sigma_k$ would be bounded from below, and the ray from the
nearest point would intersect $\Sigma_k$ at $m$ points contradicting the star-shaped property.
It also follows that $m=2$ since otherwise one of the rays from the origin orthogonal to the plane
$T_0\Sigma_\infty$ would intersect $\Sigma_k$ in at least two points. Now it must be true
that $x\cdot\nu$ is identically zero on $\Sigma_\infty$ and hence $\Sigma_\infty$ is a cone
(and therefore a plane since it is smooth). This follows because if $x\cdot\nu>0$ at some
point, then the ray from the origin to $x$ intersects $\Sigma_\infty$ transversally at $x$. This
implies that the ray intersects $\Sigma_{k'}$ in two points for $k'$ large, a contradiction. Therefore
$m=2$ and $\Sigma_\infty$ is a plane.

Finally, since the subsequential limit is unique up to rotation, we may rotate each $\Sigma_k$
so that the sequence $\Sigma_k$ converges to $\{z=0\}$ with multiplicity two. 
\end{proof}

Combining the previous theorem with the results of the previous section we have proven the main theorem of the section.
\begin{theorem} The sequence $\sigma^*(0,k)$ is strictly increasing in $k$
and converges to $4\pi$ as $k$ tends to infinity. For each $k$ a maximizing metric is achieved
by a free boundary minimal surface $\Sigma_k$ in $B^3$ of area less than $2\pi$. The limit of these minimal surfaces as $k$ tends to infinity is a double disk.
\end{theorem}
By a blow-up analysis it is possible to say a bit more about the surfaces $\Sigma_k$ near the
boundary of $B^3$.
\begin{remark}  For large $k$, $\Sigma_k$ is approximately a pair of nearby parallel plane disks joined by $k$ boundary bridges each of which is approximately a scaled down version of
half of the catenoid gotten by dividing with a plane containing the axis.
\end{remark}

\bibliographystyle{plain}

\end{document}